\documentclass[11pt]{article}

\usepackage{graphicx}
\usepackage{subcaption}
\usepackage{longtable}
\usepackage{wrapfig}
\usepackage{rotating}
\usepackage[normalem]{ulem}
\usepackage{amsmath}
\usepackage{amssymb}
\usepackage{mathtools}
\usepackage{capt-of}
\usepackage{hyperref}
\usepackage{fullpage}
\usepackage{xcolor}
\usepackage{booktabs} 
\usepackage{bm}
\usepackage{amsthm}
\usepackage{tocloft}
\usepackage{comment}

\usepackage[backend=biber,url=false,eprint=false,maxbibnames=99]{biblatex}
\DeclareMathOperator{\real}{Re}
\DeclareMathOperator{\imag}{Im}
\DeclareMathOperator{\sign}{sign}

\newcommand{\conj}[1]{\overline{#1}}
\newcommand{\normv}{\mathrm{v}}
\newcommand{\Space}{\mathcal{X}}
\newtheorem{definition}{Definition}[section]

\newtheorem{remark}[definition]{Remark}
\newtheorem{theorem}[definition]{Theorem}

\newtheorem{proposition}[definition]{Proposition}
\newtheorem{lemma}[definition]{Lemma}

\newcommand{\be}{\begin{equation}}
\newcommand{\ee}{\end{equation}}

\renewcommand{\div}{\,{\rm div}\,}

\newcommand{\om}{\omega}

\renewcommand{\Re}{\operatorname{Re}}

\newcommand{\RR}{\mathbb R}

\newcommand{\codenumber}[1]{#1}
\newcommand{\resultnumber}[1]{#1}

\definecolor{linkcol}{HTML}{1A4F9C}
\hypersetup{
  bookmarksopen=true,
  bookmarksdepth=subsubsection,
  colorlinks=true,
  allcolors=linkcol
}
\title{Non-uniqueness for the Complex Ginzburg--Landau Equation}
\author{Joel Dahne, Vladimír Šverák}
\date{}

\graphicspath{{figures/}}

\begin{document}

\maketitle

\begin{abstract}
  We show that singularities of the three-dimensional cubic complex
  Ginzburg--Landau equation developing from smooth initial data can
  induce non-uniqueness of solutions after the time of blowup. The
  solutions we study start as backward self-similar solutions, and the
  source of the non-uniqueness is an unstable eigenvalue of the
  linearization around the forward self-similar profile generated by
  the singularity. We establish the existence of this eigenvalue, and
  hence the non-uniqueness, by a computer-assisted proof for specific
  values of the parameters in the equation. The work is in part
  motivated by similarities to conjectured behavior for the
  Navier--Stokes equation, with which the complex Ginzburg--Landau
  equation shares several important properties, such as an energy
  inequality and the scaling symmetry.
\end{abstract}

\tableofcontents

\section{Introduction}
\label{sec:introduction}

We consider the Cauchy problem for the complex Ginzburg--Landau
equation for functions
\(u\colon \RR^{3}\times (0,\infty)\to \mathbb{C}\)
\begin{equation}\label{eq:CGL}
  \begin{split}
    i \frac{\partial u}{\partial t} + (1 - i\epsilon)\Delta u + (1 + i\delta)|u|^{2}u &= 0 \qquad\text{in }\mathbb{R}^{3} \times (0, \infty),\\
    u|_{t=0} &= u_{0},
  \end{split}
\end{equation}
where the initial condition \(u_0\) belongs to a suitable class of
functions to be specified later. We assume \(\epsilon>0\) and
\(\delta\ge0\) are given parameters. Although equation~\eqref{eq:CGL}
is of significant interest on its own, an important part of our
motivation comes from considering it as a model for certain aspects of
the PDE theory for the Navier--Stokes equation. The Cauchy problem for
the Navier--Stokes equation in \(\RR^{3}\) is
\begin{equation}\label{V-I-2}
  \begin{aligned}
    u_t+u\nabla u+\nabla p-\nu\Delta u &=0\\
    \div u &=0
           &\smash{\raisebox{.5\baselineskip}
             {\(\text{in }\RR^3\times(0,\infty),\)}}\,\\
    u|_{t=0}&=u_0
           &\text{in }\RR^3,\phantom{mm}
  \end{aligned}
\end{equation}
and one of the important open problems is whether one can find a class
of fields \(u\) in which the Cauchy problem has a unique solution. If
that were not the case, the Navier--Stokes equation would have to be
considered an incomplete model. It is not hard to prove uniqueness of
appropriate classes of smooth solutions, and hence the potential lack
of uniqueness for a sufficiently regular initial condition \(u_0\) is
closely related to the possible formation of singularities.

For both equations~\eqref{eq:CGL} and~\eqref{V-I-2} there is a theory
of Leray solutions, based on the energy identity: for \(t_1<t_2\) and
\(\delta=0\) we have
\begin{equation*}
  \int_{\RR^{3}} \frac{1}{2}|u(x,t_2)|^{2}\,dx+\int_{t_1}^{t_2}\int_{\RR^{3}} \epsilon|\nabla u(x,t)|^{2}\,dx\,dt=\int_{\RR^{3}}\frac{1}{2}|u(x,t_1)|^{2}\,dx
\end{equation*}
for the CGL solutions\footnote{In the case \(\delta>0\) we get an
  additional favorable term \(\delta|u|^{4}\) in the identity.} and
\begin{equation*}
  \int_{\RR^{3}} \frac{1}{2}|u(x,t_2)|^{2}\,dx+\int_{t_1}^{t_2}\int_{\RR^{3}} \nu|\nabla u(x,t)|^{2}\,dx\,dt=\int_{\RR^{3}}\frac{1}{2}|u(x,t_1)|^{2}\,dx
\end{equation*}
for the Navier--Stokes solutions. Based on these identities, and the
scaling symmetry \(u(x,t)\to\lambda u(\lambda x,\lambda^{2}t)\), which
is also shared by the two equations, one can develop the theory of
\emph{suitable weak solutions} and their partial regularity, following
Leray~\cite{Leray1934} and later developments in~\cite{Doering1994,
  Scheffer1977, Caffarelli1982, Yan1999}. However, even for the best
classes of the suitable weak solutions, uniqueness is unknown.
Non-uniqueness would call into question one of the main points of such
models, namely predicting future states from the current state.

Recent developments in the Navier--Stokes theory confirm that one
cannot expect uniqueness for \(u_0\in L^{2}(\RR^{3})\)~\cite{Jia2015,
  Guillod2023, Albritton2022, Hou2025}, but that does not rule out
uniqueness for more regular initial conditions. In reasonable classes,
the solutions are unique as long as they remain smooth.

The motivation of our work here is to show that for the CGL equation
one can have smooth, compactly supported initial data \(u_0\) for
which singularities develop that lead to non-uniqueness. Our approach
for establishing this result is based on several steps, the first of
which was completed in previous works~\cite{Plech2001, Dahne2024}:

\begin{itemize}
\item[(i)] Establish the existence of self-similar
  singularities.\footnote{Strictly speaking, the singularities are
    self-similar only in a suitable generalized sense, in that they
    are invariant not under the pure scaling symmetries, but under a
    one-parameter group of symmetries that engages both scaling and
    phase rotations.}
\item[(ii)] Show that taking the blowup profile of some of these
  singularities as an initial datum \(u_0\) leads to non-uniqueness.
\item[(iii)] Show that the whole situation can be preserved for a
  suitable truncation to compact support of the initial datum leading
  to the self-similar blowup.
\end{itemize}

Here we will establish step (ii). Our main task is to show that at
least some of the blowup profiles of the known singularities lead to
situations in which the general philosophy of~\cite{Jia2015} for the
Navier--Stokes equation applies (with some modifications) to CGL when
we take that blowup profile as the initial datum. There are a number
of technicalities, discussed in more detail below, that make this task
non-trivial, but they all turn out to be manageable.

Step (iii) in the above program is left for future work, but we are
confident that it is within reach of existing methods.

We will see that the nature of the non-uniqueness is such that it
seems difficult to imagine some good selection principle that would
single out ``the right solution''. Rather, it seems that we are
dealing with a situation where the model is incomplete: to determine
what happens when uniqueness is lost, we must look into the ``next
layer'' of the underlying physics; the ``effective theory'' at the
level of CGL is by itself insufficient. As the CGL equation is not
really a fundamental model, that may not be too surprising.

For the Navier--Stokes equation, it is currently unclear whether the
situation could be similar. One can still hope that even if in some
situations singularities leading to non-uniqueness appear, such
scenarios will be unstable and the equation will still describe a
well-defined evolution in generic situations. By contrast, the
non-uniqueness scenario for CGL may be stable, although that point
needs further investigation.

\subsection{Self-similar singularities and their continuation after blowup}

The simplest singularities of equation~\eqref{eq:CGL} would be
solutions defined on \(\RR^{3}\times(-\infty,0)\) that are invariant
under the scaling
\begin{equation*}
  u(x,t)\to\lambda u(\lambda x,\lambda^{2} t),\qquad \lambda>0.
\end{equation*}
Such solutions likely do not exist, but if we combine the scaling
symmetry with the simple phase rotation symmetry
\begin{equation*}
  u(x,t)\to e^{i\theta} u(x,t)
\end{equation*}
and seek solutions that are invariant under the symmetry
\begin{equation}\label{scaling2}
  u(x,t)\to\lambda^{1+i\varkappa}u(\lambda x,\lambda^{2}t),\qquad\lambda>0
\end{equation}
(with these transformations still forming a one-parameter subgroup of
the symmetry group of the equation), one can obtain the desired
objects. These are solutions of the form
\begin{equation}\label{eq:CGL-backward}
  u(x, t) = \frac{1}{(-2\kappa t)^{\frac{1}{2}\left(1 + i \frac{\omega}{\kappa}\right)}}
  Q\left(\frac{|x|}{(-2\kappa t)^{\frac{1}{2}}}\right),
\end{equation}
where \(Q\colon [0,\infty)\to \mathbb{C}\) is a smooth function with
\(Q(\xi)\sim \xi^{-1-i\frac{\omega}{\kappa}}\) as \(\xi\to\infty\),
and \(\omega,\kappa\) are positive parameters of physical dimension
\([\textup{time}]^{-1}\). (For our purposes one can take \(\om=1\)
without loss of generality.) We then have
\begin{equation*}
  \lim_{t\to 0_-} u(x,t)= A|x|^{-1-i\frac{\om}{\kappa}},
\end{equation*}
for some constant \(A\), and one can try to take this profile as the
initial condition for the evolution at positive times \(t>0\). It is
natural to look for the continuation again within the class of
solutions invariant under the scaling symmetry~\eqref{scaling2}, with
\(\varkappa=\frac{\om}{\kappa}\). Hence we seek solutions defined for
\(t>0\) that are of the form
\begin{equation}\label{eq:CGL-forward}
  u(x, t) = \frac{1}{(2\kappa t)^{\frac{1}{2}\left(1 + i \frac{\om}{\kappa}\right)}}
  \hat{Q}\left(\frac{|x|}{(2\kappa t)^{\frac{1}{2}}}\right),
\end{equation}
for a suitable smooth ``profile''
\(\hat{Q}\colon[0,\infty)\to\mathbb{C}\). The solution has to satisfy
the matching condition
\begin{equation*}
  \lim_{t\to 0_+} u(x,t)= A|x|^{-1-i\frac{\om}{\kappa}}.
\end{equation*}
Note that the partial regularity theory implies that the solutions of
CGL that are radially symmetric (i.e., depend only on \(|x|\)) are
smooth away from the origin. In particular, the solutions we seek here
will be smooth away from the origin, including at the ``gluing time''
\(t=0\). At the same time, to allow non-uniqueness, they cannot be
analytic in time at \(t=0\).\footnote{These requirements are, of
  course, compatible for PDEs of parabolic type, and in our case they
  are manifested by the profile \(Q\) having an asymptotic expansion
  at \(\infty\) that is not convergent.}

Our first step then is to construct an entire solution
\(u\colon\RR^{3}\times(-\infty,\infty)\to\mathbb{C}\) of CGL that is
smooth away from the origin \((x,t)=(0,0)\) such that
\begin{equation}\label{entire}
  u(x,t)=
  \begin{dcases}
    \frac{1}{(-2\kappa t)^{\frac{1}{2}\left(1+i\frac{\omega}{\kappa}\right)}}
    Q\left(\frac{|x|}{(-2\kappa t)^{\frac{1}{2}}}\right)
    & t<0,
    \\
    A|x|^{-1-i\frac{\omega}{\kappa}}
    & t=0,
    \\
    \frac{1}{(2\kappa t)^{\frac{1}{2}\left(1+i\frac{\omega}{\kappa}\right)}}
    \hat{Q}\left(\frac{|x|}{(2\kappa t)^{\frac{1}{2}}}\right)
    & t>0.
  \end{dcases}
\end{equation}
This solution is invariant under the scaling~\eqref{scaling2} with
\(\varkappa=\frac{\om}{\kappa}\). The profiles \(Q\) and \(\hat{Q}\)
are smooth, with asymptotic expansions
\begin{equation}\label{bcinfty}
  \begin{split}
    Q(\xi) & =|\xi|^{-1-i\frac{\om}{\kappa}}\left(A+a_2|\xi|^{-2}+a_4|\xi|^{-4}+\cdots\right),
    \\
    \hat{Q}(\xi) & =|\xi|^{-1-i\frac{\om}{\kappa}}\left(A-a_2|\xi|^{-2}+a_4|\xi|^{-4}-\cdots\right).
  \end{split}
\end{equation}

\subsection{Non-uniqueness at the ``transition time'' \(t=0\)}
Introducing the self-similarity variables
\begin{equation*}
  \xi=\frac{|x|}{\sqrt{2\kappa t}},\qquad \tau=\log(2\kappa t)
\end{equation*}
and seeking \(u(x,t)\) for \(t>0\) as
\begin{equation*}
  u(x, t) = \frac{1}{(2\kappa t)^{\frac{1}{2}\left(1 + i \frac{\omega}{\kappa}\right)}}
  \hat{U}(\xi, \tau),
\end{equation*}
we obtain
\begin{equation}\label{eq:hatU}
  2\kappa\partial_{\tau} \hat{U}
  = (\epsilon + i)\Delta\hat{U}
  + \kappa\xi\partial_\xi\hat{U}
  + \left(\kappa + i \omega\right)\hat{U}
  + (i - \delta)|\hat{U}|^2\hat{U},
\end{equation}
with a stationary solution \(\hat{U}=\hat{Q}\). The potential for
non-uniqueness can arise either from the possibility of this equation
having multiple stationary solutions (with the correct ``boundary
condition'' at \(\infty\) related to~\eqref{bcinfty}), or from an
unstable manifold of the steady state~\(\hat{Q}\), along which the
solutions of~\eqref{eq:hatU} can ``separate'' from \(\hat{Q}\) as
\(\tau\) runs from \(-\infty\) to finite values. In the former case we
would have two distinct continuations of \(u(x,t)\) from \(t<0\) into
entire solutions invariant under scaling~\eqref{scaling2}, whereas the
latter case can include more complicated dynamics. Writing
\begin{equation*}
  \hat{U}=\hat{Q}+\phi,
\end{equation*}
equation~\eqref{eq:hatU} gives an equation for \(\phi\) of the form
\begin{equation}\label{eq:perturbation-equation}
  \partial_\tau\phi=L_{\hat{Q}}\phi+\mathcal{N}(\phi),
\end{equation}
where \(L_{\hat{Q}}\) is the linearization of~\eqref{eq:hatU} at
\(\hat{Q}\), and \(\mathcal{N}(\phi)\) is the higher-order (in
\(\phi\)) remainder, which also depends on \(\hat{Q}\), although we do
not indicate this dependence explicitly. We note that \(L_{\hat{Q}}\)
is only \(\RR\)-linear, not \(\mathbb{C}\)-linear. We expect that the
dynamics on the unstable manifold of \(\hat{Q}\) that leads to the
appearance of the second solution in the original \(t\) variable is
dominated by the linear equation
\begin{equation*}
  \partial_\tau\phi=L_{\hat{Q}}\phi,
\end{equation*}
and hence it is natural to focus on the spectral properties of
\(L_{\hat{Q}}\). Assuming the natural space for \(\hat{U}\) is a
suitable space of complex-valued functions \(X\), the natural space
for the spectral theory for \(L_{\hat{Q}}\) is the space
\(X_{\RR}\otimes_{\RR}\mathbb{C}\cong X\oplus \bar{X}\), the
complexification of the real space obtained from \(X\) by restricting
the scalars to \(\RR\). If we can find an eigenvalue
\(\lambda\in \mathbb{C}\) for \(L_{\hat{Q}}\) (or its natural
extension to \(X\oplus\bar{X}\)) with \(\Re\lambda>0\) and an
eigenfunction \(\phi\in X\oplus \bar{X}\), so that
\(L_{\hat{Q}}\phi=\lambda\phi\), we expect that the unstable manifold
of \(\hat{Q}\) will contain solutions
\begin{equation}\label{V-I-8}
  \hat{U}(\xi,\tau)=\hat{Q}(\xi) + P\left(ce^{\lambda \tau}\phi+\conj{ce^{\lambda \tau}\phi}\right)+\text{higher-order correction},
\end{equation}
where \(P\colon X\oplus \bar{X}\to X\) is the natural projection and
\(c\in \mathbb{C}\). Denoting by \(J\) the extension of the map
\(\phi\mapsto i\phi\) to \(X_{\RR}\otimes_{\RR}\mathbb{C}\), we have
\(P=\frac{1}{2}(I-iJ)\). The bar denotes the conjugation of the
complexification, which interchanges the two summands of
\(X\oplus\bar{X}\); the argument of \(P\) in~\eqref{V-I-8} therefore
lies in the real form \(X_{\RR}\), on which \(P\) restricts to the
identification \(X_{\RR}\cong X\). Going back to the original
variables, we expect to have other solutions \(v\) defined for
\(t\in(-\infty,T)\) for some \(T>0\) (and parametrized by
\(c\in \mathbb{C}\)), coinciding with \(u\) given by~\eqref{entire}
for \(t<0\), with

\begin{equation}\label{secondsol}
  v(x,t)=
  \begin{dcases}
    u(x,t)
    & t<0,
    \\
    A|x|^{-1-i\frac{\omega}{\kappa}}
    & t=0,
    \\
    u(x,t)+\frac{1}{(2\kappa t)^{\frac{1}{2}\left(1 + i \frac{\omega}{\kappa}\right)}}
    \left(P\left(ce^{\lambda\tau}\phi(\xi)+\conj{ce^{\lambda\tau}\phi(\xi)}\right)+\text{higher-order correction}\right)
    & 0<t<T.
  \end{dcases}
\end{equation}
The main result is given in Theorem~\ref{thm:main} and can be
informally formulated as:

\begin{theorem}[Informal formulation of the main result]\label{thm-informal}
  The above non-uniqueness scenario can be rigorously established
  through a computer-assisted proof for specific parameter values, for
  example \(\delta=0\) and \(\epsilon\) in a small neighborhood of
  \(\codenumber{0.1681}\). The eigenfunction \(\phi(\xi)\) describing
  the difference between the solutions to the leading order is smooth
  and decays faster than the profile itself, and the same is true for
  the correction term in formula~\eqref{secondsol}. The solutions
  \(u\) and \(v\) are smooth in
  \(\RR^{3} \times (-\infty,T) \setminus \{(0,0)\}\). At \((0, 0)\)
  they have a singularity with the leading order given by the
  expressions~\eqref{entire} and~\eqref{bcinfty}.
\end{theorem}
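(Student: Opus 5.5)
The plan has three stages: construct the forward profile \(\hat{Q}\) rigorously, verify an unstable eigenvalue of \(L_{\hat{Q}}\) by computer, and then build the second solution \(v\) on the unstable manifold by a fixed-point argument in the spirit of~\cite{Jia2015}.

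\textbf{Construction of \(\hat{Q}\).} I would take the backward profile \(Q\) from~\cite{Dahne2024}, which exists for \(\delta=0\) and \(\epsilon\) near \(0.1681\) together with its \(\kappa,\omega\) and asymptotic constant \(A\). I would then construct \(\hat{Q}\) as a solution of the stationary version of~\eqref{eq:hatU} with the same \(A\). The method mirrors the backward case.
\begin{itemize}
\item Near \(\xi=0\), solve the regular initial value problem, parametrized by \(\hat{Q}(0)\), using rigorous Taylor models.
\item Near \(\xi=\infty\), build the two-parameter family of solutions with leading behaviour \(A\xi^{-1-i\omega/\kappa}\) as a fixed point of an integral operator in a weighted space. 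This also yields the expansion~\eqref{bcinfty}, with the alternating signs coming from \(\kappa\to-\kappa\).
\item Match the two families at an intermediate point \(\xi_1\) by interval Newton / Krawczyk, which proves existence and local uniqueness of a matching.
\end{itemize}
The entire solution~\eqref{entire} is then smooth away from \((0,0)\). I would check this by verifying that \(Q\) and \(\hat{Q}\) share the same leading term and that the \(|x|^{-1-i\omega/\kappa}\) matching makes \(u\) continuous in \(t\) at \(t=0\) for \(x\neq0\). Higher regularity there follows from the partial regularity remark, or directly from the local parabolic theory away from the origin, where the data is smooth.

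\textbf{Unstable eigenvalue.} The eigenvalue problem \(L_{\hat{Q}}\phi=\lambda\phi\) is posed on \(X\oplus\bar{X}\), i.e.\ on pairs \((\phi,\psi)\) coupled through \(|\hat{Q}|^2\) and \(\hat{Q}^2\). This is a radial ODE system of size \(2\times 2\), second order. I would proceed as follows.
\begin{itemize}
\item Near \(0\), the solutions regular at the origin form a two-parameter family.
\item Near \(\infty\), the solutions decaying faster than the profile (Gaussian-type decay \(e^{-c\xi^2}\)-like, rather than the slow \(\xi^{-1-2\lambda/\kappa\ldots}\) branch) form a two-parameter family, again obtained via weighted integral equations.
\item A nonzero \(\phi\) exists iff a \(4\times4\) determinant \(D(\lambda)\) vanishes.
\end{itemize}
I would locate a numerical root with \(\Re\lambda>0\), then prove a true root nearby using rigorous enclosures of \(D\) and an argument principle or Newton–Kantorovich bound in \(\lambda\). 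Continuity in \(\epsilon\) gives the neighborhood. I expect this step to be the main obstacle. The difficulties are the non-self-adjoint \(\mathbb{R}\)-linear structure, the need to control the decaying branch at infinity uniformly for complex \(\lambda\), and the fact that the enclosures of \(\hat{Q}\) must be tight enough for \(D\) to be computed with certified sign changes. I would first try to choose the matching point \(\xi_1\) and the expansion orders so that the determinant is well conditioned, and parametrize by \(\lambda\) on a small disk.

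\textbf{Nonlinear construction of \(v\).} With \(\lambda\) in hand, take \(\Re\lambda>0\) maximal among the finitely many unstable eigenvalues (the operator has compact resolvent in the fast-decaying weighted space, \(L^2(e^{\kappa\xi^2/2(\epsilon+i)\ldots})\)-type). I would solve~\eqref{eq:perturbation-equation} on \(\tau\in(-\infty,\tau_0]\) by Duhamel:
\[
\phi=P\bigl(ce^{\lambda\tau}\phi_\lambda+\conj{ce^{\lambda\tau}\phi_\lambda}\bigr)+\int_{-\infty}^{\tau}e^{(\tau-s)L}\mathcal{N}(\phi(s))\,ds .
\]
This is a contraction in the norm \(\sup_\tau e^{-\Re\lambda\,\tau}\|\phi(\tau)\|_{X}\) for \(\tau_0\) very negative, using the semigroup bound \(\|e^{\tau L}\|\lesssim e^{(\Re\lambda+\eta)\tau}\) on a spectral complement, with \(\eta\) smaller than \(\Re\lambda\). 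Here \(X\) is a space embedding in \(L^\infty\) with fast decay, so \(\mathcal{N}\) is locally Lipschitz and quadratic.

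Returning to \(t=e^{\tau}/2\kappa\) gives \(v\) on \((0,T)\). The correction is \(O(e^{2\Re\lambda\tau})\) in the decaying norm. Because \(\phi\) decays faster than \(\hat{Q}\), \(v-u\to0\) as \(t\to0_+\) away from the origin, so \(v\) attains the same data, and \(v\neq u\) for \(c\neq0\). Smoothness of \(v\) away from \((0,0)\) follows from parabolic regularity, and at \((0,0)\) the leading singular behaviour is that of~\eqref{entire}.
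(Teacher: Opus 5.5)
Your first two stages follow the paper. The paper uses interval-Newton (Krawczyk) matching at \(\xi_1=16\) for \(Q\), and then for \(\hat{Q}\) with \(\hat{\gamma}_1\) fixed by the backward asymptotics. It then forms the \(4\times4\) determinant \(H(\lambda)\) from the two regular solutions at \(0\) and the two Gaussian-decaying solutions at \(\infty\), and certifies a zero by a winding-number argument on a square. The paper reports that an interval Newton step in \(\lambda\) fails to close because of accumulated overestimation, which is why it uses the argument principle.

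Two inputs you take for granted are genuine steps there. First, the constant \(A=p_Q\) is not \(\gamma c^{-a}\), because \(I_E\) does not vanish at infinity; it needs the separate enclosure \(p_Q=c^{-a}(\gamma+I_{E,\infty})\) of Appendix~\ref{sec:improvements-backward}. Second, the argument principle requires analyticity in \(\lambda\) of the decaying branch. The paper obtains this from uniform convergence of the Picard iterates of \(T_{12}\) over the contour (Proposition~\ref{prop:Z-fixed-point}).

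In the third stage you take a genuinely different route. The paper stays in \(H^{2}(\mathbb{R}^{3})\) and does not use compact resolvent. It shows \(\sigma(L_0)\subset\{\real\lambda\le-1/4\}\) via a weighted \(H^2\) inner product that compensates the \(+\tfrac12\) cost per derivative in~\eqref{eq:L_0-commutator}. It proves that \(K(\lambda-L_0)^{-1}\) is compact and tends to \(0\) as \(|\imag\lambda|\to\infty\). From analytic Fredholm theory and the Gearhart--Pr\"uss-type criterion in Engel--Nagel it deduces that \(e^{(L_{\hat Q}-\beta)\tau}\) is hyperbolic for any \(\beta>0\) in a spectral gap. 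Its Lyapunov--Perron map integrates the unstable component backward from \(\tau=0\) and the stable one from \(-\infty\), so no maximal eigenvalue is needed.

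Your Duhamel-from-\(-\infty\) scheme along the most unstable eigenvalue, as in Albritton--Bru\'e--Colombo, is legitimate and buys more. It gives the exact form~\eqref{V-I-8} with an \(O(e^{2\real\lambda\,\tau})\) correction and pointwise Gaussian decay of the perturbation, rather than the paper's \(H^2\) and radial-Sobolev control. The price is that the leading \(\lambda\) is the maximal one, which need not be the certified one, since maximality is not verified.

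However, your scheme rests on two facts you only assert: that the unstable spectrum is finite, and that \(\|e^{\tau L_{\hat Q}}\|\lesssim e^{(\real\lambda+\eta)\tau}\). For this non-normal operator, compact resolvent yields neither. Zabczyk's example has compact resolvent, infinitely many eigenvalues on a vertical line, and growth bound strictly above its spectral bound. What you need is exactly what Section~\ref{sec:non-uniqueness-from-unstable-eigenvalue} supplies: resolvent bounds on vertical lines, uniform as \(|\imag\lambda|\to\infty\). These confine the spectrum in \(\{\real\lambda\ge0\}\) to a compact, hence finite, set and feed Gearhart--Pr\"uss; alternatively, you could prove eventual norm continuity of the semigroup in your weighted space.

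Even the dissipativity estimate is not automatic in a Gaussian-weighted space. With diffusion coefficient \(\epsilon+i\), the cross term produced by the gradient of the weight has no sign and must be absorbed, which restricts the admissible weight. You also need this at the \(H^2\) level to have \(X\subset L^\infty\).
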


Solutions of the form~\eqref{entire} come in branches in the
\((\epsilon,\kappa)\)-plane (when \(\om\) is fixed), see~\cite{Plech2001,
  Dahne2024}, and we expect non-uniqueness for well-defined parts
of these branches. We have rigorously verified this for a small
interval on the first branch,\footnote{The specific values of
  \(\epsilon\) are chosen to be close to the stability threshold; for
  example, for \(\epsilon=0.17\) the mechanism of non-uniqueness we use
  is no longer present and it is conceivable that one has
  uniqueness. For smaller values of \(\epsilon\) we still expect
  non-uniqueness, but the rigorous verification becomes
  computationally more subtle due to slower decay at \(\infty\) for the
  eigenfunctions.} but have not attempted a proof that would cover
larger segments of the branches. Figure~\ref{fig:branch} shows the
results of a conventional numerical computation of the eigenvalue with
largest real part along the branch. It indicates that there is an
unstable eigenvalue near the beginning of the branch, but that all
eigenvalues become stable before the branch turns around in
\(\epsilon\).

\begin{figure}
  \centering
  \begin{subfigure}[t]{0.45\textwidth}
    \includegraphics[width=\textwidth]{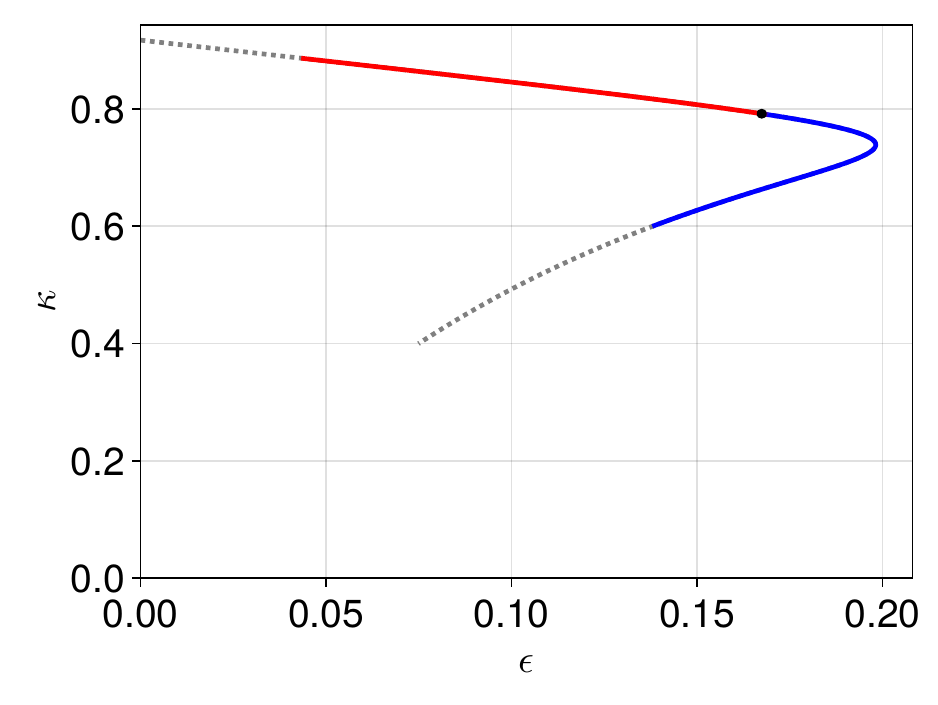}
  \end{subfigure}
  \hspace{0.05\textwidth}
  \begin{subfigure}[t]{0.45\textwidth}
    \includegraphics[width=\textwidth]{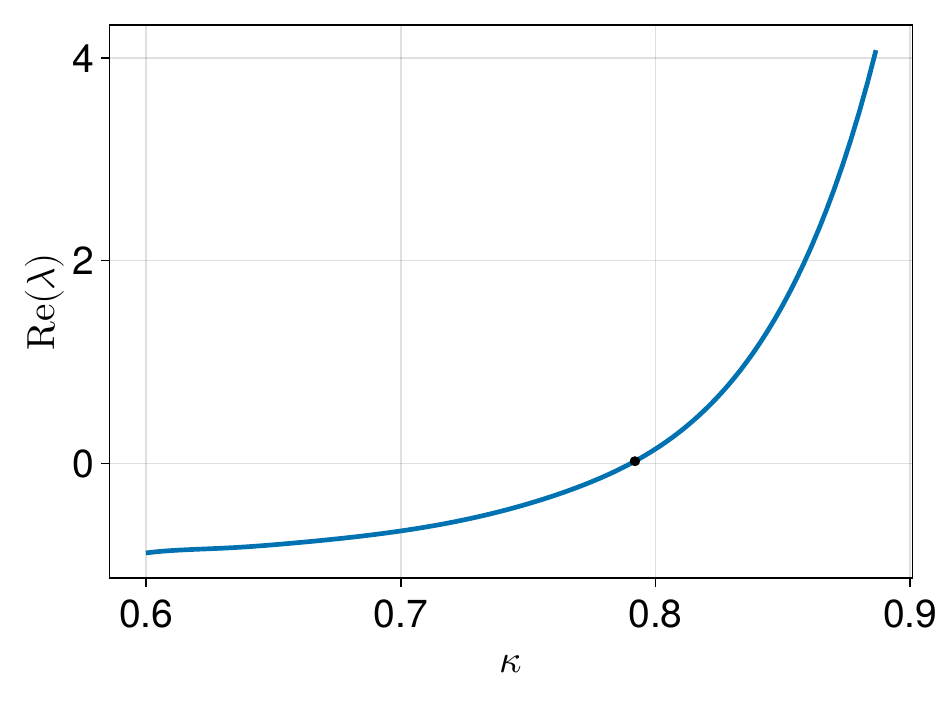}
  \end{subfigure}
  \caption{Numerical results for the first branch of
    backward self-similar solutions studied in~\cite{Plech2001,
      Dahne2024}. The left figure shows an approximation of the branch
    itself, in the \((\epsilon, \kappa)\)-plane. Along the solid part
    of the branch we have computed the eigenvalue of \(L_{\hat{Q}}\)
    with the largest real part. Red indicates that this real part is
    positive and blue that it is negative, whereas along the dashed
    parts the eigenvalue was not computed. The right figure shows the
    real part of this eigenvalue as a function of \(\kappa\). In both
    figures, the black dot marks the point on the branch for which
    Theorem~\ref{thm:unstable-eigenvalue} gives a rigorous enclosure of
    the unstable eigenvalue. The branch was computed using the same
    methodology as in~\cite{Dahne2024} and the eigenvalue using the
    finite-difference approximation from
    Appendix~\ref{sec:finite-differences-approximations}.}
  \label{fig:branch}
\end{figure}

The solutions in Theorem~\ref{thm-informal} have decay
\(\mathcal{O}(|x|^{-1})\) at spatial infinity, and hence the integral
\(\int_{\RR^{3}} |u(x,t)|^{2}\,dx\) is not finite. We expect that the
whole situation can be perturbed to initial data with compact support,
with the singularity being asymptotically self-similar, rather than
exactly self-similar, and the mechanism of non-uniqueness being
essentially the same. Techniques similar to the ones used
in~\cite{Jia2015} should be applicable to achieve such a truncation,
but we leave this step for future work. We note that producing
non-uniqueness for solutions with compactly supported initial data and
smooth, compactly supported forcing term should be easier still. If we
take a smooth, radial cut-off function \(\eta(x)\) that is \(1\) in
the unit ball and \(0\) outside a ball of radius \(2\) and consider
the functions \(u_\rho(x,t)=\eta(\rho x)u(x,t)\) and
\(v_\rho(x,t)=\eta(\rho x)v(x,t)\), they will satisfy the equation
with the same initial condition \(u_\rho(x,0)=v_\rho(x,0)\) and the
right-hand sides \(f_\rho(x,t)\) and \(g_\rho(x,t)\), respectively,
such that \(f_\rho(x,t)-g_\rho(x,t)\) will vanish for \(t<0\). We
conjecture that with minor adjustments one can also prove that
\(f_\rho(x,t)-g_\rho(x,t)\) is exponentially small in \(1/\rho\) as
\(\rho\to0\), uniformly for \(t\in (0,T)\), due to the exponential
decay of the eigenfunction \(\phi(\xi)\). One can then expect to be
able to perturb one of the solutions so that it produces the same
right-hand side as the other, while preserving the non-uniqueness. We
will not pursue this task here.

\subsection{Proof strategy and organization of the paper}
\label{sec:proof-strategy}

The proof of Theorem~\ref{thm:main} is split into two main parts,
proving the existence of an unstable eigenvalue of the linear operator
\(L_{\hat{Q}}\) and proving that this implies the given non-uniqueness
scenario.

The second part is handled in
Section~\ref{sec:non-uniqueness-from-unstable-eigenvalue} and is based
on well-established spectral and semigroup theory. Our strategy is
based on the approach proposed by Jia and Šverák for proving
non-uniqueness of forward self-similar solutions to the Navier--Stokes
equation (NSE)~\cite{Jia2015}, which has appeared in several later
works. It was applied by Albritton, Brué and Colombo to prove
non-uniqueness for the NSE with singular forcing~\cite{Albritton2022}.
For the unforced NSE, Guillod and Šverák gave numerical evidence for
the existence of an unstable eigenvalue~\cite{Guillod2023}. Recently,
Hou, Wang and Yang~\cite{Hou2025} gave a computer-assisted proof for
the existence of such an eigenvalue, and Ionescu, Jia and
Palasek~\cite{Ionescu2026} gave numerical evidence for the existence
of an unstable eigenvalue in the class of axially symmetric swirl-free
vector fields.

From Section~\ref{sec:backward-forward-linearized-solutions} onward,
the paper is devoted to the significantly more challenging part of
proving the existence of an unstable eigenvalue in the first place.
For this, we follow the strategy developed in~\cite{Dahne2024}, where
the existence of backward self-similar solutions to the complex
Ginzburg--Landau equation was first proved. The strategy is based on
a shooting method
inspired by~\cite{Plech2001} and is described in more detail in
Section~\ref{sec:backward-forward-linearized-solutions}. The backward
and forward self-similar solutions, as well as the eigenfunctions of
\(L_{\hat{Q}}\), all satisfy ODEs posed on the interval
\([0, \infty)\), with appropriate boundary conditions at zero and
infinity. The idea is to construct two families of solutions, one
satisfying the boundary conditions at zero and the other satisfying
the boundary conditions at infinity. We then need to show that with
the right choice of parameters, these two solutions match at an
intermediate point, forming a global solution satisfying both boundary
conditions. This matching condition is verified through a
computer-assisted proof.

More details about this procedure, as well as the precise equations
under consideration, are given in
Section~\ref{sec:backward-forward-linearized-solutions}. The
computer-assisted proof itself is presented in
Section~\ref{sec:existence-unstable-eigenvalue}. It is split into
three parts: (i) Theorem~\ref{thm:backward-existence} proving the
existence of a backward self-similar solution \(Q\), (ii)
Theorem~\ref{thm:forward-existence} proving the existence of a forward
self-similar solution with matching asymptotic behavior and (iii)
Theorem~\ref{thm:unstable-eigenvalue} proving the existence of an
unstable eigenvalue for the associated linear operator
\(L_{\hat{Q}}\).

The proofs rely on estimates that are developed in the remaining
sections of the paper. Section~\ref{sec:forward-solution-infinity}
gives estimates for the solution to the forward self-similar equation
satisfying the boundary conditions at infinity, and
Section~\ref{sec:forward-solution-zero} treats the solution satisfying
the boundary conditions at zero. In the same spirit,
Section~\ref{sec:linearized-solution-infinity} handles solutions to
the linearized equation satisfying the boundary conditions at
infinity, and Section~\ref{sec:linearized-solution-zero} the solutions
satisfying the boundary conditions at zero. These sections build upon
the work in~\cite{Dahne2024} and generally follow the same outline.
While some of the estimates from~\cite{Dahne2024} can be reused, in
particular those related to confluent hypergeometric functions, most
of them require changes to take into account the different behavior in
the equations. To simplify comparison with~\cite{Dahne2024}, in these
sections we opt to work with general parameters for the strength of
the nonlinearity, \(\sigma\), and the spatial dimension, \(d\), for
which the CGL equation takes the form
\begin{equation*}
  i \frac{\partial u}{\partial t} + (1 - i\epsilon)\Delta u + (1 + i\delta)|u|^{2\sigma}u = 0 \qquad\text{in }\mathbb{R}^{d} \times (0, \infty).
\end{equation*}

Finally, in Section~\ref{sec:implementation-details} we discuss some
of the implementation details for the computer-assisted proof, in
Appendix~\ref{sec:finite-differences-approximations} we present the
finite-difference method used to compute initial approximations of the
eigenvalue and in Appendix~\ref{sec:improvements-backward} we
establish bounds related
to the asymptotic behavior of the backward self-similar profile.

\begin{remark}
  In this paper, tight intervals are generally written in
  subsuperscript notation (e.g.,
  \([1.147721, 1.147734] = 1.1477_{21}^{34}\)), while wide intervals
  are written in both-endpoint notation (e.g.,
  \([-2.4, 10.2]\)).
\end{remark}

\section{Non-uniqueness from an unstable eigenvalue}
\label{sec:non-uniqueness-from-unstable-eigenvalue}

In this section we prove the non-uniqueness assuming the existence of
an unstable eigenvalue. The arguments largely follow the approach
established by Jia and Šverák~\cite{Jia2015} and make use of the
theory of semigroups for linear evolution equations, for which we
refer to the book by Engel and Nagel~\cite{Engel2000}. We also refer
to the work by Hou, Wang and Yang~\cite{Hou2025} as well as that by
Ionescu, Jia and Palasek~\cite{Ionescu2026}.

The idea is to show that, after rescaling, the semigroup generated by
\(L_{\hat{Q}}\) is hyperbolic. This allows us to decompose the space
into stable and unstable subspaces. We then write the equation in
Duhamel form, integrating each component in the direction in which the
semigroup contracts. This yields a Lyapunov--Perron operator which is
shown to be a contraction on an exponentially weighted space. Its
fixed point yields the ancient solution that gives rise to the
non-uniqueness.

To apply semigroup theory, it is easier to work with functions on
\(\mathbb{R}^{3}\) rather than with the radial component directly. For
this section we therefore work with the self-similar
variables
\begin{equation*}
  \bm{\xi} = \frac{x}{\sqrt{2\kappa t}},\quad \tau = \log(2\kappa t)
\end{equation*}
and perform the analysis in the space \(H^{2}(\mathbb{R}^{3})\).
In this case we get that the linear operator \(L_{\hat{Q}}\) and the
nonlinearity \(\mathcal{N}\) in
Equation~\eqref{eq:perturbation-equation} take the form
\begin{equation}\label{eq:L_Q-hat-R3}
  L_{\hat{Q}}\phi = \frac{1}{2\kappa}\left[
  (\epsilon I + J)\Delta\phi
  + \kappa(\bm{\xi} \cdot \nabla\phi)
  + (\kappa I + \omega J)\phi
  + (-\delta I + J)((\hat{Q}\cdot\hat{Q})I + 2\hat{Q}\hat{Q}^{T})\phi
  \right]
\end{equation}
and
\begin{equation}\label{eq:N-R3}
  \mathcal{N}(\phi) = \frac{1}{2\kappa}(-\delta I + J)
  \left(2(\hat{Q} \cdot \phi)\phi + (\phi\cdot\phi)(\hat{Q} + \phi)\right).
\end{equation}
The only assumption we will need for the profile \(\hat{Q}\) is that
it lies in \(W^{2,\infty}(\mathbb{R}^{3})\), so that multiplication by
\((\hat{Q}\cdot\hat{Q})I + 2\hat{Q}\hat{Q}^{T}\) acts as a bounded
linear operator on \(H^{2}(\mathbb{R}^{3})\). In particular, this is
the case if
\begin{equation}\label{eq:Q-hat-decay-assumption}
  \partial^{\alpha}\hat{Q} \in L^{\infty}(\mathbb{R}^{3})
  \quad\text{and}\quad
  \partial^{\alpha}\hat{Q}(\bm{\xi}) \to 0 \text{ as } |\bm{\xi}| \to \infty,
\end{equation}
for every multi-index \(\alpha\) with \(|\alpha| \leq 2\), which is
indeed satisfied for the forward self-similar solution under
consideration.

The first step is to control the essential spectrum of
\(L_{\hat{Q}}\). For this, it is natural to view \(L_{\hat{Q}}\) as a
relatively compact perturbation of the operator
\begin{equation*}
  L_{0}\phi = \frac{1}{2\kappa}\left[
    (\epsilon I + J)\Delta\phi
    + \kappa(\bm{\xi} \cdot \nabla\phi)
    + (\kappa I + \omega J)\phi
  \right].
\end{equation*}
As one can check directly, the family
\begin{equation}\label{eq:L_0-semigroup}
  (S(\tau)\phi)(\bm{\xi})
  = e^{\frac{\tau}{2\kappa}(\kappa I + \omega J)}
  \left(e^{\frac{e^{\tau} - 1}{2\kappa}(\epsilon I + J)\Delta}\phi\right)\!\left(e^{\frac{\tau}{2}}\bm{\xi}\right),
  \qquad \tau \geq 0,
\end{equation}
defines a \(C_{0}\)-semigroup on \(H^{2}(\mathbb{R}^{3})\) that leaves
the Schwartz class \(\mathcal{S}(\mathbb{R}^{3})\) invariant and whose
generator acts as \(L_{0}\) on \(\mathcal{S}(\mathbb{R}^{3})\). We
denote this generator again by \(L_{0}\) and write \(D(L_{0})\) for
its domain. Since \(\mathcal{S}(\mathbb{R}^{3})\) is a dense subspace
of \(H^{2}(\mathbb{R}^{3})\) that is contained in \(D(L_{0})\) and
invariant under the semigroup, it is a core for \(L_{0}\)
by~\cite[Chapter~II, Proposition~1.7]{Engel2000}. The following lemma
gives us control over the spectrum of \(L_{0}\) as well as its
resolvent.

\begin{lemma}\label{lemma:L_0-spectrum}
  The spectrum of \(L_{0}\) is contained in the set
  \(\left\{\lambda \in \mathbb{C} \colon \real \lambda \leq
    -1/4\right\}\). Moreover, for \(\real \lambda > -1/4\), the
  resolvent satisfies
  \begin{equation}\label{eq:L_0-resolvent-bound}
    \|(\lambda - L_{0})^{-1}\|_{\mathcal{L}(H^{2}(\mathbb{R}^{3}))} \lesssim (\real \lambda + 1/4)^{-1},
  \end{equation}
  and
  \begin{equation}\label{eq:L_0-resolvent-smoothing}
    \|(\lambda - L_{0})^{-1}\|_{\mathcal{L}(H^{2}(\mathbb{R}^{3}), H^{3}(\mathbb{R}^{3}))}
    \lesssim (\real \lambda + 1/4)^{-1/2} + (\real \lambda + 1/4)^{-1}.
  \end{equation}
\end{lemma}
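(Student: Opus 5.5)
We bound the semigroup \(S(\tau)\) from~\eqref{eq:L_0-semigroup} on \(H^{2}(\mathbb{R}^{3})\), and then recover the resolvent as a Laplace transform,
\begin{equation*}
  (\lambda - L_{0})^{-1}\phi = \int_{0}^{\infty} e^{-\lambda\tau} S(\tau)\phi\,d\tau .
\end{equation*}
This representation holds for \(\real\lambda\) larger than the growth bound by the Hille--Yosida theory in~\cite{Engel2000}. So the spectral inclusion and~\eqref{eq:L_0-resolvent-bound} follow once we show \(\|S(\tau)\|_{\mathcal{L}(H^{2})}\lesssim e^{-\tau/4}\).

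To get this decay, we compute on the Fourier side, working with \(\phi\) in the Schwartz class and extending by density. The factor \(e^{\frac{\tau}{2\kappa}(\kappa I+\omega J)}\) has norm \(e^{\tau/2}\), since \(J\) generates rotations. The heat-type factor \(e^{s(\epsilon I+J)\Delta}\), with \(s=(e^{\tau}-1)/(2\kappa)\), is the Fourier multiplier \(e^{-s(\epsilon+i)|\eta|^{2}}\). Its modulus is \(e^{-s\epsilon|\eta|^{2}}\le 1\), so it is a contraction on every \(\dot H^{k}\). The dilation \(f\mapsto f(e^{\tau/2}\cdot)\) scales the \(\dot H^{k}\) seminorm by \(e^{\tau(k/2-3/4)}\).

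Combining the three factors, the \(\dot H^{k}\) component of \(S(\tau)\) is bounded by \(e^{\tau/2}e^{\tau(k/2-3/4)}=e^{\tau(k/2-1/4)}\). For \(k=0\) this already gives decay \(e^{-\tau/4}\), but for \(k=1,2\) the crude bound grows.

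\textbf{Main obstacle.} The higher seminorms need the parabolic smoothing, not just contraction. On \(\dot H^{k}\) we bound
\begin{equation*}
  |\eta|^{k}e^{-s\epsilon|\eta|^{2}}\lesssim s^{-k/2}\,\epsilon^{-k/2}\,|\hat\phi| .
\end{equation*}
For \(\tau\ge1\) we have \(s\sim e^{\tau}\), which gives the bound \(e^{\tau(k/2-1/4)}e^{-k\tau/2}\|\phi\|_{L^{2}}=e^{-\tau/4}\|\phi\|_{L^{2}}\). For \(\tau\le1\) the crude bound suffices. Hence \(\|S(\tau)\|_{\mathcal{L}(H^{2})}\lesssim e^{-\tau/4}\) for all \(\tau\ge0\), with a constant depending on \(\epsilon\) and \(\kappa\). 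Integrating, \(\|(\lambda-L_{0})^{-1}\|\lesssim\int_{0}^{\infty}e^{-(\real\lambda+1/4)\tau}\,d\tau=(\real\lambda+1/4)^{-1}\).

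For the smoothing estimate~\eqref{eq:L_0-resolvent-smoothing} we need \(\|S(\tau)\phi\|_{\dot H^{3}}\) in terms of \(\|\phi\|_{H^{2}}\). We write \(|\eta|^{3}=|\eta|\cdot|\eta|^{2}\) and use \(|\eta|e^{-s\epsilon|\eta|^{2}}\lesssim s^{-1/2}\). This gives
\begin{equation*}
  \|S(\tau)\phi\|_{\dot H^{3}}\lesssim e^{\tau/2}e^{\tau(3/2-3/4)}s^{-1/2}e^{-\tau}\|\phi\|_{\dot H^{2}}=e^{\tau/4}s^{-1/2}\|\phi\|_{\dot H^{2}} .
\end{equation*}
For \(\tau\le1\), \(s\sim\tau\), so this is \(\lesssim\tau^{-1/2}\). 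For \(\tau\ge1\), \(s\sim e^{\tau}\), so this is \(\lesssim e^{-\tau/4}\). Hence \(\|S(\tau)\|_{\mathcal{L}(H^{2},H^{3})}\lesssim \tau^{-1/2}e^{-\tau/4}\) for \(\tau\le1\) and \(\lesssim e^{-\tau/4}\) for \(\tau\ge1\).

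We then integrate against \(e^{-\real\lambda\,\tau}\). Write \(\mu=\real\lambda+1/4\). The part \(\tau\ge1\) is bounded by \(\int_{1}^{\infty}e^{-\mu\tau}\,d\tau\le\mu^{-1}\). The part \(\tau\le1\) is bounded by \(\int_{0}^{1}\tau^{-1/2}e^{-\mu\tau}\,d\tau\lesssim\min(1,\mu^{-1/2})\). Together these give~\eqref{eq:L_0-resolvent-smoothing}.

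It remains to check that the Laplace-transform formula identifies the resolvent of the generator \(L_{0}\) on all of \(\{\real\lambda>-1/4\}\). This is the standard fact that the growth bound is at most \(-1/4\), which places the spectrum in \(\{\real\lambda\le-1/4\}\); see~\cite[Chapter~II, Theorem~1.10]{Engel2000}.
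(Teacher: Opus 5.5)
Your approach is valid and differs from the paper's, but the smoothing step contains an arithmetic slip that needs repair. In your display for \(\|S(\tau)\phi\|_{\dot H^{3}}\), the factor \(e^{-\tau}\) has no source. The rotation, dilation and heat factors give \(e^{\tau/2}e^{3\tau/4}s^{-1/2}=e^{5\tau/4}s^{-1/2}\), which is of order \(e^{3\tau/4}\) for \(\tau\geq 1\). This is not just a lossy estimate. Taking \(\hat\phi\) concentrated at frequencies \(|\eta|\sim(s\epsilon)^{-1/2}\) shows that \(\|S(\tau)\|_{\mathcal{L}(\dot H^{2},\dot H^{3})}\) really grows like \(e^{3\tau/4}\), because the \(\dot H^{2}\) seminorm is too weak at low frequencies. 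As written, your Laplace integral would only converge for \(\real\lambda>3/4\).

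For \(\tau\geq 1\) you must smooth all the way from \(L^{2}\), as you already did for \(k=1,2\). The bound \(|\eta|^{3}e^{-s\epsilon|\eta|^{2}}\lesssim s^{-3/2}\) gives \(\|S(\tau)\phi\|_{\dot H^{3}}\lesssim e^{5\tau/4}s^{-3/2}\|\phi\|_{L^{2}}\lesssim e^{-\tau/4}\|\phi\|_{L^{2}}\). Your \(\dot H^{2}\)-based bound \(e^{5\tau/4}s^{-1/2}\lesssim\tau^{-1/2}\) remains fine for \(\tau\leq 1\). With this correction your claimed bounds on \(\|S(\tau)\|_{\mathcal{L}(H^{2},H^{3})}\) hold. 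The integration giving \(\min(1,\mu^{-1/2})+\mu^{-1}\) and the appeal to Engel--Nagel, Theorem II.1.10, then go through: the \(H^{3}\)-valued Laplace integral converges absolutely and agrees with the \(H^{2}\)-valued one.

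The paper never uses the explicit formula for \(S(\tau)\) quantitatively. It proves a dissipativity estimate for \(L_{0}\) in a weighted \(H^{2}\) inner product with \(c_{0}=1\), \(c_{1}=\epsilon/\kappa\), \(c_{2}=\epsilon^{2}/(4\kappa^{2})\). These weights are chosen so that diffusion absorbs the \(+\frac{|\alpha|}{2}\) commutator from the expanding drift, giving \(\real\langle L_{0}\phi,\phi\rangle_{*}\leq-\frac14\|\phi\|_{*}^{2}-\frac{\epsilon^{3}}{8\kappa^{3}}N_{3}\). Gr\"onwall then yields a contraction semigroup in this equivalent norm. The leftover \(N_{3}\) term, with Cauchy--Schwarz applied to \(\real\langle(\lambda-L_{0})\phi,\phi\rangle_{*}\), gives the \(H^{3}\) bound directly at the resolvent level. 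A core argument then extends it from Schwartz functions to \(D(L_{0})\).

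Your Fourier argument replaces the weighted norm and the core argument with explicit multiplier bounds and the Laplace transform. It exploits that \(L_{0}\) is exactly solvable, and it shows that the decay of the higher seminorms comes from parabolic smoothing over the exponentially long effective time \(s\sim e^{\tau}\). It also shows why low frequencies force you to use the \(L^{2}\) part of the norm. The energy method is more robust because it needs no explicit semigroup, though that robustness is not required for this lemma.
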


\begin{proof}
  Let \(\langle \cdot, \cdot\rangle\) denote the inner product on
  \(L^{2}(\mathbb{R}^{3})\). We first record the dissipativity estimate
  in \(L^{2}(\mathbb{R}^{3})\), for \(\phi\) in the Schwartz class
  \(\mathcal{S}(\mathbb{R}^{3})\). For the diffusion term we have
  \begin{equation*}
    \begin{split}
      \real \int_{\mathbb{R}^{3}} (\epsilon I + J)\Delta\phi \cdot \conj{\phi}\,d\bm{\xi}
      &= -\real \int_{\mathbb{R}^{3}} (\epsilon I + J)\nabla\phi \cdot \conj{\nabla\phi}\,d\bm{\xi}\\
      &= -\epsilon \int_{\mathbb{R}^{3}} \nabla\phi \cdot \conj{\nabla\phi}\,d\bm{\xi}
      = -\epsilon\|\nabla\phi\|_{L^{2}}^{2},
    \end{split}
  \end{equation*}
  where we have used that \(J \nabla \phi \cdot \conj{\nabla\phi}\) is
  purely imaginary, since \(J\) is skew-symmetric. For the drift term
  we get
  \begin{equation*}
    \real \int_{\mathbb{R}^{3}} (\bm{\xi} \cdot \nabla\phi) \cdot \conj{\phi}\,d\bm{\xi}
    = \frac{1}{2}\int_{\mathbb{R}^{3}} \bm{\xi} \cdot \nabla |\phi|^{2} \,d\bm{\xi}
    = \frac{1}{2}\int_{\mathbb{R}^{3}} \nabla \cdot (\bm{\xi}|\phi|^{2}) \,d\bm{\xi}
    - \frac{3}{2}\int_{\mathbb{R}^{3}} |\phi|^{2} \,d\bm{\xi}
    = -\frac{3}{2}\|\phi\|_{L^{2}}^{2},
  \end{equation*}
  where the first integral in the second-to-last expression vanishes by
  the divergence theorem and the rapid decay of \(\phi\). For the last
  term, we get
  \begin{equation*}
    \real \int_{\mathbb{R}^{3}} (\kappa I + \omega J)\phi \cdot \conj{\phi}\,d\bm{\xi}
    = \kappa\|\phi\|_{L^{2}}^{2}.
  \end{equation*}
  Combining these, we get
  \begin{equation}\label{eq:L_0-dissipativity-L2}
    \real \langle L_{0}\phi, \phi \rangle
    = \frac{1}{2\kappa}\left[-\epsilon\|\nabla\phi\|_{L^{2}}^{2}
    - \frac{3\kappa}{2}\|\phi\|_{L^{2}}^{2}
    + \kappa\|\phi\|_{L^{2}}^{2}\right]
    = -\frac{\epsilon}{2\kappa}\|\nabla\phi\|_{L^{2}}^{2} - \frac{1}{4}\|\phi\|_{L^{2}}^{2}.
  \end{equation}

  To pass the dissipativity estimates from \(L^{2}(\mathbb{R}^{3})\)
  to \(H^{2}(\mathbb{R}^{3})\), we use that \(L_{0}\) almost commutes
  with derivatives. Both \(\Delta\) and the zeroth-order term commute
  with \(\partial^{\alpha}\), whereas
  \(\partial^{\alpha}(\bm{\xi} \cdot \nabla\phi) = \bm{\xi} \cdot
  \nabla\partial^{\alpha}\phi + |\alpha|\partial^{\alpha}\phi\), so
  that
  \begin{equation}\label{eq:L_0-commutator}
    \partial^{\alpha}L_{0}\phi = L_{0}\partial^{\alpha}\phi + \frac{|\alpha|}{2}\partial^{\alpha}\phi.
  \end{equation}
  Each derivative thus costs \(+\frac{1}{2}\), reflecting that the
  drift \(\bm{\xi} \cdot \nabla\) is expanding.
  Combining~\eqref{eq:L_0-commutator} with~\eqref{eq:L_0-dissipativity-L2}
  applied to \(\partial^{\alpha}\phi\), the standard inner product on
  \(H^{2}(\mathbb{R}^{3})\) gives
  \begin{equation*}
    \real \sum_{|\alpha| \leq 2} \langle \partial^{\alpha}L_{0}\phi, \partial^{\alpha}\phi\rangle
    = \sum_{|\alpha| \leq 2}\left[
      -\frac{\epsilon}{2\kappa}\|\nabla\partial^{\alpha}\phi\|_{L^{2}}^{2}
      + \left(\frac{|\alpha|}{2} - \frac{1}{4}\right)\|\partial^{\alpha}\phi\|_{L^{2}}^{2}
    \right],
  \end{equation*}
  in which the terms with \(|\alpha| = 2\) carry the positive
  coefficient \(\frac{3}{4}\), and hence we cannot extract a negative
  sign. The standard inner product is therefore not suitable, and we
  instead use the weighted one
  \begin{equation*}
    \langle u, v\rangle_{*} = \sum_{|\alpha| \leq 2} c_{|\alpha|}\langle \partial^{\alpha}u, \partial^{\alpha}v\rangle,
    \qquad c_{0} = 1,\quad c_{1} = \frac{\epsilon}{\kappa},\quad c_{2} = \frac{\epsilon^{2}}{4\kappa^{2}},
  \end{equation*}
  whose norm \(\|\cdot\|_{*}\) is equivalent to
  \(\|\cdot\|_{H^{2}(\mathbb{R}^{3})}\). Letting
  \(N_{j} = \sum_{|\alpha| = j}\|\partial^{\alpha}\phi\|_{L^{2}}^{2}\)
  and
  \(M_{j} = \sum_{|\alpha| =
    j}\|\nabla\partial^{\alpha}\phi\|_{L^{2}}^{2}\), we obtain
  \begin{equation*}
    \real \langle L_{0}\phi, \phi\rangle_{*}
    = \sum_{j = 0}^{2}c_{j}\left(-\frac{\epsilon}{2\kappa} M_{j} + \left(\frac{j}{2} - \frac{1}{4}\right)N_{j}\right).
  \end{equation*}
  Since all terms in \(N_{j + 1}\) are also included in \(M_{j}\), we
  have \(N_{j + 1} \leq M_{j}\), giving us
  \begin{equation}\label{eq:L_0-dissipativity-H2}
    \begin{split}
      \real \langle L_{0}\phi, \phi\rangle_{*}
      &\leq \sum_{j = 0}^{2}c_{j}\left(-\frac{\epsilon}{2\kappa} N_{j + 1} + \left(\frac{j}{2} - \frac{1}{4}\right)N_{j}\right)\\
      &= -\frac{1}{4}N_{0}
        + \left(\frac{c_{1}}{4} - \frac{\epsilon}{2\kappa} c_{0}\right)N_{1}
        + \left(\frac{3c_{2}}{4} - \frac{\epsilon}{2\kappa} c_{1}\right)N_{2}
        - \frac{\epsilon}{2\kappa} c_{2} N_{3}\\
      &= -\frac{1}{4}N_{0} - \frac{\epsilon}{4\kappa}N_{1} - \frac{5\epsilon^{2}}{16\kappa^{2}}N_{2} - \frac{\epsilon^{3}}{8\kappa^{3}}N_{3}
        \leq -\frac{1}{4}\|\phi\|_{*}^{2} - \frac{\epsilon^{3}}{8\kappa^{3}} N_{3}.
    \end{split}
  \end{equation}

  Now let \(\phi \in \mathcal{S}(\mathbb{R}^{3})\). Since
  \(\mathcal{S}(\mathbb{R}^{3})\) is invariant under the semigroup
  \(S(\tau)\), the estimate~\eqref{eq:L_0-dissipativity-H2} gives us
  \begin{equation*}
    \frac{d}{d\tau}\|S(\tau)\phi\|_{*}^{2}
    = 2\real\langle L_{0}S(\tau)\phi, S(\tau)\phi \rangle_{*}
    \leq -\frac{1}{2}\|S(\tau)\phi\|_{*}^{2}.
  \end{equation*}
  By Grönwall's inequality, we hence have
  \(\|S(\tau)\phi\|_{*} \leq e^{-\frac{\tau}{4}}\|\phi\|_{*}\). Since
  \(\mathcal{S}(\mathbb{R}^{3})\) is dense in
  \(H^{2}(\mathbb{R}^{3})\) and \(S(\tau)\) is a bounded operator on
  \(H^{2}(\mathbb{R}^{3})\), this bound extends to every
  \(\phi \in H^{2}(\mathbb{R}^{3})\). Since \(\|\cdot\|_{*}\) and
  \(\|\cdot\|_{H^{2}(\mathbb{R}^{3})}\) are equivalent, this gives
  \(\|S(\tau)\|_{\mathcal{L}(H^{2}(\mathbb{R}^{3}))} \lesssim
  e^{-\frac{\tau}{4}}\). The first two claims now follow
  from~\cite[Chapter~II, Theorem~1.10]{Engel2000}.

  It remains to prove~\eqref{eq:L_0-resolvent-smoothing}. Fix
  \(\lambda\) with \(\real \lambda > -\frac{1}{4}\) and let
  \(\phi \in \mathcal{S}(\mathbb{R}^{3})\).
  Then~\eqref{eq:L_0-dissipativity-H2} and Cauchy--Schwarz give
  \begin{equation}\label{eq:L_0-smoothing-basic}
    (\real \lambda + 1/4)\|\phi\|_{*}^{2} + \frac{\epsilon^{3}}{8\kappa^{3}} N_{3}
    \leq \real \lambda \|\phi\|_{*}^{2} - \real\langle L_{0}\phi, \phi\rangle_{*}
    = \real\langle (\lambda - L_{0})\phi, \phi\rangle_{*}
    \leq \|(\lambda - L_{0})\phi\|_{*}\|\phi\|_{*}.
  \end{equation}
  Discarding the second term on the left yields
  \(\|\phi\|_{*} \leq (\real \lambda + 1/4)^{-1}\|(\lambda -
  L_{0})\phi\|_{*}\), and inserting this back
  into~\eqref{eq:L_0-smoothing-basic} gives
  \(\frac{\epsilon^{3}}{8\kappa^{3}} N_{3} \leq (\real \lambda +
  1/4)^{-1}\|(\lambda - L_{0})\phi\|_{*}^{2}\). Since
  \(\|\phi\|_{H^{3}(\mathbb{R}^{3})}^{2} \eqsim
  \|\phi\|_{H^{2}(\mathbb{R}^{3})}^{2} + N_{3}\) and \(\|\cdot\|_{*}\)
  is equivalent to \(\|\cdot\|_{H^{2}(\mathbb{R}^{3})}\), we conclude
  \begin{equation}\label{eq:L_0-smoothing-schwartz}
    \|\phi\|_{H^{3}(\mathbb{R}^{3})}
    \lesssim \left((\real \lambda + 1/4)^{-1/2} + (\real \lambda + 1/4)^{-1}\right)
    \|(\lambda - L_{0})\phi\|_{H^{2}(\mathbb{R}^{3})}
  \end{equation}
  for all \(\phi \in \mathcal{S}(\mathbb{R}^{3})\).

  Finally, we extend~\eqref{eq:L_0-smoothing-schwartz} to \(D(L_{0})\).
  Let \(\phi \in D(L_{0})\) and, using that
  \(\mathcal{S}(\mathbb{R}^{3})\) is a core for \(L_{0}\), choose
  \(\phi_{n} \in \mathcal{S}(\mathbb{R}^{3})\) with
  \(\phi_{n} \to \phi\) and \(L_{0}\phi_{n} \to L_{0}\phi\) in
  \(H^{2}(\mathbb{R}^{3})\). Then \((\lambda - L_{0})\phi_{n}\) is
  Cauchy in \(H^{2}(\mathbb{R}^{3})\),
  so~\eqref{eq:L_0-smoothing-schwartz} applied to
  \(\phi_{n} - \phi_{m}\) shows that \((\phi_{n})_{n}\) is Cauchy in
  \(H^{3}(\mathbb{R}^{3})\). Its limit must be \(\phi\), whence
  \(\phi \in H^{3}(\mathbb{R}^{3})\)
  and~\eqref{eq:L_0-smoothing-schwartz} passes to the limit. In
  particular, \(D(L_{0}) \subseteq H^{3}(\mathbb{R}^{3})\)
  and~\eqref{eq:L_0-resolvent-smoothing} follows.
\end{proof}

Everything we need about the spectrum of \(L_{\hat{Q}}\) follows from
the fact that it differs from \(L_{0}\) by a perturbation \(K\) that
is well behaved relative to \(L_{0}\). We collect the required
properties of \(K\) in the following lemma.

\begin{lemma}\label{lemma:K-resolvent}
  Let \(K\) denote the matrix-valued function
  \begin{equation*}
    K = \frac{1}{2\kappa}(-\delta I + J)((\hat{Q}\cdot\hat{Q})I + 2\hat{Q}\hat{Q}^{T}),
  \end{equation*}
  identified with the operator \(\phi \mapsto K\phi\) of
  multiplication by it, so that \(L_{\hat{Q}} = L_{0} + K\). Then
  \(K\) is bounded on \(H^{2}(\mathbb{R}^{3})\), and
  \(K(\lambda - L_{0})^{-1}\) is compact on \(H^{2}(\mathbb{R}^{3})\)
  for every \(\lambda\) in the resolvent set of \(L_{0}\). Moreover,
  let \(\rho > -\frac{1}{4}\). Then
  \begin{equation}\label{eq:K-resolvent-decay}
    \|K(\lambda - L_{0})^{-1}\|_{\mathcal{L}(H^{2}(\mathbb{R}^{3}))}
    \to 0
    \quad\text{as } |\imag \lambda| \to \infty,\ \real \lambda \geq \rho.
  \end{equation}
\end{lemma}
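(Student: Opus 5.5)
Boundedness of \(K\) on \(H^{2}(\mathbb{R}^{3})\) is immediate from the assumption \(\hat{Q}\in W^{2,\infty}(\mathbb{R}^{3})\). The entries of \(K\) are quadratic in the components of \(\hat{Q}\), so by the Leibniz rule they lie in \(W^{2,\infty}\). Multiplication by a \(W^{2,\infty}\) matrix function is bounded on \(H^{2}\), again by Leibniz: \(\|\partial^{\alpha}(K\phi)\|_{L^{2}}\lesssim \|K\|_{W^{2,\infty}}\|\phi\|_{H^{2}}\) for \(|\alpha|\le 2\). Under the stronger hypothesis \eqref{eq:Q-hat-decay-assumption}, \(K\) and its derivatives up to order two also tend to zero at infinity. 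This decay is what gives compactness.

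For compactness of \(K(\lambda-L_{0})^{-1}\), the plan is to factor it as
\[
H^{2}\xrightarrow{(\lambda-L_{0})^{-1}} D(L_{0})\xrightarrow{K} H^{2}.
\]
The first map is bounded from \(H^{2}\) into the graph norm of \(L_{0}\) for any \(\lambda\) in the resolvent set. I will show that multiplication by \(K\) is compact from \(D(L_{0})\), with the graph norm, into \(H^{2}\). Lemma~\ref{lemma:L_0-spectrum} gives \(D(L_{0})\subseteq H^{3}\) continuously: the graph norm is equivalent to \(\|(\lambda_{0}-L_{0})\phi\|_{H^{2}}\) for a fixed \(\lambda_{0}>-1/4\), and that quantity controls \(\|\phi\|_{H^{3}}\) by \eqref{eq:L_0-resolvent-smoothing}. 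It therefore suffices to show that multiplication by \(K\) is compact from \(H^{3}\) into \(H^{2}\).

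For this I cut off: set \(K_{R}=\chi(\cdot/R)K\) with \(\chi\) a smooth bump equal to \(1\) on the unit ball. The map \(\phi\mapsto K_{R}\phi\) from \(H^{3}\) to \(H^{2}\) is compact by Rellich on the ball of radius \(2R\). Moreover \(\|K-K_{R}\|_{W^{2,\infty}}\to0\) as \(R\to\infty\), by the decay in \eqref{eq:Q-hat-decay-assumption}, so \(K\) is a norm limit of compact operators. If only \(\hat{Q}\in W^{2,\infty}\) were assumed, this step would fail, so I rely on the decay hypothesis. This also covers \(\lambda\) with \(\real\lambda\le -1/4\) in the resolvent set, through the resolvent identity
\[
(\lambda-L_{0})^{-1}=(\lambda_{0}-L_{0})^{-1}\bigl(I+(\lambda_{0}-\lambda)(\lambda-L_{0})^{-1}\bigr).
\]

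The main obstacle is the decay \eqref{eq:K-resolvent-decay} as \(|\imag\lambda|\to\infty\). I will use the same splitting \(K=K_{R}+(K-K_{R})\). Since \((\lambda-L_{0})^{-1}\) is uniformly bounded on \(H^{2}\) for \(\real\lambda\ge\rho\) by \eqref{eq:L_0-resolvent-bound}, the tail contributes at most \(C_{\rho}\|K-K_{R}\|_{W^{2,\infty}}\), which is small for large \(R\), uniformly in \(\lambda\). For the localized part it suffices to prove that
\[
\|K_{R}(\lambda-L_{0})^{-1}f\|_{H^{2}}\to0 \quad\text{uniformly for }\|f\|_{H^{2}}\le1 .
\]
First, the family \(\{(\lambda-L_{0})^{-1}\}\) is uniformly bounded into \(H^{3}\) by \eqref{eq:L_0-resolvent-smoothing}, and \(K_{R}\) is compact from \(H^{3}\) into \(H^{2}\). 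So by a standard interpolation argument it is enough to show that \(\|\chi_{2R}(\lambda-L_{0})^{-1}f\|_{L^{2}}\to0\) uniformly in \(f\), where \(\chi_{2R}\) localizes to the ball of radius \(2R\). Indeed, compactness combined with uniform \(H^{3}\) bounds upgrades weak smallness to \(H^{2}\) smallness via the Ehrling inequality \(\|u\|_{H^{2}(B)}\le\eta\|u\|_{H^{3}}+C_{\eta}\|u\|_{L^{2}(B)}\).

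For the local \(L^{2}\) smallness I will write \(\psi=(\lambda-L_{0})^{-1}f\) and use the identity \(\imag\lambda\,\psi=\imag\)-part of \((L_{0}\psi+f)\), tested against \(\chi^{2}\psi\) with a cutoff \(\chi\). The imaginary part of \(\langle L_{0}\psi,\chi^{2}\psi\rangle\) is bounded by \(C_{R}\|\psi\|_{H^{2}}^{2}\). The drift is harmless on a bounded region, and the skew part \(J\Delta\) and \(\omega J\) contribute terms bounded by \(\|\psi\|_{H^{1}}^{2}\) plus cutoff derivatives. This yields
\[
|\imag\lambda|\,\|\chi\psi\|_{L^{2}}^{2}\le C_{R,\rho}\,\|f\|_{H^{2}}^{2},
\]
so \(\|\chi\psi\|_{L^{2}}\lesssim|\imag\lambda|^{-1/2}\to0\). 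The delicate point is that \(J\) is not \(\mathbb{C}\)-linear in the complexified space, so \(\imag\) of the pairing must be taken in the complexification \(X_{\RR}\otimes\mathbb{C}\) carefully. If this direct approach gets messy, the fallback is \(\psi=\lambda^{-1}(f+L_{0}\psi)\) together with the \(H^{3}\) bound: \(L_{0}\psi\) is locally bounded in \(H^{1}\) by \(\|\psi\|_{H^{3}}\), giving local \(L^{2}\) decay like \(|\lambda|^{-1}\) directly. This fallback is simpler, and I would try it first.
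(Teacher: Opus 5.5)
Your argument is correct. For boundedness and compactness you follow essentially the paper's route: cut off \(K\), apply Rellich on a ball, and use the decay \eqref{eq:Q-hat-decay-assumption} for the tail. You phrase it as approximating \(K\colon H^{3}\to H^{2}\) in operator norm by compact operators, where the paper uses a diagonal-subsequence argument. Factoring through the graph norm of \(D(L_{0})\) already works for every \(\lambda\) in the resolvent set, so your resolvent-identity step is redundant.

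The genuine difference is in \eqref{eq:K-resolvent-decay}. The paper argues by contradiction. It extracts \(\psi_{n}=(\lambda_{n}-L_{0})^{-1}\phi_{n}\) with \(\|K\psi_{n}\|_{H^{2}}\ge\eta\) and uses the uniform \(H^{3}\) bound to get a nonzero strong limit in \(H^{2}(B_{R})\). It then contradicts this by showing \(\psi_{n}\to0\) weakly, via the adjoint identity \((\conj{\lambda_{n}}-L_{0}^{*})^{-1}g=\conj{\lambda_{n}}^{-1}\bigl(g+(\conj{\lambda_{n}}-L_{0}^{*})^{-1}L_{0}^{*}g\bigr)\) on the dense set \(D(L_{0}^{*})\).

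Your fallback is direct. From \(\lambda\psi=f+L_{0}\psi\) and the local bound \(\|L_{0}\psi\|_{L^{2}(B_{2R})}\lesssim_{R}\|\psi\|_{H^{2}}\) you get local \(L^{2}\) decay at rate \(|\lambda|^{-1}\). The uniform \(H^{3}\) bound \eqref{eq:L_0-resolvent-smoothing} plus Ehrling, or \(\|u\|_{H^{2}}\lesssim\|u\|_{H^{3}}^{2/3}\|u\|_{L^{2}}^{1/3}\) applied to a cutoff of \(\psi\), upgrades this to local \(H^{2}\) smallness. This avoids adjoints and weak limits, and it is quantitative: \(\|K(\lambda-L_{0})^{-1}\|_{\mathcal{L}(H^{2})}\lesssim_{\rho}\|K-K_{R}\|_{W^{2,\infty}}+C_{R}|\imag\lambda|^{-1/3}\) for every \(R\). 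The paper's argument gives only qualitative convergence.

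The price is that you use the concrete action of \(L_{0}\) on all of \(D(L_{0})\), not just on \(\mathcal{S}(\mathbb{R}^{3})\). This holds because \(\mathcal{S}(\mathbb{R}^{3})\) is a core and the differential expression passes to the limit in distributions; you should say so explicitly.

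Finally, your worry about \(J\) is unfounded here. In the complexified setting \(J\) is a real matrix acting on \(\mathbb{C}^{2}\)-valued functions, so \(L_{0}\) is \(\mathbb{C}\)-linear. Your first approach also works by taking absolute values instead of imaginary parts.
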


\begin{proof}
  Each entry of \(K\) is a quadratic polynomial in the two components
  of \(\hat{Q}\), so \eqref{eq:Q-hat-decay-assumption} gives
  \(K \in W^{2,\infty}(\mathbb{R}^{3})\). In particular,
  multiplication by \(K\) is bounded on \(H^{2}(\mathbb{R}^{3})\),
  which is the first claim.

  We next show that \(K(\lambda - L_{0})^{-1}\) is compact on
  \(H^{2}(\mathbb{R}^{3})\) for every \(\lambda\) in the resolvent set
  of \(L_{0}\). If \(\mu\) is another point in the resolvent set, the
  resolvent identity gives
  \begin{equation}\label{eq:resolvent-identity-L_0}
    K(\lambda - L_{0})^{-1} = K(\mu - L_{0})^{-1}(I + (\mu - \lambda)(\lambda - L_{0})^{-1}),
  \end{equation}
  in which the second factor is bounded on \(H^{2}(\mathbb{R}^{3})\).
  It therefore suffices to verify the compactness for a single point
  in the resolvent set. To that end we fix \(\mu\) with
  \(\real \mu > -\frac{1}{4}\) and let \((\phi_{n})_{n}\) be a bounded
  sequence in \(H^{2}(\mathbb{R}^{3})\). Put
  \(\psi_{n} = (\mu - L_{0})^{-1}\phi_{n}\). By
  Lemma~\ref{lemma:L_0-spectrum}, \((\psi_{n})_{n}\) is uniformly
  bounded in both \(H^{2}(\mathbb{R}^{3})\) and
  \(H^{3}(\mathbb{R}^{3})\). We must show that \((K\psi_{n})_{n}\) has
  a convergent subsequence in \(H^{2}(\mathbb{R}^{3})\).

  To make use of the decay of \(K\) we split the estimates into two
  parts, one on a ball \(B_{R}\) and one on the region outside. For
  all \(m\), \(n\), and all \(R > 0\) we have
  \begin{align*}
    \|K\psi_{m} - K\psi_{n}\|_{H^{2}(\mathbb{R}^{3})}
    &\leq \|K\psi_{m} - K\psi_{n}\|_{H^{2}(B_{R})}
    + \|K\psi_{m} - K\psi_{n}\|_{H^{2}(\mathbb{R}^{3} \setminus B_{R})}\\
    &\lesssim \|K\|_{W^{2,\infty}}\|\psi_{m} - \psi_{n}\|_{H^{2}(B_{R})}
    + \|K\psi_{m} - K\psi_{n}\|_{H^{2}(\mathbb{R}^{3} \setminus B_{R})}.
  \end{align*}

  To bound the first term we note that the sequence \((\psi_{n})_{n}\)
  is bounded in \(H^{3}(B_{R})\) for every \(R\) and since the
  embedding \(H^{3}(B_{R}) \hookrightarrow H^{2}(B_{R})\) is compact,
  by a diagonal argument we can extract a subsequence, still denoted
  \((\psi_{n})_{n}\), that converges in \(H^{2}(B_{R})\) for every
  \(R > 0\).

  To bound the second term we let
  \begin{equation}\label{eq:theta_R}
    \theta_{R} := \max_{|\alpha| \leq 2}\ \sup_{|\bm{\xi}| > R} |\partial^{\alpha}K(\bm{\xi})|.
  \end{equation}
  By~\eqref{eq:Q-hat-decay-assumption}, \(\theta_{R} \to 0\) as
  \(R \to \infty\). It follows that
  \begin{equation*}
    \|K\psi_{m} - K\psi_{n}\|_{H^{2}(\mathbb{R}^{3} \setminus B_{R})}
    \lesssim \theta_{R}\|\psi_{m} - \psi_{n}\|_{H^{2}(\mathbb{R}^{3} \setminus B_{R})}
    \leq 2\theta_{R}\sup_{k}\|\psi_{k}\|_{H^{2}(\mathbb{R}^{3})}.
  \end{equation*}

  From the above it follows that \((K\psi_{n})_{n}\) is a Cauchy
  sequence in \(H^{2}(\mathbb{R}^{3})\). Indeed, the second term can
  be made arbitrarily small by taking \(R\) sufficiently large, after
  which the first term can be made arbitrarily small by taking \(m\)
  and \(n\) sufficiently large. This proves that
  \(K(\lambda - L_{0})^{-1}\) is compact.

  It remains to prove~\eqref{eq:K-resolvent-decay}. Fix
  \(\rho > -\frac{1}{4}\) and suppose, for contradiction, that the
  convergence fails. Then there are \(\eta > 0\) and a sequence
  \((\lambda_{n})_{n}\) with \(\real \lambda_{n} \geq \rho\) and
  \(|\imag \lambda_{n}| \to \infty\) such that
  \(\|K(\lambda_{n} -
  L_{0})^{-1}\|_{\mathcal{L}(H^{2}(\mathbb{R}^{3}))} \geq 2\eta\) for
  every \(n\). We may therefore pick
  \(\phi_{n} \in H^{2}(\mathbb{R}^{3})\) of unit norm with
  \begin{equation}\label{eq:K-decay-contradiction}
    \|K\psi_{n}\|_{H^{2}(\mathbb{R}^{3})} \geq \eta,
    \qquad \psi_{n} = (\lambda_{n} - L_{0})^{-1}\phi_{n}.
  \end{equation}
  Since \(\real \lambda_{n} + \frac{1}{4} \geq \rho + \frac{1}{4} > 0\),
  Lemma~\ref{lemma:L_0-spectrum} bounds \((\psi_{n})_{n}\) in
  \(H^{2}(\mathbb{R}^{3})\) and in \(H^{3}(\mathbb{R}^{3})\),
  uniformly in \(n\).

  We first localise. By~\eqref{eq:theta_R},
  \begin{equation*}
    \|K\psi_{n}\|_{H^{2}(\mathbb{R}^{3} \setminus B_{R})}
    \lesssim \theta_{R}\sup_{k}\|\psi_{k}\|_{H^{2}(\mathbb{R}^{3})},
  \end{equation*}
  so we may fix \(R > 0\), once and for all, such that this is at most
  \(\frac{\eta}{2}\) for every \(n\). Combined
  with~\eqref{eq:K-decay-contradiction} and the boundedness of \(K\),
  this gives
  \begin{equation}\label{eq:K-decay-local-lower-bound}
    \|\psi_{n}\|_{H^{2}(B_{R})} \gtrsim \eta
  \end{equation}
  for every \(n\), with implicit constant depending on
  \(\|K\|_{W^{2,\infty}}\). Since \((\psi_{n})_{n}\) is bounded in
  \(H^{3}(\mathbb{R}^{3})\) and the embedding
  \(H^{3}(B_{R}) \hookrightarrow H^{2}(B_{R})\) is compact, we may
  pass to a subsequence, still denoted \((\psi_{n})_{n}\), converging
  in \(H^{2}(B_{R})\) to some \(\psi\), which
  by~\eqref{eq:K-decay-local-lower-bound} is non-zero.

  We now show that \(\psi_{n}\) goes to zero weakly in
  \(H^{2}(\mathbb{R}^{3})\), which contradicts the above. For
  \(g \in D(L_{0}^{*})\) we have
  \begin{equation*}
    |\langle \psi_{n}, g\rangle_{H^{2}(\mathbb{R}^{3})}|
    = \left|\left\langle \phi_{n}, (\conj{\lambda_{n}} - L_{0}^{*})^{-1}g\right\rangle_{H^{2}(\mathbb{R}^{3})}\right|
    \leq \|(\conj{\lambda_{n}} - L_{0}^{*})^{-1}g\|_{H^{2}(\mathbb{R}^{3})}.
  \end{equation*}
  It hence suffices to show that
  \(\|(\conj{\lambda_{n}} - L_{0}^{*})^{-1}g\|_{H^{2}(\mathbb{R}^{3})}
  \to 0\). Taking adjoints in~\eqref{eq:L_0-resolvent-bound} gives
  \begin{equation}\label{eq:L_0-adjoint-resolvent-bound}
    \|(\conj{\lambda} - L_{0}^{*})^{-1}\|_{\mathcal{L}(H^{2}(\mathbb{R}^{3}))}
    \lesssim (\real \lambda + 1/4)^{-1}.
  \end{equation}
  Applying \((\conj{\lambda_{n}} - L_{0}^{*})^{-1}\) to the
  identity
  \(\conj{\lambda_{n}} g = (\conj{\lambda_{n}} - L_{0}^{*})g +
  L_{0}^{*}g\) yields
  \begin{equation*}
    (\conj{\lambda_{n}} - L_{0}^{*})^{-1}g
    = \frac{1}{\conj{\lambda_{n}}}\left(
      g + (\conj{\lambda_{n}} - L_{0}^{*})^{-1}L_{0}^{*}g
    \right).
  \end{equation*}
  By~\eqref{eq:L_0-adjoint-resolvent-bound} this is uniformly bounded
  in \(H^{2}(\mathbb{R}^{3})\) by a constant times
  \(|\lambda_{n}|^{-1}\left(\|g\|_{H^{2}(\mathbb{R}^{3})} +
    \|L_{0}^{*}g\|_{H^{2}(\mathbb{R}^{3})}\right)\), and hence tends
  to zero as \(|\imag \lambda_{n}| \to \infty\). As \(D(L_{0}^{*})\)
  is dense in \(H^{2}(\mathbb{R}^{3})\) and \((\psi_{n})_{n}\) is
  bounded, it follows that \(\psi_{n}\) goes to zero weakly in
  \(H^{2}(\mathbb{R}^{3})\). Restriction to \(B_{R}\) is bounded and
  therefore weakly continuous, so \(\psi_{n}\) goes to zero weakly in
  \(H^{2}(B_{R})\) as well. Since \(\psi_{n} \to \psi\) strongly in
  \(H^{2}(B_{R})\), we conclude \(\psi = 0\), a contradiction. This
  proves~\eqref{eq:K-resolvent-decay}.
\end{proof}

Since \(K\) is bounded, the domain of \(L_{\hat{Q}}\) is that of
\(L_{0}\), and \(L_{\hat{Q}}\) generates a \(C_{0}\)-semigroup on
\(H^{2}(\mathbb{R}^{3})\) by the bounded perturbation
theorem~\cite[Chapter~III, Theorem~1.3]{Engel2000}. With this in hand,
we are ready to prove the two properties that will be used to
established hyperbolicity for the rescaled semigroup generated by
\(L_{\hat{Q}}\).

\begin{lemma}\label{lemma:unstable-spectrum}
  Let
  \(\Omega = \left\{\lambda \in \mathbb{C} \colon \real \lambda >
    -\frac{1}{4}\right\}\). Then every point of
  \(\sigma(L_{\hat{Q}}) \cap \Omega\) is an isolated eigenvalue of
  finite algebraic multiplicity. Moreover, if \(\rho > -\frac{1}{4}\)
  is such that no point of \(\sigma(L_{\hat{Q}})\) has real part equal
  to \(\rho\) then there exists a constant \(M\) such that for all
  \(\lambda\) with real part \(\rho\) we have
  \begin{equation*}
    \|(\lambda - L_{\hat{Q}})^{-1}\|_{\mathcal{L}(H^{2}(\mathbb{R}^{3}))} \leq M.
  \end{equation*}
\end{lemma}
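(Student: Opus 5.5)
The plan is to use the factorization
\begin{equation*}
  \lambda - L_{\hat{Q}} = \left(I - K(\lambda - L_{0})^{-1}\right)(\lambda - L_{0}),
  \qquad \lambda \in \Omega,
\end{equation*}
which holds because \(D(L_{\hat{Q}}) = D(L_{0})\). By Lemma~\ref{lemma:L_0-spectrum}, \(\Omega\) lies in the resolvent set of \(L_{0}\). So \(\lambda \in \Omega\) lies in \(\sigma(L_{\hat{Q}})\) exactly when \(I - K(\lambda - L_{0})^{-1}\) fails to be invertible. By Lemma~\ref{lemma:K-resolvent}, \(F(\lambda) := K(\lambda - L_{0})^{-1}\) is a compact-operator-valued function on \(\Omega\). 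It is also holomorphic there, since the resolvent is holomorphic and \(K\) is bounded.

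The first claim then follows from the analytic Fredholm theorem, applied to \(I - F(\lambda)\) on the connected open set \(\Omega\). To use it, I must exclude the alternative that \(I - F(\lambda)\) is nowhere invertible. This comes from \eqref{eq:K-resolvent-decay}: for \(|\imag\lambda|\) large (say with \(\real\lambda \geq 0\)) we have \(\|F(\lambda)\| < 1/2\), so a Neumann series inverts \(I - F(\lambda)\). The theorem then gives that the set where \(I - F(\lambda)\) is not invertible is discrete in \(\Omega\), and that \((I - F(\lambda))^{-1}\) is meromorphic with finite-rank principal parts at those points. Consequently \(\lambda \mapsto (\lambda - L_{\hat{Q}})^{-1} = (\lambda - L_{0})^{-1}(I - F(\lambda))^{-1}\) is meromorphic on \(\Omega\), and each pole has finite-rank Laurent coefficients. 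The Riesz projection is the residue of the resolvent, so it has finite rank. Hence each spectral point in \(\Omega\) is an isolated eigenvalue of finite algebraic multiplicity, which is the standard characterisation of a pole with finite-rank residue (Engel–Nagel, Chapter~IV). Alternatively one can note that \(\lambda - L_{\hat{Q}}\) is Fredholm of index zero and invoke the usual result on isolated points of the spectrum.

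For the resolvent bound on the line \(\real\lambda = \rho\), I will split the line into a bounded part and two tails. By \eqref{eq:K-resolvent-decay} there is \(T\) such that \(\|F(\lambda)\| \leq 1/2\) whenever \(\real\lambda = \rho\) and \(|\imag\lambda| \geq T\). There the Neumann series together with \eqref{eq:L_0-resolvent-bound} gives
\begin{equation*}
  \|(\lambda - L_{\hat{Q}})^{-1}\| \leq 2\,\|(\lambda - L_{0})^{-1}\| \lesssim 2(\rho + 1/4)^{-1}.
\end{equation*}
The remaining segment \(\{\rho + is : |s| \leq T\}\) is compact. By hypothesis it lies in the resolvent set of \(L_{\hat{Q}}\), and the resolvent is continuous there, so it is bounded on the segment. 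Taking the maximum of the two bounds gives \(M\).

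The main obstacle is the first part: verifying that the analytic Fredholm theorem applies and that its output gives finite algebraic multiplicity, not just finite geometric multiplicity. The key input is the decay \eqref{eq:K-resolvent-decay}, which supplies the point of invertibility, combined with finite-rank residues. Everything else is routine.
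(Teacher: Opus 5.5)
Your proposal is correct and follows the paper's proof essentially step for step: the same factorization, the analytic Fredholm theorem on \(\Omega\), and the same split of the line \(\real\lambda = \rho\) into Neumann-series tails and a compact middle segment. The only differences are minor. You get a point of invertibility from the decay as \(|\imag\lambda| \to \infty\) in Lemma~\ref{lemma:K-resolvent}, whereas the paper uses \(\|K(\lambda - L_{0})^{-1}\| \lesssim (\real\lambda + 1/4)^{-1}\) for large real part. You also spell out, via the finite-rank residue of the meromorphic resolvent, why the eigenvalues have finite algebraic multiplicity; the paper simply attributes this to the analytic Fredholm theorem.
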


\begin{proof}
  By Lemma~\ref{lemma:L_0-spectrum}, \(\Omega\) is contained in the
  resolvent set of \(L_{0}\), so for every \(\lambda \in \Omega\) we
  have the factorization
  \begin{equation}\label{eq:resolvent-factorization}
    \lambda - L_{\hat{Q}} = (I - K(\lambda - L_{0})^{-1})(\lambda - L_{0})
  \end{equation}
  on \(D(L_{0})\). Since \(\lambda - L_{0}\) is invertible, the
  spectrum of \(L_{\hat{Q}}\) in \(\Omega\) consists precisely of
  those \(\lambda\) for which \(I - K(\lambda - L_{0})^{-1}\) fails to
  be invertible. By Lemma~\ref{lemma:K-resolvent}, the operator
  \(K(\lambda - L_{0})^{-1}\) is compact, and it depends analytically
  on \(\lambda \in \Omega\) since the resolvent does.

  As \(\Omega\) is connected, the analytic Fredholm theorem applied to
  this family yields the following dichotomy: either
  \(\Omega \subseteq \sigma(L_{\hat{Q}})\), or
  \(\Omega \cap \sigma(L_{\hat{Q}})\) is discrete in \(\Omega\) and
  consists of eigenvalues of finite algebraic multiplicity. It
  therefore suffices to exhibit a single point of \(\Omega\) belonging
  to the resolvent set of \(L_{\hat{Q}}\). To find such a point we
  note that since \(K\) is bounded, it follows from the second claim
  in Lemma~\ref{lemma:L_0-spectrum} that
  \begin{equation}\label{eq:K-resolvent-bound}
    \|K(\lambda - L_{0})^{-1}\|_{\mathcal{L}(H^{2}(\mathbb{R}^{3}))}
    \lesssim \frac{1}{\real \lambda + \frac{1}{4}}.
  \end{equation}
  In particular,
  \(\|K(\lambda - L_{0})^{-1}\|_{\mathcal{L}(H^{2}(\mathbb{R}^{3}))}\)
  is smaller than \(1\) for \(\lambda\) with large enough real part,
  and a Neumann bound together with~\eqref{eq:resolvent-factorization}
  shows that \(\lambda - L_{\hat{Q}}\) is invertible in this case.

  To prove the second statement we note that since \(\lambda\) is in
  the resolvent set by assumption, the resolvent is bounded on any
  compact set of \(\lambda\) and it therefore suffices to verify that
  the resolvent is bounded as \(|\imag \lambda| \to \infty\). For this, we
  note that by Lemma~\ref{lemma:K-resolvent} there is \(N > 0\) such
  that \(\|K(\lambda - L_{0})^{-1}\| \leq \frac{1}{2}\) whenever
  \(\real \lambda = \rho\) and \(|\imag \lambda| \geq N\). For such
  \(N\), the factorization~\eqref{eq:resolvent-factorization} and the
  bound~\eqref{eq:L_0-resolvent-bound} give
  \begin{equation*}
    \|(\lambda - L_{\hat{Q}})^{-1}\| \leq 2\|(\lambda - L_{0})^{-1}\| \lesssim \frac{1}{\real \lambda + \frac{1}{4}},
  \end{equation*}
  proving that the resolvent is bounded whenever
  \(|\imag \lambda| \geq N\).
\end{proof}

Finally, we need the following lemma that gives us control over the
nonlinearity.

\begin{lemma}\label{lemma:nonlinearity-bound}
  For \(\phi, \varphi \in H^{2}(\mathbb{R}^{3})\) with
  \(\|\phi\|_{H^{2}(\mathbb{R}^{3})},
  \|\varphi\|_{H^{2}(\mathbb{R}^{3})} \leq 1\) we have
  \begin{equation*}
    \|\mathcal{N}(\phi)\|_{H^{2}(\mathbb{R}^{3})} \lesssim \|\phi\|_{H^{2}(\mathbb{R}^{3})}^{2}
  \end{equation*}
  and
  \begin{equation*}
    \|\mathcal{N}(\phi) - \mathcal{N}(\varphi)\|_{H^{2}(\mathbb{R}^{3})}
    \lesssim (\|\phi\|_{H^{2}(\mathbb{R}^{3})} + \|\varphi\|_{H^{2}(\mathbb{R}^{3})})
    \|\phi - \varphi\|_{H^{2}(\mathbb{R}^{3})}.
  \end{equation*}
\end{lemma}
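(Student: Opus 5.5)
The plan is to use that \(H^{2}(\mathbb{R}^{3})\) is a Banach algebra in three dimensions, \(\|fg\|_{H^{2}} \lesssim \|f\|_{H^{2}}\|g\|_{H^{2}}\), together with the multiplier bound \(\|Mf\|_{H^{2}} \lesssim \|M\|_{W^{2,\infty}}\|f\|_{H^{2}}\) for \(M \in W^{2,\infty}\). The algebra property follows from the Sobolev embedding \(H^{2}(\mathbb{R}^{3}) \hookrightarrow L^{\infty}\) together with \(H^{1} \hookrightarrow L^{4}\), via the Leibniz rule \(\partial^{2}(fg) = f\partial^{2}g + 2\partial f\,\partial g + g\,\partial^{2} f\). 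The multiplier bound also follows from the Leibniz rule. As noted before Lemma~\ref{lemma:K-resolvent}, the assumption~\eqref{eq:Q-hat-decay-assumption} gives \(\hat{Q} \in W^{2,\infty}(\mathbb{R}^{3})\). The constant matrix \(\frac{1}{2\kappa}(-\delta I + J)\) is harmless and can be dropped.

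Write \(\mathcal{N}(\phi) = \mathcal{N}_{2}(\phi) + \mathcal{N}_{3}(\phi)\), where
\begin{equation*}
  \mathcal{N}_{2}(\phi) = 2(\hat{Q}\cdot\phi)\phi + (\phi\cdot\phi)\hat{Q},
  \qquad
  \mathcal{N}_{3}(\phi) = (\phi\cdot\phi)\phi,
\end{equation*}
up to the constant prefactor.

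For \(\mathcal{N}_{2}\), first bound \(\hat{Q}\cdot\phi\) by the multiplier estimate, which gives \(\|\hat{Q}\cdot\phi\|_{H^{2}} \lesssim \|\hat{Q}\|_{W^{2,\infty}}\|\phi\|_{H^{2}}\). Then apply the algebra property, yielding \(\|\mathcal{N}_{2}(\phi)\|_{H^{2}} \lesssim \|\phi\|_{H^{2}}^{2}\); the term \((\phi\cdot\phi)\hat{Q}\) is handled the same way. For \(\mathcal{N}_{3}\), the algebra property gives \(\lesssim \|\phi\|_{H^{2}}^{3} \le \|\phi\|_{H^{2}}^{2}\), using \(\|\phi\|_{H^{2}} \le 1\). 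This proves the first estimate.

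For the difference estimate, use the fact that each term is a symmetric multilinear form evaluated on the diagonal, and telescope:
\begin{align*}
  (\hat{Q}\cdot\phi)\phi - (\hat{Q}\cdot\varphi)\varphi
  &= (\hat{Q}\cdot(\phi-\varphi))\phi + (\hat{Q}\cdot\varphi)(\phi-\varphi),\\
  (\phi\cdot\phi)-(\varphi\cdot\varphi)
  &= (\phi-\varphi)\cdot(\phi+\varphi),
\end{align*}
and similarly
\begin{equation*}
  (\phi\cdot\phi)\phi - (\varphi\cdot\varphi)\varphi
  = \bigl((\phi-\varphi)\cdot(\phi+\varphi)\bigr)\phi + (\varphi\cdot\varphi)(\phi-\varphi).
\end{equation*}
Each piece contains exactly one factor \(\phi-\varphi\) and at least one further factor \(\phi\) or \(\varphi\). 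Applying the algebra and multiplier bounds, the quadratic terms give \(\lesssim (\|\phi\|_{H^{2}}+\|\varphi\|_{H^{2}})\|\phi-\varphi\|_{H^{2}}\). The cubic terms give \(\lesssim (\|\phi\|_{H^{2}}+\|\varphi\|_{H^{2}})^{2}\|\phi-\varphi\|_{H^{2}}\), and the extra factor is at most \(2\) because both norms are bounded by \(1\).

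There is no real obstacle. The only point requiring care is the algebra property of \(H^{2}(\mathbb{R}^{3})\) applied to products of real vector components; it is standard. The one term needing attention in the Leibniz expansion is \(\partial f\,\partial g\), which is handled by Hölder's inequality as \(\|\partial f\|_{L^{4}}\|\partial g\|_{L^{4}} \lesssim \|f\|_{H^{2}}\|g\|_{H^{2}}\).
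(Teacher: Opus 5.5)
Your proposal is correct and follows essentially the same route as the paper: $\hat{Q}\in W^{2,\infty}$ acts as a bounded multiplier on $H^{2}(\mathbb{R}^{3})$, $H^{2}(\mathbb{R}^{3})$ is a Banach algebra, each term of $\mathcal{N}(\phi)-\mathcal{N}(\varphi)$ is telescoped so that it carries a single factor $\phi-\varphi$, and the cubic factors are absorbed using $\|\phi\|_{H^{2}},\|\varphi\|_{H^{2}}\le 1$. Your split of the cubic difference differs from the paper's only in whether $\phi\cdot\phi$ or $\varphi\cdot\varphi$ multiplies $\phi-\varphi$, which does not affect the argument.
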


\begin{proof}
  Recall from~\eqref{eq:N-R3} that
  \begin{equation*}
    \mathcal{N}(\phi) = \frac{1}{2\kappa}(-\delta I + J)
    \left(2(\hat{Q} \cdot \phi)\phi + (\phi\cdot\phi)(\hat{Q} + \phi)\right).
  \end{equation*}
  By~\eqref{eq:Q-hat-decay-assumption} we have
  \(\hat{Q} \in W^{2,\infty}(\mathbb{R}^{3})\), and hence \(\hat{Q}\)
  is a bounded multiplier on \(H^{2}(\mathbb{R}^{3})\). Combining this
  with the fact that \(H^{2}(\mathbb{R}^{3})\) forms a Banach algebra,
  we get
  \begin{equation*}
    \|\mathcal{N}(\phi)\|_{H^{2}(\mathbb{R}^{3})}
    \lesssim \|\hat{Q}\|_{W^{2,\infty}}\|\phi\|_{H^{2}(\mathbb{R}^{3})}^{2} + \|\phi\|_{H^{2}(\mathbb{R}^{3})}^{3}
    \lesssim (\|\hat{Q}\|_{W^{2,\infty}} + 1)\|\phi\|_{H^{2}(\mathbb{R}^{3})}^{2}
    \lesssim \|\phi\|_{H^{2}(\mathbb{R}^{3})}^{2}.
  \end{equation*}

  For \(\mathcal{N}(\phi) - \mathcal{N}(\varphi)\) we get that it
  splits into the three terms
  \begin{align*}
    2(\hat{Q} \cdot \phi)\phi - 2(\hat{Q} \cdot \varphi)\varphi
    &= 2(\hat{Q} \cdot (\phi - \varphi))\phi + 2(\hat{Q} \cdot \varphi)(\phi - \varphi),\\
    (\phi\cdot\phi - \varphi\cdot\varphi)\hat{Q}
    &= ((\phi - \varphi) \cdot (\phi + \varphi))\hat{Q},\\
    (\phi\cdot\phi)\phi - (\varphi\cdot\varphi)\varphi
    &= (\phi\cdot\phi)(\phi - \varphi) + ((\phi - \varphi) \cdot (\phi + \varphi))\varphi.
  \end{align*}
  The bound then follows in the same way as for \(\mathcal{N}(\phi)\),
  using \(\|\phi\|_{H^{2}(\mathbb{R}^{3})} \leq 1\) and
  \(\|\varphi\|_{H^{2}(\mathbb{R}^{3})} \leq 1\) to absorb the
  quadratic factors \(\phi\cdot\phi\) and \(\varphi\cdot\varphi\) into
  the constant.
\end{proof}

The above established bounds now allow us to prove the following
result regarding the existence of a perturbation solving
Equation~\eqref{eq:perturbation-equation}.

\begin{theorem}\label{thm:decaying-solution}
  Suppose that the operator \(L_{\hat{Q}}\) has at least one
  eigenvalue with positive real part. Then there exists \(\beta > 0\)
  such that for any \(r > 0\), we can find a mild solution
  \(\phi \in C((-\infty, 0], H^{2}(\mathbb{R}^{3}))\) to
  Equation~\eqref{eq:perturbation-equation} with
  \begin{equation*}
    0 < \sup_{\tau \in (-\infty, 0]} e^{-\beta\tau}\|\phi(\cdot, \tau)\|_{H^{2}(\mathbb{R}^{3})} < r.
  \end{equation*}
  Moreover, \(\phi\) can be taken to be real valued and such that
  \(\phi(\cdot, \tau) \neq 0\) for every \(\tau \leq 0\), and if the
  eigenfunction associated with the eigenvalue is radial, then
  \(\phi\) can be taken to be radial.
\end{theorem}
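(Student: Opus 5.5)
We follow the Lyapunov--Perron construction of Jia and Šverák. First we fix the splitting of the spectrum. By Lemma~\ref{lemma:unstable-spectrum}, \(\sigma(L_{\hat{Q}}) \cap \Omega\) is discrete and consists of eigenvalues of finite algebraic multiplicity. It also stays in a bounded region of any half-plane \(\real\lambda \geq \rho > -\frac{1}{4}\): the estimate~\eqref{eq:K-resolvent-bound} handles large real part, and~\eqref{eq:K-resolvent-decay} with the factorization~\eqref{eq:resolvent-factorization} handles large imaginary part. Hence \(\sigma(L_{\hat{Q}}) \cap \{\real\lambda \geq 0\}\) is a finite set. Let \(\lambda_{\max}\) be the largest real part among the unstable eigenvalues, which is positive by hypothesis. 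Choose \(0 < \beta < \lambda_{\max}\), and choose \(\rho\) with \(\beta < \rho < \lambda_{\max}\) such that no eigenvalue has real part \(\rho\) and no eigenvalue has real part in \([2\beta, \rho)\) other than those with real part at least \(\rho\). Take the Riesz projection \(P_{u}\) onto the finitely many eigenvalues with \(\real\lambda > \rho\), and set \(P_{s} = I - P_{u}\). The range of \(P_{u}\) is finite dimensional and invariant, and on it \(e^{\tau L_{\hat{Q}}}\) is a group satisfying \(\|e^{\tau L_{\hat{Q}}}P_{u}\| \lesssim e^{\rho' \tau}\) for \(\tau \leq 0\), for some \(\rho' > \rho\).

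The key analytic step, and the one I expect to be the main obstacle, is the stable estimate \(\|e^{\tau L_{\hat{Q}}}P_{s}\| \lesssim e^{\rho\tau}\) for \(\tau \geq 0\). Since \(H^{2}\) is a Hilbert space, I would apply the Gearhart--Prüss theorem to the rescaled generator \(L_{\hat{Q}} - \rho\) restricted to \(\operatorname{ran}P_{s}\). Its spectrum lies in \(\real\lambda < 0\). For its resolvent we need a uniform bound on the closed right half-plane. On the line \(\real\lambda = \rho\) this is exactly the second part of Lemma~\ref{lemma:unstable-spectrum}. To the right of the line we need more: for large \(|\imag\lambda|\) we use~\eqref{eq:K-resolvent-decay} uniformly in \(\real\lambda \geq \rho\); for large \(\real\lambda\) we use~\eqref{eq:K-resolvent-bound} together with a Neumann series; on the remaining compact set the resolvent restricted to \(\operatorname{ran}P_{s}\) is analytic, hence bounded. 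A growth bound of \(e^{\tau L_{\hat{Q}}}\) itself follows from the bounded perturbation theorem. One can perturb \(\rho\) slightly if exact constants are needed.

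With these dichotomy estimates in hand, I would fix an unstable eigenvalue \(\lambda_{*}\) with \(\real\lambda_{*} = \lambda_{\max}\) and a real eigenvector direction \(\phi_{0} \in \operatorname{ran}P_{u}\). If \(\lambda_{*}\) is not real, \(\phi_{0}\) is the real part of the complex eigenfunction, as in~\eqref{V-I-8}. Define the Lyapunov--Perron map
\begin{equation*}
  T\phi(\tau) = e^{\tau L_{\hat{Q}}}a\phi_{0}
  - \int_{\tau}^{0} e^{(\tau - s)L_{\hat{Q}}}P_{u}\mathcal{N}(\phi(s))\,ds
  + \int_{-\infty}^{\tau} e^{(\tau - s)L_{\hat{Q}}}P_{s}\mathcal{N}(\phi(s))\,ds .
\end{equation*}
It acts on the ball of radius \(r\) in the space of \(\phi \in C((-\infty,0],H^{2})\) with norm \(\sup_{\tau \leq 0} e^{-\beta\tau}\|\phi(\tau)\|_{H^{2}}\). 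A cleaner choice, which gives the lower bound, is to prescribe the full unstable component at \(\tau = 0\) and use weight exponent \(\beta\) between \(\rho\) and \(\lambda_{\max}\). By Lemma~\ref{lemma:nonlinearity-bound}, \(\|\mathcal{N}(\phi(s))\| \lesssim e^{2\beta s}\|\phi\|^{2}\). The stable integral is then bounded by \(\int_{-\infty}^{\tau} e^{\rho(\tau - s)}e^{2\beta s}\,ds \lesssim e^{2\beta\tau}\), which is finite because \(2\beta > \rho\). The unstable integral is bounded similarly, because \(\rho' < 2\beta\) after suitable choices. Both integrals gain the factor \(e^{\beta\tau}\) times \(\|\phi\|^{2}\). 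So for \(|a|\) and \(r\) small, \(T\) maps the ball into itself and is a contraction, with constant \(\lesssim r\).

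The fixed point is a mild solution of~\eqref{eq:perturbation-equation}. It is nonzero, since its unstable projection equals \(e^{\tau L_{\hat{Q}}}a\phi_{0}\) up to an \(O(r^{2})\) error that is smaller in the weighted norm. This also gives \(\phi(\tau) \neq 0\) for every \(\tau\): indeed \(\|e^{\tau L_{\hat{Q}}}a\phi_{0}\| \gtrsim |a|e^{\lambda_{\max}\tau}\) while the correction is \(O(a^{2}e^{2\beta\tau})\). For this we want \(2\beta > \lambda_{\max}\) and \(\beta\) close to \(\lambda_{\max}\), so I would choose \(\beta \in (\lambda_{\max}/2, \lambda_{\max})\) with \(\rho \in (\beta, \lambda_{\max})\) accordingly.

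Real-valuedness means that the iteration stays in the real form \(X_{\mathbb{R}}\). This holds because \(L_{\hat{Q}}\), \(\mathcal{N}\) and the Riesz projections (for a conjugation-symmetric spectral set) commute with conjugation. Radial symmetry follows because all the operators commute with rotations, so the contraction preserves the closed subspace of radial functions whenever \(\phi_{0}\) is radial.
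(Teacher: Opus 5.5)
Your construction of the small ancient solution is sound. Of your inconsistent parameter choices, the final one \(\lambda_{\max}/2<\beta<\rho<\lambda_{\max}\) works. It is a legitimate variant of the paper's argument. The paper splits the spectrum exactly at the weight \(\beta\) and gets the dichotomy from the Hilbert-space hyperbolicity theorem, using only the resolvent bound on a vertical line from Lemma~\ref{lemma:unstable-spectrum}. You split at \(\rho>\beta\), let the stable part grow like \(e^{\rho\tau}\), and absorb this through \(2\beta>\rho\) using the quadratic nonlinearity.

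\textbf{Gap: the claim \(\phi(\cdot,\tau)\neq0\) for every \(\tau\le0\).} You assert \(\phi(\tau)-e^{\tau L_{\hat Q}}a\phi_0=O(a^2e^{2\beta\tau})\). This is false for the unstable Duhamel term, which only satisfies
\[
  \Big\|\int_{\tau}^{0}e^{(\tau-s)L_{\hat Q}}P_u\mathcal{N}(\phi(s))\,ds\Big\|_{H^2}
  \lesssim a^{2}e^{\rho'\tau}\int_{\tau}^{0}e^{(2\beta-\rho')s}\,ds
  \lesssim a^{2}e^{\rho'\tau}\qquad(\rho'<2\beta).
\]
Getting \(e^{2\beta\tau}\) here would need \(\rho'>2\beta>\lambda_{\max}\), which is impossible. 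In the weighted norm this term gains only \(e^{(\rho'-\beta)\tau}\), not \(e^{\beta\tau}\). That is harmless for the contraction but fatal for your comparison.

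The loss is real, not an artifact of the estimate. The term equals \(e^{\tau L_{\hat Q}}w(\tau)\), where \(w(\tau)\to\int_{-\infty}^{0}e^{-sL_{\hat Q}}P_u\mathcal{N}(\phi(s))\,ds=O(a^2)\). Suppose this limit has a component on an eigenvalue \(\lambda_2\) with \(\rho<\real\lambda_2<\lambda_{\max}\). Then the correction is of order \(a^2e^{\real\lambda_2\tau}\), which exceeds \(c|a|e^{\lambda_{\max}\tau}\) for \(\tau\) sufficiently negative. A Jordan block at the top level gives \(a^2|\tau|^ke^{\lambda_{\max}\tau}\) and fails the same way. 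In either case your lower bound on \(\|\phi(\tau)\|\) yields nothing.

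There are two ways to repair this.
\begin{itemize}
\item The paper's route is simpler. You have \(P_u\phi(0)=a\phi_0\neq0\), hence \(\phi(0)\neq0\). Forward uniqueness of mild solutions in \(H^2\) (Gr\"onwall plus the local Lipschitz bound of Lemma~\ref{lemma:nonlinearity-bound}) then rules out \(\phi(\tau_0)=0\) for any \(\tau_0<0\).
\item Within your own setup, compare the pulled-back component instead:
\[
e^{-\tau L_{\hat Q}}P_u\phi(\tau)=a\phi_0-\int_\tau^0e^{-sL_{\hat Q}}P_u\mathcal{N}(\phi(s))\,ds .
\]
This is \(a\phi_0+O(a^2)\) uniformly in \(\tau\), because \(2\beta>\lambda_{\max}\). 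Then use injectivity of \(e^{\tau L_{\hat Q}}\) on \(\operatorname{ran}P_u\).
\end{itemize}

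\textbf{Smaller gap: radiality.} Your \(\phi_0\) comes from an eigenvalue of maximal real part over all of \(H^2(\mathbb{R}^3)\). The hypothesis only provides a radial eigenfunction for \emph{some} eigenvalue with positive real part. In the application only a radial eigenvalue is known, and the top of the full spectrum need not have a radial eigenfunction. So the observation that \(T\) preserves radial functions does not by itself give a radial \(\phi\). You can fix this in either of two ways:
\begin{itemize}
\item Run the whole construction in the closed invariant subspace of radial functions, with \(\lambda_{\max}\) taken over the spectrum there.
\item Once nonvanishing is handled by forward uniqueness, drop the requirement that \(\phi_0\) be a top eigenvector. As the paper does, place the splitting below the real part of the given eigenvalue and take \(\phi_0\) to be the real part of its radial eigenfunction.
\end{itemize}
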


\begin{proof}
  To begin with, we assert that we can take \(\beta > 0\) such that
  \(\sigma(L_{\hat{Q}}) \cap \{\real \lambda > \beta\} \neq
  \emptyset\) and
  \(\sigma(L_{\hat{Q}}) \cap \{\real \lambda = \beta\} = \emptyset\).
  Since \(L_{\hat{Q}}\) has at least one eigenvalue with positive real
  part we get an open interval of values satisfying the first
  condition. To see that we can simultaneously satisfy the second
  condition it suffices to note that by
  Lemma~\ref{lemma:unstable-spectrum} the set
  \(\sigma(L_{\hat{Q}}) \cap \{\real \lambda > 0\}\) consists of
  isolated points and is hence at most countable, so the real part of
  the set cannot cover the full interval.

  Taking \(\beta\) such that the two above conditions are satisfied we
  can now, since \(H^{2}(\mathbb{R}^{3})\) is a Hilbert space,
  apply~\cite[Chapter~V, Section~1, Theorem~1.18]{Engel2000} to show
  that the rescaled semigroup \(e^{(L_{\hat{Q}} - \beta)\tau}\) is
  hyperbolic. The first condition of the theorem,
  \(\sigma(L_{\hat{Q}} - \beta) \cap \{\real \lambda = 0\} =
  \emptyset\), follows immediately from the choice of \(\beta\). For
  the second condition we need to verify that
  \(\|R(\lambda, L_{\hat{Q}} - \beta)\|_{\mathcal{L}(H^{2}(\mathbb{R}^{3}))} =
  \|R(\lambda + \beta, L_{\hat{Q}})\|_{\mathcal{L}(H^{2}(\mathbb{R}^{3}))}\) is
  uniformly bounded for \(\lambda \in \{\real \lambda = 0\}\), which
  is exactly the second statement of
  Lemma~\ref{lemma:unstable-spectrum}.

  The hyperbolicity of \(e^{(L_{\hat{Q}} - \beta)\tau}\) implies that
  we can decompose the space into stable and unstable subspaces as
  \(H^{2}(\mathbb{R}^{3}) = X_{s} \oplus X_{u}\) and that these
  subspaces are invariant under the semigroup. In particular, since
  \(L_{\hat{Q}}\) has at least one eigenvalue with real part greater
  than \(\beta\), \(X_{u}\) is non-trivial. We let \(P_{u}\) denote
  the projection onto \(X_{u}\), and set \(P_{s} = I - P_{u}\).
  Furthermore, we use the notation
  \(A_{s} = \left.L_{\hat{Q}}\right\vert_{X_{s}}\) and
  \(A_{u} = \left.L_{\hat{Q}}\right\vert_{X_{u}}\). From the
  hyperbolicity of \(e^{(L_{\hat{Q}} - \beta)\tau}\) we get that
  \(e^{(A_{u} - \beta)\tau}\) is invertible and that
  \(e^{(A_{s} - \beta)\tau}\) and \(e^{-(A_{u} - \beta)\tau}\) are
  both uniformly exponentially stable. In particular, this gives us
  the estimates
  \begin{align*}
    \|e^{A_{s}\tau}f\|_{H^{2}(\mathbb{R}^{3})} &\lesssim e^{\beta \tau}\|f\|_{H^{2}(\mathbb{R}^{3})} \quad\text{for }f \in X_{s},\\
    \|e^{-A_{u}\tau}f\|_{H^{2}(\mathbb{R}^{3})} &\lesssim e^{-\beta \tau}\|f\|_{H^{2}(\mathbb{R}^{3})} \quad\text{for }f \in X_{u}
  \end{align*}
  for \(\tau > 0\).

  To find a solution to Equation~\eqref{eq:perturbation-equation} we
  will use a Duhamel-type argument. With \(\beta\) as chosen above, we
  work in the Banach space
  \begin{equation*}
    W = \left\{\phi \in C((-\infty, 0], H^{2}(\mathbb{R}^{3})) \colon \|\phi\|_{W} < \infty\right\},
  \end{equation*}
  where \(\|\cdot\|_{W}\) is the norm
  \begin{equation*}
    \|\phi\|_{W} = \sup_{\tau \in (-\infty, 0]} e^{-\beta\tau} \|\phi(\cdot, \tau)\|_{H^{2}(\mathbb{R}^{3})}.
  \end{equation*}
  For \(\phi \in W\), \(a \in \mathbb{R}\) and a non-zero
  \(\phi_{u,0} \in X_{u}\), we let
  \begin{align*}
    T_{u}[\phi](\cdot, \tau) &= ae^{A_{u}\tau}\phi_{u,0} - \int_{\tau}^{0} e^{A_{u}(\tau - s)}P_{u}\mathcal{N}(\phi)(\cdot, s)\,ds,\\
    T_{s}[\phi](\cdot, \tau) &= \int_{-\infty}^{\tau} e^{A_{s}(\tau - s)}P_{s}\mathcal{N}(\phi)(\cdot, s)\,ds,
  \end{align*}
  and set \(T = T_{s} + T_{u}\). We will use a contraction mapping
  argument for \(T\) in \(W\) to construct a solution.

  In all the estimates below we restrict our attention to
  \(\phi, \varphi \in W\) with
  \(\|\phi\|_{W}, \|\varphi\|_{W} \leq 1\). Since
  \(\|\phi(\cdot, \tau)\|_{H^{2}(\mathbb{R}^{3})} \leq
  e^{\beta\tau}\|\phi\|_{W} \leq \|\phi\|_{W}\) for \(\tau \leq 0\),
  and similarly for \(\varphi\), Lemma~\ref{lemma:nonlinearity-bound}
  is then applicable.

  Applying the above bound for \(A_{s}\) together with the bound from
  Lemma~\ref{lemma:nonlinearity-bound}, we get
  \begin{align*}
    \|T_{s}[\phi](\cdot, \tau)\|_{H^{2}(\mathbb{R}^{3})}
    &\leq \int_{-\infty}^{\tau} \left\|e^{A_{s}(\tau - s)}P_{s}\mathcal{N}(\phi)(\cdot, s)\right\|_{H^{2}(\mathbb{R}^{3})}\,ds\\
    &\lesssim \int_{-\infty}^{\tau} e^{\beta(\tau - s)}\left\|P_{s}\mathcal{N}(\phi)(\cdot, s)\right\|_{H^{2}(\mathbb{R}^{3})}\,ds\\
    &\lesssim \int_{-\infty}^{\tau} e^{\beta(\tau - s)}\left\|\mathcal{N}(\phi)(\cdot, s)\right\|_{H^{2}(\mathbb{R}^{3})}\,ds\\
    &\lesssim \int_{-\infty}^{\tau} e^{\beta(\tau - s)}\left\|\phi(\cdot, s)\right\|_{H^{2}(\mathbb{R}^{3})}^{2}\,ds\\
    &\leq \int_{-\infty}^{\tau} e^{\beta(\tau - s)}e^{2\beta s}\,ds \left\|\phi\right\|_{W}^{2}\\
    &\lesssim e^{2\beta \tau}\left\|\phi\right\|_{W}^{2}.
  \end{align*}
  It follows that
  \begin{equation*}
    e^{-\beta\tau}\|T_{s}[\phi](\cdot, \tau)\|_{H^{2}(\mathbb{R}^{3})}
    \lesssim e^{\beta \tau}\left\|\phi\right\|_{W}^{2},
  \end{equation*}
  and taking the supremum for \(\tau \leq 0\) gives us
  \(\|T_{s}[\phi]\|_{W} \lesssim \left\|\phi\right\|_{W}^{2}\). For
  \(T_{u}\) we similarly get
  \begin{align*}
    \|T_{u}[\phi](\cdot, \tau)\|_{H^{2}(\mathbb{R}^{3})}
    &\leq \left\|ae^{A_{u}\tau}\phi_{u,0}\right\|_{H^{2}(\mathbb{R}^{3})}
      + \int_{\tau}^{0} \left\|e^{A_{u}(\tau - s)}P_{u}\mathcal{N}(\phi)(\cdot, s)\right\|_{H^{2}(\mathbb{R}^{3})}\,ds\\
    &\lesssim |a|e^{\beta\tau}\left\|\phi_{u,0}\right\|_{H^{2}(\mathbb{R}^{3})}
      + \int_{\tau}^{0} e^{\beta(\tau - s)}e^{2\beta s}\,ds\left\|\phi\right\|_{W}^{2}\\
    &= |a|e^{\beta\tau}\left\|\phi_{u,0}\right\|_{H^{2}(\mathbb{R}^{3})}
      + \beta^{-1}\left(
      e^{\beta\tau}
      - e^{2\beta\tau}
      \right)\left\|\phi\right\|_{W}^{2}.
  \end{align*}
  It follows that
  \begin{equation*}
    e^{-\beta\tau}\|T_{u}[\phi](\cdot, \tau)\|_{H^{2}(\mathbb{R}^{3})}
    \lesssim |a|\left\|\phi_{u,0}\right\|_{H^{2}(\mathbb{R}^{3})}
    + \left\|\phi\right\|_{W}^{2}.
  \end{equation*}
  Taking the supremum for \(\tau \leq 0\), we get
  \begin{equation*}
    \|T_{u}[\phi]\|_{W} \lesssim |a|\left\|\phi_{u,0}\right\|_{H^{2}(\mathbb{R}^{3})} + \left\|\phi\right\|_{W}^{2}.
  \end{equation*}

  Using
  \begin{equation*}
    \left\|\mathcal{N}(\phi)(\cdot, \tau) - \mathcal{N}(\varphi)(\cdot, \tau)\right\|_{H^{2}(\mathbb{R}^{3})}
    \lesssim (\|\phi(\cdot, \tau)\|_{H^{2}(\mathbb{R}^{3})} + \|\varphi(\cdot, \tau)\|_{H^{2}(\mathbb{R}^{3})})\|\phi(\cdot, \tau) - \varphi(\cdot, \tau)\|_{H^{2}(\mathbb{R}^{3})},
  \end{equation*}
  from Lemma~\ref{lemma:nonlinearity-bound}, similar calculations give
  us that for \(\phi, \varphi \in W\),
  \begin{equation*}
    \|T_{s}[\phi] - T_{s}[\varphi]\|_{W}
    \lesssim (\|\phi\|_{W} + \|\varphi\|_{W})\|\phi - \varphi\|_{W}
  \end{equation*}
  and
  \begin{equation*}
    \|T_{u}[\phi] - T_{u}[\varphi]\|_{W}
    \lesssim (\|\phi\|_{W} + \|\varphi\|_{W})\|\phi - \varphi\|_{W}.
  \end{equation*}

  Combining the bounds for \(T_{s}\) and \(T_{u}\), we get
  \begin{align*}
    \|T[\phi]\|_{W} &\leq C\left(|a|\left\|\phi_{u,0}\right\|_{H^{2}(\mathbb{R}^{3})} + \left\|\phi\right\|_{W}^{2}\right),\\
    \|T[\phi] - T[\varphi]\|_{W} &\leq C(\|\phi\|_{W} + \|\varphi\|_{W})\|\phi - \varphi\|_{W},
  \end{align*}
  for some constant \(C > 0\) depending on \(\beta\) but not on \(a\).

  Now let \(r > 0\) be given, put
  \(\tilde{r} = \min\left(\frac{r}{2}, 1, \frac{1}{4C}\right)\) and
  let
  \(B_{\tilde{r}} = \{\phi \in W \colon \|\phi\|_{W} \leq \tilde{r}\}\).
  For \(\phi, \varphi \in B_{\tilde{r}}\) the second estimate gives
  \begin{equation*}
    \|T[\phi] - T[\varphi]\|_{W} \leq 2C\tilde{r}\|\phi - \varphi\|_{W} \leq \frac{1}{2}\|\phi - \varphi\|_{W},
  \end{equation*}
  so \(T\) is a strict contraction. Choosing \(a \neq 0\) so small
  that
  \(C|a|\|\phi_{u,0}\|_{H^{2}(\mathbb{R}^{3})} \leq \frac{\tilde{r}}{2}\),
  the first estimate gives
  \(\|T[\phi]\|_{W} \leq \frac{\tilde{r}}{2} + C\tilde{r}^{2} \leq
  \tilde{r}\), so that \(T\) maps \(B_{\tilde{r}}\) into itself. By
  the Banach fixed-point theorem, \(T\) has a unique fixed point
  \(\phi\) in \(B_{\tilde{r}}\), and by construction of \(T\) this
  fixed point is a solution to
  Equation~\eqref{eq:perturbation-equation} in the mild, Duhamel
  sense.

  We note that \(\phi\) does not vanish identically: since
  \(\mathcal{N}(0) = 0\) we have
  \(T[0](\cdot, 0) = a\phi_{u,0} \neq 0\), so \(0\) is not a fixed
  point. Hence
  \begin{equation*}
    0 < \|\phi\|_{W} \leq \tilde{r} \leq \frac{r}{2} < r,
  \end{equation*}
  as required.

  To prove that \(\phi\) can be taken to be real valued we note that
  since \(L_{\hat{Q}}\) has real coefficients, \(X_{u}\) and \(X_{s}\)
  are invariant under conjugation and \(P_{u}\) and \(P_{s}\) are real
  valued. It follows that we can take \(\phi_{u,0}\) real valued, that
  \(T\) preserves this and as a result that the associated fixed point
  is real valued. The claim that \(\phi\) can be taken radial if the
  eigenfunction is radial follows from the same principles: we can
  then take \(\phi_{u,0}\) to be radial and \(T\) preserves this
  property.

  Finally, we prove that \(\phi\) can be taken such that
  \(\phi(\cdot, \tau) \neq 0\) for every \(\tau \leq 0\). Since
  \(\|\phi\|_{W} > 0\), there is a \(\tau_{2} \leq 0\) with
  \(\phi(\cdot, \tau_{2}) \neq 0\). As
  Equation~\eqref{eq:perturbation-equation} is autonomous, the
  translate \(\phi(\cdot, \cdot + \tau_{2})\), restricted to
  \((-\infty, 0]\), is again a mild solution, with \(W\)-norm
  \(e^{\beta\tau_{2}}\|\phi\|_{W} \in (0, r)\). Replacing \(\phi\) by
  this translate we may hence assume that
  \(\phi(\cdot, 0) \neq 0\). That \(\phi(\cdot, \tau) \neq 0\) also
  for \(\tau < 0\) then follows from forward uniqueness in
  \(H^{2}(\mathbb{R}^{3})\) for~\eqref{eq:perturbation-equation}.
\end{proof}

It remains to translate Theorem~\ref{thm:decaying-solution} back into
a statement about the original equation~\eqref{eq:CGL}. The solution
\(\phi\) decays as \(\tau \to -\infty\), which corresponds to
\(t \to 0^{+}\), and the perturbed solution therefore attains the same
initial data as the unperturbed one.

\begin{theorem}[Perturbed solution]\label{thm:perturbed-solution}
  Suppose that the assumptions of Theorem~\ref{thm:decaying-solution}
  hold and let \(\phi\) be a real-valued solution as in that theorem.
  With
  \(T = \frac{1}{2\kappa}\), \(\bm{\xi} = \frac{x}{\sqrt{2\kappa t}}\)
  and \(\tau = \log(2\kappa t)\), define
  \begin{align*}
    u_{\hat{Q}}(x, t) &= \frac{1}{(2\kappa t)^{\frac{1}{2}\left(1 + i\frac{\omega}{\kappa}\right)}}\hat{Q}(\bm{\xi}),\\
    u_{\phi}(x, t) &= \frac{1}{(2\kappa t)^{\frac{1}{2}\left(1 + i\frac{\omega}{\kappa}\right)}}\left(\hat{Q}(\bm{\xi}) + \phi(\bm{\xi}, \tau)\right),
  \end{align*}
  for \(t \in (0, T]\). Then, for every \(\alpha \in [0, 1 / 2)\), the
  difference \(u_{\phi} - u_{\hat{Q}}\) converges to zero in
  \(H^{\alpha}(\mathbb{R}^{3})\) as \(t \to 0^{+}\) and, extended by
  \(0\) at \(t = 0\), it satisfies
  \(u_{\phi} - u_{\hat{Q}} \in C([0, T], H^{\alpha}(\mathbb{R}^{3}))\).
\end{theorem}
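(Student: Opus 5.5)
The plan is a direct scaling computation combined with the exponential decay of \(\phi\) from Theorem~\ref{thm:decaying-solution}. Write \(\lambda(t) = \sqrt{2\kappa t}\), so \(\bm{\xi} = x/\lambda\) and \(\tau = 2\log\lambda\). The difference is
\begin{equation*}
  w(x,t) := u_{\phi}(x,t) - u_{\hat{Q}}(x,t) = \lambda^{-1 - i\frac{\omega}{\kappa}}\, \phi\!\left(\frac{x}{\lambda}, \tau\right).
\end{equation*}
The factor \(\lambda^{-i\omega/\kappa}\) is a unimodular constant in \(x\). In the real two-component picture it is a rotation matrix acting pointwise, so it does not change any Sobolev norm. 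Hence only the modulus \(\lambda^{-1}\) and the dilation matter.

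\textbf{Step 1 (norm bound).} For the homogeneous norms, a change of variables in Fourier space gives \(\|f(\cdot/\lambda)\|_{\dot H^{s}} = \lambda^{3/2 - s}\|f\|_{\dot H^{s}}\). Therefore
\begin{equation*}
  \|w(\cdot,t)\|_{\dot H^{s}} = \lambda^{1/2 - s}\|\phi(\cdot,\tau)\|_{\dot H^{s}} \lesssim \lambda^{1/2 - s}\|\phi(\cdot,\tau)\|_{H^{2}} \leq \lambda^{1/2 - s} e^{\beta\tau} r = r\,\lambda^{1/2 - s + 2\beta},
\end{equation*}
for \(0 \le s \le 2\). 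Apply this with \(s = 0\) and \(s = \alpha\), and use \(\|\cdot\|_{H^{\alpha}} \eqsim \|\cdot\|_{L^{2}} + \|\cdot\|_{\dot H^{\alpha}}\). Since \(\alpha < 1/2\) and \(\beta > 0\), both exponents are positive. Hence \(\|w(\cdot,t)\|_{H^{\alpha}} \to 0\) as \(t \to 0^{+}\), which gives the convergence claim and continuity at \(t = 0\) for the extension by \(0\). The restriction \(\alpha < 1/2\) enters exactly here. Note that \(T = \frac{1}{2\kappa}\) was chosen so that \(t \in (0,T]\) corresponds to \(\tau \le 0\), which is the range on which Theorem~\ref{thm:decaying-solution} controls \(\phi\).

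\textbf{Step 2 (continuity on \((0,T]\)).} This is the only step requiring care, namely the joint continuity of the dilation in \((\lambda, f)\). Write \(D_{\lambda}f = \lambda^{-1}f(\cdot/\lambda)\). For \(t, t'\) in a compact subset of \((0,T]\), with \(\lambda = \lambda(t)\), \(\mu = \lambda(t')\), \(\tau = \log(2\kappa t)\), \(\tau' = \log(2\kappa t')\), split
\begin{equation*}
  \|D_{\lambda}\phi(\tau) - D_{\mu}\phi(\tau')\|_{H^{\alpha}} \le \|D_{\lambda}(\phi(\tau) - \phi(\tau'))\|_{H^{\alpha}} + \|(D_{\lambda} - D_{\mu})\phi(\tau')\|_{H^{\alpha}}.
\end{equation*}
For the first term, \(D_{\lambda}\) is uniformly bounded on \(H^{\alpha}\) for \(\lambda\) in a compact subset of \((0,\infty)\) by Step 1. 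Combined with \(\phi \in C((-\infty,0], H^{2}(\mathbb{R}^{3}))\) and the continuity of \(t \mapsto \tau\), this term tends to \(0\).

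For the second term, first take \(\phi(\tau')\) replaced by a Schwartz function \(g\). Then \((D_{\lambda} - D_{\mu})g \to 0\) in \(H^{\alpha}\) as \(\mu \to \lambda\), for instance by dominated convergence on the Fourier side. Next, the set \(\{\phi(\tau')\}\) for \(t'\) near \(t\) is compact in \(H^{2}\), hence in \(H^{\alpha}\). Using density of Schwartz functions and the uniform bounds on \(D_{\lambda}\), the convergence is therefore uniform over this set.

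Finally, the scalar factor \(\lambda^{-i\omega/\kappa}\) is continuous in \(t\). Together these give \(w \in C((0,T], H^{\alpha}(\mathbb{R}^{3}))\), and with Step 1, \(w \in C([0,T], H^{\alpha}(\mathbb{R}^{3}))\).
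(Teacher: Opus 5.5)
Your proposal is correct and follows essentially the same route as the paper: the scaling identities \(\|w(\cdot,t)\|_{\dot H^{s}} = (2\kappa t)^{\frac{1-2s}{4}}\|\phi(\cdot,\tau)\|_{\dot H^{s}}\) for \(s = 0, \alpha\), combined with \(\|\phi(\cdot,\tau)\|_{H^{2}} \lesssim e^{\beta\tau}\), give \(\|w(\cdot,t)\|_{H^{\alpha}} \lesssim (2\kappa t)^{\beta + \frac{1-2\alpha}{4}}\). Your Step 2 only adds detail: it spells out the joint continuity of the dilation, which the paper takes for granted when it deduces continuity on \((0,T]\) directly from \(\phi \in C((-\infty,0], H^{2}(\mathbb{R}^{3}))\).
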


\begin{proof}
  To begin with we note that, using the scaling of the
  \(L^{2}(\mathbb{R}^{3})\) and \(\dot{H}^{\alpha}(\mathbb{R}^{3})\)
  norms,
  \begin{align*}
    \|u_{\phi}(\cdot, t) - u_{\hat{Q}}(\cdot, t)\|_{L^{2}(\mathbb{R}^{3})}
    &= \frac{1}{(2\kappa t)^{\frac{1}{2}}}
    \left\|\phi\left(\frac{\cdot}{\sqrt{2\kappa t}}, \tau\right)\right\|_{L^{2}(\mathbb{R}^{3})}
    = (2\kappa t)^{\frac{1}{4}}\left\|\phi\left(\cdot, \tau\right)\right\|_{L^{2}(\mathbb{R}^{3})},\\
    \|u_{\phi}(\cdot, t) - u_{\hat{Q}}(\cdot, t)\|_{\dot{H}^{\alpha}(\mathbb{R}^{3})}
    &= \frac{1}{(2\kappa t)^{\frac{1}{2}}}
    \left\|\phi\left(\frac{\cdot}{\sqrt{2\kappa t}}, \tau\right)\right\|_{\dot{H}^{\alpha}(\mathbb{R}^{3})}
    = (2\kappa t)^{\frac{1 - 2\alpha}{4}}\left\|\phi\left(\cdot, \tau\right)\right\|_{\dot{H}^{\alpha}(\mathbb{R}^{3})}.
  \end{align*}
  Using that
  \(\|\cdot\|_{H^{\alpha}(\mathbb{R}^{3})} \simeq
  \|\cdot\|_{L^{2}(\mathbb{R}^{3})} +
  \|\cdot\|_{\dot{H}^{\alpha}(\mathbb{R}^{3})}\), that the
  \(H^{2}(\mathbb{R}^{3})\) norm controls both the
  \(L^{2}(\mathbb{R}^{3})\) and \(\dot{H}^{\alpha}(\mathbb{R}^{3})\)
  norms and the bound from Theorem~\ref{thm:decaying-solution} we get
  \begin{equation*}
    \|u_{\phi}(\cdot, t) - u_{\hat{Q}}(\cdot, t)\|_{H^{\alpha}(\mathbb{R}^{3})}
    \lesssim (2\kappa t)^{\frac{1 - 2\alpha}{4}}
    \left\|\phi\left(\cdot, \tau\right)\right\|_{H^{2}(\mathbb{R}^{3})}
    \lesssim (2\kappa t)^{\frac{1 - 2\alpha}{4}}e^{\beta\tau}
    = (2\kappa t)^{\beta + \frac{1 - 2\alpha}{4}}.
  \end{equation*}
  The exponent \(\beta + \frac{1 - 2\alpha}{4}\) is positive for every
  \(\alpha \in [0, 1/2)\) and it follows that the
  \(H^{\alpha}(\mathbb{R}^{3})\) norm is uniformly bounded for
  \(t \in (0, T]\) and goes to zero as \(t \to 0^{+}\). Since
  \(\phi \in C((-\infty, 0], H^{2}(\mathbb{R}^{3}))\), the map
  \(t \mapsto u_{\phi}(\cdot, t) - u_{\hat{Q}}(\cdot, t)\) is
  continuous from \((0, T]\) to \(H^{\alpha}(\mathbb{R}^{3})\).
  Extending it by \(0\) at \(t = 0\) we hence get
  \(u_{\phi} - u_{\hat{Q}} \in C([0, T],
  H^{\alpha}(\mathbb{R}^{3}))\).
\end{proof}

Finally, let us give the formal version of Theorem~\ref{thm-informal}
from the introduction. In this case, we specialize the result
specifically to the unstable eigenvalue produced in
Theorem~\ref{thm:unstable-eigenvalue}. We therefore let \(\delta = 0\)
and \(\epsilon \in \codenumber{[0.1681 \pm 10^{-15}]}\) and consider
the backward and forward self-similar solutions \(Q\) and \(\hat{Q}\)
from Theorems~\ref{thm:backward-existence}
and~\ref{thm:forward-existence}. By
Theorem~\ref{thm:unstable-eigenvalue}, \(L_{\hat{Q}}\) then has an
unstable eigenvalue with an associated eigenfunction that is radial.
We can hence take \(\phi\) to be a real valued, radial solution to
Equation~\eqref{eq:perturbation-equation} as given by
Theorem~\ref{thm:decaying-solution}. We let
\begin{align*}
  u_{Q}(x, t) &= \frac{1}{(-2\kappa t)^{\frac{1}{2}\left(1 + i\frac{\omega}{\kappa}\right)}}Q\left(\frac{|x|}{\sqrt{-2\kappa t}}\right),\\
  u_{\hat{Q}}(x, t) &= \frac{1}{(2\kappa t)^{\frac{1}{2}\left(1 + i\frac{\omega}{\kappa}\right)}}\hat{Q}\left(\frac{|x|}{\sqrt{2\kappa t}}\right),\\
  u_{\phi}(x, t) &= \frac{1}{(2\kappa t)^{\frac{1}{2}\left(1 + i\frac{\omega}{\kappa}\right)}}\left(\hat{Q}\left(\frac{|x|}{\sqrt{2\kappa t}}\right) + \phi\left(\frac{|x|}{\sqrt{2\kappa t}}, \log(2\kappa t)\right)\right).
\end{align*}
Furthermore, we let
\begin{equation*}
  u_{0}(x) = p_{Q}|x|^{-1 - i\frac{\omega}{\kappa}}
\end{equation*}
with \(p_{Q}\) as in Theorem~\ref{thm:backward-existence}. Let
\(T = \frac{1}{2\kappa}\) and define
\begin{equation}\label{eq:u_1-u_2}
  u_{1}(x, t) = \begin{cases}
    u_{Q}(x, t) & t \in [-T, 0)\\
    u_{0}(x) & t = 0\\
    u_{\hat{Q}}(x, t) & t \in (0, T]
  \end{cases},\qquad
  u_{2}(x, t) = \begin{cases}
    u_{Q}(x, t) & t \in [-T, 0)\\
    u_{0}(x) & t = 0\\
    u_{\phi}(x, t) & t \in (0, T]
  \end{cases}.
\end{equation}
Then \(u_{1}\) and \(u_{2}\) are two different radial solutions to the
CGL equation having the same initial data at \(t = -T\). We also
recall the notion of a suitable weak solution of~\eqref{eq:CGL}, see
e.g.~\cite{Yan1999}, which we in our case localize in space. We say
that \(u\) is a locally suitable weak solution of~\eqref{eq:CGL} if
\begin{enumerate}
\item
  \(u \in L^{\infty}([-T, T], L_{\text{loc}}^{2}(\mathbb{R}^{3})) \cap
  L^{2}([-T, T], W_{\text{loc}}^{1,2}(\mathbb{R}^{3}))\);
\item \(u\) satisfies~\eqref{eq:CGL} in the distributional sense;
\item For each real valued
  \(\psi \in C_{c}^{\infty}(\mathbb{R}^{3} \times (-T, T))\) with
  \(\psi \geq 0\), and with \(\delta = 0\) in~\eqref{eq:CGL}, the
  following inequality holds for \(t \in (-T, T)\)
  \begin{equation}\label{eq:energy-inequality}
    \int_{\mathbb{R}^{3}} |u(\cdot, t)|^{2}\psi(\cdot, t)
    + 2\epsilon \int_{-T}^{t}\int_{\mathbb{R}^{3}} |\nabla u|^{2}\psi
    \leq 2\real\left((\epsilon + i)\int_{-T}^{t}\int_{\mathbb{R}^{3}} u\nabla \conj{u}\nabla\psi\right)
    + \int_{-T}^{t}\int_{\mathbb{R}^{3}} |u|^{2}(\psi_{t} + 2\epsilon \Delta\psi).
  \end{equation}
\end{enumerate}
Then we have the following result.

\begin{theorem}\label{thm:main}
  The functions \(u_{1}\) and \(u_{2}\) both belong to
  \begin{equation*}
    L^{\infty}([-T, T], L^{3,\infty}(\mathbb{R}^{3}))
    \cap L_{\text{loc}}^{3}(\mathbb{R}^{3} \times (-T, T))
    \cap C^{\infty}((\mathbb{R}^{3} \times (-T, T)) \setminus \{(0, 0)\}),
  \end{equation*}
  are locally suitable weak solutions of~\eqref{eq:CGL} with the
  inequality in~\eqref{eq:energy-inequality} in fact being an
  equality, and share the same initial data
  \begin{equation*}
    u_{1}(\cdot, -T) = u_{2}(\cdot, -T) \in C^{\infty}(\mathbb{R}^{3}) \cap L^{3,\infty}(\mathbb{R}^{3}).
  \end{equation*}
  They satisfy
  \begin{equation*}
    u_{1} - u_{0}, u_{2} - u_{0} \in C([-T, T], H^{\alpha}(\mathbb{R}^{3}))\qquad \text{for any } \alpha \in [0, 1 / 2).
  \end{equation*}
  On \((0, T]\), only \(u_{1}\) is forward self-similar and they are
  distinct for all \(t \in (0, T]\).
\end{theorem}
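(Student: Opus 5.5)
We split the claims into those about the exactly self-similar pieces $u_Q$, $u_{\hat Q}$, $u_0$, and those about the perturbation $w := u_\phi - u_{\hat Q}$ on $(0,T]$.

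\emph{The self-similar pieces.} From Theorems~\ref{thm:backward-existence} and~\ref{thm:forward-existence}, $Q$ and $\hat Q$ are smooth and bounded on $[0,\infty)$ with $|Q(\xi)|,|\hat Q(\xi)| \lesssim \langle\xi\rangle^{-1}$, and their derivatives decay one order faster. Scaling then gives $|u_Q(x,t)|,|u_{\hat Q}(x,t)|\lesssim (|x|+\sqrt{|t|})^{-1}$ and $|\nabla u|\lesssim (|x|+\sqrt{|t|})^{-2}$ uniformly in $t$. This yields:
\begin{itemize}
\item the $L^\infty_t L^{3,\infty}$ bound;
\item the $L^3_{\mathrm{loc}}$ bound, since $\int_{B_1}\int_{-T}^T (|x|+\sqrt{|t|})^{-3}\,dt\,dx<\infty$;
\item $L^\infty L^2_{\mathrm{loc}}\cap L^2 W^{1,2}_{\mathrm{loc}}$, since $\int_{B_1}\int (|x|+\sqrt{|t|})^{-4}\,dt\,dx \lesssim \int_{B_1}|x|^{-2}\,dx<\infty$.
\end{itemize}
Smoothness away from $(0,0)$ for $t\neq0$ is immediate. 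At $t=0$, $x\neq0$, we need the profiles to glue smoothly. We use the asymptotic expansions~\eqref{bcinfty}: in the variable $s=t/|x|^2$, both $u_Q$ and $u_{\hat Q}$ have the form $|x|^{-1-i\omega/\kappa}F_\pm(s)$, where $F_\pm$ has a full asymptotic expansion at $s=0$ with matching coefficients. These coefficients are determined recursively by the ODE and fixed by $A=p_Q$, and the sign alternation in~\eqref{bcinfty} is exactly the $t\to -t$ symmetry. Hence all $t$-derivatives match at $t=0$, giving $C^\infty$ regularity. The initial datum $u_Q(\cdot,-T)$ is smooth and lies in $L^{3,\infty}$ by the decay of $Q$. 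Since the pieces are classical solutions away from the origin and the singular set $\{(0,0)\}$ has zero parabolic capacity relative to $L^3$ bounds, the equation holds distributionally. For the local energy \emph{equality}, we multiply the equation by $\bar u\psi$ on the complement of a small parabolic cylinder $Q_r$ around $(0,0)$, integrate by parts, and let $r\to0$. The boundary fluxes are $O(r^5\cdot r^{-3}\cdot r^{-2})$-type terms, namely $|u||\nabla u|$ times the area of $\partial Q_r$, which give $O(r)\to0$, and the cubic term is integrable.

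\emph{The perturbation.} Theorem~\ref{thm:perturbed-solution} gives $w\in C([0,T],H^\alpha)$ with $w(0)=0$. For $u_1-u_0\in C([-T,T],H^\alpha)$, we compute directly: $u_Q(\cdot,t)-u_0 = |t|^{-1/2}\,G(x/\sqrt{|t|})$ up to phase, with $G(\xi)=Q(\xi)-p_Q|\xi|^{-1-i\omega/\kappa}=O(|\xi|^{-3})$ at infinity and $O(|\xi|^{-1})$ at zero. Thus $G\in L^2$, and $G\in \dot H^\alpha$ for $\alpha<1/2$ since $|\xi|^{-1}$ is locally $\dot H^{\alpha}$ for $\alpha<1/2$. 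Scaling gives a norm of $|t|^{(1-2\alpha)/4}\to0$, and similarly for $\hat Q$. Adding $w$ handles $u_2$.

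For smoothness of $u_2$ on $t>0$, we upgrade the mild $H^2$ solution $\phi$ by parabolic regularity, using the bootstrap from Lemma~\ref{lemma:L_0-spectrum}, which gives smoothing into $H^3$, and iterating. Near $t=0$, $x\neq0$, we use the decay $\|\phi(\tau)\|_{H^2}\lesssim e^{\beta\tau}$ together with the Sobolev embedding. In original variables, $w$ and all its derivatives then vanish to infinite order as $t\to0^+$ locally uniformly away from $x=0$. Concretely, we would show $|\partial^k w(x,t)|\lesssim t^{N}$ for $|x|\ge c$ by combining interior parabolic estimates for the equation satisfied by $w$, which has smooth bounded coefficients there, with the $H^\alpha$ smallness. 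The energy equality for $u_2$ follows as before, since $w$ is smoother than $u_{\hat Q}$. Finally, $u_2$ is not self-similar and $u_1\neq u_2$ for every $t$ because $\phi(\cdot,\tau)\neq0$ for all $\tau\le0$ by Theorem~\ref{thm:decaying-solution}, while self-similarity would force $\phi$ to be $\tau$-independent, contradicting $e^{-\beta\tau}$-weighted decay.

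The main obstacle is smoothness across $t=0$ away from the origin, both the gluing of $Q$ and $\hat Q$ and the infinite-order vanishing of $w$. We would first try local parabolic regularity via a Caffarelli--Kohn--Nirenberg/Yan type $\varepsilon$-regularity statement, as alluded to in the introduction: $u_2$ is radial and suitable, hence smooth away from the origin. This requires first establishing suitability through the limiting argument above.
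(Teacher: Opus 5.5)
There is a genuine gap in how you control the perturbation $w=u_\phi-u_{\hat{Q}}$ near points $(x_0,0)$ with $x_0\neq0$. This is exactly where smoothness of $u_2$ across $t=0$ has to come from.

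The Sobolev embedding $H^2(\RR^3)\subset L^\infty$ applied to $\phi(\cdot,\tau)$ only yields
\begin{equation*}
  |w(x,t)| = (2\kappa t)^{-\frac{1}{2}}\,|\phi(\bm{\xi},\tau)| \lesssim (2\kappa t)^{\beta-\frac{1}{2}}.
\end{equation*}
This blows up as $t\to0^{+}$: nothing forces $\beta\geq\frac12$, and the natural choice of $\beta$ lies below $\real\lambda\approx0.012$. The $H^{\alpha}$ smallness you cite, with $\alpha<\frac12$, is below the critical regularity for the cubic term. So the equation for $w$ does not have ``smooth bounded coefficients'' near $(x_0,0)$, because its nonlinear coefficients contain $w$ itself. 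The infinite-order vanishing of $w$ is therefore a \emph{consequence} of smoothness across $t=0$, not a route to it.

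The missing idea is to use radiality through the Strauss inequality $|\phi(\bm{\xi},\tau)|\lesssim|\bm{\xi}|^{-1}\|\phi(\cdot,\tau)\|_{H^{1}}$. The scaling prefactor then cancels exactly:
\begin{equation*}
  |w(x,t)| \lesssim \frac{\|\phi(\cdot,\tau)\|_{H^{1}}}{|x|} \lesssim \frac{(2\kappa t)^{\beta}}{|x|}.
\end{equation*}
Hence $w$ is bounded near $(x_0,0)$ and tends to $0$ locally uniformly in $x\neq0$. The paper then argues in this order:
\begin{enumerate}
\item Continuity across $t=0$ makes the boundary terms at $t=0$ cancel, so $u_2$ solves~\eqref{eq:CGL} in $\mathcal{D}'$ away from $(0,0)$, and across $(0,0)$ by a parabolic cutoff.
\item A bounded distributional solution near $(x_0,0)$ is smooth by the parabolic bootstrap.
\item Only then is the local energy equality derived by multiplying by $\conj{u}\psi$.
\end{enumerate}

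This order matters. Your arguments for the distributional formulation (``classical solutions away from the origin'') and for the energy equality (integration by parts outside $Q_r$) both presuppose smoothness across $t=0$. For $u_2$ that is precisely what is missing. Your CKN/Yan fallback needs suitability obtained ``through the limiting argument above'', so it is circular as written.

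For $u_1$, gluing through the full expansions~\eqref{bcinfty} would require complete expansions of $Q$ and $\hat{Q}$ with term-by-term differentiable remainders. The paper does not establish these: Theorem~\ref{thm:backward-existence} and Lemma~\ref{lemma:Q-hat-leading-term} give only the leading term plus $\mathcal{O}(\xi^{-3})$. They are also unnecessary. The identity $p_{\hat{Q}}=p_Q$ already gives continuity at $t=0$, and the same chain (continuity, distributional solution, boundedness, bootstrap) then applies.

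Finally, you never account for $w$ in the $L^{\infty}L^{3,\infty}$, $L^{3}_{\mathrm{loc}}$ and $L^{2}W^{1,2}_{\mathrm{loc}}$ claims for $u_2$. These follow from $\|w(\cdot,t)\|_{L^{3}}=\|\phi(\cdot,\tau)\|_{L^{3}}\lesssim\|\phi(\cdot,\tau)\|_{H^{2}}$ and $\|\nabla w(\cdot,t)\|_{L^{2}}=(2\kappa t)^{-1/4}\|\nabla\phi(\cdot,\tau)\|_{L^{2}}\lesssim(2\kappa t)^{\beta-1/4}$.
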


\begin{proof}
  That \(u_{1}\) lies in
  \(L^{\infty}([-T, T], L^{3,\infty}(\mathbb{R}^{3}))\) follows from
  it having \(|x|^{-1}\) decay at infinity for all \(t\) and that the
  norm is invariant under the scaling~\eqref{scaling2}. For
  \(L_{\text{loc}}^{3}\) we use the substitution
  \(x = \sqrt{-2\kappa t}\,\bm{\xi}\), for which the \(L^{3}\) norm is
  invariant, together with \(|Q(\xi)| \lesssim \min(1, \xi^{-1})\), to
  get, for \(R \geq \sqrt{2\kappa T}\),
  \begin{equation*}
    \int_{|x| \leq R} |u_{Q}(x, t)|^{3}\,dx
    = \int_{|\bm{\xi}| \leq R / \sqrt{-2\kappa t}} |Q(|\bm{\xi}|)|^{3}\,d\bm{\xi}
    \lesssim 1 + \log\left(\frac{R}{\sqrt{-2\kappa t}}\right),
  \end{equation*}
  which is integrable in \(t\) on \([-T, 0)\). A similar computation
  for \(u_{\hat{Q}}\) gives us that
  \(u_{1} \in L_{\text{loc}}^{3}(\mathbb{R}^{3} \times (-T, T))\). For
  \(u_{2}\) the same arguments apply, with the difference
  \(w = u_{\phi} - u_{\hat{Q}}\) handled separately. Since the
  \(L^{3}(\mathbb{R}^{3})\) norm is invariant under the
  scaling~\eqref{scaling2}, we have
  \begin{equation*}
    \|w(\cdot, t)\|_{L^{3}(\mathbb{R}^{3})}
    = \|\phi(\cdot, \tau)\|_{L^{3}(\mathbb{R}^{3})}
    \lesssim \|\phi(\cdot, \tau)\|_{H^{2}(\mathbb{R}^{3})},
  \end{equation*}
  which by Theorem~\ref{thm:decaying-solution} is uniformly bounded
  for \(\tau \leq 0\). Together with
  \(L^{3}(\mathbb{R}^{3}) \subset L^{3,\infty}(\mathbb{R}^{3})\) this
  gives both the \(L^{3,\infty}\) and the \(L_{\text{loc}}^{3}\)
  bounds for \(u_{2}\).

  The smoothness for \(t \neq 0\) is immediate for \(u_{1}\) from the
  smoothness of \(Q\) and \(\hat{Q}\). The same applies to \(u_{2}\)
  for \(t < 0\), whereas for \(t > 0\) it is a consequence of a
  standard parabolic bootstrap. In particular, they both
  solve~\eqref{eq:CGL} pointwise for \(t \neq 0\). To prove smoothness
  across \(t = 0\) we first establish that both \(u_{1}\) and
  \(u_{2}\) are distributional solutions also across \(t = 0\).

  Note first that \(u_{1}, u_{2} \in L_{\text{loc}}^{3}\) gives
  \(|u_{1}|^{2}u_{1}, |u_{2}|^{2}u_{2} \in L_{\text{loc}}^{1}\), so
  that the distributional formulation makes sense. We next note that
  they are continuous across \(t = 0\) for \(x \neq 0\). For \(u_{1}\)
  this follows from \(Q\) and \(\hat{Q}\) having the same leading
  asymptotic behavior at infinity, so that both \(u_{Q}\) and
  \(u_{\hat{Q}}\) converge to \(u_{0}\) as \(t \to 0^{\mp}\), locally
  uniformly in \(x \neq 0\). For \(u_{2}\) we use that \(\phi\) is
  radial, so that the radial Sobolev inequality gives
  \(|\phi(\bm{\xi}, \tau)| \lesssim |\bm{\xi}|^{-1}\|\phi(\cdot,
  \tau)\|_{H^{1}(\mathbb{R}^{3})}\) and hence, the prefactor in \(w\)
  cancelling against \(|\bm{\xi}|^{-1}\),
  \begin{equation}\label{eq:w-pointwise-bound}
    |w(x, t)| \lesssim \frac{\|\phi(\cdot, \tau)\|_{H^{1}(\mathbb{R}^{3})}}{|x|}
    \lesssim \frac{(2\kappa t)^{\beta}}{|x|}
  \end{equation}
  by Theorem~\ref{thm:decaying-solution}. In particular, \(w\) is
  bounded near any point \((x_{0}, 0)\) with \(x_{0} \neq 0\) and
  tends to zero as \(t \to 0^{+}\), locally uniformly in
  \(x \neq 0\). Testing against a function supported away from the
  origin and splitting the integral into \(t < 0\) and \(t > 0\), the
  boundary terms at \(t = 0\) coming from the integration by parts in
  \(t\) cancel by this continuity, so that both solve~\eqref{eq:CGL}
  in \(\mathcal{D}'\) away from \((0, 0)\). It hence only remains to
  verify that the singularity at \((0, 0)\) is removable, which can be
  done directly, by multiplying the test function by a cutoff which
  cuts out a parabolic ball of radius \(\rho\) around the origin, and
  letting \(\rho \to 0\). They are hence solutions of~\eqref{eq:CGL}
  in \(\mathcal{D}'(\mathbb{R}^{3} \times (-T, T))\).

  This also gives us the smoothness across \(t = 0\). Indeed, both
  \(u_{1}\) and \(u_{2}\) are bounded near any point \((x_{0}, 0)\)
  with \(x_{0} \neq 0\), by~\eqref{eq:w-pointwise-bound} for the
  latter, and hence bounded solutions of~\eqref{eq:CGL} in
  \(\mathcal{D}'\) in a neighborhood of such a point. The same
  parabolic bootstrap as above therefore gives that they are smooth
  there, so that
  \(u_{1}, u_{2} \in C^{\infty}((\mathbb{R}^{3} \times (-T, T))
  \setminus \{(0, 0)\})\).

  It remains to verify that \(u_{1}\) and \(u_{2}\) are locally
  suitable weak solutions. The second condition is proved above, so we
  only have to check the first and the third one. For the first one,
  the bound \(|u_{1}| \lesssim \min(|t|^{-\frac{1}{2}}, |x|^{-1})\)
  gives
  \(u_{1} \in L^{\infty}([-T, T],
  L_{\text{loc}}^{2}(\mathbb{R}^{3}))\), and the substitution
  \(x = \sqrt{-2\kappa t}\,\bm{\xi}\) together with
  \(|Q'(\xi)| \lesssim \min(1, \xi^{-2})\) gives
  \begin{equation*}
    \int_{|x| \leq R} |\nabla u_{Q}(x, t)|^{2}\,dx
    = \frac{1}{\sqrt{-2\kappa t}}
    \int_{|\bm{\xi}| \leq R / \sqrt{-2\kappa t}} |Q'(|\bm{\xi}|)|^{2}\,d\bm{\xi}
    \lesssim \frac{1}{\sqrt{-2\kappa t}},
  \end{equation*}
  which is integrable in \(t\) on \([-T, 0)\). A similar computation
  for \(u_{\hat{Q}}\) gives us that
  \(u_{1} \in L^{2}([-T, T], W_{\text{loc}}^{1,2}(\mathbb{R}^{3}))\).
  For \(u_{2}\) the difference \(w\) is again handled separately: it
  is bounded in \(L^{3}(\mathbb{R}^{3})\), and hence in
  \(L_{\text{loc}}^{2}(\mathbb{R}^{3})\), uniformly in \(t\), and
  \begin{equation*}
    \|\nabla w(\cdot, t)\|_{L^{2}(\mathbb{R}^{3})}
    = (2\kappa t)^{-\frac{1}{4}}\|\nabla \phi(\cdot, \tau)\|_{L^{2}(\mathbb{R}^{3})}
    \lesssim (2\kappa t)^{\beta - \frac{1}{4}},
  \end{equation*}
  which is square integrable in \(t\) on \((0, T]\).

  For the third condition, multiplying~\eqref{eq:CGL} by
  \(\conj{u}\psi\), taking real parts and integrating by parts gives
  the identity corresponding to~\eqref{eq:energy-inequality}, with
  equality since \(\delta = 0\), for every \(\psi\) supported away
  from the origin. As for the distributional formulation, the general
  case follows by multiplying \(\psi\) with a cutoff which cuts out a
  parabolic ball of radius \(\rho\) around the origin, and letting
  \(\rho \to 0\).

  That \(u_{1}\) and \(u_{2}\) have the same initial data at
  \(t = -T\) is immediate from the definitions, both being given by
  \(u_{Q}\) for \(t < 0\). Since \(T = \frac{1}{2\kappa}\), it is
  given by \(Q(|\cdot|)\), which lies in
  \(C^{\infty}(\mathbb{R}^{3}) \cap L^{3,\infty}(\mathbb{R}^{3})\) by
  the above.

  For \(u_{1} - u_{0}\), let
  \(g(\bm{\xi}) = Q(|\bm{\xi}|) - p_{Q}|\bm{\xi}|^{-1 -
    i\frac{\omega}{\kappa}}\). Since \(Q\) is bounded, \(g\) behaves
  like \(|\bm{\xi}|^{-1}\) near the origin, and it is
  \(\mathcal{O}(|\bm{\xi}|^{-3})\) at infinity by
  Theorem~\ref{thm:backward-existence}, so that
  \(g \in H^{\alpha}(\mathbb{R}^{3})\) for every
  \(\alpha \in [0, 1/2)\). For \(t < 0\) we have
  \begin{equation*}
    u_{1}(x, t) - u_{0}(x)
    = \frac{1}{(-2\kappa t)^{\frac{1}{2}\left(1 + i\frac{\omega}{\kappa}\right)}}
    g\left(\frac{x}{\sqrt{-2\kappa t}}\right),
  \end{equation*}
  and hence, as in the proof of Theorem~\ref{thm:perturbed-solution},
  \(\|u_{1}(\cdot, t) - u_{0}\|_{H^{\alpha}(\mathbb{R}^{3})} \lesssim
  (-2\kappa t)^{\frac{1 -
      2\alpha}{4}}\|g\|_{H^{\alpha}(\mathbb{R}^{3})}\). The same holds
  for \(t > 0\) with \(\hat{Q}\) in place of \(Q\), and since
  \(u_{1}(\cdot, 0) = u_{0}\) this gives
  \(u_{1} - u_{0} \in C([-T, T], H^{\alpha}(\mathbb{R}^{3}))\). For
  \(u_{2}\) the same conclusion follows by combining this with
  Theorem~\ref{thm:perturbed-solution}, which gives
  \(w \in C([0, T], H^{\alpha}(\mathbb{R}^{3}))\) with
  \(w(\cdot, 0) = 0\).

  That \(u_{1}\) is forward self-similar on \((0, T]\) holds by
  construction. Self-similarity for \(u_{2}\) would require \(\phi\)
  to be independent of \(\tau\), which is impossible since \(\phi\) is
  non-zero but decays as \(\tau \to -\infty\) by
  Theorem~\ref{thm:decaying-solution}. Moreover, \(u_{1}\) and
  \(u_{2}\) are distinct for \(t \in (0, T]\) since \(\phi\) is
  non-zero for all \(\tau \leq 0\).
\end{proof}

\section{Backward, forward and linearized solutions}
\label{sec:backward-forward-linearized-solutions}

To prove the existence of connecting backward and forward self-similar
solutions, as well as an unstable eigenvalue for the forward
self-similar solution, we make use of the same type of shooting method
as in~\cite{Dahne2024}. Let us briefly describe the method when
applied to the backward self-similar solution and then describe the
adjustments required for the forward self-similar solution as well as
the unstable eigenvalue.

The backward self-similar ansatz~\eqref{eq:CGL-backward} gives us the
equation
\begin{equation}\label{eq:Q}
  (1 - i\epsilon)\left(Q'' + \frac{2}{\xi}Q'\right) + i\kappa\xi Q'
  + i \kappa Q - \omega Q + (1 + i\delta)|Q|^{2}Q = 0.
\end{equation}
The idea of the shooting method is to construct two solutions,
\(Q_{0}\) and \(Q_{\infty}\), to this equation, with \(Q_{0}\)
satisfying the boundary condition at zero and \(Q_{\infty}\) the
condition at infinity. We then show that with the right choice of
parameters, these two solutions match at an intermediate point,
forming a global solution satisfying both boundary conditions. One of
these parameters is the self-similar parameter \(\kappa\); the others
are related to the parametrization of the respective boundary
conditions.

The function \(Q_{\infty}\) should be a solution to
Equation~\eqref{eq:Q} satisfying the boundary condition
\(Q_{\infty}(\xi) \sim \xi^{-1 - i\frac{\omega}{\kappa}}\) as
\(\xi \to \infty\). There exists a one-dimensional complex manifold of
such solutions, associated with the constant in front of the leading
term in the asymptotic expansion.
We parametrize this manifold by \(\gamma \in \mathbb{C}\). To
emphasize the solution's dependence on \(\gamma\) as well as
\(\kappa\), we use the notation
\(Q_{\infty}(\xi) = Q_{\infty}(\gamma, \kappa; \xi)\).

The function \(Q_{0}\) should be a solution to Equation~\eqref{eq:Q}
satisfying the boundary condition \(Q_{0}'(0) = 0\). In this case we
parametrize such solutions by the value of \(Q_{0}(0)\). For scaling
reasons we can take \(Q_{0}(0) = \mu\) with \(\mu > 0\). We use the
notation \(Q_{0}(\xi) = Q_{0}(\mu, \kappa; \xi)\) to indicate the
solution's dependence on \(\mu\) and \(\kappa\).

Since we are dealing with a second-order ODE, the required matching
condition is that the values and derivatives of \(Q_{0}\) and
\(Q_{\infty}\) should match at some intermediate point
\(0 < \xi_{1} < \infty\). If indeed we have parameters \(\mu\),
\(\gamma\) and \(\kappa\) such that
\begin{equation}\label{eq:Q-match}
  Q_{0}(\mu, \kappa; \xi_{1}) = Q_{\infty}(\gamma, \kappa; \xi_{1})
  \quad\text{and}\quad
  Q_{0}'(\mu, \kappa; \xi_{1}) = Q_{\infty}'(\gamma, \kappa; \xi_{1}),
\end{equation}
then the two functions can be glued together to form a global solution
satisfying both boundary conditions. With this condition, it is
natural to define the map
\begin{equation}\label{eq:G}
  G(\mu, \gamma, \kappa) = \left(
    Q_{0}(\mu, \kappa; \xi_{1}) - Q_{\infty}(\gamma, \kappa; \xi_{1}),
    Q_{0}'(\mu, \kappa; \xi_{1}) - Q_{\infty}'(\gamma, \kappa; \xi_{1})
  \right) \colon \mathbb{R} \times \mathbb{C} \times \mathbb{R} \to \mathbb{C}^{2}.
\end{equation}
By identifying \(\mathbb{C}\) with \(\mathbb{R}^{2}\), we can
interpret \(G\) as a map from \(\mathbb{R}^{4}\) to
\(\mathbb{R}^{4}\). A zero of the function \(G\) then corresponds to a
matching as in~\eqref{eq:Q-match} and ultimately implies the
existence of a backward self-similar solution.

The existence of a zero of \(G\) is proved using an interval Newton
method. This requires computing interval enclosures of
\(Q_{0}(\mu, \kappa; \xi_{1})\) and
\(Q_{\infty}(\gamma, \kappa; \xi_{1})\) as well as their derivatives
with respect to both \(\xi\) and the parameters \(\mu\), \(\gamma\)
and \(\kappa\). We refer to~\cite{Dahne2024} for details on how these
enclosures are computed.

The approach for proving the existence of a forward self-similar
solution is largely the same. What changes is the equation, which
from the ansatz~\eqref{eq:CGL-forward} becomes
\begin{equation}\label{eq:Q-hat}
  (1 - i\epsilon)\left(\hat{Q}'' + \frac{2}{\xi}\hat{Q}'\right) - i\kappa\xi \hat{Q}'
  - i \kappa \hat{Q} + \omega \hat{Q} + (1 + i\delta)|\hat{Q}|^{2}\hat{Q} = 0,
\end{equation}
as well as the parametrization of the boundary conditions. In this
case, we denote the solution satisfying the appropriate boundary
condition at zero by \(\hat{Q}_{0}\) and the one satisfying the
appropriate boundary condition at infinity by \(\hat{Q}_{\infty}\).

The function \(\hat{Q}_{\infty}\) should be a solution to
Equation~\eqref{eq:Q-hat} with the same leading asymptotic behavior as
\(Q_{\infty}\) as \(\xi \to \infty\). As we will see in
Section~\ref{sec:forward-solution-infinity}, the forward self-similar
equation has a two-dimensional manifold of decaying solutions at
infinity. One of the dimensions is associated with solutions with
asymptotic behavior \(\xi^{-1 - i \frac{\omega}{\kappa}}\) and the
other dimension with solutions with asymptotic behavior
\(e^{-c\xi^{2}}\xi^{-2 + i \frac{\omega}{\kappa}}\), where
\(c = \frac{\kappa}{2}\frac{\epsilon - i}{1 + \epsilon^{2}}\). We can
parametrize this manifold by \(\hat{\gamma}_{1}\) and
\(\hat{\gamma}_{2}\), associated with these two asymptotic behaviors.
The value of \(\hat{\gamma}_{1}\) is directly determined by the
condition that \(\hat{Q}_{\infty}\) and \(Q_{\infty}\) should have the
same asymptotic behavior; this leaves us with only
\(\hat{\gamma}_{2}\) to vary. We emphasize the dependence on
\(\hat{\gamma}_{2}\) using the notation
\(\hat{Q}_{\infty}(\xi) = \hat{Q}_{\infty}(\hat{\gamma}_{2}; \xi)\).

The function \(\hat{Q}_{0}\) should be a solution to
Equation~\eqref{eq:Q-hat} satisfying the boundary condition
\(\hat{Q}_{0}'(0) = 0\). As for \(Q_{0}\), we parametrize the solution
by the value of \(\hat{Q}_{0}(0)\). In this case we are, however, not
free to choose the scaling, and we therefore take
\(\hat{Q}_{0}(0) = \nu\) with \(\nu \in \mathbb{C}\). We use the
notation \(\hat{Q}_{0}(\xi) = \hat{Q}_{0}(\nu; \xi)\).

As for the backward solution, we have the matching condition
\begin{equation}\label{eq:Q-hat-match}
  \hat{Q}_{0}(\nu; \xi_{1}) = \hat{Q}_{\infty}(\hat{\gamma}_{2}; \xi_{1})
  \quad\text{and}\quad
  \hat{Q}_{0}'(\nu; \xi_{1}) = \hat{Q}_{\infty}'(\hat{\gamma}_{2}; \xi_{1}).
\end{equation}
For this reason we define the map
\begin{equation}\label{eq:G-hat}
  \hat{G}(\nu, \hat{\gamma}_{2}) = \left(
  \hat{Q}_{0}(\nu; \xi_{1}) - \hat{Q}_{\infty}(\hat{\gamma}_{2}; \xi_{1}),
  \hat{Q}_{0}'(\nu; \xi_{1}) - \hat{Q}_{\infty}'(\hat{\gamma}_{2}; \xi_{1})
  \right) \colon \mathbb{C}^{2} \to \mathbb{C}^{2}.
\end{equation}
A zero of the function \(\hat{G}\) then corresponds to the existence
of a forward self-similar solution, with asymptotic behavior
determined by \(\hat{\gamma}_{1}\).

To prove the existence of a zero of \(\hat{G}\), we will, as for
\(G\), apply an interval Newton method. Since the nonlinearity
\(|Q|^{2}Q\) is not holomorphic, \(\hat{G}\) does not depend
holomorphically on \(\nu\) and \(\hat{\gamma}_{2}\). We therefore
split into real and imaginary parts and consider it as a function from
\(\mathbb{R}^{4}\) to \(\mathbb{R}^{4}\). In this setting, the map is
differentiable and the interval Newton method can be applied. This
requires computing interval enclosures of
\(\hat{Q}_{0}(\nu; \xi_{1})\) and
\(\hat{Q}_{\infty}(\hat{\gamma}_{2}; \xi_{1})\) as well as their
derivatives with respect to \(\xi\) and the real and imaginary parts
of \(\nu\) and \(\hat{\gamma}_{2}\), respectively. How to compute
enclosures related to \(\hat{Q}_{\infty}\) is discussed in
Section~\ref{sec:forward-solution-infinity}, and enclosures related to
\(\hat{Q}_{0}\) are covered in
Section~\ref{sec:forward-solution-zero}.

What remains is handling the unstable eigenvalue and associated
eigenfunction of the linear operator \(L_{\hat{Q}}\). Due to the
non-analyticity of the nonlinearity, it is in this case beneficial to
work in the complexification of the real space. The operator is then
given by
\begin{equation}\label{eq:L_Q-hat}
  L_{\hat{Q}}\phi = \frac{1}{2\kappa}\left[
  (\epsilon I + J)\left(\phi'' + \frac{2}{\xi}\phi'\right)
  + \kappa\xi\phi'
  + \left(\kappa I + \omega J\right)\phi
  + (-\delta I + J)((\hat{Q} \cdot \hat{Q})I + 2\hat{Q}\hat{Q}^{T})\phi
  \right],
\end{equation}
with \(\phi \colon [0, \infty) \to \mathbb{C}^{2}\) and where \(I\) is
the identity matrix and
\begin{equation*}
  J = \begin{pmatrix} 0 & -1 \\ 1 & 0 \end{pmatrix}.
\end{equation*}
We are looking for an eigenfunction \(Y\) and an eigenvalue
\(\lambda\) satisfying \(L_{\hat{Q}}Y = \lambda Y\). As with the
handling of \(\kappa\) for the backward solution, we treat
\(\lambda\) as a parameter. To simplify the notation we let
\begin{equation*}
  A = \epsilon I + J,\
  B_{1} = \kappa I,\
  B_{2} = 2A,\
  C = \kappa I + \omega J,\
  J_{N}(\xi) = (-\delta I + J)((\hat{Q}(\xi) \cdot \hat{Q}(\xi))I + 2\hat{Q}(\xi)\hat{Q}(\xi)^{T}),
\end{equation*}
and write the equation as
\begin{equation}\label{eq:Y}
  AY'' + (B_{1}\xi + B_{2}\xi^{-1})Y' + (C + J_{N}(\xi) - 2\kappa\lambda I)Y = 0.
\end{equation}
As in the earlier cases, we let \(Y_{\infty}\) and \(Y_{0}\) denote
solutions to this equation satisfying appropriate boundary conditions
at infinity and zero, respectively.

As we will see in Section~\ref{sec:linearized-solution-infinity}, the
equation has two linearly independent solutions with polynomial
behavior at infinity, and two linearly independent solutions with
exponential decay. The solutions with polynomial behavior do not give
solutions in \(H^{2}(\mathbb{R}^{3})\) for \(\lambda\) with positive
real part; we are hence only interested in the solutions with
exponential decay. This leaves us with a two-parameter family of
solutions, which we parametrize by \(c_{0} \in \mathbb{C}^{2}\). As we
are dealing with a linear equation, we take this parametrization to be
linear. Since the solution also depends on \(\lambda\), we use the
notation \(Y_{\infty}(\xi) = Y_{\infty}(c_{0}, \lambda; \xi)\).

As before, the function \(Y_{0}\) should be a solution to
Equation~\eqref{eq:Y} satisfying the boundary condition
\(Y_{0}'(0) = 0\). In this case we parametrize the solution by
\(y_{0} \in \mathbb{C}^{2}\) such that \(Y_{0}(0) = y_{0}\), and use
the notation \(Y_{0}(\xi) = Y_{0}(y_{0}, \lambda; \xi)\).

We have a matching condition similar to those in the earlier cases,
\begin{equation}\label{eq:Y-match}
  Y_{0}(y_{0}, \lambda; \xi_{1}) = Y_{\infty}(c_{0}, \lambda; \xi_{1})
  \quad\text{and}\quad
  Y_{0}'(y_{0}, \lambda; \xi_{1}) = Y_{\infty}'(c_{0}, \lambda; \xi_{1}).
\end{equation}
However, since in this case we are dealing with a linear ODE, we can
significantly simplify the matching procedure. If we let
\begin{align*}
  Y_{0,1}(\lambda; \xi_{1}) &= Y_{0}\left(\begin{pmatrix} 1 \\ 0 \end{pmatrix}, \lambda; \xi_{1}\right),\quad
  Y_{0,2}(\lambda; \xi_{1}) = Y_{0}\left(\begin{pmatrix} 0 \\ 1 \end{pmatrix}, \lambda; \xi_{1}\right),\\
  Y_{\infty,1}(\lambda; \xi_{1}) &= Y_{\infty}\left(\begin{pmatrix} 1 \\ 0 \end{pmatrix}, \lambda; \xi_{1}\right),\quad
  Y_{\infty,2}(\lambda; \xi_{1}) = Y_{\infty}\left(\begin{pmatrix} 0 \\ 1 \end{pmatrix}, \lambda; \xi_{1}\right),
\end{align*}
then the matching condition reduces to verifying that these solutions
are linearly dependent. Letting \(H(\lambda)\) denote the
determinant of the associated \(4 \times 4\) fundamental matrix,
\begin{equation}\label{eq:H}
  H(\lambda) =
  \begin{vmatrix}
    Y_{0,1}(\lambda; \xi_{1}) & Y_{0,2}(\lambda; \xi_{1}) & Y_{\infty,1}(\lambda; \xi_{1}) & Y_{\infty,2}(\lambda; \xi_{1}) \\
    Y_{0,1}'(\lambda; \xi_{1}) & Y_{0,2}'(\lambda; \xi_{1}) & Y_{\infty,1}'(\lambda; \xi_{1}) & Y_{\infty,2}'(\lambda; \xi_{1})
  \end{vmatrix} \colon \mathbb{C} \to \mathbb{C},
\end{equation}
we see that verifying linear dependence corresponds to proving the
existence of a zero of \(H\).

In this case, \(H\) is holomorphic in \(\lambda\), and we will
apply a winding number argument to prove the existence of a zero; see
Theorem~\ref{thm:unstable-eigenvalue} for more details. In particular,
this means that we do not need any derivatives with respect to
\(\lambda\). Details on how to compute the required interval
enclosures of \(Y_{\infty}\) and \(Y_{0}\) are given in
Sections~\ref{sec:linearized-solution-infinity}
and~\ref{sec:linearized-solution-zero}, respectively.

\section{Existence of an unstable eigenvalue}
\label{sec:existence-unstable-eigenvalue}

Proving the existence of an unstable eigenvalue requires first proving
the existence of a backward self-similar solution, then of an
associated forward self-similar solution and finally of an unstable
eigenvalue for the corresponding linear operator. It is therefore
natural to split the proof into three parts: one for the backward
solution, one for the forward solution and one for the eigenvalue.

In all three cases, the proof of existence is based on proving that
one of the functions \(G\)~\eqref{eq:G}, \(\hat{G}\)~\eqref{eq:G-hat}
or \(H\)~\eqref{eq:H} has a (locally unique) zero. For \(G\) and
\(\hat{G}\), this mostly follows the same strategy as
in~\cite[Section~5.1]{Dahne2024}, whereas for \(H\), a slightly
different approach based on a winding argument is used.

The approach in~\cite{Dahne2024} is based on the \emph{Krawczyk
  interval Newton method}, see e.g.~\cite{Moore1977}. Given a
continuously differentiable function
\(f\colon\mathbb{R}^{n} \to \mathbb{R}^{n}\) and a set \(X\) given by
a box in \(\mathbb{R}^{n}\) (a Cartesian product of intervals), we
define the interval Newton operator by
\begin{equation*}
  N(X) = \operatorname{mid}(X) - Yf(\operatorname{mid}(X)) + (I - YJ_{f}(X))(X - \operatorname{mid}(X)),
\end{equation*}
where \(\operatorname{mid}(X)\) denotes the midpoint of \(X\),
\(J_{f}(X)\) is (an enclosure of) the convex hull of the image of the
Jacobian of \(f\) on \(X\) and \(Y\) is a preconditioner that in
practice is taken to be an approximate inverse of the midpoint of
\(J_{f}(X)\). If we have \(N(X) \subseteq \operatorname{int}(X)\),
then the function \(f\) has a unique zero in \(X\) and this zero is
contained in \(N(X)\).

To apply the interval Newton method, one usually first uses
non-rigorous numerical methods to find an approximate zero
\(x \in \mathbb{R}^{n}\). One then takes a (small) box \(X\) around
this approximation on which the interval Newton method is applied. If
the box is too small or too large, then
\(N(X) \subseteq \operatorname{int}(X)\) will not be satisfied. For
choosing the box \(X\), we follow the heuristic approach discussed
in~\cite[Section~10]{Dahne2024}.

For the backward solution, we prove its existence and also determine
its leading-order asymptotic behavior at infinity, which is used when
computing the associated forward solution.

\begin{theorem}[Existence of a backward self-similar solution]\label{thm:backward-existence}
  For \(\epsilon \in \codenumber{[0.1681 \pm 10^{-15}]}\), the CGL
  equation has a backward self-similar solution. It is associated with
  a zero of the function \(G(\mu, \gamma, \kappa)\)~\eqref{eq:G},
  using \(\xi_{1} = \codenumber{16}\), with \((\mu, \gamma, \kappa)\)
  contained in
  \begin{align*}
    \mu &\in \resultnumber{2.2600773707_{82}^{95}},\\
    \gamma &\in \resultnumber{1.0571_{81}^{95} - 1.4043_{175}^{298} i},\\
    \kappa &\in \resultnumber{0.7912415632_{49}^{52}}.
  \end{align*}
  Moreover, as \(\xi \to \infty\), the profile satisfies
  \(Q(\xi) = p_{Q}\xi^{-1 - i\frac{\omega}{\kappa}} +
  \mathcal{O}(\xi^{-3})\), with
  \begin{equation*}
    p_{Q} \in \resultnumber{1.061_{84}^{93} - 1.400_{73}^{82} i}.
  \end{equation*}
\end{theorem}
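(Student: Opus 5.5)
The proof is a computer-assisted verification following~\cite[Section~5.1]{Dahne2024}: we apply the Krawczyk interval Newton method to \(G\colon\mathbb{R}^{4}\to\mathbb{R}^{4}\) from~\eqref{eq:G}, with \(\xi_{1}=16\). We treat \(\epsilon\) as the interval \([0.1681\pm10^{-15}]\) throughout, so that every enclosure is uniform in \(\epsilon\).

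First, a non-rigorous shooting computation with floating-point ODE solvers gives an approximate zero \((\mu^{*},\gamma^{*},\kappa^{*})\). Around it we take a small box \(X\), sized with the heuristic of~\cite[Section~10]{Dahne2024}, with radii of order \(10^{-12}\)--\(10^{-11}\) to match the stated enclosures.

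On \(X\) we need enclosures of \(Q_{0}\), \(Q_{0}'\), \(Q_{\infty}\) and \(Q_{\infty}'\) at \(\xi_{1}\), together with their derivatives with respect to \(\mu\), \(\kappa\) and the real and imaginary parts of \(\gamma\).
\begin{itemize}
\item[(a)] For \(Q_{0}\), we use a rigorous Taylor-model integrator (CAPD-style) on \([0,\xi_{1}]\), started from a Taylor expansion at the regular singular point \(\xi=0\). It is run together with its variational equations to obtain the parameter derivatives.
\item[(b)] For \(Q_{\infty}\), we use the representation from~\cite{Dahne2024} as a fixed point of an integral operator built from confluent hypergeometric functions. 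This yields rigorous bounds on \(Q_{\infty}(\gamma,\kappa;\xi)\) and its \(\xi\)- and parameter-derivatives for \(\xi\ge\xi_{1}\).
\end{itemize}
From these we form \(G(\operatorname{mid}X)\) and an enclosure of \(J_{G}(X)\), take the preconditioner \(Y\approx J_{G}(\operatorname{mid}X)^{-1}\), and check \(N(X)\subseteq\operatorname{int}X\). This yields a unique zero in \(X\). Gluing \(Q_{0}\) and \(Q_{\infty}\) at \(\xi_{1}\) then gives a global solution of~\eqref{eq:Q} with \(Q'(0)=0\) and the correct decay, hence a backward self-similar solution. The stated intervals for \(\mu,\gamma,\kappa\) are read off from \(N(X)\).

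For the asymptotic statement, we use the expansion from the infinity construction: \(Q_{\infty}(\xi)=\gamma\xi^{-1-i\omega/\kappa}(1+\dots)\), with the remainder controlled by the fixed-point bounds. The coefficient \(p_{Q}\) is the exact leading coefficient, which in the parametrization of~\cite{Dahne2024} is \(\gamma\) times a normalizing factor coming from the hypergeometric function \(U\). It must also absorb the contribution of the nonlinear correction, whose leading behaviour is again \(\xi^{-1-i\omega/\kappa}\). This explains why \(p_{Q}\) differs slightly from \(\gamma\).

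We would prove the \(\mathcal{O}(\xi^{-3})\) bound on the remainder, with \(p_{Q}\) enclosed, along the lines of Appendix~\ref{sec:improvements-backward}. The plan there is to write \(Q-p_{Q}\xi^{-1-i\omega/\kappa}\) as the sum of two terms: the tail of the expansion of the linear solution, and the integral of the cubic term against the Green's function, which decays like \(\xi^{-3}\) relative to the leading term. We then evaluate the limiting coefficient of the cubic term as an explicit convergent integral, enclosed rigorously from the bounds on \(Q_{\infty}\) over \([\xi_{1},\infty)\).

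I expect the main obstacle to be this last step: extracting \(p_{Q}\) with enough precision and obtaining a rigorous \(\xi^{-3}\) remainder. The original estimates of~\cite{Dahne2024} only give bounds of the form \(\xi^{-1}\) times something small. They must therefore be sharpened to isolate the exact leading coefficient uniformly over the box \(X\) and the \(\epsilon\)-interval.
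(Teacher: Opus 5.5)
Your existence argument is the paper's: a non-rigorous Newton refinement of an approximate zero, a box chosen with the heuristic of~\cite[Section~10]{Dahne2024}, Taylor expansion plus CAPD for \(Q_{0}\), the fixed-point representation for \(Q_{\infty}\), and the Krawczyk inclusion. Your description of \(p_{Q}\) also matches Lemma~\ref{lemma:p_Q_0}: \(p_{Q} = c^{-a}(\gamma + I_{E,\infty})\) with \(I_{E,\infty} = \int_{\xi_{1}}^{\infty} J_{E}|Q|^{2}Q\,d\eta\).

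The gap is in the enclosure of \(I_{E,\infty}\), which you leave as ``sharpen the estimates''. Taken literally, enclosing it from the bounds on \(Q_{\infty}\) over \([\xi_{1},\infty)\), i.e.\ from \(|Q(\eta)| \leq \|Q\|_{0}\eta^{-1}\), only gives a disk around \(0\) of radius \(\frac{C_{J_{E}}}{2}\|Q\|_{0}^{3}\xi_{1}^{-2}\). That radius is of the same size as \(I_{E,\infty}\) itself, so it is far wider than the stated interval for \(p_{Q}\), which then feeds \(\hat{\gamma}_{1}\) in Theorem~\ref{thm:forward-existence}.

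The missing idea, Lemmas~\ref{lemma:I_E_infty} and~\ref{lemma:I_E_infty-integral}, is to expand around the leading term. Write \(Q = \gamma P + R_{Q}\) with \(R_{Q} = PI_{E} + EI_{P}\). Tracking the growth of \(I_{E}\) through \(|I_{E}(\xi)| \leq \frac{C_{J_{E}}}{2}\|Q\|_{0}^{3}(\xi_{1}^{-2} - \xi^{-2})\) gives \(|R_{Q}(\xi)| \leq A_{\max}\xi^{-1}\) with \(A_{\max} = \mathcal{O}(\xi_{1}^{-2})\). Splitting \(|Q|^{2}Q = |\gamma|^{2}\gamma|P|^{2}P + R_{Q,N}\), the \(R_{Q,N}\) part of the integral is \(\mathcal{O}(\xi_{1}^{-4})\).

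The main term \(|\gamma|^{2}\gamma\int_{\xi_{1}}^{\infty} J_{E}|P|^{2}P\,d\eta\) is handled separately. It is a product of three truncated asymptotic series of \(U\) times \(\eta^{-2/\sigma - 1}\), so it can be integrated termwise in closed form. Only the terms containing \(R_{U}\) need to be bounded, using \(C_{R_{U}}\). This replaces an \(\mathcal{O}(\xi_{1}^{-2})\) uncertainty centred at \(0\) with an \(\mathcal{O}(\xi_{1}^{-4})\) uncertainty around an explicitly computed value, and that is what produces the stated enclosure.

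Conversely, the step you expect to be the main obstacle needs no new estimates. The \(\mathcal{O}(\xi^{-3})\) remainder follows from the existing bounds with \(\normv = 0\):
\begin{itemize}
\item \(|E(\xi)I_{P}(\xi)| \leq C_{E}C_{I_{P}}\|Q\|_{0}^{3}\xi^{-3}\);
\item \(|I_{E}(\xi) - I_{E,\infty}| \leq \frac{C_{J_{E}}}{2}\|Q\|_{0}^{3}\xi^{-2}\);
\item \(P(\xi) = c^{-a}\xi^{-2a} + \mathcal{O}(\xi^{-3})\).
\end{itemize}
These three facts are the entire proof of Lemma~\ref{lemma:p_Q_0}. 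Note also that the Green's-function term is \(\mathcal{O}(\xi^{-3})\) in absolute terms, which is \(\xi^{-2}\) relative to the leading term, not \(\xi^{-3}\) relative.
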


\begin{proof}
  The first step in applying the interval Newton method is to find a
  (non-rigorous) numerical approximation. As a starting point, we use
  a rough approximation given by
  \begin{equation*}
    \mu = \codenumber{2.2600451},\quad
    \kappa = \codenumber{0.7912471} \quad\text{and}\quad
    \gamma = \frac{Q_{0}(\mu, \kappa; \xi_{1})}{P(\xi_{1})},
  \end{equation*}
  with \(P\) as in~\cite[Section~7]{Dahne2024}. These approximations
  for \(\mu\) and \(\kappa\) were originally found by following the
  first branch of solutions in \(\epsilon\), as discussed
  in~\cite[Section~4.1]{Dahne2024}, but at this point, we treat them
  as fixed. This initial approximation is then refined using a
  (non-rigorous) Newton--Raphson scheme implemented in the Julia
  package \texttt{NonlinearSolve.jl}~\cite{Pal2024}. This gives us
  (rounded to 16 digits) the approximations
  \begin{align*}
    \mu_{0} &= \resultnumber{2.260077370790754},\\
    \gamma_{0} &= \resultnumber{1.057188097010809 - 1.404323666369636i},\\
    \kappa_{0} &= \resultnumber{0.7912415632503764}.
  \end{align*}

  To apply the interval Newton method, we need to take a box \(X\)
  enclosing this approximation. This box is found using the heuristic
  method described in~\cite[Section~10]{Dahne2024}. The result is the
  existence of a locally unique zero enclosed in the intervals given
  in the statement of the theorem. Figure~\ref{fig:Q} shows an
  approximation of the profile \(Q\) associated with these parameters.

  The enclosure for \(p_{Q}\) is based on Lemma~\ref{lemma:p_Q_0},
  from which we have
  \begin{equation*}
    p_{Q} = c^{-a}p_{Q,0} = c^{-a}(\gamma + I_{E,\infty}).
  \end{equation*}
  To enclose \(I_{E,\infty}\), we make use of
  Lemmas~\ref{lemma:I_E_infty} and~\ref{lemma:I_E_infty-integral}.
  This results in the enclosure given in the statement of the theorem.
\end{proof}

\begin{figure}
  \centering
  \begin{subfigure}[t]{0.45\textwidth}
    \includegraphics[width=\textwidth]{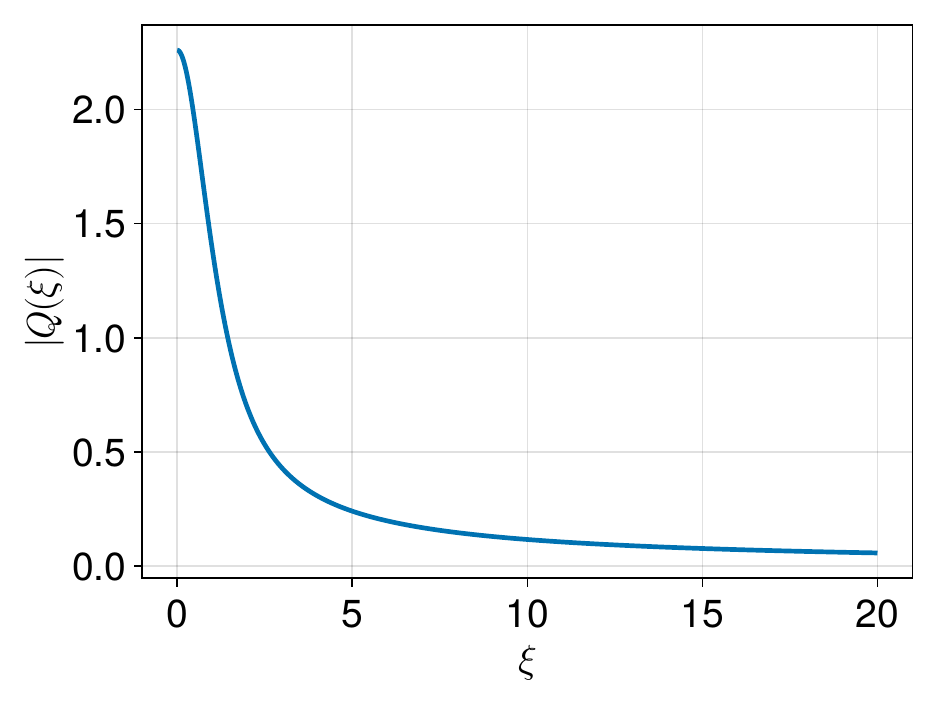}
    \caption{}
    \label{fig:Q}
  \end{subfigure}
  \hspace{0.05\textwidth}
  \begin{subfigure}[t]{0.45\textwidth}
    \includegraphics[width=\textwidth]{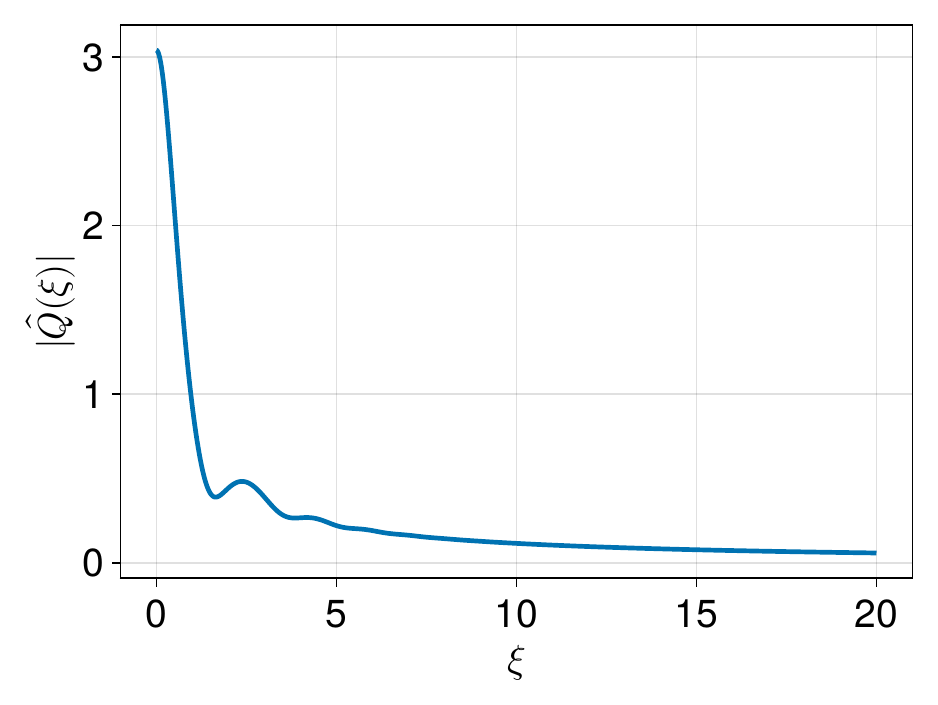}
    \caption{}
    \label{fig:Q-hat}
  \end{subfigure}
  \caption{Approximations of the profiles \(Q\) (left) and \(\hat{Q}\)
    (right) from Theorems~\ref{thm:backward-existence}
    and~\ref{thm:forward-existence}.}
\end{figure}

The next step is to compute a forward self-similar solution with the
same asymptotic behavior as the backward self-similar solution in the
previous theorem.

\begin{theorem}[Existence of a forward self-similar solution]\label{thm:forward-existence}
  For \(\epsilon \in \codenumber{[0.1681 \pm 10^{-15}]}\), the CGL
  equation has a forward self-similar solution with the same
  asymptotic behavior as the backward self-similar solution from
  Theorem~\ref{thm:backward-existence}. It is associated with a zero
  of the function \(\hat{G}(\nu, \hat{\gamma}_{2})\)~\eqref{eq:G-hat},
  using \(\xi_{1} = \codenumber{16}\) and
  \(\hat{\gamma}_{1} \in \resultnumber{0.192_{38}^{41} +
    0.1458_{43}^{55} i}\), with \((\nu, \hat{\gamma}_{2})\) contained
  in
  \begin{align*}
    \nu &\in \resultnumber{2.190_{28}^{76} + 2.1_{09}^{11} i},\\
    \hat{\gamma}_{2} &\in \resultnumber{-1_{058.6}^{184.6} + [-1053.2, -922.1] i}.
  \end{align*}
\end{theorem}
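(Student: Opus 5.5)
The plan mirrors the proof of Theorem~\ref{thm:backward-existence}, with the extra step of transporting the asymptotic data from the backward profile.

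\textbf{Step 1: determine \(\hat{\gamma}_{1}\).} The matching condition at \(t=0\) requires that \(\hat{Q}\) have the same leading coefficient at infinity as \(Q\), namely \(p_{Q}\xi^{-1-i\frac{\omega}{\kappa}}\). I will use the parametrization of the decaying manifold from Section~\ref{sec:forward-solution-infinity} to relate \(\hat{\gamma}_{1}\) to the leading coefficient. This relation should be the analogue of \(p_{Q} = c^{-a}(\gamma + I_{E,\infty})\) from Lemma~\ref{lemma:p_Q_0}, with the sign changes appropriate to the forward equation. I will then solve it for \(\hat{\gamma}_{1}\), feeding in the enclosures of \(p_{Q}\) and \(\kappa\) from Theorem~\ref{thm:backward-existence}. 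Since \(\kappa\) and \(p_{Q}\) are only known as intervals, all subsequent computations are done with \(\kappa\) and \(\hat{\gamma}_{1}\) as interval parameters. The resulting zero is then a zero for the true values contained in those intervals.

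\textbf{Step 2: numerical approximation.} I will find a non-rigorous approximate zero \((\nu_{0},\hat{\gamma}_{2,0})\) of \(\hat{G}\) at the midpoint parameters. A natural starting guess comes from integrating \eqref{eq:Q-hat} inward from a large \(\xi\) using only the \(\hat{\gamma}_{1}\) branch, which gives an approximate value of \(\hat{Q}(0)\) for \(\nu\), with \(\hat{\gamma}_{2}\) initially zero. This guess is refined by Newton--Raphson via \texttt{NonlinearSolve.jl}. The large magnitude of \(\hat{\gamma}_{2}\), of order \(10^{3}\), is expected: the factor \(e^{-c\xi^{2}}\) at \(\xi_{1}=16\) is tiny, so \(\hat{\gamma}_{2}\) must be correspondingly large to matter.

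\textbf{Step 3: rigorous verification.} I will view \(\hat{G}\) as a map \(\mathbb{R}^{4}\to\mathbb{R}^{4}\) in the real and imaginary parts of \(\nu\) and \(\hat{\gamma}_{2}\). A box \(X\) around the approximation is chosen with the heuristic of \cite[Section~10]{Dahne2024}. On \(X\) I then enclose two things: \(\hat{Q}_{0}\), \(\hat{Q}_{0}'\) and their parameter derivatives at \(\xi_{1}\), using Section~\ref{sec:forward-solution-zero}; and \(\hat{Q}_{\infty}\), \(\hat{Q}_{\infty}'\) and their \(\hat{\gamma}_{2}\)-derivatives, using Section~\ref{sec:forward-solution-infinity}. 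With these enclosures I check \(N(X)\subseteq\operatorname{int}(X)\) for the Krawczyk operator. This yields a unique zero in \(X\), and hence a global solution with \(\hat{Q}'(0)=0\) and the prescribed asymptotics.

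\textbf{Main obstacle.} The hardest part will be getting sufficiently tight enclosures of \(\hat{Q}_{\infty}\) at \(\xi_{1}=16\). The wide interval in \(\hat{\gamma}_{2}\) multiplies the small exponentially decaying mode, and any overestimation in the fixed-point bounds for the nonlinear correction can blow up the Jacobian enclosure. I would first try to control this by splitting the solution into the explicit polynomial and exponential modes plus a remainder bounded via the hypergeometric estimates. If the interval is still too wide, I would increase \(\xi_{1}\) or rescale \(\hat{\gamma}_{2}\) so that the exponential mode is normalized at \(\xi_{1}\).
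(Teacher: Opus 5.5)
Your plan matches the paper's proof: fix \(\hat{\gamma}_{1}\) by matching leading coefficients, get an approximate zero by Newton--Raphson (the paper simply starts it from \((\nu, \hat{\gamma}_{2}) = (0, 0)\)), and close with the Krawczyk test on a box in \(\mathbb{R}^{4}\), using enclosures from Sections~\ref{sec:forward-solution-infinity} and~\ref{sec:forward-solution-zero}. One simplification for Step~1: because \(\hat{c}_{1}\) is anchored at infinity, there is no forward analogue of the \(I_{E,\infty}\) correction, so Lemma~\ref{lemma:Q-hat-leading-term} gives directly \(p_{\hat{Q}} = \hat{\gamma}_{1}(-c)^{-a}\) and hence \(\hat{\gamma}_{1} = p_{Q}/(-c)^{-a}\); the correction \(I_{E,\infty}\) enters only through \(p_{Q}\) itself.
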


\begin{proof}
  The first step is to determine \(\hat{\gamma}_{1}\) such that
  \(\hat{Q}_{\infty}\) has the same asymptotic behavior as \(Q\). From
  Lemma~\ref{lemma:Q-hat-leading-term}, we have that
  \begin{equation*}
    \hat{Q}(\xi) = p_{\hat{Q}}\xi^{-1 - i\frac{\omega}{\kappa}} + \mathcal{O}(\xi^{-3}),
  \end{equation*}
  with \(p_{\hat{Q}} = \hat{\gamma}_{1}(-c)^{-a}\). We therefore let
  \(\hat{\gamma}_{1} = p_{Q} / (-c)^{-a} = (-1)^{a}p_{Q,0}\). Here the
  last equality uses \(p_{Q} = c^{-a}p_{Q,0}\) from
  Lemma~\ref{lemma:p_Q_0} together with
  \(c^{-a} = (-1)^{a}(-c)^{-a}\), which holds for the principal branch
  since \(\imag(c) < 0\). Based on the enclosures from
  Theorem~\ref{thm:backward-existence}, this gives us
  \begin{equation*}
    \hat{\gamma}_{1} \in \resultnumber{0.192_{38}^{41} + 0.1458_{43}^{55} i}.
  \end{equation*}

  To find a numerical approximation, we apply the same Newton--Raphson
  method as in Theorem~\ref{thm:backward-existence}, this time to the
  initial guess \((\nu, \hat{\gamma}_{2}) = (0, 0)\). This gives us the
  approximations
  \begin{equation*}
    \nu_{0} = \resultnumber{2.190519866535078 + 2.110080808821536i},\quad
    \hat{\gamma}_{2,0} = \resultnumber{-1121.578763109324 - 987.6373181813229i},
  \end{equation*}
  rounded to 16 digits. As in Theorem~\ref{thm:backward-existence}, we
  then apply the interval Newton method to a box around this
  approximation. The resulting enclosure is the one given in the
  statement of the theorem. Figure~\ref{fig:Q-hat} shows an
  approximation of the profile \(\hat{Q}\) associated with these
  parameters.
\end{proof}

Finally, we prove the existence of an unstable eigenvalue for the
linear operator associated with the forward self-similar solution.

\begin{theorem}[Existence of an unstable eigenvalue]\label{thm:unstable-eigenvalue}
  For \(\epsilon \in \codenumber{[0.1681 \pm 10^{-15}]}\), the linear
  operator \(L_{\hat{Q}}\)~\eqref{eq:L_Q-hat} associated with the
  forward self-similar solution from
  Theorem~\ref{thm:forward-existence} has a locally unique unstable
  eigenvalue \(\lambda\) satisfying
  \begin{equation*}
    \lambda \in \resultnumber{0.0_{02294}^{22295} + 1._{5997}^{6198} i}.
  \end{equation*}
  It is associated with a zero of the function
  \(H(\lambda)\)~\eqref{eq:H}, using
  \(\xi_{1} = \codenumber{16}\).
\end{theorem}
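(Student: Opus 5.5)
The plan is to prove existence of a zero of the holomorphic function \(H\)~\eqref{eq:H} inside a small rectangle \(R\) around a numerical approximation, using the argument principle. I would first compute a non-rigorous approximation \(\lambda_{0} \approx 0.0022 + 1.61i\), either from the finite-difference discretization in Appendix~\ref{sec:finite-differences-approximations} or by Newton iteration on \(H\) itself. I would then take \(R\) to be the box given in the statement.

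That \(H\) is holomorphic in \(\lambda\) follows from the construction. The columns \(Y_{0,j}\) and \(Y_{\infty,j}\) solve the linear ODE~\eqref{eq:Y}, whose coefficients depend holomorphically on \(\lambda\). Both normalisations are linear and independent of \(\lambda\): \(Y_{0}(0) = y_{0}\), and \(c_{0}\) parametrises the exponentially decaying solutions. The decaying solutions at infinity come from the fixed-point construction of Section~\ref{sec:linearized-solution-infinity}. That construction is uniform for \(\lambda\) in a neighbourhood of \(R\), so they too are holomorphic in \(\lambda\). The determinant is therefore holomorphic.

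The rigorous step is to cover \(\partial R\) by small subintervals (boxes) and, on each box, compute an interval enclosure of \(H\). These enclosures use the bounds of Sections~\ref{sec:linearized-solution-infinity} and~\ref{sec:linearized-solution-zero} at \(\xi_{1} = 16\), with \(\hat{Q}\) taken from Theorem~\ref{thm:forward-existence} and \(\epsilon\), \(\kappa\), \(\nu\), \(\hat{\gamma}_{2}\) treated as intervals. On each box I would check that the enclosure avoids \(0\) and lies in a half-plane or sector, so that the change of argument along that piece is rigorously determined. Summing these changes gives a winding number of \(1\). Hence \(H\) has exactly one zero in \(R\), counted with multiplicity, which gives local uniqueness.

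A zero of \(H\) gives a nontrivial linear dependence among the four columns. Since \(Y_{0,1}, Y_{0,2}\) and \(Y_{\infty,1}, Y_{\infty,2}\) are each independent pairs, there are nonzero \(y_{0}\) and \(c_{0}\) with \(Y_{0}\) and \(Y_{\infty}\) agreeing to first order at \(\xi_{1}\). Gluing them gives a smooth radial solution that is regular at \(0\) and decays exponentially at infinity, so it lies in \(H^{2}(\mathbb{R}^{3})\) and is an eigenfunction. Because every point of the box has \(\real\lambda > 0\), the eigenvalue is unstable.

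I expect the main obstacle to be obtaining enclosures of \(H\) tight enough on \(\partial R\). The real part of the eigenvalue is only about \(0.0023\), so the box is thin in the real direction and \(|H|\) on the boundary is small relative to the widths of the enclosures. The errors come from propagating \(\hat{Q}\) and from the wide enclosure of \(\hat{\gamma}_{2}\). If they are too large, I would first subdivide \(\partial R\) more finely. If that is not enough, I would reduce the dependency effect by enclosing \(H\) at the box midpoint plus a derivative bound, using Cauchy estimates on a slightly larger contour so that no rigorous \(\lambda\)-derivatives are needed. I would also adjust the aspect ratio of \(R\), since the width in the imaginary direction can be much larger.
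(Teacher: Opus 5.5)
Your proposal is correct and follows the paper's proof: the paper also takes a finite-difference initial guess, gets analyticity of \(Y_{0}\) from the initial value problem and of \(Y_{\infty}\) from the contraction of \(T_{12}\) holding uniformly on the rectangle (it checks \(C_{T_{12}}\xi_{1}^{-2} < 1\) there), and counts zeros by winding number using interval enclosures of \(H\) on a finely subdivided square, with each side confined to a half-plane. One correction: \(0.0_{02294}^{22295}\) denotes \([0.002294, 0.022295]\), so the contour is a square of side \(0.02\) centred at \(\lambda_{0} \approx 0.0123 + 1.6097i\) and is not thin in the real direction; your remarks about thinness and about changing the aspect ratio come from misreading this notation.
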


\begin{proof}
  To compute a first estimate of \(\lambda\), we use a
  finite-difference method that is described in more detail in
  Appendix~\ref{sec:finite-differences-approximations}. Rounded to 8
  digits, this gives us the approximation
  \begin{equation*}
    \lambda_{0} = \resultnumber{0.012294299 + 1.6097332i}.
  \end{equation*}

  In principle, one could apply the interval Newton method to prove
  the existence of a zero in the neighborhood of this approximation.
  However, due to the accumulation of overestimations in the interval
  enclosures when determining \(Q\) and then \(\hat{Q}\), and finally
  when evaluating \(H\), the resulting enclosures are insufficient to
  close the argument using an interval Newton iteration. Instead, we
  apply a winding number argument, which is more robust against the
  overestimations in the enclosures.

  To apply the winding argument, the first step is to take a contour
  \(\Gamma\) around the approximate solution. We take the contour to
  be given by a square centered at the approximation, with side length
  \(2r\), where \(r = \codenumber{0.01}\). The resulting square is
  shown in Figure~\ref{fig:contour} and is contained in
  \(\resultnumber{0.0_{02294}^{22295} + 1._{5997}^{6198} i}\). We
  parametrize the contour by a piecewise-linear function
  \(z(t) \colon [0, 4] \to \mathbb{C}\), taken so that \(z(0)\) is the
  lower left corner of the square, \(z(1)\) is the lower right corner,
  \(z(2)\) is the upper right corner and \(z(3)\) is the upper left
  corner.

  \begin{figure}
    \centering
    \begin{subfigure}[t]{0.45\textwidth}
      \includegraphics[width=\textwidth]{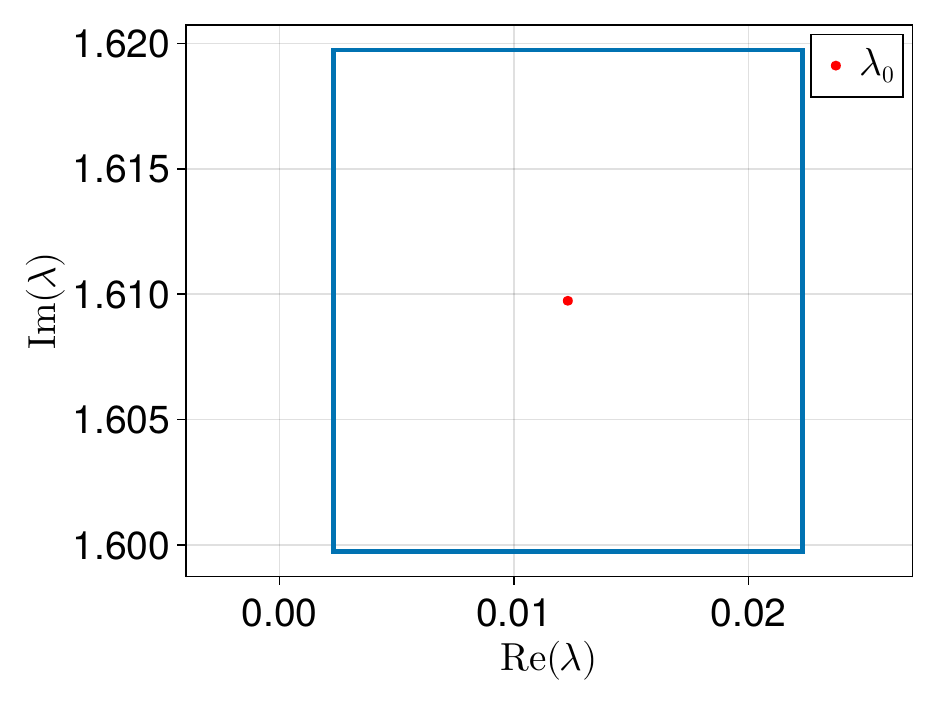}
      \caption{The square given by the contour \(\Gamma\).}
      \label{fig:contour}
    \end{subfigure}
    \hspace{0.05\textwidth}
    \begin{subfigure}[t]{0.45\textwidth}
      \includegraphics[width=\textwidth]{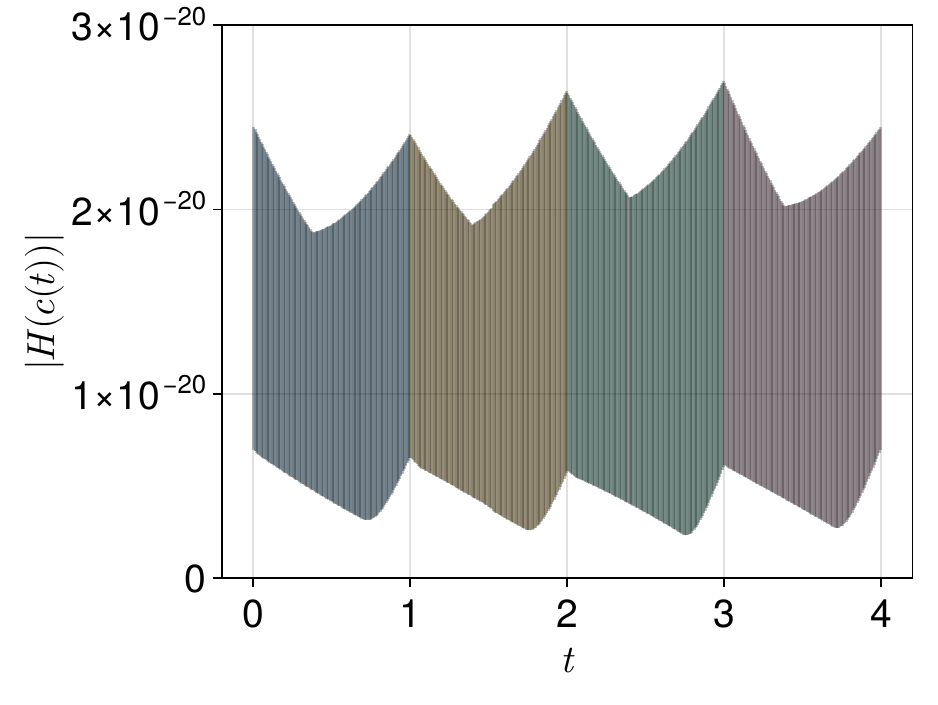}
      \caption{Enclosure of \(|H(z(t))|\).}
      \label{fig:H-abs}
    \end{subfigure}
    \caption{}
    \label{fig:contour-H-abs}
  \end{figure}

  If \(H\) is analytic inside the contour \(\Gamma\), Cauchy's
  argument principle implies that the number of zeros of \(H\) inside
  the contour, counted with multiplicity, is exactly given by the
  winding number of the path \(H(z(t))\) for \(t \in [0, 4]\). Proving
  the existence of a unique zero inside \(\Gamma\) thus reduces to
  proving that \(H\) is analytic and that the winding number is one.

  To prove analyticity of \(H\), it suffices to show that the
  components of the matrix in~\eqref{eq:H} are all analytic in
  \(\lambda\). For \(Y_{0,1}\), \(Y_{0,2}\), \(Y_{0,1}'\) and
  \(Y_{0,2}'\), this is immediate from the fact that they are the
  solutions to an initial value problem that depends analytically on
  \(\lambda\). To prove analyticity of \(Y_{\infty,1}\),
  \(Y_{\infty,2}\), \(Y_{\infty,1}'\) and \(Y_{\infty,2}'\), we make
  use of Proposition~\ref{prop:Z-fixed-point}. To apply the
  proposition, it suffices to verify~\eqref{eq:T_12-ineq-2} for all
  \(\lambda\) inside the contour, namely the inequality
  \begin{equation*}
    C_{T_{12}}\xi_{1}^{-2} < 1.
  \end{equation*}
  The inequality~\eqref{eq:T_12-ineq-1} can then always be satisfied
  by taking \(\rho\) sufficiently large. To verify the above
  inequality, we simply compute an enclosure of
  \(C_{T_{12}} = C_{T_{12}}(\lambda)\) inside the contour, giving us
  \begin{equation*}
    C_{T_{12}}\xi_{1}^{-2} \in \resultnumber{0.0_{09}^{14}}.
  \end{equation*}
  Note that \(C_{T_{12}}\) is independent of the \(c_{0}\) parameter
  in \(Y_{\infty}\), so this covers both \(Y_{\infty,1}\) and
  \(Y_{\infty,2}\).

  To compute the winding number of \(H\) along \(\Gamma\), we start by
  computing an enclosure of the path \(H(z(t))\) for \(t \in [0, 4]\).
  For this, we first split the interval into four parts: \([0, 1]\),
  \([1, 2]\), \([2, 3]\) and \([3, 4]\), corresponding to the
  different sides of the square. Each of these intervals is then
  further subdivided into \(n = \codenumber{128}\) pieces. This gives
  us a total of \(4n\) complex intervals for which we compute an
  enclosure of \(H(z(t))\) using interval arithmetic.

  Figure~\ref{fig:H-abs} shows an enclosure of \(|H(z(t))|\) along the
  path, and Figure~\ref{fig:H-arg} shows an enclosure of
  \(\arg(H(z(t)))\). Due to the discontinuity of the argument along
  the negative real axis, we include two versions of the plot for
  \(\arg(H(z(t)))\). The first one shows the principal argument
  (\(\operatorname{Arg}(H(z(t)))\)), where we can see that the
  discontinuity is confined to \(t \in [2, 3]\). For the second
  version, we adjust for this discontinuity to get a continuous value;
  for \(t \in [2, 3]\), this is done by computing
  \(\operatorname{Arg}(-H(z(t))) + \pi\), and for \(t \in [3, 4]\), by
  computing \(\operatorname{Arg}(H(z(t))) + 2\pi\).

  \begin{figure}
    \centering
    \begin{subfigure}[t]{0.45\textwidth}
      \includegraphics[width=\textwidth]{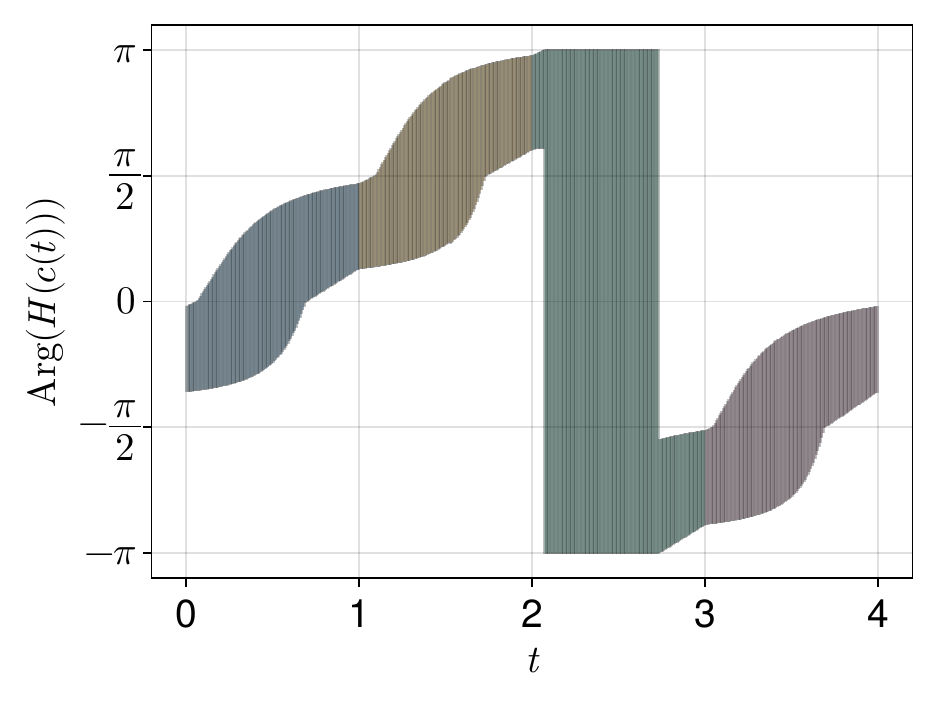}
      \caption{}
      \label{fig:H-arg-principal}
    \end{subfigure}
    \hspace{0.05\textwidth}
    \begin{subfigure}[t]{0.45\textwidth}
      \includegraphics[width=\textwidth]{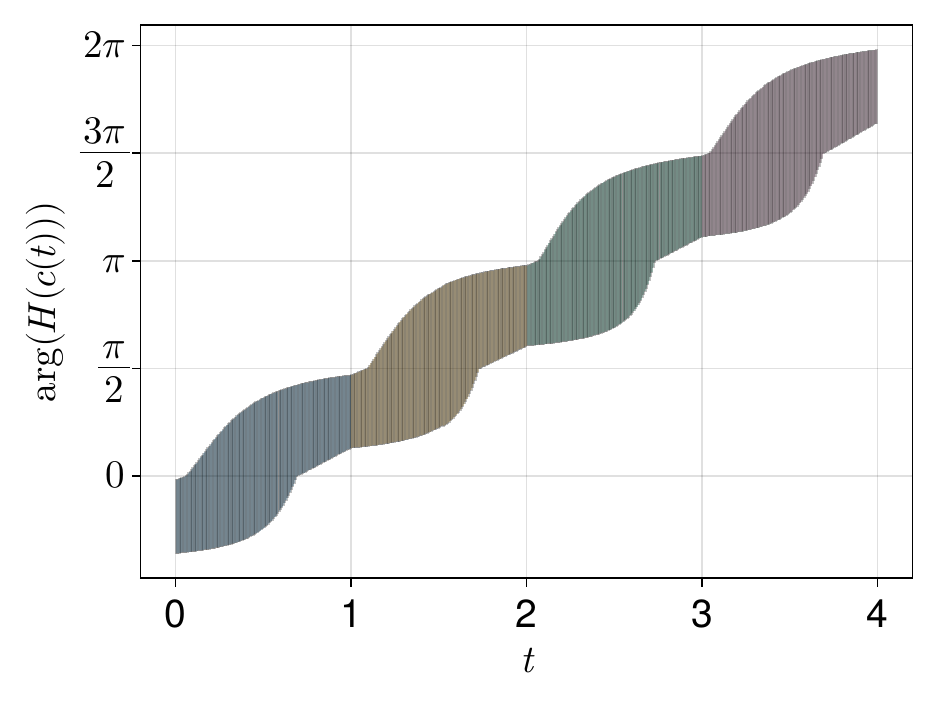}
      \caption{}
      \label{fig:H-arg-continuous}
    \end{subfigure}
    \caption{Enclosure of \(\arg(H(z(t)))\). The left figure shows the
      principal argument, \(\operatorname{Arg}(H(z(t)))\), which has a
      discontinuity along the negative real axis. The right figure
      shows the argument when adjusted to be continuous along the
      path.}
    \label{fig:H-arg}
  \end{figure}

  While Figure~\ref{fig:H-arg-continuous} contains the details needed
  to prove that the winding number is one, we also provide a more
  formal verification that \(H(z(t))\) encircles the origin exactly
  once. We start by verifying the following four properties:
  \begin{enumerate}
  \item For \(t \in [0, 1]\), the curve \(H(z(t))\) has positive real
    part;
  \item For \(t \in [1, 2]\), the curve \(H(z(t))\) has positive
    imaginary part;
  \item For \(t \in [2, 3]\), the curve \(H(z(t))\) has negative real
    part;
  \item For \(t \in [3, 4]\), the curve \(H(z(t))\) has negative
    imaginary part.
  \end{enumerate}
  In particular, \(H(z(t)) \neq 0\) along the whole contour, so it has
  a continuous argument \(\theta(t)\). Each of the four properties
  confines the curve to an open half-plane, on which \(\theta\) varies
  by less than \(\pi\). At the corners, the properties combine
  pairwise, placing \(H(z(t))\) in the fourth quadrant for \(t = 0\),
  the first for \(t = 1\), the second for \(t = 2\) and the third for
  \(t = 3\). Normalizing so that
  \(\theta(0) \in (-\frac{\pi}{2}, 0)\), these two observations give
  in turn \(\theta(1) \in (0, \frac{\pi}{2})\),
  \(\theta(2) \in (\frac{\pi}{2}, \pi)\),
  \(\theta(3) \in (\pi, \frac{3\pi}{2})\) and
  \(\theta(4) \in (\frac{3\pi}{2}, 2\pi)\). Hence
  \(\theta(4) - \theta(0) \in (\frac{3\pi}{2}, \frac{5\pi}{2})\), and
  since it is an integer multiple of \(2\pi\), it equals \(2\pi\). The
  winding number is therefore exactly one and, by Cauchy's argument
  principle, \(H\) has a unique zero inside the square, giving us the
  enclosure of the eigenvalue \(\lambda\) of \(L_{\hat{Q}}\) stated in
  the theorem. In particular, since the real part of this enclosure is
  positive, the eigenvalue is unstable.
\end{proof}

\section{Forward solution at infinity}
\label{sec:forward-solution-infinity}

In this section, we adapt the approach for computing enclosures near
spatial infinity of backward self-similar solutions
from~\cite[Section~7]{Dahne2024} to handle forward self-similar
solutions. To stay as close as possible to the notation
in~\cite{Dahne2024}, we keep the strength of the nonlinearity,
\(\sigma\), and the spatial dimension, \(d\), symbolic throughout this
section. In our application they take the values \(\sigma = 1\) and
\(d = 3\). In this notation, Equation~\eqref{eq:Q-hat} for the
forward self-similar solutions reads
\begin{equation}\label{eq:Q-hat-infty-equation}
  (1 - i\epsilon)\left(\hat{Q}'' + \frac{d - 1}{\xi}\hat{Q}'\right) - i\kappa\xi \hat{Q}'
  - i \frac{\kappa}{\sigma}\hat{Q} + \omega \hat{Q} + (1 + i\delta)|\hat{Q}|^{2\sigma}\hat{Q} = 0,
\end{equation}
where we are interested in solutions satisfying the condition
\(\hat{Q}(\xi) \sim \xi^{-\frac{1}{\sigma} - i\frac{\omega}{\kappa}}\).

As in~\cite{Dahne2024}, the main idea is to apply variation of
parameters to the linearized equation to construct an operator whose
fixed points are solutions to the equation. The necessary bounds are
then derived from this fixed-point equation. Compared to the
presentation in~\cite{Dahne2024}, there are three main differences:
\begin{enumerate}
\item The linear equation for the forward solution has a solution that
  converges exponentially fast to zero, rather than diverging
  exponentially as in the backward case.
\item We only treat the CGL case, which means we can assume that
  \(\epsilon > 0\). This simplifies many of the bounds that otherwise
  deteriorate in the limit as \(\epsilon \to 0\).
\item We do not need derivatives with respect to the parameters
  \(\kappa\) or \(\epsilon\).
\end{enumerate}
Combined, all of these serve to significantly reduce the complexity of
acquiring the necessary bounds.

Since the only difference between the backward and forward equations is
in the sign of \(\kappa\) and \(\omega\), the structure mostly remains
the same. To highlight this correspondence, we use mostly the same
notation as in~\cite{Dahne2024}, but with hats added over the forward
versions.

For the forward solutions, the associated linear equation is given by
\begin{equation}\label{eq:Q-hat-infty-linear}
  (1 - i\epsilon)\left(\hat{Q}'' + \frac{d - 1}{\xi}\hat{Q}'\right) - i\kappa\xi \hat{Q}' - i \frac{\kappa}{\sigma}\hat{Q} + \omega \hat{Q} = 0.
\end{equation}
Making the change of variables
\begin{equation}
  \label{eq:abc}
  a = \frac{1}{2}\left(\frac{1}{\sigma} + i \frac{\omega}{\kappa}\right),\quad
  b = \frac{d}{2},\quad
  c = \frac{-i \kappa}{2(1 - i\epsilon)},\quad
  \hat{z} = -c\xi^{2},
\end{equation}
this can be transformed into the well-studied Kummer equation,
\begin{equation*}
  \hat{z} \frac{d^{2}w}{d\hat{z}^{2}} + (b - \hat{z})\frac{dw}{d\hat{z}} - aw = 0.
\end{equation*}
Note that the only difference compared to the backward case is in the
sign of \(\hat{z}\). One solution of Kummer's equation is the
confluent hypergeometric function \(U(a, b, \hat{z})\), with another,
linearly independent, solution given by
\(V(a, b, \hat{z}) = e^{\hat{z}}U(b - a, b, -\hat{z})\). This gives us
two linearly independent solutions \(\hat{P}(\xi)\) and
\(\hat{E}(\xi)\) of~\eqref{eq:Q-hat-infty-linear}, given by
\begin{equation*}
  \hat{P}(\xi) = U(a, b, -c\xi^{2}) \quad\text{and}\quad
  \hat{E}(\xi) = e^{-c\xi^{2}}U(b - a, b, c\xi^{2}).
\end{equation*}
We also note that the associated Wronskian is given by
\begin{equation*}
  \hat{W}(\xi) = \hat{P}(\xi)\hat{E}'(\xi) - \hat{P}'(\xi)\hat{E}(\xi) = -2ce^{-\sign(\imag(c)) \pi i (b - a)}\xi \left(-c\xi^{2}\right)^{-b}e^{-c\xi^{2}}.
\end{equation*}
Here the function \(\hat{P}(\xi)\) has the right asymptotic behavior
at infinity, whereas \(\hat{E}(\xi)\) decays exponentially.

Following the method of variation of parameters, we look for solutions
to~\eqref{eq:Q-hat-infty-equation} of the form
\begin{equation*}
  \hat{Q}(\xi) = \hat{c}_{1}(\xi)\hat{P}(\xi) + \hat{c}_{2}(\xi)\hat{E}(\xi),
\end{equation*}
where
\begin{equation*}
  \hat{c}_{1}'(\xi) = \frac{1 + i\delta}{1 - i \epsilon}\hat{E}(\xi)\hat{W}(\xi)^{-1}|\hat{Q}(\xi)|^{2\sigma}\hat{Q}(\xi)
  \quad\text{and}\quad
  \hat{c}_{2}'(\xi) = -\frac{1 + i\delta}{1 - i \epsilon}\hat{P}(\xi)\hat{W}(\xi)^{-1}|\hat{Q}(\xi)|^{2\sigma}\hat{Q}(\xi).
\end{equation*}
To get the right asymptotic behavior at infinity, we need
\(\hat{c}_{1}\) to have a non-zero limit at infinity and
\(\hat{c}_{2}\hat{E}\) to be of lower order than \(\hat{P}\). It is
therefore natural to use the representations
\begin{align*}
  \hat{c}_{1}(\xi) &= \hat{\gamma}_{1} - \int_{\xi}^{\infty}\frac{1 + i\delta}{1 - i \epsilon}\hat{E}(\eta)\hat{W}(\eta)^{-1}|\hat{Q}(\eta)|^{2\sigma}\hat{Q}(\eta)\,d\eta,\\
  \hat{c}_{2}(\xi) &= \hat{\gamma}_{2} - \int_{\xi_{1}}^{\xi}\frac{1 + i\delta}{1 - i \epsilon}\hat{P}(\eta)\hat{W}(\eta)^{-1}|\hat{Q}(\eta)|^{2\sigma}\hat{Q}(\eta)\,d\eta.
\end{align*}
This ensures that \(\hat{c}_{1}\) converges to \(\hat{\gamma}_{1}\) and
allows \(\hat{c}_{2}\) to diverge. Using the notation
\begin{align*}
  \hat{J}_{P}(\xi) &= \frac{1 + i\delta}{1 - i \epsilon}\hat{P}(\xi)\hat{W}(\xi)^{-1},\\
  \hat{J}_{E}(\xi) &= \frac{1 + i\delta}{1 - i \epsilon}\hat{E}(\xi)\hat{W}(\xi)^{-1}
\end{align*}
and
\begin{align*}
  I_{\hat{P}}(\hat{Q}, \xi) &= -\int_{\xi_{1}}^{\xi} \hat{J}_{P}(\eta)|\hat{Q}(\eta)|^{2\sigma}\hat{Q}(\eta)\,d\eta,\\
  I_{\hat{E}}(\hat{Q}, \xi) &= -\int_{\xi}^{\infty} \hat{J}_{E}(\eta)|\hat{Q}(\eta)|^{2\sigma}\hat{Q}(\eta)\,d\eta,
\end{align*}
this gives us the equation
\begin{equation}\label{eq:Q-hat-infty-fixed-point-equation}
  \hat{Q}(\xi) = \hat{\gamma}_{1} \hat{P}(\xi) + \hat{\gamma}_{2}\hat{E}(\xi)
  + \hat{P}(\xi)I_{\hat{E}}(\hat{Q}, \xi) + \hat{E}(\xi)I_{\hat{P}}(\hat{Q}, \xi),
\end{equation}
for \(\hat{Q}\). Specifically, if we let \(\hat{T}\) be the operator
\begin{equation}\label{eq:T-hat}
  \hat{T}(\hat{Q})(\xi) = \hat{\gamma}_{1}\hat{P}(\xi) + \hat{\gamma}_{2}\hat{E}(\xi) + \hat{P}(\xi)I_{\hat{E}}(\hat{Q}, \xi) + \hat{E}(\xi)I_{\hat{P}}(\hat{Q}, \xi),
\end{equation}
then, by standard variation-of-parameters arguments, fixed points of
this operator correspond to solutions
of~\eqref{eq:Q-hat-infty-equation}.

As in~\cite{Dahne2024}, the strategy for computing enclosures of
\(\hat{Q}\) is to study the operator \(\hat{T}\) and the fixed-point
equation~\eqref{eq:Q-hat-infty-fixed-point-equation}. The work is
split into two parts:
\begin{enumerate}
\item The first step is to prove the existence of a fixed point of
  \(\hat{T}\) by proving that it is a contraction in a ball in a
  weighted function space. This gives both existence of a solution and
  initial bounds for its norm.
\item The second step is to compute refined enclosures of the fixed
  point using a bootstrapping approach applied to the fixed-point
  equation~\eqref{eq:Q-hat-infty-fixed-point-equation}.
\end{enumerate}
Handling the fixed point follows mostly the same procedure as for the
backward solution. The refined enclosures, however, require a slightly
different approach. For the forward solution, the main error at
\(\xi = \xi_{1}\) comes from \(I_{\hat{E}}\), whereas for the backward
solution it comes from \(I_{P}\). The integrand for \(I_{\hat{E}}\)
does not have the same oscillating behavior as for \(I_{P}\), and we
therefore cannot use integration by parts to get better enclosures.
Instead, we isolate the leading behavior of the integrand and
integrate that explicitly. Avoiding integration by parts means that we
do not need enclosures for any higher-order derivatives of
\(\hat{Q}\), and allows us to simplify the final computation of the
enclosures at \(\xi = \xi_{1}\).

The derivatives with respect to the real and imaginary parts of
\(\hat{\gamma}_{2}\) are controlled by directly differentiating the
fixed-point equation. Since the functions \(\hat{P}\) and \(\hat{E}\)
do not depend on this parameter, the resulting equation has similar
behavior to the original one.

We proceed as follows:
\begin{enumerate}
\item Section~\ref{sec:fixed-point-hat} is devoted to studying the
  operator \(\hat{T}\), and we give explicit conditions for the
  existence of a fixed point.
\item In Section~\ref{sec:fixed-point-enclosures-hat}, we discuss how
  to use the fixed-point
  equation~\eqref{eq:Q-hat-infty-fixed-point-equation} to compute
  tight enclosures of \(\hat{Q}\) and its derivatives.
\end{enumerate}

Lastly, let us comment on some of the notation and other conventions
used in this section. For the derivatives with respect to the real and
imaginary parts of \(\hat{\gamma}_{2}\), we use the notation
\(\hat{Q}_{\real\hat{\gamma}_{2}}\) and
\(\hat{Q}_{\imag\hat{\gamma}_{2}}\). For the most part, we do not make
explicit the functions' dependence on parameters such as \(\kappa\)
and \(\epsilon\). In the same way, we generally drop the \(\hat{Q}\)
argument for \(I_{\hat{P}}\) and \(I_{\hat{E}}\) defined above, i.e.,
we prefer to write just \(I_{\hat{P}}(\xi)\) instead of
\(I_{\hat{P}}(\hat{Q}, \xi)\). The following sections contain a number
of asymptotic bounds with explicitly given constants. In general,
these explicit constants will depend on parameters such as \(\kappa\)
and \(\epsilon\), as well as the value of \(\xi_{1}\). However, to
keep the notation brief, we do not make this dependence explicit. We
use the variable \(C\) for all of these types of constants, with a
subscript to denote which function it is associated with.

\subsection{Existence of a fixed point}
\label{sec:fixed-point-hat}

We perform the fixed-point argument for the operator \(\hat{T}\) in
the Banach space \(\Space_{0}\) of continuous functions
\(\hat{Q} \colon [\xi_{1}, \infty) \to \mathbb{C}\) for which the norm
\begin{equation*}
  \|\hat{Q}\|_{0} = \sup_{\xi \geq \xi_{1}} \xi^{\frac{1}{\sigma}}|\hat{Q}(\xi)|
\end{equation*}
is finite. For the backward solutions, we instead worked with a norm of
the form
\begin{equation*}
  \|Q\|_{\normv} = \sup_{\xi \geq \xi_{1}} \xi^{\frac{1}{\sigma} - \normv}|Q(\xi)|.
\end{equation*}
Here, the parameter \(\normv\) was needed to account for logarithmic
factors appearing in some of the derivatives. However, for the forward
case we do not encounter these logarithmic factors and we can
therefore take \(\normv = 0\).

The first step in bounding the operator \(\hat{T}\) is to bound the
functions \(\hat{P}\) and \(\hat{E}\). We let
\begin{equation*}
  \hat{B}_{W} = \frac{1 + i\delta}{i\kappa}e^{\sign(\imag(c))\pi i(b - a)}(-c)^{b},
\end{equation*}
so that \(\hat{J}_{P}\) and \(\hat{J}_{E}\) can be written as
\begin{equation*}
  \hat{J}_{P}(\xi) = \hat{B}_{W}\hat{P}(\xi)e^{c\xi^{2}}\xi^{d - 1}
  \quad\text{and}\quad
  \hat{J}_{E}(\xi) = \hat{B}_{W}\hat{E}(\xi)e^{c\xi^{2}}\xi^{d - 1}.
\end{equation*}
Compared to the backward solutions, we need bounds for significantly
fewer functions. The ones we need are given in the following lemma,
closely related to~\cite[Lemma~7.3]{Dahne2024}.

\begin{lemma}\label{lemma:P-hat-E-hat-bounds}
  Let \(\xi_{1} > 1\). Assuming that the parameters \(\kappa\),
  \(\omega\) and \(\epsilon\) are such that \(U(a, b, -c\xi_{1}^{2})\)
  and \(U(b - a, b, c\xi_{1}^{2})\) satisfy the conditions
  of~\cite[Lemma~7.1]{Dahne2024}, we have, for \(\xi \geq \xi_{1}\),
  the bounds
  \begin{align*}
    |\hat{P}(\xi)| &\leq C_{\hat{P}}\xi^{-\frac{1}{\sigma}},\\
    |\hat{E}(\xi)| &\leq C_{\hat{E}}e^{-\real(c)\xi^{2}}\xi^{\frac{1}{\sigma} - d},\\
    |\hat{J}_{E}(\xi)| &\leq C_{\hat{J}_{E}}\xi^{\frac{1}{\sigma} - 1},\\
    |\hat{J}_{P}(\xi)| &\leq C_{\hat{J}_{P}}e^{\real(c)\xi^{2}}\xi^{-\frac{1}{\sigma} + d - 1},
  \end{align*}
  where the constants are given by
  \begin{align*}
    C_{\hat{P}} &= |(-c)^{-a}|C_{U}(a, b, n, -c\xi_{1}^{2}),\\
    C_{\hat{E}} &= |c^{a - b}|C_{U}(b - a, b, n, c\xi_{1}^{2}),\\
    C_{\hat{J}_{E}} &= |\hat{B}_{W}|C_{\hat{E}},\\
    C_{\hat{J}_{P}} &= |\hat{B}_{W}|C_{\hat{P}}.
  \end{align*}
  Here \(C_{U}\) is as in~\cite[Lemma~7.1]{Dahne2024} and \(n\) is any
  non-negative integer.
\end{lemma}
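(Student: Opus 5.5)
The plan is to reduce everything to the uniform bound for the Tricomi function from \cite[Lemma~7.1]{Dahne2024}. In the form we use it, that lemma says: if \(z_{1}\) satisfies its conditions, then for all \(z = z_{1}t\) with \(t \geq 1\) we have \(|U(\alpha, \beta, z)| \leq C_{U}(\alpha, \beta, n, z_{1})|z^{-\alpha}|\). The bound comes from truncating the asymptotic expansion of \(U\) after \(n\) terms and bounding the remainder. Along the ray \(z = -c\xi^{2}\) the ratio \(z/z_{1} = (\xi/\xi_{1})^{2}\) is real and at least \(1\) for \(\xi \geq \xi_{1}\), and the same holds for \(z = c\xi^{2}\). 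So the hypothesis of the lemma, imposed at \(\xi_{1}\), propagates to all \(\xi \geq \xi_{1}\).

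For \(\hat{P}\) we get \(|\hat{P}(\xi)| \leq C_{U}(a,b,n,-c\xi_{1}^{2})|(-c\xi^{2})^{-a}|\). Since \(\xi^{2} > 0\), the principal branch factorizes as \((-c\xi^{2})^{-a} = (-c)^{-a}\xi^{-2a}\), and \(|\xi^{-2a}| = \xi^{-2\real a} = \xi^{-1/\sigma}\). This gives \(C_{\hat{P}}\).

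For \(\hat{E}\), first note \(|e^{-c\xi^{2}}| = e^{-\real(c)\xi^{2}}\). The lemma gives \(|U(b-a,b,c\xi^{2})| \leq C_{U}(b-a,b,n,c\xi_{1}^{2})|c^{a-b}|\xi^{2\real(a-b)}\). Using \(2\real(a-b) = 1/\sigma - d\) yields the stated bound with \(C_{\hat{E}}\).

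For the \(J\)-functions I use the representation \(\hat{J}_{E} = \hat{B}_{W}\hat{E}e^{c\xi^{2}}\xi^{d-1}\) given just before the lemma. I would briefly recheck it from the Wronskian formula: \((-c\xi^{2})^{-b} = (-c)^{-b}\xi^{-d}\), so \(\hat{W}^{-1}\) is proportional to \(\xi^{d-1}e^{c\xi^{2}}\), and the constant matches \(\hat{B}_{W}\) after using \(1 - i\epsilon = -i\kappa/(2c)\). Then
\[
|\hat{J}_{E}| \leq |\hat{B}_{W}|\,C_{\hat{E}}\,e^{-\real(c)\xi^{2}}\xi^{1/\sigma-d}\,e^{\real(c)\xi^{2}}\xi^{d-1} = C_{\hat{J}_{E}}\xi^{1/\sigma-1}.
\]
In the same way, \(|\hat{J}_{P}| \leq |\hat{B}_{W}|C_{\hat{P}}\xi^{-1/\sigma}e^{\real(c)\xi^{2}}\xi^{d-1}\), which is the stated bound with \(C_{\hat{J}_{P}}\).

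The main point requiring care is the branch bookkeeping. I need to check that the principal powers factor without extra phase factors. This holds because \(\xi^{2}\) is real and positive, so \(\arg(\pm c\xi^{2}) = \arg(\pm c)\). The factor \(e^{\pm\pi i(b-a)}\) appearing in \(\hat{W}\) is absorbed into \(\hat{B}_{W}\) and therefore enters only through \(|\hat{B}_{W}|\). It also has to be confirmed that the conditions of \cite[Lemma~7.1]{Dahne2024} are stated in a form that is monotone along these rays; that is why they are required only at \(\xi_{1}\). The assumption \(\xi_{1} > 1\) is inherited from that lemma. Nothing else is needed.
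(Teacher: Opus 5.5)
Your proposal is correct and is the same argument as the paper's: write \(|\hat{P}(\xi)| = |U(a, b, -c\xi^{2})|\) and \(|\hat{E}(\xi)| = e^{-\real(c)\xi^{2}}|U(b - a, b, c\xi^{2})|\), apply the bound for \(U\) from \cite[Lemma~7.1]{Dahne2024}, and read off the \(\hat{J}_{E}\) and \(\hat{J}_{P}\) bounds from the representations \(\hat{J}_{E} = \hat{B}_{W}\hat{E}e^{c\xi^{2}}\xi^{d - 1}\) and \(\hat{J}_{P} = \hat{B}_{W}\hat{P}e^{c\xi^{2}}\xi^{d - 1}\). Your version only adds detail the paper leaves implicit: the factoring of the principal powers along the ray and the check of the \(\hat{B}_{W}\) representation against the Wronskian.
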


\begin{proof}
  From the definitions of \(\hat{P}\) and \(\hat{E}\), we get
  \begin{align*}
    \left|\hat{P}(\xi)\right| &= |U(a, b, -c\xi^{2})|,\\
    \left|\hat{E}(\xi)\right| &= |e^{-c\xi^{2}}U(b - a, b, c\xi^{2})|.
  \end{align*}
  The bounds then follow from the bounds for \(U\)
  from~\cite[Lemma~7.1]{Dahne2024}. The bounds for \(\hat{J}_{E}\) and
  \(\hat{J}_{P}\) are immediate consequences of their definitions and
  the bounds for \(\hat{E}\) and \(\hat{P}\), respectively.
\end{proof}

For the rest of this section, whenever the bounds from
Lemma~\ref{lemma:P-hat-E-hat-bounds} are used, we will implicitly
assume that the parameters \(\kappa\), \(\omega\) and
\(\epsilon\) satisfy its assumptions.

We then have the following lemma that bounds the integrals
\(I_{\hat{E}}\) and \(I_{\hat{P}}\).

\begin{lemma}\label{lemma:I_E-hat-I_P-hat-bounds}
  Assume that \(\xi_{1} > 1\) and \(\real(c) > 0\), and that the
  following inequalities are satisfied:
  \begin{align*}
    -\frac{2}{\sigma} + d - 4 &< 0,\\
    2\real(c) + \left(-\frac{2}{\sigma} + d - 4\right)\xi_{1}^{-2} &> 0.
  \end{align*}
  Then, for \(\hat{Q} \in \Space_{0}\) and \(\xi \geq \xi_{1}\), we
  have the following bounds:
  \begin{align*}
    |I_{\hat{E}}(\xi)| &\leq C_{I_{\hat{E}}}\xi^{-2}\|\hat{Q}\|_{0}^{2\sigma + 1},\\
    |I_{\hat{P}}(\xi)| &\leq C_{I_{\hat{P}}}e^{\real(c)\xi^{2}}\xi^{-\frac{2}{\sigma} + d - 4}\|\hat{Q}\|_{0}^{2\sigma + 1},
  \end{align*}
  with
  \begin{equation*}
    C_{I_{\hat{E}}} = \frac{C_{\hat{J}_{E}}}{2},\quad
    C_{I_{\hat{P}}} = \frac{C_{\hat{J}_{P}}}{2\real(c) + \left(-\frac{2}{\sigma} + d - 4\right)\xi_{1}^{-2}}.
  \end{equation*}
\end{lemma}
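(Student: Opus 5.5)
The plan is to insert the pointwise bound \(|\hat{Q}(\eta)| \leq \eta^{-\frac{1}{\sigma}}\|\hat{Q}\|_{0}\), which gives \(|\hat{Q}(\eta)|^{2\sigma+1} \leq \eta^{-2-\frac{1}{\sigma}}\|\hat{Q}\|_{0}^{2\sigma+1}\), into the two integrals. We then combine it with the bounds for \(\hat{J}_{E}\) and \(\hat{J}_{P}\) from Lemma~\ref{lemma:P-hat-E-hat-bounds} and evaluate or majorize the resulting scalar integrals explicitly.

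For \(I_{\hat{E}}\) this is immediate. Since \(|\hat{J}_{E}(\eta)| \leq C_{\hat{J}_{E}}\eta^{\frac{1}{\sigma}-1}\), the integrand is bounded by
\[
C_{\hat{J}_{E}}\eta^{-3}\|\hat{Q}\|_{0}^{2\sigma+1}.
\]
Integrating from \(\xi\) to \(\infty\) gives
\[
\frac{C_{\hat{J}_{E}}}{2}\xi^{-2}\|\hat{Q}\|_{0}^{2\sigma+1},
\]
which is exactly the claimed constant. Convergence of the integral at infinity is automatic, so this also shows \(I_{\hat{E}}\) is well defined for \(\hat{Q}\in\Space_{0}\).

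For \(I_{\hat{P}}\), the bound \(|\hat{J}_{P}(\eta)| \leq C_{\hat{J}_{P}}e^{\real(c)\eta^{2}}\eta^{-\frac{1}{\sigma}+d-1}\) gives
\[
|I_{\hat{P}}(\xi)| \leq C_{\hat{J}_{P}}\|\hat{Q}\|_{0}^{2\sigma+1}\int_{\xi_{1}}^{\xi} e^{\real(c)\eta^{2}}\eta^{k+1}\,d\eta,
\qquad k = -\tfrac{2}{\sigma}+d-4.
\]
The key step is to bound this integral by \(e^{\real(c)\xi^{2}}\xi^{k}\) divided by the stated denominator. I would do this by comparison with a derivative. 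We have
\[
\frac{d}{d\eta}\left(e^{\real(c)\eta^{2}}\eta^{k}\right) = e^{\real(c)\eta^{2}}\eta^{k+1}\left(2\real(c) + k\eta^{-2}\right).
\]
Since \(k<0\), the factor \(2\real(c)+k\eta^{-2}\) is increasing in \(\eta\). It is therefore bounded below on \([\xi_{1},\infty)\) by its value at \(\xi_{1}\), namely \(D = 2\real(c)+k\xi_{1}^{-2}\), which is positive by assumption. This yields
\[
e^{\real(c)\eta^{2}}\eta^{k+1} \leq D^{-1}\frac{d}{d\eta}\left(e^{\real(c)\eta^{2}}\eta^{k}\right).
\]
Integrating from \(\xi_{1}\) to \(\xi\) and dropping the negative boundary term at \(\xi_{1}\) gives the bound with \(C_{I_{\hat{P}}} = C_{\hat{J}_{P}}/D\).

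The only point requiring care is the monotonicity and sign argument in this last step, which is exactly where the two stated inequalities enter: \(k<0\) makes the factor increasing, and the second inequality makes it positive. The rest is routine.
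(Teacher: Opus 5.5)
Your proof is correct and follows essentially the paper's route. The \(I_{\hat{E}}\) bound is identical. For \(I_{\hat{P}}\), your pointwise comparison \(e^{\real(c)\eta^{2}}\eta^{k+1} \leq D^{-1}\frac{d}{d\eta}\bigl(e^{\real(c)\eta^{2}}\eta^{k}\bigr)\) is the differential form of the paper's argument: the paper integrates by parts, bounds \(\eta^{k-1} \leq \xi_{1}^{-2}\eta^{k+1}\), and absorbs the resulting \(-\frac{k}{2\real(c)}\xi_{1}^{-2}I\) term into the left-hand side, using the two hypotheses exactly as you do.
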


\begin{proof}
  Using the fact that
  \begin{equation*}
    |\hat{Q}(\eta)|^{2\sigma + 1}
    \leq \|\hat{Q}\|_{0}^{2\sigma + 1}\eta^{-\frac{1}{\sigma} - 2}
  \end{equation*}
  together with the bounds for \(\hat{J}_{E}\) and \(\hat{J}_{P}\) from
  Lemma~\ref{lemma:P-hat-E-hat-bounds}, we get
  \begin{equation*}
    |I_{\hat{E}}(\xi)|
    \leq \int_{\xi}^{\infty} |\hat{J}_{E}(\eta)||\hat{Q}(\eta)|^{2\sigma + 1}\,d\eta
    \leq C_{\hat{J}_{E}}\|\hat{Q}\|_{0}^{2\sigma + 1}
    \int_{\xi}^{\infty}\eta^{\frac{1}{\sigma} - 1}\eta^{-\frac{1}{\sigma} - 2}\,d\eta
    = \frac{C_{\hat{J}_{E}}}{2}\|\hat{Q}\|_{0}^{2\sigma + 1}\xi^{-2},
  \end{equation*}
  and
  \begin{equation*}
    \begin{split}
      |I_{\hat{P}}(\xi)|
      &\leq \int_{\xi_{1}}^{\xi} |\hat{J}_{P}(\eta)||\hat{Q}(\eta)|^{2\sigma + 1}\,d\eta\\
      &\leq C_{\hat{J}_{P}}\|\hat{Q}\|_{0}^{2\sigma + 1}
        \int_{\xi_{1}}^{\xi} e^{\real(c)\eta^{2}}\eta^{-\frac{1}{\sigma} + d - 1}\eta^{-\frac{1}{\sigma} - 2}\,d\eta\\
      &= C_{\hat{J}_{P}}\|\hat{Q}\|_{0}^{2\sigma + 1}
        \int_{\xi_{1}}^{\xi} e^{\real(c)\eta^{2}}\eta^{-\frac{2}{\sigma} + d - 3}\,d\eta.
    \end{split}
  \end{equation*}
  To bound the last integral, we integrate by parts, giving us
  \begin{equation*}
    \begin{split}
      \int_{\xi_{1}}^{\xi}e^{\real(c)\eta^{2}}\eta^{-\frac{2}{\sigma} + d - 3}\,d\eta
      &= \frac{1}{2\real(c)}\left[e^{\real(c)\eta^{2}}\eta^{-\frac{2}{\sigma} + d - 4}\right]_{\xi_{1}}^{\xi}
      - \frac{-\frac{2}{\sigma} + d - 4}{2\real(c)}\int_{\xi_{1}}^{\xi}e^{\real(c)\eta^{2}}\eta^{-\frac{2}{\sigma} + d - 5}\,d\eta\\
      &\leq \frac{1}{2\real(c)}e^{\real(c)\xi^{2}}\xi^{-\frac{2}{\sigma} + d - 4}
      - \frac{-\frac{2}{\sigma} + d - 4}{2\real(c)}\xi_{1}^{-2}\int_{\xi_{1}}^{\xi}e^{\real(c)\eta^{2}}\eta^{-\frac{2}{\sigma} + d - 3}\,d\eta.
    \end{split}
  \end{equation*}
  Letting
  \begin{equation*}
    I = \int_{\xi_{1}}^{\xi}e^{\real(c)\eta^{2}}\eta^{-\frac{2}{\sigma} + d - 3}\,d\eta
  \end{equation*}
  gives
  \begin{equation*}
    I \leq \frac{1}{2\real(c)}e^{\real(c)\xi^{2}}\xi^{-\frac{2}{\sigma} + d - 4}
    - \frac{-\frac{2}{\sigma} + d - 4}{2\real(c)}\xi_{1}^{-2}I.
  \end{equation*}
  Since, by assumption,
  \(2\real(c) + \left(-\frac{2}{\sigma} + d - 4\right)\xi_{1}^{-2} >
  0\), it follows that
  \begin{equation*}
    -\frac{-\frac{2}{\sigma} + d - 4}{2\real(c)}\xi_{1}^{-2} < 1,
  \end{equation*}
  and hence we can solve for \(I\), giving us
  \begin{equation*}
    I \leq \frac{1}{2\real(c) + (-\frac{2}{\sigma} + d - 4)\xi_{1}^{-2}}e^{\real(c)\xi^{2}}\xi^{-\frac{2}{\sigma} + d - 4}.
  \end{equation*}
\end{proof}

For the existence of a fixed point, we have the following adaptation
of~\cite[Lemma~A.2]{Plech2001} and~\cite[Lemma~7.4]{Dahne2024}.

\begin{lemma}\label{lemma:Q-hat-fixed-point-bounds}
  Under the assumptions of Lemma~\ref{lemma:I_E-hat-I_P-hat-bounds}
  and \(2 / \sigma - d < 0\), the operator \(\hat{T}\) defines a
  continuous mapping \(\hat{T} \colon \Space_{0} \to \Space_{0}\).
  Moreover,
  \begin{equation}\label{eq:T-1-hat}
    \|\hat{T}(\hat{Q})\|_{0}
    \leq C_{\hat{P}}|\hat{\gamma}_{1}|
    + C_{\hat{E}}|\hat{\gamma}_{2}|e^{-\real(c)\xi_{1}^{2}}\xi_{1}^{\frac{2}{\sigma} - d}
    + C_{\hat{T}}\xi_{1}^{-2}\|\hat{Q}\|_{0}^{2\sigma + 1}
  \end{equation}
  and
  \begin{equation}\label{eq:T-2-hat}
    \|\hat{T}(\hat{Q}) - \hat{T}(\hat{Z})\|_{0} \leq M_{\sigma}C_{\hat{T}}\|\hat{Q} - \hat{Z}\|_{0}(\|\hat{Q}\|_{0}^{2\sigma} + \|\hat{Z}\|_{0}^{2\sigma})\xi_{1}^{-2},
  \end{equation}
  for all \(\hat{Q}, \hat{Z} \in \Space_{0}\). Here,
  \begin{equation*}
    C_{\hat{T}} = C_{\hat{P}}C_{I_{\hat{E}}} + C_{\hat{E}}C_{I_{\hat{P}}}\xi_{1}^{-2}
  \end{equation*}
  and \(M_{\sigma}\) is as in~\cite[Lemma~A.7]{Dahne2024}.
\end{lemma}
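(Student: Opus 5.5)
The proof goes term by term through the definition~\eqref{eq:T-hat} of \(\hat{T}\), bounding each term in the weighted norm \(\|\cdot\|_{0}\) with Lemmas~\ref{lemma:P-hat-E-hat-bounds} and~\ref{lemma:I_E-hat-I_P-hat-bounds}. Fix \(\xi \geq \xi_{1}\) and multiply each term by \(\xi^{\frac{1}{\sigma}}\).

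\textbf{The two linear terms.} Lemma~\ref{lemma:P-hat-E-hat-bounds} gives
\(\xi^{\frac{1}{\sigma}}|\hat{\gamma}_{1}\hat{P}(\xi)| \leq C_{\hat{P}}|\hat{\gamma}_{1}|\).
For the second term it gives
\(\xi^{\frac{1}{\sigma}}|\hat{\gamma}_{2}\hat{E}(\xi)| \leq C_{\hat{E}}|\hat{\gamma}_{2}|e^{-\real(c)\xi^{2}}\xi^{\frac{2}{\sigma}-d}\).
Since \(\real(c) > 0\) and \(2/\sigma - d < 0\), the right-hand side is decreasing in \(\xi\). Its supremum is therefore attained at \(\xi_{1}\), which gives the second term in~\eqref{eq:T-1-hat}.

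\textbf{The two nonlinear terms.}
\begin{itemize}
\item For the first, combining the bounds on \(\hat{P}\) and \(I_{\hat{E}}\) gives \(\xi^{\frac{1}{\sigma}}|\hat{P}I_{\hat{E}}| \leq C_{\hat{P}}C_{I_{\hat{E}}}\xi^{-2}\|\hat{Q}\|_{0}^{2\sigma+1}\).
\item For the second, the exponentials \(e^{-\real(c)\xi^{2}}\) and \(e^{\real(c)\xi^{2}}\) cancel, and the powers combine as \(\frac{1}{\sigma} + \frac{1}{\sigma} - d - \frac{2}{\sigma} + d - 4 = -4\). This gives \(\xi^{\frac{1}{\sigma}}|\hat{E}I_{\hat{P}}| \leq C_{\hat{E}}C_{I_{\hat{P}}}\xi^{-4}\|\hat{Q}\|_{0}^{2\sigma+1}\).
\end{itemize}
Bounding \(\xi^{-2} \leq \xi_{1}^{-2}\) and \(\xi^{-4} \leq \xi_{1}^{-4}\) and summing yields \(C_{\hat{T}}\xi_{1}^{-2}\|\hat{Q}\|_{0}^{2\sigma+1}\), which proves~\eqref{eq:T-1-hat}. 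This bound shows in particular that \(\hat{T}\) maps \(\Space_{0}\) into itself. Continuity of \(\hat{T}(\hat{Q})\) in \(\xi\) holds because \(\hat{P}\), \(\hat{E}\) are continuous and the integrals are absolutely convergent, by the bounds just obtained.

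\textbf{The difference estimate.} I would use the pointwise inequality from~\cite[Lemma~A.7]{Dahne2024}:
\begin{equation*}
\bigl||\hat{Q}|^{2\sigma}\hat{Q} - |\hat{Z}|^{2\sigma}\hat{Z}\bigr| \leq M_{\sigma}|\hat{Q}-\hat{Z}|\left(|\hat{Q}|^{2\sigma} + |\hat{Z}|^{2\sigma}\right).
\end{equation*}
In the weighted norm the right-hand side is at most \(M_{\sigma}\|\hat{Q}-\hat{Z}\|_{0}(\|\hat{Q}\|_{0}^{2\sigma}+\|\hat{Z}\|_{0}^{2\sigma})\eta^{-\frac{1}{\sigma}-2}\). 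This is the same decay \(\eta^{-\frac{1}{\sigma}-2}\) used for \(|\hat{Q}|^{2\sigma+1}\) in the proof of Lemma~\ref{lemma:I_E-hat-I_P-hat-bounds}. The integral estimates of that lemma, including the integration by parts for \(I_{\hat{P}}\), therefore go through verbatim with \(\|\hat{Q}\|_{0}^{2\sigma+1}\) replaced by this factor. The linear terms cancel in \(\hat{T}(\hat{Q}) - \hat{T}(\hat{Z})\), so the same weighting argument gives~\eqref{eq:T-2-hat}. The estimate~\eqref{eq:T-2-hat} also gives Lipschitz continuity of \(\hat{T}\) on bounded sets, hence continuity of \(\hat{T}\) as a map \(\Space_{0} \to \Space_{0}\).

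\textbf{Main obstacle.} There is no real obstacle beyond bookkeeping. The points to check carefully are the monotonicity used for the \(\hat{\gamma}_{2}\) term, which is exactly where the hypothesis \(2/\sigma - d < 0\) enters, and the exact cancellation of the exponential and polynomial factors in \(\hat{E}I_{\hat{P}}\).
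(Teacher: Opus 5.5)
Your proposal is correct and follows the paper's proof essentially step for step. It bounds \(\xi^{1/\sigma}|\hat{T}(\hat{Q})(\xi)|\) termwise via Lemmas~\ref{lemma:P-hat-E-hat-bounds} and~\ref{lemma:I_E-hat-I_P-hat-bounds}, uses monotonicity (via \(2/\sigma - d < 0\) and \(\real(c) > 0\)) for the \(\hat{\gamma}_{2}\) term, and derives the Lipschitz bound from \cite[Lemma~A.7]{Dahne2024} and the same integral estimates, with your ``Lipschitz on bounded sets'' remark being slightly more precise than the paper's wording.
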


\begin{proof}
  For~\eqref{eq:T-1-hat}, we want to bound
  \(\xi^{\frac{1}{\sigma}}|\hat{T}(\hat{Q})(\xi)|\). Bounding it
  termwise, we get
  \begin{equation*}
    \xi^{\frac{1}{\sigma}}|\hat{T}(\hat{Q})(\xi)| \leq
    \xi^{\frac{1}{\sigma}}|\hat{\gamma}_{1}| |\hat{P}(\xi)|
    + \xi^{\frac{1}{\sigma}}|\hat{\gamma}_{2}| |\hat{E}(\xi)|
    + \xi^{\frac{1}{\sigma}}|\hat{P}(\xi)||I_{\hat{E}}(\xi)|
    + \xi^{\frac{1}{\sigma}}|\hat{E}(\xi)||I_{\hat{P}}(\xi)|.
  \end{equation*}
  From Lemma~\ref{lemma:P-hat-E-hat-bounds}, we get for the first two
  terms
  \begin{equation*}
    \xi^{\frac{1}{\sigma}}|\hat{\gamma}_{1}| |\hat{P}(\xi)| \leq C_{\hat{P}}|\hat{\gamma}_{1}|
  \end{equation*}
  and
  \begin{equation*}
    \xi^{\frac{1}{\sigma}}|\hat{\gamma}_{2}| |\hat{E}(\xi)|
    \leq C_{\hat{E}}|\hat{\gamma}_{2}|e^{-\real(c)\xi^{2}}\xi^{\frac{2}{\sigma} - d}
    \leq C_{\hat{E}}|\hat{\gamma}_{2}|e^{-\real(c)\xi_{1}^{2}}\xi_{1}^{\frac{2}{\sigma} - d},
  \end{equation*}
  where we have used that \(2 / \sigma - d < 0\) and \(\real(c) > 0\)
  to bound the last term at \(\xi_{1}\). Using
  Lemmas~\ref{lemma:P-hat-E-hat-bounds}
  and~\ref{lemma:I_E-hat-I_P-hat-bounds}, we get for the other two
  terms that
  \begin{equation*}
    \xi^{\frac{1}{\sigma}}|\hat{P}(\xi)||I_{\hat{E}}(\xi)|
    \leq C_{\hat{P}}C_{I_{\hat{E}}}\|\hat{Q}\|_{0}^{2\sigma + 1}\xi_{1}^{-2}
  \end{equation*}
  and
  \begin{equation*}
    \xi^{\frac{1}{\sigma}}|\hat{E}(\xi)||I_{\hat{P}}(\xi)|
    \leq C_{\hat{E}}C_{I_{\hat{P}}}\|\hat{Q}\|_{0}^{2\sigma + 1}\xi_{1}^{-4}.
  \end{equation*}
  Combining all these bounds gives us~\eqref{eq:T-1-hat}.

  For~\eqref{eq:T-2-hat}, we get
  \begin{multline*}
    \xi^{\frac{1}{\sigma}}|\hat{T}(\hat{Q})(\xi) - \hat{T}(\hat{Z})(\xi)|
    \leq \xi^{\frac{1}{\sigma}}|\hat{P}(\xi)|
    \int_{\xi}^{\infty}|\hat{J}_{E}(\eta)|\left||\hat{Q}(\eta)|^{2\sigma}\hat{Q}(\eta) - |\hat{Z}(\eta)|^{2\sigma}\hat{Z}(\eta)\right|\,d\eta\\
    + \xi^{\frac{1}{\sigma}}|\hat{E}(\xi)|
    \int_{\xi_{1}}^{\xi}|\hat{J}_{P}(\eta)|\left||\hat{Q}(\eta)|^{2\sigma}\hat{Q}(\eta) - |\hat{Z}(\eta)|^{2\sigma}\hat{Z}(\eta)\right|\,d\eta.
  \end{multline*}
  By~\cite[Lemma~A.7]{Dahne2024}, we have the bound
  \begin{equation*}
    \left||\hat{Q}(\eta)|^{2\sigma}\hat{Q}(\eta) - |\hat{Z}(\eta)|^{2\sigma}\hat{Z}(\eta)\right|
    \leq M_{\sigma} |\hat{Q}(\eta) - \hat{Z}(\eta)|(|\hat{Q}(\eta)|^{2\sigma} + |\hat{Z}(\eta)|^{2\sigma}),
  \end{equation*}
  giving us
  \begin{equation*}
    \left||\hat{Q}(\eta)|^{2\sigma}\hat{Q}(\eta) - |\hat{Z}(\eta)|^{2\sigma}\hat{Z}(\eta)\right|
    \leq M_{\sigma} \|\hat{Q} - \hat{Z}\|_{0}(\|\hat{Q}\|_{0}^{2\sigma} + \|\hat{Z}\|_{0}^{2\sigma})\eta^{-\frac{1}{\sigma} - 2}.
  \end{equation*}
  Together with Lemma~\ref{lemma:P-hat-E-hat-bounds}, this gives us
  \begin{equation*}
    \int_{\xi}^{\infty}|\hat{J}_{E}(\eta)|\left||\hat{Q}(\eta)|^{2\sigma}\hat{Q}(\eta) - |\hat{Z}(\eta)|^{2\sigma}\hat{Z}(\eta)\right|\,d\eta
    \leq C_{\hat{J}_{E}}M_{\sigma} \|\hat{Q} - \hat{Z}\|_{0}(\|\hat{Q}\|_{0}^{2\sigma} + \|\hat{Z}\|_{0}^{2\sigma})
    \int_{\xi}^{\infty}\eta^{-3}\,d\eta
  \end{equation*}
  and
  \begin{multline*}
    \int_{\xi_{1}}^{\xi}|\hat{J}_{P}(\eta)|\left||\hat{Q}(\eta)|^{2\sigma}\hat{Q}(\eta) - |\hat{Z}(\eta)|^{2\sigma}\hat{Z}(\eta)\right|\,d\eta\\
    \leq C_{\hat{J}_{P}}M_{\sigma} \|\hat{Q} - \hat{Z}\|_{0}(\|\hat{Q}\|_{0}^{2\sigma} + \|\hat{Z}\|_{0}^{2\sigma})
    \int_{\xi_{1}}^{\xi}e^{\real(c)\eta^{2}}\eta^{-\frac{2}{\sigma} + d - 3}\,d\eta.
  \end{multline*}
  The remaining integrals can be bounded as in
  Lemma~\ref{lemma:I_E-hat-I_P-hat-bounds}, giving us the bounds
  \begin{equation*}
    \int_{\xi}^{\infty}|\hat{J}_{E}(\eta)|\left||\hat{Q}(\eta)|^{2\sigma}\hat{Q}(\eta) - |\hat{Z}(\eta)|^{2\sigma}\hat{Z}(\eta)\right|\,d\eta
    \leq C_{I_{\hat{E}}}M_{\sigma} \|\hat{Q} - \hat{Z}\|_{0}(\|\hat{Q}\|_{0}^{2\sigma} + \|\hat{Z}\|_{0}^{2\sigma})
    \xi^{-2}
  \end{equation*}
  and
  \begin{multline*}
    \int_{\xi_{1}}^{\xi}|\hat{J}_{P}(\eta)|\left||\hat{Q}(\eta)|^{2\sigma}\hat{Q}(\eta) - |\hat{Z}(\eta)|^{2\sigma}\hat{Z}(\eta)\right|\,d\eta\\
    \leq C_{I_{\hat{P}}}M_{\sigma} \|\hat{Q} - \hat{Z}\|_{0}(\|\hat{Q}\|_{0}^{2\sigma} + \|\hat{Z}\|_{0}^{2\sigma})
    e^{\real(c)\xi^{2}}\xi^{-\frac{2}{\sigma} + d - 4}.
  \end{multline*}
  Combined with Lemma~\ref{lemma:P-hat-E-hat-bounds}, this gives us
  \begin{equation*}
    \xi^{\frac{1}{\sigma}}|\hat{T}(\hat{Q})(\xi) - \hat{T}(\hat{Z})(\xi)|
    \leq (C_{\hat{P}}C_{I_{\hat{E}}} + C_{\hat{E}}C_{I_{\hat{P}}}\xi_{1}^{-2})M_{\sigma} \|\hat{Q} - \hat{Z}\|_{0}(\|\hat{Q}\|_{0}^{2\sigma} + \|\hat{Z}\|_{0}^{2\sigma})\xi_{1}^{-2}.
  \end{equation*}

  That \(\hat{T}\) maps \(\Space_{0}\) to \(\Space_{0}\) follows from
  the continuity of \(\hat{T}(\hat{Q})(\xi)\) in \(\xi\) and the
  bound~\eqref{eq:T-1-hat}. That the mapping is continuous is
  immediate from the Lipschitz bound~\eqref{eq:T-2-hat}.
\end{proof}

The process then follows the same path as for the backward equation.
The estimates~\eqref{eq:T-1-hat} and~\eqref{eq:T-2-hat} show that
\(\hat{T}\) is a contraction of the ball
\(B_{\rho} = \{\hat{Q} \in \Space_{0} \colon \|\hat{Q}\|_{0} \leq \rho\}\)
into itself if \(\rho\) is such that
\begin{align}
  \label{eq:T-hat-ineq-1}
  C_{\hat{P}}|\hat{\gamma}_{1}|
  + C_{\hat{E}}|\hat{\gamma}_{2}|e^{-\real(c)\xi_{1}^{2}}\xi_{1}^{\frac{2}{\sigma} - d}
  + C_{\hat{T}}\xi_{1}^{-2}\rho^{2\sigma + 1} &\leq \rho,\\
  \label{eq:T-hat-ineq-2}
  2M_{\sigma}C_{\hat{T}}\xi_{1}^{-2}\rho^{2\sigma} &< 1.
\end{align}
This gives us the following result.

\begin{proposition}\label{prop:Q-hat-fixed-point}
  Under the assumptions of Lemma~\ref{lemma:Q-hat-fixed-point-bounds},
  if \(\rho\) is such that the two
  inequalities~\eqref{eq:T-hat-ineq-1} and~\eqref{eq:T-hat-ineq-2} are
  satisfied, then the map \(\hat{T}\) has a unique fixed point
  \(\hat{Q}\) in the ball \(B_{\rho}\).
\end{proposition}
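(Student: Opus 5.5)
This is a direct application of the Banach fixed-point theorem on the closed ball \(B_{\rho}\subset\Space_{0}\). Since \(\Space_{0}\) is a Banach space (a weighted space of continuous functions with a sup-norm) and \(B_{\rho}\) is closed in it, \(B_{\rho}\) is a complete metric space. It therefore suffices to show two things: that \(\hat{T}\) maps \(B_{\rho}\) into itself, and that it is a strict contraction there. Lemma~\ref{lemma:Q-hat-fixed-point-bounds} already gives that \(\hat{T}\colon\Space_{0}\to\Space_{0}\) is well defined and continuous, so only these two quantitative properties need checking.

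For self-mapping, take \(\hat{Q}\in B_{\rho}\), so \(\|\hat{Q}\|_{0}\le\rho\). The bound~\eqref{eq:T-1-hat} then gives
\[
\|\hat{T}(\hat{Q})\|_{0}\le C_{\hat{P}}|\hat{\gamma}_{1}| + C_{\hat{E}}|\hat{\gamma}_{2}|e^{-\real(c)\xi_{1}^{2}}\xi_{1}^{\frac{2}{\sigma}-d} + C_{\hat{T}}\xi_{1}^{-2}\rho^{2\sigma+1}.
\]
Here we use that the right-hand side is monotone in \(\|\hat{Q}\|_{0}\), which holds because all constants are nonnegative. By~\eqref{eq:T-hat-ineq-1} this is at most \(\rho\), so \(\hat{T}(B_{\rho})\subseteq B_{\rho}\).

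For the contraction, take \(\hat{Q},\hat{Z}\in B_{\rho}\). Then~\eqref{eq:T-2-hat} gives
\[
\|\hat{T}(\hat{Q})-\hat{T}(\hat{Z})\|_{0}\le 2M_{\sigma}C_{\hat{T}}\xi_{1}^{-2}\rho^{2\sigma}\,\|\hat{Q}-\hat{Z}\|_{0},
\]
and by~\eqref{eq:T-hat-ineq-2} the Lipschitz constant is strictly less than \(1\). The Banach fixed-point theorem then yields existence and uniqueness of a fixed point in \(B_{\rho}\).

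There is essentially no obstacle: the analytic work has already been done in Lemmas~\ref{lemma:I_E-hat-I_P-hat-bounds} and~\ref{lemma:Q-hat-fixed-point-bounds}. The only points that deserve a brief remark are the completeness of \(\Space_{0}\) and the monotonicity used above. For completeness, a Cauchy sequence in the weighted sup-norm converges locally uniformly to a continuous limit, and the weighted bound passes to that limit. One might also note that, by the preceding variation-of-parameters discussion, this fixed point is a solution of~\eqref{eq:Q-hat-infty-equation} on \([\xi_{1},\infty)\), although that is not part of the statement.
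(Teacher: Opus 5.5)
Your proposal is correct and matches the paper's argument. The paper likewise notes that \eqref{eq:T-1-hat} and \eqref{eq:T-2-hat}, together with \eqref{eq:T-hat-ineq-1} and \eqref{eq:T-hat-ineq-2}, make \(\hat{T}\) a contraction of \(B_{\rho}\) into itself, and then concludes by the Banach fixed-point theorem; your remarks on the completeness of \(\Space_{0}\) and on the monotonicity of the bound in \(\|\hat{Q}\|_{0}\) just make explicit what the paper leaves implicit.
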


\subsection{Enclosures for fixed point}
\label{sec:fixed-point-enclosures-hat}

Given the existence of a fixed point, we are interested in computing
refined enclosures of \(\hat{Q}\), \(\hat{Q}'\),
\(\hat{Q}_{\real\hat{\gamma}_{2}}\),
\(\hat{Q}_{\imag\hat{\gamma}_{2}}\),
\(\hat{Q}_{\real\hat{\gamma}_{2}}'\) and
\(\hat{Q}_{\imag\hat{\gamma}_{2}}'\) at \(\xi = \xi_{1}\). Compared
to~\cite{Dahne2024}, a slightly different approach is used and the
presentation therefore follows a different structure. First, we
discuss how to compute enclosures of \(\hat{Q}\), after which we
describe the adjustments needed to handle
\(\hat{Q}_{\real\hat{\gamma}_{2}}\) and
\(\hat{Q}_{\imag\hat{\gamma}_{2}}\); finally, we go through how to
enclose \(\hat{Q}'\), \(\hat{Q}_{\real\hat{\gamma}_{2}}'\) and
\(\hat{Q}_{\imag\hat{\gamma}_{2}}'\).

At \(\xi = \xi_{1}\), we have \(I_{\hat{P}}(\xi_{1}) = 0\), hence
\begin{equation*}
  \hat{Q}(\xi_{1})
  = \hat{\gamma}_{1}\hat{P}(\xi_{1}) + \hat{\gamma}_{2}\hat{E}(\xi_{1})
  + \hat{P}(\xi_{1})I_{\hat{E}}(\xi_{1}).
\end{equation*}
We can readily compute accurate enclosures of both
\(\hat{P}(\xi_{1})\) and \(\hat{E}(\xi_{1})\). What remains is thus
enclosing \(I_{\hat{E}}(\xi_{1})\). While an enclosure can be computed
using Lemma~\ref{lemma:I_E-hat-I_P-hat-bounds}, the resulting bounds
are not quite good enough for our purposes. Instead, we extract the
leading term of the integrand and integrate that explicitly. As a
first step, we isolate the leading term of \(\hat{Q}\), which is also
needed for matching the backward and forward solutions.

\begin{lemma}\label{lemma:Q-hat-leading-term}
  Suppose the assumptions of Lemma~\ref{lemma:I_E-hat-I_P-hat-bounds}
  hold. Then, for \(\xi \geq \xi_{1}\), we have
  \begin{equation*}
    \hat{Q}(\xi) = \hat{\gamma}_{1}\hat{P}(\xi) + R_{\hat{Q}}(\xi)
  \end{equation*}
  where
  \begin{equation*}
    |R_{\hat{Q}}(\xi)| \leq C_{R,\hat{Q}}\xi^{-\frac{1}{\sigma} - 2}
  \end{equation*}
  with
  \begin{equation*}
    C_{R,\hat{Q}} = |\hat{\gamma}_{2}|C_{\hat{E}}e^{-\real(c)\xi_{1}^{2}}\xi_{1}^{\frac{2}{\sigma} - d + 2}
    + C_{\hat{T}}\|\hat{Q}\|_{0}^{2\sigma + 1}.
  \end{equation*}
  In particular,
  \begin{equation*}
    \hat{Q}(\xi) = p_{\hat{Q}}\xi^{-2a} + \mathcal{O}(\xi^{-\frac{1}{\sigma} - 2})
  \end{equation*}
  with \(p_{\hat{Q}} = \hat{\gamma}_{1}(-c)^{-a}\).
\end{lemma}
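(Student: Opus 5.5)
The proof starts from the fixed-point equation~\eqref{eq:Q-hat-infty-fixed-point-equation}: since \(\hat{Q}\) is a fixed point of \(\hat{T}\), we can set
\begin{equation*}
  R_{\hat{Q}}(\xi) = \hat{\gamma}_{2}\hat{E}(\xi) + \hat{P}(\xi)I_{\hat{E}}(\xi) + \hat{E}(\xi)I_{\hat{P}}(\xi),
\end{equation*}
and bound the three terms separately, using Lemmas~\ref{lemma:P-hat-E-hat-bounds} and~\ref{lemma:I_E-hat-I_P-hat-bounds}. Each term will be brought to the common form \(\text{const}\cdot\xi^{-\frac{1}{\sigma}-2}\), and the constants added up.

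For the first term, Lemma~\ref{lemma:P-hat-E-hat-bounds} gives \(|\hat{\gamma}_{2}\hat{E}(\xi)| \leq |\hat{\gamma}_{2}|C_{\hat{E}}e^{-\real(c)\xi^{2}}\xi^{\frac{1}{\sigma}-d}\). Write this as
\begin{equation*}
  \xi^{-\frac{1}{\sigma}-2}\cdot e^{-\real(c)\xi^{2}}\xi^{\frac{2}{\sigma}-d+2}.
\end{equation*}
The second factor must be bounded by its value at \(\xi_{1}\), which requires it to be nonincreasing on \([\xi_{1},\infty)\). Its logarithmic derivative is \(-2\real(c)\xi + (\frac{2}{\sigma}-d+2)\xi^{-1}\). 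Since \(2/\sigma-d<0\) in our setting, the coefficient satisfies \(\frac{2}{\sigma}-d+2 \leq 2\real(c)\xi_{1}^{2}\), so the derivative is nonpositive. If one wants to rely only on the hypotheses of Lemma~\ref{lemma:I_E-hat-I_P-hat-bounds}, the inequality \(2\real(c)+(-\frac{2}{\sigma}+d-4)\xi_{1}^{-2}>0\) gives the same conclusion after rearranging. This monotonicity check is the only step needing care. It yields the first constant, \(|\hat{\gamma}_{2}|C_{\hat{E}}e^{-\real(c)\xi_{1}^{2}}\xi_{1}^{\frac{2}{\sigma}-d+2}\).

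For the second term, \(|\hat{P}||I_{\hat{E}}| \leq C_{\hat{P}}C_{I_{\hat{E}}}\|\hat{Q}\|_{0}^{2\sigma+1}\xi^{-\frac{1}{\sigma}-2}\) follows directly from the two lemmas. For the third term, the exponentials cancel and we get \(|\hat{E}||I_{\hat{P}}| \leq C_{\hat{E}}C_{I_{\hat{P}}}\|\hat{Q}\|_{0}^{2\sigma+1}\xi^{-\frac{1}{\sigma}-4}\). Using \(\xi^{-2}\leq\xi_{1}^{-2}\), this is at most \(C_{\hat{E}}C_{I_{\hat{P}}}\xi_{1}^{-2}\|\hat{Q}\|_{0}^{2\sigma+1}\xi^{-\frac{1}{\sigma}-2}\). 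The second and third constants add to exactly \(C_{\hat{T}}\|\hat{Q}\|_{0}^{2\sigma+1}\), so together with the first term this gives the stated \(C_{R,\hat{Q}}\).

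For the final claim, use the standard asymptotic expansion \(U(a,b,z) = z^{-a}(1+\mathcal{O}(z^{-1}))\) as \(z\to\infty\) in the relevant sector. This is the same expansion that underlies the bound \(C_{U}\) of~\cite[Lemma~7.1]{Dahne2024}, where taking \(n=1\) gives a remainder of order \(|z|^{-a-1}\). With \(z=-c\xi^{2}\) and principal branches, \((-c\xi^{2})^{-a} = (-c)^{-a}\xi^{-2a}\), because \(\xi^{2}>0\). Hence \(\hat{P}(\xi) = (-c)^{-a}\xi^{-2a} + \mathcal{O}(\xi^{-2\real(a)-2})\). Since \(2\real(a)=1/\sigma\), this error is \(\mathcal{O}(\xi^{-\frac{1}{\sigma}-2})\). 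Combining with the bound on \(R_{\hat{Q}}\) gives \(\hat{Q}(\xi) = p_{\hat{Q}}\xi^{-2a} + \mathcal{O}(\xi^{-\frac{1}{\sigma}-2})\) with \(p_{\hat{Q}}=\hat{\gamma}_{1}(-c)^{-a}\).
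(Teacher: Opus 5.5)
Your proposal is correct and follows the paper's proof essentially step by step. It uses the same decomposition $R_{\hat{Q}} = \hat{\gamma}_{2}\hat{E} + \hat{P}I_{\hat{E}} + \hat{E}I_{\hat{P}}$, the same termwise bounds, the same monotonicity argument for $e^{-\real(c)\xi^{2}}\xi^{\frac{2}{\sigma}-d+2}$ via $2\real(c)\xi_{1}^{2} > \frac{2}{\sigma}-d+4$ from Lemma~\ref{lemma:I_E-hat-I_P-hat-bounds}, and the same leading term of $\hat{P}$. The one slip is your first justification of monotonicity: $\frac{2}{\sigma}-d<0$ alone does not give $\frac{2}{\sigma}-d+2 \leq 2\real(c)\xi_{1}^{2}$ (it would also need $\real(c)\xi_{1}^{2}\geq 1$), so it is your second route, through the hypothesis of Lemma~\ref{lemma:I_E-hat-I_P-hat-bounds}, that actually carries the argument.
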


\begin{proof}
  We have
  \begin{equation*}
    \hat{Q}(\xi) = \hat{\gamma}_{1}\hat{P}(\xi) + \hat{\gamma}_{2}\hat{E}(\xi)
    + \hat{P}(\xi)I_{\hat{E}}(\xi) + \hat{E}(\xi)I_{\hat{P}}(\xi).
  \end{equation*}
  This gives us
  \(\hat{Q}(\xi) = \hat{\gamma}_{1}\hat{P}(\xi) + R_{\hat{Q}}(\xi)\),
  where
  \begin{equation*}
    R_{\hat{Q}}(\xi) = \hat{\gamma}_{2}\hat{E}(\xi) + \hat{P}(\xi)I_{\hat{E}}(\xi) + \hat{E}(\xi)I_{\hat{P}}(\xi).
  \end{equation*}
  To bound \(R_{\hat{Q}}(\xi)\), we make use of
  Lemmas~\ref{lemma:P-hat-E-hat-bounds}
  and~\ref{lemma:I_E-hat-I_P-hat-bounds}, yielding
  \begin{equation*}
    \begin{split}
      |R_{\hat{Q}}(\xi)|
      &\leq |\hat{\gamma}_{2}|C_{\hat{E}}e^{-\real(c)\xi^{2}}\xi^{\frac{1}{\sigma} - d}\\
      &\quad+ C_{\hat{P}}\xi^{-\frac{1}{\sigma}}C_{I_{\hat{E}}}\xi^{-2}\|\hat{Q}\|_{0}^{2\sigma + 1}\\
      &\quad+ C_{\hat{E}}e^{-\real(c)\xi^{2}}\xi^{\frac{1}{\sigma} - d}C_{I_{\hat{P}}}e^{\real(c)\xi^{2}}\xi^{-\frac{2}{\sigma} + d - 4}\|\hat{Q}\|_{0}^{2\sigma + 1}\\
      &= |\hat{\gamma}_{2}|C_{\hat{E}}e^{-\real(c)\xi^{2}}\xi^{\frac{1}{\sigma} - d}
      + C_{\hat{P}}C_{I_{\hat{E}}}\xi^{-\frac{1}{\sigma} - 2}\|\hat{Q}\|_{0}^{2\sigma + 1}
      + C_{\hat{E}}C_{I_{\hat{P}}}\xi^{-\frac{1}{\sigma} - 4}\|\hat{Q}\|_{0}^{2\sigma + 1}.
    \end{split}
  \end{equation*}
  Factoring out \(\xi^{-\frac{1}{\sigma} - 2}\) from the bound, we get
  \begin{equation*}
    |R_{\hat{Q}}(\xi)|
    \leq \left(|\hat{\gamma}_{2}|C_{\hat{E}}e^{-\real(c)\xi^{2}}\xi^{\frac{2}{\sigma} - d + 2}
      + (C_{\hat{P}}C_{I_{\hat{E}}} + C_{\hat{E}}C_{I_{\hat{P}}}\xi^{-2})\|\hat{Q}\|_{0}^{2\sigma + 1}
    \right)\xi^{-\frac{1}{\sigma} - 2}.
  \end{equation*}
  To get a uniform bound for
  \(e^{-\real(c)\xi^{2}}\xi^{\frac{2}{\sigma} - d + 2}\), we
  differentiate, giving us
  \begin{equation*}
    \left(-2\real(c) + \left(\frac{2}{\sigma} - d + 2\right)\xi^{-2}\right)
    e^{-\real(c)\xi^{2}}\xi^{\frac{2}{\sigma} - d + 3}.
  \end{equation*}
  By the assumptions of Lemma~\ref{lemma:I_E-hat-I_P-hat-bounds}, we
  have
  \(2\real(c) > \left(\frac{2}{\sigma} - d + 4\right)\xi_{1}^{-2}\),
  and hence, for \(\xi \geq \xi_{1}\), the factor
  \(-2\real(c) + \left(\frac{2}{\sigma} - d + 2\right)\xi^{-2}\) is
  negative and the function is decreasing. This gives us
  \begin{equation*}
    |R_{\hat{Q}}(\xi)|
    \leq \left(|\hat{\gamma}_{2}|C_{\hat{E}}e^{-\real(c)\xi_{1}^{2}}\xi_{1}^{\frac{2}{\sigma} - d + 2}
      + (C_{\hat{P}}C_{I_{\hat{E}}} + C_{\hat{E}}C_{I_{\hat{P}}}\xi_{1}^{-2})\|\hat{Q}\|_{0}^{2\sigma + 1}
    \right)\xi^{-\frac{1}{\sigma} - 2}.
  \end{equation*}

  That
  \(\hat{Q}(\xi) = p_{\hat{Q}}\xi^{-2a} +
  \mathcal{O}(\xi^{-\frac{1}{\sigma} - 2})\) follows immediately from
  the fact that
  \begin{equation*}
    \hat{P}(\xi) = (-c)^{-a}\xi^{-2a} + \mathcal{O}(\xi^{-\frac{1}{\sigma} - 2}).
  \end{equation*}
\end{proof}

The above lemma allows us to extract the leading term of
\(I_{\hat{E}}\). To simplify the handling of the nonlinearity, we
specialize to the case \(\sigma = 1\). However, to make comparison
with other lemmas simpler, we keep \(\sigma\) in parts of the
formulas.

\begin{lemma}\label{lemma:I_E-hat-enclosure}
  Let \(\sigma = 1\). Under the assumptions of
  Lemma~\ref{lemma:Q-hat-leading-term}, we have, for
  \(\xi \geq \xi_{1}\),
  \begin{equation*}
    I_{\hat{E}}(\xi)
    = -|\hat{\gamma}_{1}|^{2}\hat{\gamma}_{1}\int_{\xi}^{\infty} \hat{J}_{E}(\eta)|\hat{P}(\eta)|^{2}\hat{P}(\eta)\,d\eta
    + R_{I_{\hat{E}}}(\xi),
  \end{equation*}
  with
  \begin{equation*}
    |R_{I_{\hat{E}}}(\xi)|
    \leq C_{\hat{J}_{E}}
    \left(
    \frac{3|\hat{\gamma}_{1}|^{2}C_{\hat{P}}^{2}C_{R,\hat{Q}}}{|-\frac{2}{\sigma} - 2|}
    + \frac{3|\hat{\gamma}_{1}|C_{\hat{P}}C_{R,\hat{Q}}^{2}}{|-\frac{2}{\sigma} - 4|}\xi^{-2}
    + \frac{C_{R,\hat{Q}}^{3}}{|-\frac{2}{\sigma} - 6|}\xi^{-4}
    \right)\xi^{-\frac{2}{\sigma} - 2}.
  \end{equation*}
\end{lemma}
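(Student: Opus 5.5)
With \(\sigma = 1\), I will write \(\hat{Q} = \hat{\gamma}_{1}\hat{P} + R_{\hat{Q}}\), as in Lemma~\ref{lemma:Q-hat-leading-term}, and expand the cubic nonlinearity. Put \(p = \hat{\gamma}_{1}\hat{P}\) and \(r = R_{\hat{Q}}\). Then
\begin{equation*}
  |p + r|^{2}(p + r) - |p|^{2}p
  = 2|p|^{2}r + p^{2}\conj{r} + 2p|r|^{2} + \conj{p}r^{2} + |r|^{2}r .
\end{equation*}
The leading term \(|p|^{2}p = |\hat{\gamma}_{1}|^{2}\hat{\gamma}_{1}|\hat{P}|^{2}\hat{P}\) produces exactly the explicit integral in the statement. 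Everything else goes into \(R_{I_{\hat{E}}}\). The remainder collects into a part linear in \(r\) with absolute value at most \(3|p|^{2}|r|\), a quadratic part at most \(3|p||r|^{2}\), and a cubic part at most \(|r|^{3}\).

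Next I insert the pointwise bounds. From Lemma~\ref{lemma:P-hat-E-hat-bounds} I have \(|p| \leq |\hat{\gamma}_{1}|C_{\hat{P}}\eta^{-1/\sigma}\) and \(|\hat{J}_{E}(\eta)| \leq C_{\hat{J}_{E}}\eta^{1/\sigma - 1}\). From Lemma~\ref{lemma:Q-hat-leading-term} I have \(|r| \leq C_{R,\hat{Q}}\eta^{-1/\sigma - 2}\).

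For the linear part this gives an integrand bounded by
\begin{equation*}
  3C_{\hat{J}_{E}}|\hat{\gamma}_{1}|^{2}C_{\hat{P}}^{2}C_{R,\hat{Q}}\,\eta^{-\frac{2}{\sigma} - 3}.
\end{equation*}
Integrating from \(\xi\) to \(\infty\) yields \(\xi^{-2/\sigma - 2}/|{-\frac{2}{\sigma}} - 2|\). The quadratic and cubic parts carry the extra factors \(\eta^{-2}\) and \(\eta^{-4}\), so they integrate to \(\xi^{-2/\sigma - 4}/|{-\frac{2}{\sigma}} - 4|\) and \(\xi^{-2/\sigma - 6}/|{-\frac{2}{\sigma}} - 6|\). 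Factoring out \(\xi^{-2/\sigma - 2}\) gives exactly the claimed constant.

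The only points to check are the following.
\begin{itemize}
\item The explicit leading integral converges. It does, since its integrand is \(\mathcal{O}(\eta^{-3})\).
\item The splitting of \(I_{\hat{E}}\) into the leading integral plus the remainder integral is legitimate. It is, because both integrals converge absolutely.
\item The coefficient count in the cubic expansion is correct, giving at most \(3\) for the linear and quadratic parts, uniformly in the phases. This is the only mildly delicate point. It amounts to bounding \(|2|p|^{2}r + p^{2}\conj{r}| \leq 3|p|^{2}|r|\) and \(|2p|r|^{2} + \conj{p}r^{2}| \leq 3|p||r|^{2}\) by the triangle inequality.
\end{itemize}
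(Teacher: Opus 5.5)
Your proposal is correct and follows essentially the same route as the paper. You expand the cubic nonlinearity around \(\hat{\gamma}_{1}\hat{P}\) and bound the five remainder terms by \(3|p|^{2}|r| + 3|p||r|^{2} + |r|^{3}\) using Lemmas~\ref{lemma:P-hat-E-hat-bounds} and~\ref{lemma:Q-hat-leading-term}, then integrate termwise against the bound for \(\hat{J}_{E}\). Your explicit convergence checks are a small addition that the paper leaves implicit.
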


\begin{proof}
  From Lemma~\ref{lemma:Q-hat-leading-term} we have
  \begin{equation*}
    \hat{Q}(\xi) = \hat{\gamma}_{1}\hat{P}(\xi) + R_{\hat{Q}}(\xi).
  \end{equation*}
  With \(\sigma = 1\) this gives us
  \begin{multline*}
    |\hat{Q}(\xi)|^{2\sigma}\hat{Q}(\xi)
    = |\hat{\gamma}_{1}|^{2}\hat{\gamma}_{1}|\hat{P}(\xi)|^{2}\hat{P}(\xi)
    + 2|\hat{\gamma}_{1}|^{2}|\hat{P}(\xi)|^{2} R_{\hat{Q}}(\xi)
    + \hat{\gamma}_{1}^{2}\hat{P}(\xi)^{2}\conj{R_{\hat{Q}}(\xi)}\\
    + 2\hat{\gamma}_{1}\hat{P}(\xi) |R_{\hat{Q}}(\xi)|^{2}
    + \conj{\hat{\gamma}_{1}}\conj{\hat{P}(\xi)} R_{\hat{Q}}(\xi)^{2}
    + |R_{\hat{Q}}(\xi)|^{2} R_{\hat{Q}}(\xi).
  \end{multline*}
  Let \(R_{\hat{Q},N}\) denote the sum of the remainder terms, so that
  \begin{equation*}
    |\hat{Q}(\xi)|^{2\sigma}\hat{Q}(\xi) = |\hat{\gamma}_{1}|^{2}\hat{\gamma}_{1}|\hat{P}(\xi)|^{2}\hat{P}(\xi)
    + R_{\hat{Q},N}(\xi).
  \end{equation*}
  Inserting this into \(I_{\hat{E}}(\xi)\) gives us
  \begin{equation*}
    I_{\hat{E}}(\xi)
    = -|\hat{\gamma}_{1}|^{2}\hat{\gamma}_{1}\int_{\xi}^{\infty} \hat{J}_{E}(\eta)|\hat{P}(\eta)|^{2}\hat{P}(\eta)\,d\eta
    - \int_{\xi}^{\infty} \hat{J}_{E}(\eta)R_{\hat{Q},N}(\eta)\,d\eta.
  \end{equation*}

  To bound the remainder integral, we first bound \(R_{\hat{Q},N}\).
  Using Lemma~\ref{lemma:P-hat-E-hat-bounds} and the inequality
  \(|R_{\hat{Q}}(\xi)| \leq C_{R,\hat{Q}}\xi^{-\frac{1}{\sigma} -
    2}\), we get
  \begin{equation*}
    |R_{\hat{Q},N}(\xi)|
    \leq 3|\hat{\gamma}_{1}|^{2}C_{\hat{P}}^{2}C_{R,\hat{Q}}\xi^{-\frac{3}{\sigma} - 2}
    + 3|\hat{\gamma}_{1}|C_{\hat{P}}C_{R,\hat{Q}}^{2}\xi^{-\frac{3}{\sigma} - 4}
    + C_{R,\hat{Q}}^{3}\xi^{-\frac{3}{\sigma} - 6}.
  \end{equation*}
  Inserting this back into the integral and combining it with the
  bound for \(\hat{J}_{E}\) from
  Lemma~\ref{lemma:P-hat-E-hat-bounds}, we get
  \begin{equation*}
    \begin{split}
      \left|\int_{\xi}^{\infty} \hat{J}_{E}(\eta)R_{\hat{Q},N}(\eta)\,d\eta\right|
      &\leq C_{\hat{J}_{E}}\int_{\xi}^{\infty}
        3|\hat{\gamma}_{1}|^{2}C_{\hat{P}}^{2}C_{R,\hat{Q}}\eta^{-\frac{2}{\sigma} - 3}
      + 3|\hat{\gamma}_{1}|C_{\hat{P}}C_{R,\hat{Q}}^{2}\eta^{-\frac{2}{\sigma} - 5}
      + C_{R,\hat{Q}}^{3}\eta^{-\frac{2}{\sigma} - 7}\,d\eta\\
      &= C_{\hat{J}_{E}}
        \left(
        \frac{3|\hat{\gamma}_{1}|^{2}C_{\hat{P}}^{2}C_{R,\hat{Q}}}{|-\frac{2}{\sigma} - 2|}
      + \frac{3|\hat{\gamma}_{1}|C_{\hat{P}}C_{R,\hat{Q}}^{2}}{|-\frac{2}{\sigma} - 4|}\xi^{-2}
      + \frac{C_{R,\hat{Q}}^{3}}{|-\frac{2}{\sigma} - 6|}\xi^{-4}
        \right)\xi^{-\frac{2}{\sigma} - 2}.
    \end{split}
  \end{equation*}
\end{proof}

To help us enclose the integral
\begin{equation*}
  \int_{\xi}^{\infty} \hat{J}_{E}(\eta)|\hat{P}(\eta)|^{2}\hat{P}(\eta)\,d\eta,
\end{equation*}
we have the following lemma.

\begin{lemma}\label{lemma:integral-J_E-hat-P-hat}
  We have
  \begin{equation*}
    \int_{\xi}^{\infty} \hat{J}_{E}(\eta)|\hat{P}(\eta)|^{2}\hat{P}(\eta)\,d\eta
    = \hat{B}_{W}c^{a - b}|(-c)^{-a}|^{2}(-c)^{-a}I_{U}(\xi),
  \end{equation*}
  with
  \begin{equation*}
    \begin{split}
      I_{U}(\xi)
      = \int_{\xi}^{\infty}
      &\left(
        \sum_{k = 0}^{n - 1} \frac{(b - a)_{k}(-a + 1)_{k}}{k!(-c)^{k}}\eta^{-2k}
        + R_{U}(b - a, b, n, c\eta^{2})c^{-n}\eta^{-2n}
        \right)\\
      &\quad\left(
        \sum_{k = 0}^{n - 1} \frac{(a)_{k}(a - b + 1)_{k}}{k!c^{k}}\eta^{-2k}
        + R_{U}(a, b, n, -c\eta^{2})(-c)^{-n}\eta^{-2n}
        \right)^{2}\\
      &\quad\left(
        \sum_{k = 0}^{n - 1} \conj{\left(\frac{(a)_{k}(a - b + 1)_{k}}{k!c^{k}}\right)}\eta^{-2k}
        + \conj{R_{U}(a, b, n, -c\eta^{2})(-c)^{-n}}\eta^{-2n}
        \right)\eta^{-\frac{2}{\sigma} - 1}\,d\eta.
    \end{split}
  \end{equation*}
  Here \(R_{U}\) is the remainder term in the asymptotic expansion of
  \(U\) from~\cite[Lemma~7.1]{Dahne2024} and \(n\) is any
  non-negative integer.
\end{lemma}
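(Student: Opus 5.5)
This is a direct algebraic rewriting of the integrand, and I would carry it out in the following order.

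First, substitute the definitions. We have \(\hat{J}_{E}(\eta) = \hat{B}_{W}\hat{E}(\eta)e^{c\eta^{2}}\eta^{d-1}\) and \(\hat{E}(\eta) = e^{-c\eta^{2}}U(b-a,b,c\eta^{2})\). The exponentials cancel, so
\[
  \hat{J}_{E}(\eta) = \hat{B}_{W}U(b-a,b,c\eta^{2})\eta^{d-1}.
\]
The integrand then becomes \(\hat{B}_{W}\,U(b-a,b,c\eta^{2})\,U(a,b,-c\eta^{2})^{2}\,\conj{U(a,b,-c\eta^{2})}\,\eta^{d-1}\).

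Next, insert the asymptotic expansion of \(U\) from \cite[Lemma~7.1]{Dahne2024}. In the form
\[
  U(a,b,z) = z^{-a}\left(\sum_{k=0}^{n-1}\frac{(a)_{k}(a-b+1)_{k}}{k!(-z)^{k}} + R_{U}(a,b,n,z)z^{-n}\right),
\]
it gives the following with \(z=-c\eta^{2}\):
\[
  U(a,b,-c\eta^{2}) = (-c)^{-a}\eta^{-2a}\left(\sum_{k}\frac{(a)_{k}(a-b+1)_{k}}{k!c^{k}}\eta^{-2k} + R_{U}(a,b,n,-c\eta^{2})(-c)^{-n}\eta^{-2n}\right).
\]
With \(z=c\eta^{2}\) and parameters \((b-a,b)\), using \((b-a)-b+1 = -a+1\), it gives
\[
  U(b-a,b,c\eta^{2}) = c^{a-b}\eta^{2a-2b}\left(\sum_{k}\frac{(b-a)_{k}(-a+1)_{k}}{k!(-c)^{k}}\eta^{-2k} + R_{U}(b-a,b,n,c\eta^{2})c^{-n}\eta^{-2n}\right).
\]
The conjugated factor carries the prefactor \(\conj{(-c)^{-a}}\,\eta^{-2\conj{a}}\), and each coefficient and remainder inside the bracket is conjugated, which is exactly the third bracket in the statement. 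Here I would rely on \(\eta\) being real and positive, so that \(\conj{\eta^{-2a}} = \eta^{-2\conj{a}}\).

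Then collect the prefactors. The constants are \(c^{a-b}\), \(((-c)^{-a})^{2}\) and \(\conj{(-c)^{-a}}\), and these combine to \(c^{a-b}|(-c)^{-a}|^{2}(-c)^{-a}\). The powers of \(\eta\) combine to
\[
  \eta^{2a-2b-4a-2\conj{a}+d-1} = \eta^{-2(a+\conj{a})-2b+d-1}.
\]
Since \(a+\conj{a} = 1/\sigma\) and \(2b=d\), this equals \(\eta^{-2/\sigma-1}\), matching the power in \(I_{U}\).

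The only step needing care, and the main obstacle, is justifying the branch identities. Specifically, \((-c\eta^{2})^{-a} = (-c)^{-a}\eta^{-2a}\) and \((c\eta^{2})^{a-b} = c^{a-b}\eta^{2a-2b}\) must hold for principal powers. They do, because \(\eta^{2}>0\) does not change the argument of \(\pm c\). The form of the expansion of \(U\) in \cite[Lemma~7.1]{Dahne2024} also has to be applicable at both \(\pm c\eta^{2}\). With these in hand the identity follows, for any \(n\).
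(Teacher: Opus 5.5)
Your proposal is correct and follows the paper's proof essentially step by step. Both arguments rewrite \(\hat{J}_{E}\) as \(\hat{B}_{W}U(b-a,b,c\eta^{2})\eta^{d-1}\), insert the expansion of \(U\) from~\cite[Lemma~7.1]{Dahne2024} at \(\mp c\eta^{2}\), and collect constants and powers of \(\eta\) using \(2a+2\conj{a}=2/\sigma\) and \(2b=d\); your explicit check of the principal-branch identities for \(\eta^{2}>0\) spells out a step the paper leaves implicit.
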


\begin{proof}
  Since
  \begin{equation*}
    \hat{J}_{E}(\xi) = \hat{B}_{W}\hat{E}(\xi)e^{c\xi^{2}}\xi^{d - 1}
    = \hat{B}_{W}U(b - a, b, c\xi^{2})\xi^{d - 1}
  \end{equation*}
  we can write the integral as
  \begin{equation*}
    \int_{\xi}^{\infty} \hat{J}_{E}(\eta)|\hat{P}(\eta)|^{2}\hat{P}(\eta)\,d\eta
    = \hat{B}_{W}\int_{\xi}^{\infty} U(b - a, b, c\eta^{2})\hat{P}(\eta)^{2}\conj{\hat{P}(\eta)}\eta^{d - 1}\,d\eta.
  \end{equation*}
  Using the expansion for \(U\) from~\cite[Lemma~7.1]{Dahne2024}, we
  have
  \begin{equation*}
    \begin{split}
      \hat{P}(\xi)
      = U(a, b, -c\xi^{2})
      &= \left(
        \sum_{k = 0}^{n - 1} \frac{(a)_{k}(a - b + 1)_{k}}{k!(c\xi^{2})^{k}}
        + R_{U}(a, b, n, -c\xi^{2})\left(-c\xi^{2}\right)^{-n}
        \right)\left(-c\xi^{2}\right)^{-a}\\
      &= \left(
        \sum_{k = 0}^{n - 1} \frac{(a)_{k}(a - b + 1)_{k}}{k!c^{k}}\xi^{-2k}
        + R_{U}(a, b, n, -c\xi^{2})(-c)^{-n}\xi^{-2n}
        \right)(-c)^{-a}\xi^{-2a}
    \end{split}
  \end{equation*}
  and
  \begin{equation*}
    \begin{split}
      U(b - a, b, c\xi^{2})
      &= \left(
        \sum_{k = 0}^{n - 1} \frac{(b - a)_{k}(-a + 1)_{k}}{k!(-c\xi^{2})^{k}}
        + R_{U}(b - a, b, n, c\xi^{2})\left(c\xi^{2}\right)^{-n}
        \right)\left(c\xi^{2}\right)^{a - b}\\
      &= \left(
        \sum_{k = 0}^{n - 1} \frac{(b - a)_{k}(-a + 1)_{k}}{k!(-c)^{k}}\xi^{-2k}
        + R_{U}(b - a, b, n, c\xi^{2})c^{-n}\xi^{-2n}
        \right)c^{a - b}\xi^{2(a - b)}.
    \end{split}
  \end{equation*}
  We can then write the integral as
  \begin{equation*}
    \begin{split}
      \hat{B}_{W}c^{a - b}|(-c)^{-a}|^{2}(-c)^{-a}\int_{\xi}^{\infty}
      &\left(
        \sum_{k = 0}^{n - 1} \frac{(b - a)_{k}(-a + 1)_{k}}{k!(-c)^{k}}\eta^{-2k}
        + R_{U}(b - a, b, n, c\eta^{2})c^{-n}\eta^{-2n}
        \right)\\
      &\ \left(
        \sum_{k = 0}^{n - 1} \frac{(a)_{k}(a - b + 1)_{k}}{k!c^{k}}\eta^{-2k}
        + R_{U}(a, b, n, -c\eta^{2})(-c)^{-n}\eta^{-2n}
        \right)^{2}\\
      &\ \left(
        \sum_{k = 0}^{n - 1} \conj{\left(\frac{(a)_{k}(a - b + 1)_{k}}{k!c^{k}}\right)}\eta^{-2k}
        + \conj{R_{U}(a, b, n, -c\eta^{2})(-c)^{-n}}\eta^{-2n}
        \right)\eta^{-\frac{2}{\sigma} - 1}\,d\eta,
    \end{split}
  \end{equation*}
  where we have used that \(2a + 2\conj{a} = \frac{2}{\sigma}\)
  and \(2b = d\).
\end{proof}

These lemmas allow us to compute a sufficiently tight enclosure of
\(I_{\hat{E}}\) for our purposes. The remainder term
\(R_{I_{\hat{E}}}\) is bounded as in
Lemma~\ref{lemma:I_E-hat-enclosure}. The next step is to enclose
\begin{equation*}
  \int_{\xi}^{\infty} \hat{J}_{E}(\eta)|\hat{P}(\eta)|^{2}\hat{P}(\eta)\,d\eta
\end{equation*}
using the above lemma. In the implementation, this is done by
expanding the sum using nested for-loops and integrating it termwise.
For terms without any factors \(R_{U}\), the integral is computed
explicitly, giving very tight enclosures. When there are \(R_{U}\)
factors, we use the bound
\(|R_{U}(a, b, n, z)| \leq C_{R_{U}}(a, b, n, z_{1})\)
from~\cite[Lemma~7.1]{Dahne2024}.

With the above, we can compute enclosures of \(\hat{Q}\). The next
step is to enclose the derivatives with respect to
\(\real\hat{\gamma}_{2}\) and \(\imag\hat{\gamma}_{2}\). In this case,
we have
\begin{align*}
  \hat{Q}_{\real\hat{\gamma}_{2}}(\xi)
  &= \hat{E}(\xi)
    + \hat{P}(\xi)I_{\hat{E},\real\hat{\gamma}_{2}}(\xi)
    + \hat{E}(\xi)I_{\hat{P},\real\hat{\gamma}_{2}}(\xi),\\
  \hat{Q}_{\imag\hat{\gamma}_{2}}(\xi)
  &= i\hat{E}(\xi)
    + \hat{P}(\xi)I_{\hat{E},\imag\hat{\gamma}_{2}}(\xi)
    + \hat{E}(\xi)I_{\hat{P},\imag\hat{\gamma}_{2}}(\xi),
\end{align*}
where \(I_{\hat{P},\real\hat{\gamma}_{2}}(\xi)\) and
\(I_{\hat{P},\imag\hat{\gamma}_{2}}(\xi)\) are zero at
\(\xi = \xi_{1}\). As for \(\hat{Q}\), the only problematic part is
bounding \(I_{\hat{E},\real\hat{\gamma}_{2}}\) and
\(I_{\hat{E},\imag\hat{\gamma}_{2}}\), where the former is given by
\begin{equation*}
  I_{\hat{E},\real\hat{\gamma}_{2}}(\xi) = -\int_{\xi}^{\infty} \hat{J}_{E}(\eta)\frac{\partial}{\partial \real\hat{\gamma}_{2}}\left(|\hat{Q}(\eta)|^{2\sigma}\hat{Q}(\eta)\right)\,d\eta
\end{equation*}
and similarly for \(\imag\hat{\gamma}_{2}\).

To bound \(I_{\hat{E},\real\hat{\gamma}_{2}}\) and
\(I_{\hat{E},\imag\hat{\gamma}_{2}}\), we need an initial bound for
\(\|\hat{Q}_{\real\hat{\gamma}_{2}}\|_{0}\) and
\(\|\hat{Q}_{\imag\hat{\gamma}_{2}}\|_{0}\). For this, we need bounds
for \(I_{\hat{E},\real\hat{\gamma}_{2}}\),
\(I_{\hat{E},\imag\hat{\gamma}_{2}}\),
\(I_{\hat{P},\real\hat{\gamma}_{2}}\) and
\(I_{\hat{P},\imag\hat{\gamma}_{2}}\), for which we have the following
lemma.

\begin{lemma}\label{lemma:I_E-hat-I_P-hat-dgamma-bounds}
  Under the assumptions of Lemma~\ref{lemma:I_E-hat-I_P-hat-bounds},
  we have, for \(\xi \geq \xi_{1}\) and
  \(t \in \{\real\hat{\gamma}_{2}, \imag\hat{\gamma}_{2}\}\),
  \begin{align*}
    |I_{\hat{E},t}(\xi)| &\leq (2\sigma + 1)C_{I_{\hat{E}}}\xi^{-2}\|\hat{Q}\|_{0}^{2\sigma}\|\hat{Q}_{t}\|_{0},\\
    |I_{\hat{P},t}(\xi)| &\leq (2\sigma + 1)C_{I_{\hat{P}}}e^{\real(c)\xi^{2}}\xi^{-\frac{2}{\sigma} + d - 4}\|\hat{Q}\|_{0}^{2\sigma}\|\hat{Q}_{t}\|_{0},
  \end{align*}
  with \(C_{I_{\hat{E}}}\) and \(C_{I_{\hat{P}}}\) as in
  Lemma~\ref{lemma:I_E-hat-I_P-hat-bounds}.
\end{lemma}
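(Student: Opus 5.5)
We differentiate the definitions of \(I_{\hat{E}}\) and \(I_{\hat{P}}\) under the integral sign with respect to \(t \in \{\real\hat{\gamma}_{2}, \imag\hat{\gamma}_{2}\}\), and then reduce everything to the integrals already estimated in Lemma~\ref{lemma:I_E-hat-I_P-hat-bounds}. We have
\begin{equation*}
  I_{\hat{E},t}(\xi) = -\int_{\xi}^{\infty} \hat{J}_{E}(\eta)\,\partial_{t}\left(|\hat{Q}(\eta)|^{2\sigma}\hat{Q}(\eta)\right)d\eta,
  \qquad
  I_{\hat{P},t}(\xi) = -\int_{\xi_{1}}^{\xi} \hat{J}_{P}(\eta)\,\partial_{t}\left(|\hat{Q}(\eta)|^{2\sigma}\hat{Q}(\eta)\right)d\eta.
\end{equation*}
Differentiation under the integral is justified because \(\hat{J}_{E}\), \(\hat{J}_{P}\) do not depend on \(\hat{\gamma}_{2}\), and the fixed point depends differentiably on \(\hat{\gamma}_{2}\) as a map into \(\Space_{0}\). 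This dependence follows from the implicit function theorem applied to \(\hat{Q} - \hat{T}(\hat{Q})\) in \(\Space_{0}\), since \(\hat{T}\) is a contraction. The resulting integrands are dominated uniformly by the bound below.

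The key pointwise estimate is that the real derivative of \(f(z) = |z|^{2\sigma}z\) in any real direction \(h\) satisfies \(|Df(z)h| \le (2\sigma+1)|z|^{2\sigma}|h|\). To see this, write \(f(z) = (z\bar z)^{\sigma}z\). Then
\begin{equation*}
  Df(z)h = |z|^{2\sigma}h + \sigma|z|^{2\sigma-2}z\,(\bar z h + z\bar h),
\end{equation*}
whose modulus is at most \((1+2\sigma)|z|^{2\sigma}|h|\). For \(\sigma = 1\) this is the explicit identity \(\partial_{t}(|\hat{Q}|^{2}\hat{Q}) = 2|\hat{Q}|^{2}\hat{Q}_{t} + \hat{Q}^{2}\conj{\hat{Q}_{t}}\). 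Using \(|\hat{Q}(\eta)| \le \|\hat{Q}\|_{0}\eta^{-1/\sigma}\) and \(|\hat{Q}_{t}(\eta)| \le \|\hat{Q}_{t}\|_{0}\eta^{-1/\sigma}\), we obtain
\begin{equation*}
  \left|\partial_{t}\left(|\hat{Q}|^{2\sigma}\hat{Q}\right)(\eta)\right| \le (2\sigma+1)\|\hat{Q}\|_{0}^{2\sigma}\|\hat{Q}_{t}\|_{0}\,\eta^{-\frac{1}{\sigma}-2}.
\end{equation*}
This is exactly the same power-law majorant as \(|\hat{Q}|^{2\sigma+1} \le \|\hat{Q}\|_{0}^{2\sigma+1}\eta^{-1/\sigma-2}\) used in Lemma~\ref{lemma:I_E-hat-I_P-hat-bounds}, with \(\|\hat{Q}\|_{0}^{2\sigma+1}\) replaced by \((2\sigma+1)\|\hat{Q}\|_{0}^{2\sigma}\|\hat{Q}_{t}\|_{0}\).

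We then insert the bounds on \(\hat{J}_{E}\) and \(\hat{J}_{P}\) from Lemma~\ref{lemma:P-hat-E-hat-bounds} and repeat the computation of Lemma~\ref{lemma:I_E-hat-I_P-hat-bounds} verbatim. The \(\hat{E}\)-integral is \(\int_{\xi}^{\infty}\eta^{-3}\,d\eta = \xi^{-2}/2\). The \(\hat{P}\)-integral, \(\int_{\xi_{1}}^{\xi} e^{\real(c)\eta^{2}}\eta^{-2/\sigma+d-3}\,d\eta\), is bounded by the integration-by-parts argument under the same two inequality assumptions. Together these give the stated bounds with the same constants \(C_{I_{\hat{E}}}\) and \(C_{I_{\hat{P}}}\).

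The only step needing care is the justification that \(\hat{Q}_{t}\) exists and lies in \(\Space_{0}\), so that \(\|\hat{Q}_{t}\|_{0}\) is finite. We handle this via the implicit function argument above. The analytic estimate itself is routine.
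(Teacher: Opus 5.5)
Your proposal is correct and follows the paper's proof. The paper likewise writes \(\partial_{t}(|\hat{Q}|^{2\sigma}\hat{Q}) = (\sigma+1)|\hat{Q}|^{2\sigma}\hat{Q}_{t} + \sigma|\hat{Q}|^{2\sigma-2}\hat{Q}^{2}\conj{\hat{Q}_{t}}\), bounds it by \((2\sigma+1)\|\hat{Q}\|_{0}^{2\sigma}\|\hat{Q}_{t}\|_{0}\eta^{-\frac{1}{\sigma}-2}\), and repeats the calculations of Lemma~\ref{lemma:I_E-hat-I_P-hat-bounds}; your implicit-function argument for \(\hat{Q}_{t} \in \Space_{0}\), which also needs \(\hat{T}\) to be real \(C^{1}\) (true since \(z \mapsto |z|^{2\sigma}z\) is), is a justification the paper leaves implicit.
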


\begin{proof}
  To begin with, we note that, for \(\eta \geq \xi_{1}\), we have
  \begin{equation*}
    \left|\frac{\partial}{\partial t}\left(|\hat{Q}(\eta)|^{2\sigma}\hat{Q}(\eta)\right)\right|
    = \left|
      (\sigma + 1)|\hat{Q}|^{2\sigma}\frac{\partial \hat{Q}}{\partial t}
      + \sigma|\hat{Q}|^{2\sigma - 2}\hat{Q}^{2}\conj{\left(\frac{\partial \hat{Q}}{\partial t}\right)}
    \right|
    \leq (2\sigma + 1)\|\hat{Q}\|_{0}^{2\sigma}\|\hat{Q}_{t}\|_{0}\eta^{-\frac{1}{\sigma} - 2}.
  \end{equation*}
  The result then follows from the same calculations as in
  Lemma~\ref{lemma:I_E-hat-I_P-hat-bounds}.
\end{proof}

This allows us to bound \(\|\hat{Q}_{\real\hat{\gamma}_{2}}\|_{0}\)
and \(\|\hat{Q}_{\imag\hat{\gamma}_{2}}\|_{0}\) in terms of
\(\|\hat{Q}\|_{0}\).

\begin{lemma}\label{lemma:Q-hat-dgamma-bound}
  Under the assumptions of Lemma~\ref{lemma:Q-hat-fixed-point-bounds},
  we have, for
  \(t \in \{\real\hat{\gamma}_{2}, \imag\hat{\gamma}_{2}\}\),
  \begin{equation*}
    \|\hat{Q}_{t}\|_{0} \leq \frac{
      C_{\hat{E}}e^{-\real(c)\xi_{1}^{2}}\xi_{1}^{\frac{2}{\sigma} - d}
    }{1 - (2\sigma + 1)C_{\hat{T}}\xi_{1}^{-2}\|\hat{Q}\|_{0}^{2\sigma}},
  \end{equation*}
  as long as the denominator is positive.
\end{lemma}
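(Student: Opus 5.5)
Differentiate the fixed-point equation~\eqref{eq:Q-hat-infty-fixed-point-equation} with respect to \(t \in \{\real\hat{\gamma}_{2}, \imag\hat{\gamma}_{2}\}\), bound the resulting equation termwise in the norm \(\|\cdot\|_{0}\), and solve the linear inequality for \(\|\hat{Q}_{t}\|_{0}\).

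\emph{Step 1: the differentiated equation.} Since \(\hat{P}\), \(\hat{E}\), \(\hat{J}_{P}\), \(\hat{J}_{E}\) and \(\hat{\gamma}_{1}\) do not depend on \(\hat{\gamma}_{2}\), we get
\begin{equation*}
  \hat{Q}_{t}(\xi) = \hat{\gamma}_{2,t}\hat{E}(\xi) + \hat{P}(\xi)I_{\hat{E},t}(\xi) + \hat{E}(\xi)I_{\hat{P},t}(\xi),
\end{equation*}
where \(\hat{\gamma}_{2,t} \in \{1, i\}\) has modulus one. This matches the formulas stated before Lemma~\ref{lemma:I_E-hat-I_P-hat-dgamma-bounds}. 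Differentiability of the fixed point in \(\hat{\gamma}_{2}\), with \(\hat{Q}_{t} \in \Space_{0}\), I would take from the contraction setting. Under~\eqref{eq:T-hat-ineq-2}, \(\hat{T}\) is a uniform contraction depending affinely (hence smoothly) on \(\hat{\gamma}_{2}\). Its derivative in \(\hat{Q}\) has norm at most \((2\sigma+1)C_{\hat{T}}\xi_{1}^{-2}\|\hat{Q}\|_{0}^{2\sigma} < 1\), so the implicit function theorem in \(\Space_{0}\) applies; this uses the positivity of the denominator. This regularity step is the only genuinely non-computational point, and I expect it to be the main (though mild) obstacle. The nonlinearity \(|\hat{Q}|^{2}\hat{Q}\) is only \(\mathbb{R}\)-differentiable, so one must work with real parameters, which is exactly why \(t\) ranges over real and imaginary parts.

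\emph{Step 2: termwise bounds.} Multiply by \(\xi^{\frac{1}{\sigma}}\) and take suprema over \(\xi \geq \xi_{1}\), as in the proof of Lemma~\ref{lemma:Q-hat-fixed-point-bounds}. By Lemma~\ref{lemma:P-hat-E-hat-bounds} and \(2/\sigma - d < 0\), \(\real(c) > 0\), the first term gives at most \(C_{\hat{E}}e^{-\real(c)\xi_{1}^{2}}\xi_{1}^{\frac{2}{\sigma}-d}\). For the other two terms, Lemma~\ref{lemma:I_E-hat-I_P-hat-dgamma-bounds} combined with the bounds on \(\hat{P}\) and \(\hat{E}\) gives
\begin{equation*}
  (2\sigma+1)\left(C_{\hat{P}}C_{I_{\hat{E}}} + C_{\hat{E}}C_{I_{\hat{P}}}\xi_{1}^{-2}\right)\xi_{1}^{-2}\|\hat{Q}\|_{0}^{2\sigma}\|\hat{Q}_{t}\|_{0} = (2\sigma+1)C_{\hat{T}}\xi_{1}^{-2}\|\hat{Q}\|_{0}^{2\sigma}\|\hat{Q}_{t}\|_{0}.
\end{equation*}
The exponential factors cancel exactly as in Lemma~\ref{lemma:Q-hat-leading-term}.

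\emph{Step 3: conclusion.} Combining the bounds yields
\begin{equation*}
  \|\hat{Q}_{t}\|_{0} \leq C_{\hat{E}}e^{-\real(c)\xi_{1}^{2}}\xi_{1}^{\frac{2}{\sigma}-d} + (2\sigma+1)C_{\hat{T}}\xi_{1}^{-2}\|\hat{Q}\|_{0}^{2\sigma}\|\hat{Q}_{t}\|_{0}.
\end{equation*}
Since \(\|\hat{Q}_{t}\|_{0}\) is finite, we may rearrange when the denominator is positive, which gives the stated bound.
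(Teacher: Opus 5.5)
Your proposal is correct and follows the paper's proof essentially verbatim: the paper bounds \(|\hat{Q}_{t}| \leq |\hat{E}| + |\hat{P}I_{\hat{E},t}| + |\hat{E}I_{\hat{P},t}|\), then applies Lemma~\ref{lemma:I_E-hat-I_P-hat-dgamma-bounds} as in the proof of Lemma~\ref{lemma:Q-hat-fixed-point-bounds}, which gives the same linear inequality, and solves it for \(\|\hat{Q}_{t}\|_{0}\). Your Step~1 is a justification the paper leaves implicit, and it is sound: it uses the implicit function theorem in \(\Space_{0}\) to get differentiability of the fixed point in \(\hat{\gamma}_{2}\) with \(\hat{Q}_{t} \in \Space_{0}\), so that \(\|\hat{Q}_{t}\|_{0}\) is finite and the final rearrangement is legitimate.
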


\begin{proof}
  We have
  \begin{equation*}
    |\hat{Q}_{t}(\xi)| \leq |\hat{E}(\xi)|
    + |\hat{P}(\xi)I_{\hat{E},t}(\xi)|
    + |\hat{E}(\xi)I_{\hat{P},t}(\xi)|.
  \end{equation*}
  Using Lemma~\ref{lemma:I_E-hat-I_P-hat-dgamma-bounds} and following
  the same approach as in Lemma~\ref{lemma:Q-hat-fixed-point-bounds},
  we get
  \begin{equation*}
    \|\hat{Q}_{t}\|_{0}
    \leq C_{\hat{E}}e^{-\real(c)\xi_{1}^{2}}\xi_{1}^{\frac{2}{\sigma} - d}
    + (2\sigma + 1)C_{\hat{T}}\xi_{1}^{-2}\|\hat{Q}\|_{0}^{2\sigma}\|\hat{Q}_{t}\|_{0}.
  \end{equation*}
  Solving for \(\|\hat{Q}_{t}\|_{0}\) gives us the result.
\end{proof}

Combining the bounds for \(\hat{Q}\) with the two lemmas above allows
us to compute an enclosure of \(I_{\hat{E},\real\hat{\gamma}_{2}}\)
and \(I_{\hat{E},\imag\hat{\gamma}_{2}}\), and hence
\(\hat{Q}_{\real\hat{\gamma}_{2}}\) and
\(\hat{Q}_{\imag\hat{\gamma}_{2}}\). Compared to \(\hat{Q}\), we do not
need as accurate enclosures, so the bounds for
\(I_{\hat{E},\real\hat{\gamma}_{2}}\) and
\(I_{\hat{E},\imag\hat{\gamma}_{2}}\) from
Lemma~\ref{lemma:I_E-hat-I_P-hat-dgamma-bounds} suffice in this case.

Finally, we have \(\hat{Q}'\), \(\hat{Q}_{\real\hat{\gamma}_{2}}'\)
and \(\hat{Q}_{\imag\hat{\gamma}_{2}}'\). As a consequence of
variation of parameters, we have
\(\hat{P}(\xi)I_{\hat{E}}'(\xi) + \hat{E}(\xi)I_{\hat{P}}'(\xi) = 0\),
giving us
\begin{equation*}
  \hat{Q}'(\xi)
  = \hat{\gamma}_{1}\hat{P}'(\xi) + \hat{\gamma}_{2}\hat{E}'(\xi)
  + \hat{P}'(\xi)I_{\hat{E}}(\xi) + \hat{E}'(\xi)I_{\hat{P}}(\xi)
\end{equation*}
and
\begin{align*}
  \hat{Q}_{\real\hat{\gamma}_{2}}'(\xi)
  &= \hat{E}'(\xi)
  + \hat{P}'(\xi)I_{\hat{E},\real\hat{\gamma}_{2}}(\xi) + \hat{E}'(\xi)I_{\hat{P},\real\hat{\gamma}_{2}}(\xi),\\
  \hat{Q}_{\imag\hat{\gamma}_{2}}'(\xi)
  &= i\hat{E}'(\xi)
  + \hat{P}'(\xi)I_{\hat{E},\imag\hat{\gamma}_{2}}(\xi) + \hat{E}'(\xi)I_{\hat{P},\imag\hat{\gamma}_{2}}(\xi).
\end{align*}
The only new functions to enclose are \(\hat{E}'(\xi)\) and
\(\hat{P}'(\xi)\), which are easily handled.

\section{Forward solution at zero}
\label{sec:forward-solution-zero}

In this section, we discuss how to enclose \(\hat{Q}_{0}\). As in
the previous section, we keep the strength of the nonlinearity,
\(\sigma\), and the spatial dimension, \(d\), symbolic throughout this
section. The initial value problem is then given by
\begin{equation*}
  \begin{split}
    (1 - i\epsilon)\left(\hat{Q}_{0}'' + \frac{d - 1}{\xi}\hat{Q}_{0}'\right) - i\kappa\xi \hat{Q}_{0}'
    - i \frac{\kappa}{\sigma}\hat{Q}_{0} + \omega \hat{Q}_{0} + (1 + i\delta)|\hat{Q}_{0}|^{2\sigma}\hat{Q}_{0} &= 0,\\
    \hat{Q}_{0}(0) &= \nu,\\
    \hat{Q}_{0}'(0) &= 0.
  \end{split}
\end{equation*}
The only difference between this equation and the associated equation
for the backward self-similar solution is a change of sign in
\(\kappa\) and \(\omega\). Since the signs of \(\kappa\) and
\(\omega\) do not play any role in the approach, we are able to reuse
most of the existing code for the backward solutions. We therefore
refer to~\cite[Section~8]{Dahne2024} for full details on the approach
and only briefly describe the main ideas here.

As a first step, the equation is split into real and imaginary parts,
giving a system of two second-order equations. To handle the removable
singularity at \(\xi = 0\), the interval \([0, \xi_{1}]\) is split
into \([0, \xi_{0}]\) and \([\xi_{0}, \xi_{1}]\) for some small
\(\xi_{0}\). On the first interval, \([0, \xi_{0}]\), the solution is
enclosed using a Taylor expansion at \(\xi = 0\),
see~\cite[Section~8.1]{Dahne2024} for more details. On the interval
\([\xi_{0}, \xi_{1}]\), the system is integrated using a rigorous
numerical integrator implemented in the CAPD
library~\cite{Kapela2021}. Derivatives with respect to the real and
imaginary parts of \(\nu\) are handled similarly to the derivatives
with respect to \(\mu\) in~\cite{Dahne2024}.

\section{Linearized solution at infinity}
\label{sec:linearized-solution-infinity}

In this section, we study the solutions to Equation~\eqref{eq:Y}, given
by
\begin{equation*}
  AY'' + (B_{1}\xi + B_{2}\xi^{-1})Y' + (C + J_{N}(\xi) - 2\kappa\lambda I)Y = 0,
\end{equation*}
which have exponential decay as \(\xi \to \infty\). As in the previous
sections, we keep the strength of the nonlinearity and the spatial
dimension symbolic throughout this section, giving us
\begin{equation}\label{eq:ABC-sigma-d}
  A = \epsilon I + J,\
  B_{1} = \kappa I,\
  B_{2} = (d - 1)A,\
  C = \frac{\kappa}{\sigma} I + \omega J
\end{equation}
as well as
\begin{equation}\label{eq:J_N-sigma-d}
  J_{N}(\xi) = (-\delta I + J)(\hat{Q}(\xi) \cdot \hat{Q}(\xi))^{\sigma - 1}((\hat{Q}(\xi) \cdot \hat{Q}(\xi))I + 2\sigma\hat{Q}(\xi)\hat{Q}(\xi)^{T}).
\end{equation}
We will see that there exists a manifold of such solutions, which can
be parametrized by
\(\begin{pmatrix} c_{0,1} \\ c_{0,2}\end{pmatrix} \in
\mathbb{C}^{2}\). For fixed \(\xi_{1} > 1\), the goal is to compute
enclosures of \(Y(\xi_{1})\) and \(Y'(\xi_{1})\). Note that, compared
to the backward and forward self-similar solutions, in this case we do
not need to compute derivatives with respect to any of the parameters.

For large values of \(\xi\), the value of \(J_{N}(\xi)\) is small. We
therefore take an approach where we write the equation as
\begin{equation*}
  AY'' + (B_{1}\xi + B_{2}\xi^{-1})Y' + (C - 2\kappa\lambda I)Y = -J_{N}(\xi)Y,
\end{equation*}
and treat the right-hand side as a perturbation.

To simplify the treatment of the left-hand side, we start by
diagonalizing it. We observe that all coefficients on the left-hand
side are given by sums of \(I\) and \(J\). We therefore let \(V\) be
the matrix whose columns are eigenvectors of \(J\), so that
\begin{equation*}
  V =
  \begin{pmatrix}
    i & -i \\ 1 & 1
  \end{pmatrix}
  \quad\text{and}\quad
  V^{-1} = \frac{1}{2}
  \begin{pmatrix}
    -i & 1 \\ i & 1
  \end{pmatrix}.
\end{equation*}
Letting \(Y = VZ\) and left-multiplying the ODE by \(V^{-1}\), we get
\begin{equation}\label{eq:Z}
  V^{-1}AVZ'' + (B_{1}\xi + V^{-1}B_{2}V\xi^{-1})Z' + (V^{-1}CV - 2\kappa\lambda I)Z = -V^{-1}J_{N}(\xi)VZ,
\end{equation}
which we will treat as a perturbation of
\begin{equation}\label{eq:Z-homogeneous}
  V^{-1}AVZ'' + (B_{1}\xi + V^{-1}B_{2}V\xi^{-1})Z' + (V^{-1}CV - 2\kappa\lambda I)Z = 0.
\end{equation}
All the matrices on the left-hand side are now diagonal. The ODE
therefore splits into the two scalar ODEs
\begin{align}
  \label{eq:Z-scalar-1}
  (\epsilon + i)Z_{1}'' + (\kappa\xi + (d - 1)(\epsilon + i)\xi^{-1})Z_{1}'
  + \left(\frac{\kappa}{\sigma} + \omega i - 2\kappa\lambda\right)Z_{1} &= 0,\\
  \label{eq:Z-scalar-2}
  (\epsilon - i)Z_{2}'' + (\kappa\xi + (d - 1)(\epsilon - i)\xi^{-1})Z_{2}'
  + \left(\frac{\kappa}{\sigma} - \omega i - 2\kappa\lambda\right)Z_{2} &= 0.
\end{align}
Note that if \(\lambda\) is real, then these two equations are complex
conjugates and the solutions can be chosen with
\(Z_{1} = \conj{Z_{2}}\), corresponding to a real-valued \(Y\). When
\(\lambda\) is not real, this is no longer the case.

Up to multiplication by \(i\) and replacement of
\(\omega + 2i\kappa\lambda\) by \(\omega\),
Equation~\eqref{eq:Z-scalar-1} is the same as
Equation~\eqref{eq:Q-hat-infty-linear}, the ODE for the forward
self-similar solution. From
Section~\ref{sec:forward-solution-infinity}, it follows that two
linearly independent solutions are given by
\begin{equation*}
  P_{1}(\xi) = U\left(a - \lambda, b, -c\xi^{2}\right) \quad\text{and}\quad
  E_{1}(\xi) = e^{-c\xi^{2}}U\left(b - a + \lambda, b, c\xi^{2}\right),
\end{equation*}
with associated Wronskian
\begin{equation*}
  W_{1}(\xi) = P_{1}(\xi)E_{1}'(\xi) - P_{1}'(\xi)E_{1}(\xi)
  = -2ce^{-\sign(\imag(c)) \pi i (b - a + \lambda)}\xi \left(-c\xi^{2}\right)^{-b}e^{-c\xi^{2}}.
\end{equation*}
For Equation~\eqref{eq:Z-scalar-2}, we similarly get that two linearly
independent solutions are given by
\begin{equation*}
  P_{2}(\xi) = U\left(\conj{a} - \lambda, b, -\conj{c}\xi^{2}\right) \quad\text{and}\quad
  E_{2}(\xi) = e^{-\conj{c}\xi^{2}}U\left(b - \conj{a} + \lambda, b, \conj{c}\xi^{2}\right),
\end{equation*}
with associated Wronskian
\begin{equation*}
  W_{2}(\xi) = P_{2}(\xi)E_{2}'(\xi) - P_{2}'(\xi)E_{2}(\xi)
  = -2\conj{c}e^{\sign(\imag(c)) \pi i (b - \conj{a} + \lambda)}\xi \left(-\conj{c}\xi^{2}\right)^{-b}e^{-\conj{c}\xi^{2}}.
\end{equation*}
For Equation~\eqref{eq:Z-homogeneous}, we hence get the four linearly
independent solutions
\begin{equation*}
  P_{1}(\xi)\begin{pmatrix} 1 \\ 0 \end{pmatrix},\quad
  P_{2}(\xi)\begin{pmatrix} 0 \\ 1 \end{pmatrix},\quad
  E_{1}(\xi)\begin{pmatrix} 1 \\ 0 \end{pmatrix} \quad\text{and}\quad
  E_{2}(\xi)\begin{pmatrix} 0 \\ 1 \end{pmatrix}.
\end{equation*}

Following the same approach as for the backward and forward equations,
we now search for solutions to~\eqref{eq:Z} by making use of the
method of variation of parameters. We hence look for solutions of the
form
\begin{equation*}
  Z(\xi) =
  c_{E,1}(\xi)E_{1}(\xi)\begin{pmatrix} 1 \\ 0 \end{pmatrix}
  + c_{E,2}(\xi)E_{2}(\xi)\begin{pmatrix} 0 \\ 1 \end{pmatrix}
  + c_{P,1}(\xi)P_{1}(\xi)\begin{pmatrix} 1 \\ 0 \end{pmatrix}
  + c_{P,2}(\xi)P_{2}(\xi)\begin{pmatrix} 0 \\ 1 \end{pmatrix}.
\end{equation*}
If we let
\begin{equation*}
  E_{12}(\xi) = \begin{pmatrix} E_{1}(\xi) & 0 \\ 0 & E_{2}(\xi) \end{pmatrix}
  \quad\text{and}\quad
  P_{12}(\xi) = \begin{pmatrix} P_{1}(\xi) & 0 \\ 0 & P_{2}(\xi) \end{pmatrix},
\end{equation*}
then we can write this as
\begin{equation*}
  Z(\xi) =
  E_{12}(\xi)\begin{pmatrix} c_{E,1}(\xi) \\ c_{E,2}(\xi) \end{pmatrix}
  + P_{12}(\xi)\begin{pmatrix} c_{P,1}(\xi) \\ c_{P,2}(\xi) \end{pmatrix}.
\end{equation*}
Let \(F(\xi) = -V^{-1}J_{N}(\xi)VZ(\xi)\) denote the right-hand side
of Equation~\eqref{eq:Z}. Since the equation splits into the two
scalar equations~\eqref{eq:Z-scalar-1} and~\eqref{eq:Z-scalar-2}, with
\(F_{1}\) and \(F_{2}\) as their respective right-hand sides, we can
apply the standard variation-of-parameters formulas for scalar
second-order equations to each of them separately. Dividing by the
leading coefficients \(\epsilon + i\) and \(\epsilon - i\), which
amounts to multiplying \(F\) by the diagonal matrix
\(V^{-1}A^{-1}V\), this gives us, for \(j \in \{1, 2\}\),
\begin{align*}
  c_{E,j}'(\xi) &= \frac{P_{j}(\xi)}{W_{j}(\xi)}\left(V^{-1}A^{-1}VF(\xi)\right)_{j},\\
  c_{P,j}'(\xi) &= -\frac{E_{j}(\xi)}{W_{j}(\xi)}\left(V^{-1}A^{-1}VF(\xi)\right)_{j}.
\end{align*}
Written in matrix form, this becomes
\begin{align*}
  \begin{pmatrix} c_{E,1}(\xi) \\ c_{E,2}(\xi) \end{pmatrix}'
  &= K_{1}(\xi)F(\xi),\\
  \begin{pmatrix} c_{P,1}(\xi) \\ c_{P,2}(\xi) \end{pmatrix}'
  &= -K_{2}(\xi)F(\xi),
\end{align*}
with
\begin{equation}\label{eq:K_1-K_2}
  K_{1}(\xi) =
  \begin{pmatrix}
    \frac{P_{1}(\xi)}{W_{1}(\xi)} & 0\\ 0 & \frac{P_{2}(\xi)}{W_{2}(\xi)}
  \end{pmatrix}V^{-1}A^{-1}V
  \quad\text{and}\quad
  K_{2}(\xi) =
  \begin{pmatrix}
    \frac{E_{1}(\xi)}{W_{1}(\xi)} & 0\\ 0 & \frac{E_{2}(\xi)}{W_{2}(\xi)}
  \end{pmatrix}V^{-1}A^{-1}V.
\end{equation}
We want the leading behavior to be determined by \(c_{E,j}\), whereas
the terms with \(c_{P,j}\) should be lower order. This, in particular,
means that \(c_{E,j}\) is allowed to have a non-zero limit at
infinity, whereas \(c_{P,j}\) necessarily has to go to zero. The
choice for \(c_{P,j}\) is then forced. For \(c_{E,j}\), we anchor the
integral at \(\xi = \xi_{1}\), since that is where we want enclosures.
This gives us
\begin{align*}
  \begin{pmatrix} c_{E,1} \\ c_{E,2} \end{pmatrix}
  &= \begin{pmatrix} c_{0,1} \\ c_{0,2} \end{pmatrix}
  + \int_{\xi_{1}}^{\xi} K_{1}(\eta)F(\eta)\,d\eta,\\
  \begin{pmatrix} c_{P,1} \\ c_{P,2} \end{pmatrix}
  &= \int_{\xi}^{\infty} K_{2}(\eta)F(\eta)\,d\eta.
\end{align*}
If we let \(I_{N}(\xi) = V^{-1}J_{N}(\xi)V\), so that
\(F(\xi) = -I_{N}(\xi)Z(\xi)\), and
\begin{align*}
  I_{K_{1}}(\xi)
  &= -\int_{\xi_{1}}^{\xi} K_{1}(\eta)I_{N}(\eta)Z(\eta)\,d\eta,\\
  I_{K_{2}}(\xi)
  &= -\int_{\xi}^{\infty} K_{2}(\eta)I_{N}(\eta)Z(\eta)\,d\eta,
\end{align*}
then we get that fixed points of the operator
\begin{equation}\label{eq:T_12}
  T_{12}(Z)(\xi) = E_{12}(\xi)\begin{pmatrix} c_{0,1} \\ c_{0,2} \end{pmatrix}
  + E_{12}(\xi)I_{K_{1}}(\xi) + P_{12}(\xi)I_{K_{2}}(\xi)
\end{equation}
give us solutions to Equation~\eqref{eq:Z}.

The process is then analogous to the case of the forward solution in
Section~\ref{sec:forward-solution-infinity}:
\begin{enumerate}
\item In Section~\ref{sec:fixed-point-Z}, we give explicit conditions
  for the existence of a fixed point of the operator \(T_{12}\).
\item In Section~\ref{sec:fixed-point-enclosures-Z}, we discuss how to
  use the fixed-point equation to compute tight enclosures of \(Z\)
  and \(Z'\).
\end{enumerate}

In the subsections below, we use \(|\cdot|_{\infty}\) to denote the
supremum norm of a vector and \(\|\cdot\|_{\infty}\) to denote the
operator norm on matrices induced by this vector norm, given by the
maximum absolute row sum of the matrix. In some cases, we also need to
consider the elementwise bounds for vectors or row-wise bounds for
matrices. We use the notation \(|\cdot|_{\infty,j}\) to denote the
absolute value of the \(j\)th element of a vector and
\(\|\cdot\|_{\infty,j}\) to denote the sum of the absolute values in
row \(j\). With this notation, we have
\(|\cdot|_{\infty} = \max_{j} |\cdot|_{\infty,j}\) and
\(\|\cdot\|_{\infty} = \max_{j} \|\cdot\|_{\infty,j}\).

\subsection{Existence of a fixed point}
\label{sec:fixed-point-Z}

As a first step, we need bounds for the functions \(P_{1}\), \(P_{2}\),
\(E_{1}\) and \(E_{2}\). As for the forward and backward
solutions, it will also be convenient to introduce the notation
\begin{equation*}
  J_{P,1}(\xi) = P_{1}(\xi) W_{1}(\xi)^{-1},\quad
  J_{P,2}(\xi) = P_{2}(\xi) W_{2}(\xi)^{-1},\quad
  J_{E,1}(\xi) = E_{1}(\xi) W_{1}(\xi)^{-1} \quad\text{and}\quad
  J_{E,2}(\xi) = E_{2}(\xi) W_{2}(\xi)^{-1}.
\end{equation*}
If we let
\begin{equation*}
  B_{W,1} = \frac{1}{2}e^{\sign(\imag(c)) \pi i (b - a + \lambda)}(-c)^{b - 1} \quad\text{and}\quad
  B_{W,2} = \frac{1}{2}e^{-\sign(\imag(c)) \pi i (b - \conj{a} + \lambda)}(-\conj{c})^{b - 1},
\end{equation*}
then we can write these as
\begin{align*}
  J_{P,1}(\xi) &= B_{W,1}P_{1}(\xi)e^{c\xi^{2}}\xi^{d - 1},\\
  J_{P,2}(\xi) &= B_{W,2}P_{2}(\xi)e^{\conj{c}\xi^{2}}\xi^{d - 1},\\
  J_{E,1}(\xi) &= B_{W,1}E_{1}(\xi)e^{c\xi^{2}}\xi^{d - 1},\\
  J_{E,2}(\xi) &= B_{W,2}E_{2}(\xi)e^{\conj{c}\xi^{2}}\xi^{d - 1}.
\end{align*}

We then have the following lemma.

\begin{lemma}\label{lemma:P_i_E_i-bounds}
  Let \(\xi_{1} \geq 1\). Assuming that the parameters \(\lambda\),
  \(\kappa\), \(\omega\) and \(\epsilon\) are such that
  \(U(a - \lambda, b, -c\xi_{1}^{2})\),
  \(U(\conj{a} - \lambda, b, -\conj{c}\xi_{1}^{2})\),
  \(U(b - a + \lambda, b, c\xi_{1}^{2})\) and
  \(U(b - \conj{a} + \lambda, b, \conj{c}\xi_{1}^{2})\)
  satisfy the conditions of~\cite[Lemma~7.1]{Dahne2024}, we have, for
  \(\xi \geq \xi_{1}\) and \(j \in \{1, 2\}\), the bounds
  \begin{align*}
    \left|P_{j}(\xi)\right| &\leq C_{P_{j}}\xi^{-\frac{1}{\sigma} + 2\real(\lambda)},\\
    \left|E_{j}(\xi)\right| &\leq C_{E_{j}}e^{-\real(c)\xi^{2}}\xi^{\frac{1}{\sigma} - d - 2\real(\lambda)},\\
    \left|J_{P,j}(\xi)\right| &\leq C_{J_{P,j}}e^{\real(c)\xi^{2}}\xi^{-\frac{1}{\sigma} + d + 2\real(\lambda) - 1},\\
    \left|J_{E,j}(\xi)\right| &\leq C_{J_{E,j}}\xi^{\frac{1}{\sigma} - 2\real(\lambda) - 1}.
  \end{align*}
  Here, the constants for \(E_{j}\) and \(P_{j}\) are given by
  \begin{align*}
    C_{P_{1}} &= C_{U}(a - \lambda, b, n, -c\xi_{1}^{2})\left|(-c)^{-a + \lambda}\right|,\\
    C_{P_{2}} &= C_{U}(\conj{a} - \lambda, b, n, -\conj{c}\xi_{1}^{2})\left|(-\conj{c})^{-\conj{a} + \lambda}\right|,\\
    C_{E_{1}} &= C_{U}(b - a + \lambda, b, n, c\xi_{1}^{2})\left|c^{-b + a - \lambda}\right|,\\
    C_{E_{2}} &= C_{U}(b - \conj{a} + \lambda, b, n, \conj{c}\xi_{1}^{2})\left|\conj{c}^{-b + \conj{a} - \lambda}\right|.
  \end{align*}
  Here \(C_{U}\) is as in~\cite[Lemma~7.1]{Dahne2024} and \(n\) is any
  non-negative integer. The constants related to \(J_{P,j}\) and
  \(J_{E,j}\) are given by
  \begin{align*}
    C_{J_{P,j}} &= |B_{W,j}|C_{P_{j}},\\
    C_{J_{E,j}} &= |B_{W,j}|C_{E_{j}}.
  \end{align*}
\end{lemma}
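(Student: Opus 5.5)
The plan is to mimic the proof of Lemma~\ref{lemma:P-hat-E-hat-bounds}, this time with the shifted parameters \(a - \lambda\), \(b - a + \lambda\) and their conjugate-side counterparts. The only input is the bound from~\cite[Lemma~7.1]{Dahne2024}. As used throughout this paper, I take it to say that, for \(z\) on the ray through \(z_{1}\) with \(|z| \geq |z_{1}|\), one has \(U(\alpha, b, z) = z^{-\alpha}(\text{truncated series} + R_{U})\). It also gives \(|U(\alpha, b, z)| \leq C_{U}(\alpha, b, n, z_{1})|z^{-\alpha}|\), provided the conditions of that lemma hold at \(z_{1}\). For \(\xi \geq \xi_{1}\), the points \(-c\xi^{2}\), \(c\xi^{2}\), \(-\conj{c}\xi^{2}\) and \(\conj{c}\xi^{2}\) lie on the rays through the corresponding points at \(\xi_{1}\), with larger modulus. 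So the hypotheses assumed at \(\xi_{1}\) give the bounds for every \(\xi \geq \xi_{1}\).

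\textbf{Step 1: the bounds for \(P_{j}\) and \(E_{j}\).} For \(P_{1}\) I write
\[
|P_{1}(\xi)| \leq C_{U}(a - \lambda, b, n, -c\xi_{1}^{2})\,\bigl|(-c\xi^{2})^{-a + \lambda}\bigr|.
\]
Since \(\xi^{2} > 0\), the principal branch splits as \((-c\xi^{2})^{s} = (-c)^{s}\xi^{2s}\), so \(|(-c\xi^{2})^{-a + \lambda}| = |(-c)^{-a + \lambda}|\,\xi^{-2\real(a) + 2\real(\lambda)}\). Using \(\real(a) = \frac{1}{2\sigma}\) from~\eqref{eq:abc}, this is exactly \(C_{P_{1}}\xi^{-\frac{1}{\sigma} + 2\real(\lambda)}\).

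For \(E_{1}\) I use \(|e^{-c\xi^{2}}| = e^{-\real(c)\xi^{2}}\). Combined with \(|(c\xi^{2})^{-(b - a + \lambda)}| = |c^{-b + a - \lambda}|\,\xi^{-d + \frac{1}{\sigma} - 2\real(\lambda)}\) (using \(2b = d\)), this gives the bound for \(E_{1}\). The \(j = 2\) cases are identical with \(c, a\) replaced by \(\conj{c}, \conj{a}\). Here I use \(\real(\conj{a}) = \real(a)\) and \(\real(\conj{c}) = \real(c)\), while \(\lambda\) itself is not conjugated.

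\textbf{Step 2: the bounds for \(J_{P,j}\) and \(J_{E,j}\).} First I check that \(W_{j}^{-1}\) has the stated form. From the Wronskian formula,
\[
W_{1}^{-1} = -\tfrac{1}{2c}\,e^{\sign(\imag(c))\pi i(b - a + \lambda)}\,\xi^{-1}(-c)^{b}\xi^{2b}e^{c\xi^{2}}.
\]
Since \(-\frac{1}{2c} = \frac{1}{2}(-c)^{-1}\), this equals \(B_{W,1}e^{c\xi^{2}}\xi^{d - 1}\), and the analogous computation holds for \(W_{2}\). Hence \(|J_{P,j}| \leq |B_{W,j}|\,e^{\real(c)\xi^{2}}\xi^{d - 1}|P_{j}|\). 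Inserting Step~1 gives the stated bound with \(C_{J_{P,j}} = |B_{W,j}|C_{P_{j}}\). For \(J_{E,j}\), the factor \(e^{\real(c)\xi^{2}}\) cancels the exponential decay of \(E_{j}\), leaving \(\xi^{\frac{1}{\sigma} - 2\real(\lambda) - 1}\).

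\textbf{Main obstacle.} The analysis itself is routine. The real care is needed with complex powers and branches: the splitting \((\pm c\xi^{2})^{s} = (\pm c)^{s}\xi^{2s}\), the identity \(-\frac{1}{2c} = \frac{1}{2}(-c)^{-1}\), and the consistent sign of \(\sign(\imag(c))\) in \(B_{W,2}\) when conjugating (recall \(\imag(\conj{c}) = -\imag(c)\)). I would first verify these on the principal branch, using that \(c\) has \(\real(c) > 0\) and \(\imag(c) < 0\), so none of \(\pm c\), \(\pm\conj{c}\) lies on the negative real axis.
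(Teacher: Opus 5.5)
Your proposal is correct and takes essentially the same approach as the paper. You apply the \(U\)-bound of \cite[Lemma~7.1]{Dahne2024} along the rays through the points at \(\xi_{1}\), split the complex powers using \(\real(a) = \frac{1}{2\sigma}\) and \(2b = d\), and then read off the \(J_{P,j}\), \(J_{E,j}\) bounds from the representation \(J_{\cdot,j} = B_{W,j}(\cdot)e^{c\xi^{2}}\xi^{d-1}\) (with \(\conj{c}\) for \(j = 2\)); your explicit check that \(W_{j}^{-1}\) has this form is a detail the paper simply takes from the representation stated just before the lemma.
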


\begin{proof}
  From the definitions of \(P_{j}\) and \(E_{j}\), we get
  \begin{align*}
    \left|P_{1}(\xi)\right| &= |U(a - \lambda, b, -c\xi^{2})|,\\
    \left|P_{2}(\xi)\right| &= |U(\conj{a} - \lambda, b, -\conj{c}\xi^{2})|,\\
    \left|E_{1}(\xi)\right| &= |e^{-c\xi^{2}}U(b - a + \lambda, b, c\xi^{2})|,\\
    \left|E_{2}(\xi)\right| &= |e^{-\conj{c}\xi^{2}}U(b - \conj{a} + \lambda, b, \conj{c}\xi^{2})|.
  \end{align*}
  The bounds then follow from the bounds for \(U\)
  in~\cite[Lemma~7.1]{Dahne2024}. The bounds for \(J_{E,j}\) and
  \(J_{P,j}\) are immediate consequences of their definitions and the
  bounds for \(E_{j}\) and \(P_{j}\), respectively.
\end{proof}

For the rest of this section, whenever the bounds from
Lemma~\ref{lemma:P_i_E_i-bounds} are used, we will implicitly assume
that the parameters \(\lambda\), \(\kappa\), \(\omega\) and
\(\epsilon\) satisfy its assumptions.

With this notation, the matrices \(K_{1}\) and \(K_{2}\)
from~\eqref{eq:K_1-K_2} take the form
\begin{equation*}
  K_{1}(\xi) = \begin{pmatrix} J_{P,1}(\xi) & 0\\ 0 & J_{P,2}(\xi)\end{pmatrix}V^{-1}A^{-1}V
  \quad\text{and}\quad
  K_{2}(\xi) = \begin{pmatrix} J_{E,1}(\xi) & 0\\ 0 & J_{E,2}(\xi)\end{pmatrix}V^{-1}A^{-1}V.
\end{equation*}
Since
\begin{equation*}
  V^{-1}A^{-1}V = \frac{1}{1 + \epsilon^{2}}
  \begin{pmatrix}
    \epsilon - i & 0\\
    0 & \epsilon + i
  \end{pmatrix},
\end{equation*}
both \(K_{1}\) and \(K_{2}\) are diagonal.

The following lemma gives us control of the \(\|\cdot\|_{\infty,j}\)
norms for \(K_{1}\) and \(K_{2}\).

\begin{lemma}\label{lemma:bound-K_1-K_2}
  We have the following bounds:
  \begin{align*}
    \|K_{1}(\xi)\|_{\infty,j} &\leq C_{K_{1},j}e^{\real(c)\xi^{2}}\xi^{-\frac{1}{\sigma} + d + 2\real(\lambda) - 1},\\
    \|K_{2}(\xi)\|_{\infty,j} &\leq C_{K_{2},j}\xi^{\frac{1}{\sigma} - 2\real(\lambda) - 1},
  \end{align*}
  where
  \begin{align*}
    C_{K_{1},j} &= \frac{1}{\sqrt{1 + \epsilon^{2}}}C_{J_{P,j}},\\
    C_{K_{2},j} &= \frac{1}{\sqrt{1 + \epsilon^{2}}}C_{J_{E,j}}.
  \end{align*}
\end{lemma}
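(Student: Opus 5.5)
Since both \(K_{1}\) and \(K_{2}\) are diagonal, the row-wise norm \(\|\cdot\|_{\infty,j}\) is just the modulus of the \(j\)th diagonal entry. The plan is therefore to compute these entries explicitly and apply the bounds of Lemma~\ref{lemma:P_i_E_i-bounds}. From the diagonal form of \(V^{-1}A^{-1}V\) recorded just before the statement, the diagonal entries are
\begin{equation*}
  (K_{1}(\xi))_{11} = \frac{\epsilon - i}{1 + \epsilon^{2}}J_{P,1}(\xi),\quad
  (K_{1}(\xi))_{22} = \frac{\epsilon + i}{1 + \epsilon^{2}}J_{P,2}(\xi),
\end{equation*}
and the same holds for \(K_{2}\) with \(J_{E,j}\) in place of \(J_{P,j}\).

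The first step is to note that
\(\left|\frac{\epsilon \mp i}{1 + \epsilon^{2}}\right| = \frac{\sqrt{1 + \epsilon^{2}}}{1 + \epsilon^{2}} = \frac{1}{\sqrt{1 + \epsilon^{2}}}\). This gives
\(\|K_{1}(\xi)\|_{\infty,j} = \frac{1}{\sqrt{1 + \epsilon^{2}}}|J_{P,j}(\xi)|\) and
\(\|K_{2}(\xi)\|_{\infty,j} = \frac{1}{\sqrt{1 + \epsilon^{2}}}|J_{E,j}(\xi)|\). There is no off-diagonal contribution to the row sums, because \(K_{1}\) and \(K_{2}\) are diagonal.

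The second step is to insert the bounds \(|J_{P,j}(\xi)| \leq C_{J_{P,j}}e^{\real(c)\xi^{2}}\xi^{-\frac{1}{\sigma} + d + 2\real(\lambda) - 1}\) and \(|J_{E,j}(\xi)| \leq C_{J_{E,j}}\xi^{\frac{1}{\sigma} - 2\real(\lambda) - 1}\) from Lemma~\ref{lemma:P_i_E_i-bounds}, valid for \(\xi \geq \xi_{1}\) under that lemma's standing assumptions. Doing so yields exactly the stated constants \(C_{K_{1},j}\) and \(C_{K_{2},j}\).

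There is no real obstacle here. The only point requiring care is checking the matrix product \(V^{-1}A^{-1}V\) and the modulus of its diagonal entries: \(A = \epsilon I + J\) acts as \(\epsilon \pm i\) on the eigenvectors of \(J\), so \(A^{-1}\) acts as \((\epsilon \pm i)^{-1} = (\epsilon \mp i)/(1 + \epsilon^{2})\). Both entries have the same modulus \((1 + \epsilon^{2})^{-1/2}\), so which entry goes with which row does not affect the bound.
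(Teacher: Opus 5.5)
Your proposal is correct and follows essentially the same argument as the paper: since $K_{1}$ and $K_{2}$ are diagonal, $\|\cdot\|_{\infty,j}$ reduces to the modulus of the $j$th diagonal entry. The paper's proof then applies $\left|\frac{\epsilon \pm i}{1 + \epsilon^{2}}\right| = \frac{1}{\sqrt{1 + \epsilon^{2}}}$ together with the bounds on $J_{P,j}$ and $J_{E,j}$ from Lemma~\ref{lemma:P_i_E_i-bounds}, exactly as you do.
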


\begin{proof}
  This follows immediately from Lemma~\ref{lemma:P_i_E_i-bounds} and
  the above representations, since the matrices are diagonal and
  \begin{equation*}
    \left|\frac{\epsilon \pm i}{1 + \epsilon^{2}}\right| = \frac{1}{\sqrt{1 + \epsilon^{2}}}.
  \end{equation*}
\end{proof}

We expect the fixed points of \(T_{12}\) to have the same decay as
\(E_{1}\) and \(E_{2}\). We therefore perform the fixed-point argument
in the Banach space \(\Space_{L}\) of continuous functions
\(Z \colon [\xi_{1}, \infty) \to \mathbb{C}^{2}\) for which the norm
\begin{equation*}
  \|Z\|_{L} = \sup_{\xi \geq \xi_{1}} e^{\real(c)\xi^{2}}\xi^{-\frac{1}{\sigma} + d + 2\real(\lambda)}|Z(\xi)|_{\infty}
\end{equation*}
is finite.

As in Section~\ref{sec:fixed-point-hat}, we need asymptotic bounds for
\(I_{K_{1}}\) and \(I_{K_{2}}\). As a first step, the following lemma
gives us bounds for \(I_{N}\). In this case, we specialize to
\(\sigma = 1\) and \(\delta = 0\), since that is the case we primarily
care about.

\begin{lemma}\label{lemma:bound-I_N}
  If \(\sigma = 1\) and \(\delta = 0\), then we have the bound
  \begin{equation*}
    \|I_{N}(\xi)\|_{\infty} \leq C_{I_{N}}\xi^{-2},
  \end{equation*}
  with
  \begin{equation*}
    C_{I_{N}} = 3\|\hat{Q}\|_{0}^{2}.
  \end{equation*}
  Note that, for the norm \(\|\hat{Q}\|_{0}\) to be defined, we here
  treat \(\hat{Q}\) as a function into \(\mathbb{C}\) rather than into
  \(\mathbb{R}^{2}\).
\end{lemma}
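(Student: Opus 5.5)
We compute \(I_{N}(\xi) = V^{-1}J_{N}(\xi)V\) explicitly for \(\sigma = 1\), \(\delta = 0\) and then read off its maximum absolute row sum. With these parameters, \eqref{eq:J_N-sigma-d} reduces to
\begin{equation*}
  J_{N}(\xi) = J\left((\hat{Q}\cdot\hat{Q})I + 2\hat{Q}\hat{Q}^{T}\right).
\end{equation*}
Here \(\hat{Q}\cdot\hat{Q} = |\hat{Q}|^{2}\), with \(\hat{Q}\) viewed as a complex number. The columns of \(V\) are eigenvectors of \(J\), and \(V^{-1}JV = \operatorname{diag}(-i, i)\). Hence
\begin{equation*}
  I_{N} = \operatorname{diag}(-i, i)\left(|\hat{Q}|^{2}I + 2V^{-1}\hat{Q}\hat{Q}^{T}V\right).
\end{equation*}

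The main step is to compute the rank-one matrix \(V^{-1}\hat{Q}\hat{Q}^{T}V\). We write \(\hat{Q} = (q_{1}, q_{2})^{T}\) with \(q = q_{1} + iq_{2}\). A direct calculation gives
\begin{equation*}
  \hat{Q}^{T}V = (iq_{1} + q_{2},\, -iq_{1} + q_{2}) = (i\conj{q},\, -iq).
\end{equation*}
It also gives \(V^{-1}\hat{Q} = \frac{1}{2}(-iq_{1} + q_{2},\, iq_{1} + q_{2})^{T} = \frac{1}{2}(-iq,\, i\conj{q})^{T}\). The product is therefore
\begin{equation*}
  V^{-1}\hat{Q}\hat{Q}^{T}V = \frac{1}{2}
  \begin{pmatrix} |q|^{2} & -q^{2} \\ -\conj{q}^{2} & |q|^{2} \end{pmatrix}.
\end{equation*}
Every entry has modulus \(\frac{1}{2}|\hat{Q}|^{2}\), so \(2V^{-1}\hat{Q}\hat{Q}^{T}V\) has entries of modulus \(|\hat{Q}|^{2}\). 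Multiplying on the left by \(\operatorname{diag}(-i, i)\) does not change any moduli.

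Each row of \(I_{N}\) then has diagonal entry of modulus \(2|\hat{Q}|^{2}\) and off-diagonal entry of modulus \(|\hat{Q}|^{2}\). The maximum absolute row sum is therefore \(\|I_{N}(\xi)\|_{\infty} = 3|\hat{Q}(\xi)|^{2}\). To finish, we use the definition of \(\|\cdot\|_{0}\) with \(\sigma = 1\), which gives \(|\hat{Q}(\xi)| \leq \|\hat{Q}\|_{0}\xi^{-1}\) for \(\xi \geq \xi_{1}\). This yields \(\|I_{N}(\xi)\|_{\infty} \leq 3\|\hat{Q}\|_{0}^{2}\xi^{-2}\), as claimed.

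The only step that needs care is the conjugation computation with \(V\) and \(V^{-1}\). There it is easy to misplace factors of \(i\) or of \(\frac{1}{2}\). A quick check is the trace: \(V^{-1}\hat{Q}\hat{Q}^{T}V\) must have trace \(|\hat{Q}|^{2}\), and the matrix above does.
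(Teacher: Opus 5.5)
Your argument is correct and is essentially the paper's proof: you compute \(I_{N} = V^{-1}J_{N}V\) explicitly, observe that each row has entries of modulus \(2|\hat{Q}|^{2}\) and \(|\hat{Q}|^{2}\), and then use \(|\hat{Q}(\xi)| \leq \|\hat{Q}\|_{0}\xi^{-1}\). One harmless slip: since \(J(i,1)^{T} = i(i,1)^{T}\), in fact \(V^{-1}JV = \operatorname{diag}(i,-i)\) rather than \(\operatorname{diag}(-i,i)\) (consistent with \(V^{-1}AV = \operatorname{diag}(\epsilon+i,\epsilon-i)\) in the paper), so your \(I_{N}\) has the wrong overall sign, which changes none of the moduli and hence not the bound.
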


\begin{proof}
  For \(\sigma = 1\) and \(\delta = 0\), we have
  \begin{equation*}
    J_{N}(\xi) = J((\hat{Q}(\xi) \cdot \hat{Q}(\xi))I + 2\hat{Q}(\xi)\hat{Q}(\xi)^{T}).
  \end{equation*}
  If we let \(\hat{a}\) and \(\hat{b}\) denote the two components of
  \(\hat{Q}\), this gives us
  \begin{equation*}
    I_{N}(\xi) = V^{-1}J_{N}(\xi)V =
    \begin{pmatrix}
      2i(\hat{a}^{2} + \hat{b}^{2}) & -i(\hat{a} + i\hat{b})^{2}\\
      i(\hat{a} - i\hat{b})^{2} & -2i(\hat{a}^{2} + \hat{b}^{2})
    \end{pmatrix}.
  \end{equation*}
  By instead interpreting \(\hat{Q}\) as
  \(\hat{Q} = \hat{a} + i\hat{b}\), we can write this as
  \begin{equation*}
    I_{N}(\xi) =
    \begin{pmatrix}
      2i|\hat{Q}|^{2} & -i\hat{Q}^{2}\\
      \conj{-i\hat{Q}^{2}} & -2i|\hat{Q}|^{2}
    \end{pmatrix},
  \end{equation*}
  from which it follows that \(\|I_{N}\|_{\infty} = 3|\hat{Q}|^{2}\).
  By definition of the norm \(\|\hat{Q}\|_{0}\), we have (with
  \(\sigma = 1\)) \(|\hat{Q}(\xi)| \leq \|\hat{Q}\|_{0}\xi^{-1}\),
  which gives us the bound.
\end{proof}

Combining the above result with the bounds for \(K_{1}\) and \(K_{2}\)
from Lemma~\ref{lemma:bound-K_1-K_2}, we can obtain bounds for
\(I_{K_{1}}\) and \(I_{K_{2}}\). For this, the following lemma will be
useful.

\begin{lemma}\label{lemma:exponential-integral-bound}
  Let \(\xi > 0\), \(p - 1 < 0\) and \(q > 0\). Then
  \begin{equation*}
    \int_{\xi}^{\infty} e^{-q\eta^{2}}\eta^{p}\,d\eta \leq \frac{1}{2q}e^{-q\xi^{2}}\xi^{p - 1}.
  \end{equation*}
\end{lemma}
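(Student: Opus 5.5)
The plan is to use the comparison-by-derivative trick, essentially the same integration by parts as in the proof of Lemma~\ref{lemma:I_E-hat-I_P-hat-bounds}, but in the decaying direction. Define
\begin{equation*}
  F(x) = \frac{1}{2q}e^{-qx^{2}}x^{p - 1}
\end{equation*}
for \(x > 0\). Then \(F(x) \to 0\) as \(x \to \infty\), since the Gaussian factor dominates any power. Hence
\begin{equation*}
  F(\xi) = -\int_{\xi}^{\infty} F'(\eta)\,d\eta,
\end{equation*}
and it suffices to show that \(-F'(\eta) \geq e^{-q\eta^{2}}\eta^{p}\) for every \(\eta > 0\).

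A direct computation gives
\begin{equation*}
  F'(\eta) = \frac{1}{2q}\left(-2q\eta\, e^{-q\eta^{2}}\eta^{p - 1} + (p - 1)e^{-q\eta^{2}}\eta^{p - 2}\right)
  = -e^{-q\eta^{2}}\eta^{p} + \frac{p - 1}{2q}e^{-q\eta^{2}}\eta^{p - 2}.
\end{equation*}
Since \(p - 1 < 0\) and \(q > 0\), the second term is nonpositive. Therefore \(-F'(\eta) \geq e^{-q\eta^{2}}\eta^{p}\), and integrating over \([\xi, \infty)\) yields
\begin{equation*}
  \int_{\xi}^{\infty} e^{-q\eta^{2}}\eta^{p}\,d\eta \leq F(\xi),
\end{equation*}
which is the claimed bound.

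There is no serious obstacle. The only points to check are that the left-hand integral converges, which follows from the Gaussian decay for any real \(p\) since \(\xi > 0\) keeps us away from the origin, and that the boundary term at infinity vanishes, which was noted above. Equivalently, one could integrate by parts exactly as in Lemma~\ref{lemma:I_E-hat-I_P-hat-bounds} and observe that the remaining integral carries the factor \((p-1)/(2q) < 0\), so it can simply be dropped rather than absorbed.
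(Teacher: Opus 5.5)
Your proof is correct and is essentially the paper's argument: the paper integrates by parts to get the boundary term \(\frac{1}{2q}e^{-q\xi^{2}}\xi^{p-1}\) plus \(\frac{1}{2q}\int_{\xi}^{\infty} e^{-q\eta^{2}}\frac{d}{d\eta}\bigl(\eta^{p-1}\bigr)\,d\eta\), which it drops because it is negative. Your pointwise bound \(-F'(\eta) \geq e^{-q\eta^{2}}\eta^{p}\) is the same computation written via the fundamental theorem of calculus.
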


\begin{proof}
  Integrating by parts, we have
  \begin{equation*}
    \int_{\xi}^{\infty} e^{-q\eta^{2}}\eta^{p}\,d\eta
    = \frac{1}{2q}e^{-q\xi^{2}}\xi^{p - 1}
    + \frac{1}{2q}\int_{\xi}^{\infty} e^{-q\eta^{2}}\frac{d}{d\eta}\left(\eta^{p - 1}\right)\,d\eta.
  \end{equation*}
  Since \(\frac{d}{d\eta}\left(\eta^{p - 1}\right) < 0\), the last
  integral is negative and we get an upper bound by removing it.
\end{proof}

The bounds for \(I_{K_{1}}\) and \(I_{K_{2}}\) are given in the
following lemma.

\begin{lemma}\label{lemma:I_K_1-I_K_2-bounds}
  Assume that the assumptions of Lemma~\ref{lemma:bound-I_N} hold and
  that \(\xi_{1} \geq 1\), \(\real(c) > 0\) and
  \(\frac{2}{\sigma} - d - 4\real(\lambda) - 4 < 0\).
  Then, for \(\xi \geq \xi_{1}\) and \(j \in \{1, 2\}\), we have the
  following bounds:
  \begin{align*}
    |I_{K_{1}}(\xi)|_{\infty,j} &\leq C_{I_{K_{1}},j}\xi_{1}^{-2}\|Z\|_{L},\\
    |I_{K_{2}}(\xi)|_{\infty,j} &\leq C_{I_{K_{2}},j}e^{-\real(c)\xi^{2}}\xi^{\frac{2}{\sigma} - d - 4\real(\lambda) - 4}\|Z\|_{L},
  \end{align*}
  with
  \begin{equation*}
    C_{I_{K_{1}},j} = \frac{C_{K_{1},j}C_{I_{N}}}{2},\quad
    C_{I_{K_{2}},j} = \frac{C_{K_{2},j}C_{I_{N}}}{2\real(c)}.
  \end{equation*}
\end{lemma}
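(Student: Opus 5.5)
The plan is to estimate each row of the integrands pointwise and then integrate in \(\eta\), just as in Lemma~\ref{lemma:I_E-hat-I_P-hat-bounds}. Since \(K_{1}\) and \(K_{2}\) are diagonal, the \(j\)th entry of \(K_{i}(\eta)I_{N}(\eta)Z(\eta)\) is bounded in absolute value by \(\|K_{i}(\eta)\|_{\infty,j}\|I_{N}(\eta)\|_{\infty}|Z(\eta)|_{\infty}\). For the three factors we use, in order, Lemma~\ref{lemma:bound-K_1-K_2}, Lemma~\ref{lemma:bound-I_N}, and the definition of \(\|\cdot\|_{L}\). The last of these gives
\begin{equation*}
  |Z(\eta)|_{\infty} \leq \|Z\|_{L}\,e^{-\real(c)\eta^{2}}\eta^{\frac{1}{\sigma} - d - 2\real(\lambda)}.
\end{equation*}

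For \(I_{K_{1}}\) the exponentials \(e^{\real(c)\eta^{2}}\) from \(K_{1}\) and \(e^{-\real(c)\eta^{2}}\) from \(Z\) cancel. The powers of \(\eta\) also collapse: \(\left(-\frac{1}{\sigma} + d + 2\real(\lambda) - 1\right) - 2 + \left(\frac{1}{\sigma} - d - 2\real(\lambda)\right) = -3\). So the integrand is bounded by \(C_{K_{1},j}C_{I_{N}}\|Z\|_{L}\eta^{-3}\). Integrating over \([\xi_{1}, \xi]\), we bound the result by the integral over \([\xi_{1}, \infty)\), which equals \(\frac{1}{2}\xi_{1}^{-2}\). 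This gives the stated constant \(C_{K_{1},j}C_{I_{N}}/2\) and explains why the bound carries \(\xi_{1}^{-2}\) rather than \(\xi^{-2}\).

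For \(I_{K_{2}}\) the factor from \(K_{2}\) is purely polynomial. The integrand is therefore bounded by
\begin{equation*}
  C_{K_{2},j}C_{I_{N}}\|Z\|_{L}\,e^{-\real(c)\eta^{2}}\eta^{p},\qquad p = \tfrac{2}{\sigma} - 4\real(\lambda) - d - 3,
\end{equation*}
which comes from adding the exponents \(\frac{1}{\sigma} - 2\real(\lambda) - 1\), \(-2\), and \(\frac{1}{\sigma} - d - 2\real(\lambda)\). We then apply Lemma~\ref{lemma:exponential-integral-bound} with \(q = \real(c) > 0\). Its hypothesis \(p - 1 < 0\) is exactly the assumed inequality \(\frac{2}{\sigma} - d - 4\real(\lambda) - 4 < 0\). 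The lemma gives the factor \(\frac{1}{2\real(c)}e^{-\real(c)\xi^{2}}\xi^{p-1}\), which matches the claim.

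There is no real obstacle here; the main risk is bookkeeping. Two points need care: getting the exponents right, and justifying the row-wise estimate \(|MF|_{\infty,j} \leq \|M\|_{\infty,j}|F|_{\infty}\) together with the composition through \(I_{N}\), using only the operator norm \(\|I_{N}\|_{\infty}\). Finally, the integrals converge absolutely, which justifies the termwise estimation.
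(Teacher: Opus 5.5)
Your proposal is correct and follows essentially the same route as the paper's proof. Both bound the $j$th component row-wise by $\|K_{i}(\eta)\|_{\infty,j}\|I_{N}(\eta)\|_{\infty}|Z(\eta)|_{\infty}$, use the exponent cancellation to get $\eta^{-3}$, which integrates to at most $\xi_{1}^{-2}/2$, and handle $I_{K_{2}}$ with Lemma~\ref{lemma:exponential-integral-bound} at $q = \real(c)$, whose hypothesis $p - 1 < 0$ is exactly the assumed inequality.
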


\begin{proof}
  Let us start by noting that
  \begin{equation*}
    K_{1}(\eta)I_{N}(\eta)Z(\eta) \quad\text{and}\quad
    K_{2}(\eta)I_{N}(\eta)Z(\eta)
  \end{equation*}
  satisfy
  \begin{align*}
    |K_{1}(\eta)I_{N}(\eta)Z(\eta)|_{\infty,j} &\leq \|K_{1}(\eta)\|_{\infty,j}\|I_{N}(\eta)\|_{\infty}|Z(\eta)|_{\infty},\\
    |K_{2}(\eta)I_{N}(\eta)Z(\eta)|_{\infty,j} &\leq \|K_{2}(\eta)\|_{\infty,j}\|I_{N}(\eta)\|_{\infty}|Z(\eta)|_{\infty}.
  \end{align*}
  This gives us that
  \begin{align*}
    |I_{K_{1}}(\xi)|_{\infty,j}
    &\leq \int_{\xi_{1}}^{\xi} \|K_{1}(\eta)\|_{\infty,j}\|I_{N}(\eta)\|_{\infty}|Z(\eta)|_{\infty}\,d\eta,\\
    |I_{K_{2}}(\xi)|_{\infty,j}
    &\leq \int_{\xi}^{\infty} \|K_{2}(\eta)\|_{\infty,j}\|I_{N}(\eta)\|_{\infty}|Z(\eta)|_{\infty}\,d\eta.
  \end{align*}
  We have
  \begin{equation*}
    |Z(\eta)|_{\infty} \leq e^{-\real(c)\eta^{2}}\eta^{\frac{1}{\sigma} - d - 2\real(\lambda)}\|Z\|_{L}.
  \end{equation*}
  Combining this with the bounds from
  Lemmas~\ref{lemma:bound-K_1-K_2} and~\ref{lemma:bound-I_N}, we get
  \begin{equation*}
    \begin{split}
      \|K_{1}(\eta)\|_{\infty,j}\|I_{N}(\eta)\|_{\infty}|Z(\eta)|_{\infty}
      &\leq C_{K_{1},j}e^{\real(c)\eta^{2}}\eta^{-\frac{1}{\sigma} + d + 2\real(\lambda) - 1}
      C_{I_{N}}\eta^{-2}
      e^{-\real(c)\eta^{2}}\eta^{\frac{1}{\sigma} - d - 2\real(\lambda)}\|Z\|_{L}\\
      &\leq C_{K_{1},j}C_{I_{N}}\eta^{-3}\|Z\|_{L}
    \end{split}
  \end{equation*}
  and
  \begin{equation*}
    \begin{split}
      \|K_{2}(\eta)\|_{\infty,j}\|I_{N}(\eta)\|_{\infty}|Z(\eta)|_{\infty}
      &\leq C_{K_{2},j}\eta^{\frac{1}{\sigma} - 2\real(\lambda) - 1}
      C_{I_{N}}\eta^{-2}
      e^{-\real(c)\eta^{2}}\eta^{\frac{1}{\sigma} - d - 2\real(\lambda)}\|Z\|_{L}\\
      &\leq C_{K_{2},j}C_{I_{N}}e^{-\real(c)\eta^{2}}\eta^{\frac{2}{\sigma} - d - 4\real(\lambda) - 3}\|Z\|_{L}.
    \end{split}
  \end{equation*}
  Upon inserting this into the integrals, we obtain
  \begin{align*}
    |I_{K_{1}}(\xi)|_{\infty,j}
    &\leq C_{K_{1},j}C_{I_{N}}\|Z\|_{L}\int_{\xi_{1}}^{\xi} \eta^{-3}\,d\eta,\\
    |I_{K_{2}}(\xi)|_{\infty,j}
    &\leq C_{K_{2},j}C_{I_{N}}\|Z\|_{L}\int_{\xi}^{\infty}e^{-\real(c)\eta^{2}}\eta^{\frac{2}{\sigma} - d - 4\real(\lambda) - 3}\,d\eta.
  \end{align*}
  For the first integral, we note that
  \begin{equation*}
    \int_{\xi_{1}}^{\xi} \eta^{-3}\,d\eta \leq \int_{\xi_{1}}^{\infty} \eta^{-3}\,d\eta
    = \frac{\xi_{1}^{-2}}{2},
  \end{equation*}
  which gives us the bound for \(I_{K_{1}}\). For \(I_{K_{2}}\),
  Lemma~\ref{lemma:exponential-integral-bound} directly yields
  \begin{equation*}
    \int_{\xi}^{\infty}e^{-\real(c)\eta^{2}}\eta^{\frac{2}{\sigma} - d - 4\real(\lambda) - 3}\,d\eta
    \leq \frac{1}{2\real(c)}e^{-\real(c)\xi^{2}}\xi^{\frac{2}{\sigma} - d - 4\real(\lambda) - 4}.
  \end{equation*}
\end{proof}

The following lemma collects the corresponding bounds for
\(T_{12}\).

\begin{lemma}\label{lemma:Z-fixed-point-bounds}
  Under the assumptions of Lemma~\ref{lemma:I_K_1-I_K_2-bounds}, the
  operator \(T_{12}\) defines a continuous mapping
  \(T_{12} \colon \Space_{L} \to \Space_{L}\). Moreover,
  \begin{equation}
    \label{eq:T_12-1}
    \|T_{12}(Z)\|_{L} \leq \max(C_{E_{1}}|c_{0,1}|, C_{E_{2}}|c_{0,2}|)
    + C_{T_{12}}\xi_{1}^{-2}\|Z\|_{L}
  \end{equation}
  and
  \begin{equation}
    \label{eq:T_12-2}
    \|T_{12}(Z) - T_{12}(W)\|_{L} \leq C_{T_{12}}\xi_{1}^{-2}\|Z - W\|_{L},
  \end{equation}
  for all \(Z, W \in \Space_{L}\). Here,
  \begin{equation*}
    C_{T_{12}} = \max(C_{E_{1}}C_{I_{K_{1}},1}, C_{E_{2}}C_{I_{K_{1}},2})
    + \max(C_{P_{1}}C_{I_{K_{2}},1}, C_{P_{2}}C_{I_{K_{2}},2})\xi_{1}^{-2}.
  \end{equation*}
\end{lemma}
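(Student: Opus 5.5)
The proof mirrors that of Lemma~\ref{lemma:Q-hat-fixed-point-bounds}. I would bound the three terms of \(T_{12}(Z)\) componentwise, multiply by the weight \(w(\xi) = e^{\real(c)\xi^{2}}\xi^{-\frac{1}{\sigma} + d + 2\real(\lambda)}\), and take the supremum over \(\xi \geq \xi_{1}\). Since \(E_{12}\) and \(P_{12}\) are diagonal, the \(j\)th component of \(T_{12}(Z)(\xi)\) is
\[
E_{j}(\xi)c_{0,j} + E_{j}(\xi)(I_{K_{1}}(\xi))_{j} + P_{j}(\xi)(I_{K_{2}}(\xi))_{j}.
\]
So it suffices to bound \(w(\xi)\) times each of these terms by constants depending on \(j\), and then take the maximum over \(j\). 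This is why \(C_{T_{12}}\) is a sum of two maxima.

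\textbf{Term 1.} Lemma~\ref{lemma:P_i_E_i-bounds} gives \(|E_{j}(\xi)| \leq C_{E_{j}}e^{-\real(c)\xi^{2}}\xi^{\frac{1}{\sigma} - d - 2\real(\lambda)}\), which is exactly \(C_{E_{j}}w(\xi)^{-1}\). Hence \(w|E_{j}c_{0,j}| \leq C_{E_{j}}|c_{0,j}|\).

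\textbf{Term 2.} Combining the same bound with Lemma~\ref{lemma:I_K_1-I_K_2-bounds} gives \(w|E_{j}(I_{K_{1}})_{j}| \leq C_{E_{j}}C_{I_{K_{1}},j}\xi_{1}^{-2}\|Z\|_{L}\).

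\textbf{Term 3.} Here \(w(\xi)|P_{j}(\xi)| \leq C_{P_{j}}e^{\real(c)\xi^{2}}\xi^{-\frac{2}{\sigma} + d + 4\real(\lambda)}\). Multiplying by the bound \(C_{I_{K_{2}},j}e^{-\real(c)\xi^{2}}\xi^{\frac{2}{\sigma} - d - 4\real(\lambda) - 4}\|Z\|_{L}\), the exponentials and all powers except \(\xi^{-4}\) cancel. Since \(\xi \geq \xi_{1}\), we get \(\xi^{-4} \leq \xi_{1}^{-4}\), which gives \(C_{P_{j}}C_{I_{K_{2}},j}\xi_{1}^{-4}\|Z\|_{L}\).

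Summing the three terms and maximizing over \(j\) gives~\eqref{eq:T_12-1}.

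For~\eqref{eq:T_12-2} I would use that \(T_{12}\) is affine in \(Z\). The map \(Z \mapsto I_{K_{i}}\) is linear, so \(T_{12}(Z) - T_{12}(W)\) equals the \(I_{K}\)-part of \(T_{12}\) applied to \(Z - W\), with the \(c_{0}\) term cancelling. The same estimates for terms 2 and 3, now applied to \(Z - W\), give the Lipschitz bound.

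That \(T_{12}\) maps \(\Space_{L}\) into \(\Space_{L}\) follows from~\eqref{eq:T_12-1} once \(T_{12}(Z)\) is known to be continuous in \(\xi\). Continuity of the integrals follows from the integrability of the integrands shown in Lemma~\ref{lemma:I_K_1-I_K_2-bounds}, using dominated convergence for the tail integral \(I_{K_{2}}\). Continuity of \(T_{12}\) as a map on \(\Space_{L}\) follows from the Lipschitz bound.

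The only point needing care is the exponent bookkeeping in term 3, where \(P_{j}\) grows like \(\xi^{2\real(\lambda)}\) relative to the weight. The assumption \(\frac{2}{\sigma} - d - 4\real(\lambda) - 4 < 0\), already used in Lemma~\ref{lemma:I_K_1-I_K_2-bounds} via Lemma~\ref{lemma:exponential-integral-bound}, ensures the \(I_{K_{2}}\) estimate holds. After that the residual power is exactly \(\xi^{-4}\), so no further monotonicity argument is needed.
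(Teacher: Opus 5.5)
Your proposal is correct and follows essentially the same route as the paper. Both bound the weighted norm termwise using Lemmas~\ref{lemma:P_i_E_i-bounds} and~\ref{lemma:I_K_1-I_K_2-bounds}, with the \(P_{12}I_{K_{2}}\) term producing the extra factor \(\xi_{1}^{-4}\), and both get the Lipschitz bound from the affine structure and the mapping and continuity properties from~\eqref{eq:T_12-1} and~\eqref{eq:T_12-2}. Your componentwise bookkeeping, which uses that the maximum over \(j\) of the sums is at most the sum of the maxima, just makes explicit why \(C_{T_{12}}\) is a sum of two maxima.
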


\begin{proof}
  For~\eqref{eq:T_12-1}, we need to bound
  \(e^{\real(c)\xi^{2}}\xi^{-\frac{1}{\sigma} + d +
    2\real(\lambda)}|T_{12}(Z)(\xi)|_{\infty}\). Bounding it
  termwise, we get
  \begin{align*}
    e^{\real(c)\xi^{2}}\xi^{-\frac{1}{\sigma} + d + 2\real(\lambda)}|T_{12}(Z)(\xi)|_{\infty}
    &\leq e^{\real(c)\xi^{2}}\xi^{-\frac{1}{\sigma} + d + 2\real(\lambda)}\left|E_{12}(\xi)\begin{pmatrix} c_{0,1} \\ c_{0,2} \end{pmatrix}\right|_{\infty}\\
    &\qquad+ e^{\real(c)\xi^{2}}\xi^{-\frac{1}{\sigma} + d + 2\real(\lambda)}|E_{12}(\xi)I_{K_{1}}(\xi)|_{\infty}\\
    &\qquad+ e^{\real(c)\xi^{2}}\xi^{-\frac{1}{\sigma} + d + 2\real(\lambda)}|P_{12}(\xi)I_{K_{2}}(\xi)|_{\infty}.
  \end{align*}
  For the first term, we get from Lemma~\ref{lemma:P_i_E_i-bounds} that
  \begin{equation*}
    e^{\real(c)\xi^{2}}\xi^{-\frac{1}{\sigma} + d + 2\real(\lambda)}
    \left|E_{12}(\xi)\begin{pmatrix} c_{0,1} \\ c_{0,2} \end{pmatrix}\right|_{\infty}
    \leq \max(C_{E_{1}}|c_{0,1}|, C_{E_{2}}|c_{0,2}|).
  \end{equation*}
  For the second term, we get from Lemmas~\ref{lemma:P_i_E_i-bounds}
  and~\ref{lemma:I_K_1-I_K_2-bounds} that
  \begin{equation*}
    e^{\real(c)\xi^{2}}\xi^{-\frac{1}{\sigma} + d + 2\real(\lambda)}|E_{12}(\xi)I_{K_{1}}(\xi)|_{\infty}
    \leq \max(C_{E_{1}}C_{I_{K_{1}},1},C_{E_{2}}C_{I_{K_{1}},2})\xi_{1}^{-2}\|Z\|_{L}
  \end{equation*}
  and for the third term, we get
  \begin{equation*}
    e^{\real(c)\xi^{2}}\xi^{-\frac{1}{\sigma} + d + 2\real(\lambda)}|P_{12}(\xi)I_{K_{2}}(\xi)|_{\infty}
    \leq \max(C_{P_{1}}C_{I_{K_{2}},1}, C_{P_{2}}C_{I_{K_{2}},2})\xi_{1}^{-4}\|Z\|_{L}.
  \end{equation*}
  Combining these bounds gives us~\eqref{eq:T_12-1}.

  For~\eqref{eq:T_12-2}, we similarly get
  \begin{multline*}
    e^{\real(c)\xi^{2}}\xi^{-\frac{1}{\sigma} + d + 2\real(\lambda)}|T_{12}(Z)(\xi) - T_{12}(W)(\xi)|_{\infty}\\
    \leq e^{\real(c)\xi^{2}}\xi^{-\frac{1}{\sigma} + d + 2\real(\lambda)}
    \left|E_{12}(\xi)\int_{\xi_{1}}^{\xi} K_{1}(\eta)I_{N}(\eta)(Z(\eta) - W(\eta))\,d\eta\right|_{\infty}\\
    + e^{\real(c)\xi^{2}}\xi^{-\frac{1}{\sigma} + d + 2\real(\lambda)}
    \left|P_{12}(\xi)\int_{\xi}^{\infty} K_{2}(\eta)I_{N}(\eta)(Z(\eta) - W(\eta))\,d\eta\right|_{\infty}.
  \end{multline*}
  By using that
  \begin{equation*}
    |Z(\eta) - W(\eta)|_{\infty} \leq e^{-\real(c)\eta^{2}}\eta^{\frac{1}{\sigma} - d - 2\real(\lambda)}\|Z - W\|_{L},
  \end{equation*}
  the two integrals can be bounded using the same approach as in
  Lemma~\ref{lemma:I_K_1-I_K_2-bounds}, which, combined with
  Lemma~\ref{lemma:P_i_E_i-bounds}, gives us the result.

  That \(T_{12}\) maps \(\Space_{L}\) to \(\Space_{L}\) follows from
  the continuity of \(T_{12}(Z)(\xi)\) in \(\xi\) and the
  bound~\eqref{eq:T_12-1}. That the mapping is continuous is immediate
  from the Lipschitz bound~\eqref{eq:T_12-2}.
\end{proof}

The estimates~\eqref{eq:T_12-1} and~\eqref{eq:T_12-2} show that
\(T_{12}\) is a contraction of the ball
\(B_{\rho} = \{u \in \Space_{L} \colon \|u\|_{L} \leq \rho\}\) into
itself if \(\rho\) is such that
\begin{equation*}
  \max(C_{E_{1}}|c_{0,1}|, C_{E_{2}}|c_{0,2}|) + C_{T_{12}}\xi_{1}^{-2}\rho \leq \rho
\end{equation*}
and
\begin{equation}
  \label{eq:T_12-ineq-2}
  C_{T_{12}}\xi_{1}^{-2} < 1.
\end{equation}
Note that the condition \(C_{T_{12}}\xi_{1}^{-2} < 1\) means that the
first inequality is equivalent to
\begin{equation}
  \label{eq:T_12-ineq-1}
  \rho \geq (1 - C_{T_{12}}\xi_{1}^{-2})^{-1}\max(C_{E_{1}}|c_{0,1}|, C_{E_{2}}|c_{0,2}|).
\end{equation}
The following proposition establishes the existence of a fixed point
using the Banach fixed-point theorem. To use the winding argument
when proving the existence of an eigenvalue in
Theorem~\ref{thm:unstable-eigenvalue}, we also need to verify that the
fixed point depends analytically on \(\lambda\).

\begin{proposition}\label{prop:Z-fixed-point}
  Under the assumptions of Lemma~\ref{lemma:Z-fixed-point-bounds}, if
  \(\rho\) is such that the two inequalities~\eqref{eq:T_12-ineq-1}
  and~\eqref{eq:T_12-ineq-2} are satisfied, then the map \(T_{12}\)
  has a unique fixed point \(Z\) in
  \(B_{\rho} = \{u \in \Space_{L} \colon \|u\|_{L} \leq \rho\}\). In
  particular, \(Z\) satisfies \(\|Z\|_{L} \leq \rho\). Moreover, let
  \(R\) be a closed rectangle in the complex plane such that the above
  holds for every \(\lambda \in R\), with one and the same \(\rho\).
  Then, for every \(\xi \geq \xi_{1}\), the values
  \(Z(\xi) = Z(\xi, \lambda)\) and \(Z'(\xi) = Z'(\xi, \lambda)\)
  depend analytically on \(\lambda\) in the interior of \(R\).
\end{proposition}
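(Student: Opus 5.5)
The existence and uniqueness part is a direct application of the Banach fixed-point theorem. By Lemma~\ref{lemma:Z-fixed-point-bounds}, $T_{12}$ maps $\Space_{L}$ continuously into itself, and by~\eqref{eq:T_12-1} together with~\eqref{eq:T_12-ineq-1} it maps the closed ball $B_{\rho}$ into itself. By~\eqref{eq:T_12-2} and~\eqref{eq:T_12-ineq-2} it is a strict contraction there, with constant $C_{T_{12}}\xi_{1}^{-2}<1$. Since $B_{\rho}$ is a closed subset of the Banach space $\Space_{L}$, there is a unique fixed point $Z$, and $\|Z\|_{L}\le\rho$.

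For analyticity I would use the standard fact that a locally uniform limit of analytic functions is analytic. First I would fix $\rho$ and $R$ as in the statement. One subtlety is that the space $\Space_{L}$ depends on $\lambda$ through the weight $\xi^{2\real(\lambda)}$. To handle this, I would work with the Picard iterates $Z_{0}=0$, $Z_{n+1}=T_{12}(Z_{n})$ pointwise in $\xi$.

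The first step is to show by induction that each $Z_{n}(\xi,\lambda)$ is analytic in $\lambda$ on the interior of $R$, for every fixed $\xi\ge\xi_{1}$.
\begin{itemize}
\item The functions $P_{j}$, $E_{j}$ and the Wronskians $W_{j}$ are built from $U(\alpha,b,z)$ with $\alpha$ affine in $\lambda$, together with exponentials $e^{\pm\pi i\lambda}$. Hence $E_{12}$, $P_{12}$, $K_{1}$ and $K_{2}$ are entire in $\lambda$ for fixed $\eta$, since $U$ is entire in its first parameter and $W_{j}\neq0$.
\item $I_{N}$ does not depend on $\lambda$.
\item The integral $\int_{\xi_{1}}^{\xi}$ in $I_{K_{1}}$ is over a compact interval of jointly continuous integrands that are analytic in $\lambda$, so it is analytic.
\item For the improper integral $\int_{\xi}^{\infty}$ in $I_{K_{2}}$ I would use the uniform domination from the proof of Lemma~\ref{lemma:I_K_1-I_K_2-bounds}. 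The integrand is bounded by $C\,e^{-\real(c)\eta^{2}}\eta^{p}$, with $C$ and $p$ uniform over the compact set $R$, because the constants $C_{K_{2},j}$ and the iterate norms $\|Z_{n}\|_{L}\le\rho$ are uniform there. Morera's theorem combined with Fubini then gives analyticity of the tail integral.
\end{itemize}

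The second step is the locally uniform convergence of the iterates. The contraction gives $\|Z_{n}-Z\|_{L}\le (C_{T_{12}}\xi_{1}^{-2})^{n}\rho/(1-C_{T_{12}}\xi_{1}^{-2})$. The factor $\sup_{\lambda\in R}C_{T_{12}}\xi_{1}^{-2}$ is strictly less than $1$ by continuity on the compact set $R$. Multiplying by the inverse weight, which is continuous in $\lambda$ and hence bounded on $R$ for fixed $\xi$, gives uniform convergence $Z_{n}(\xi,\cdot)\to Z(\xi,\cdot)$ on $R$. Analyticity of $Z(\xi,\cdot)$ follows.

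For $Z'$ I would differentiate the fixed-point equation~\eqref{eq:T_12}. The variation-of-parameters identity $E_{12}I_{K_{1}}'+P_{12}I_{K_{2}}'=0$ gives
\begin{equation*}
  Z'(\xi)=E_{12}'(\xi)c_{0}+E_{12}'(\xi)I_{K_{1}}(\xi)+P_{12}'(\xi)I_{K_{2}}(\xi),
\end{equation*}
where $I_{K_{1}}$ and $I_{K_{2}}$ are evaluated at the fixed point $Z$. Each factor is analytic in $\lambda$ by the same arguments: the derivatives of $U$ in $z$ are entire in the parameters, and the integrals are analytic because $Z$ is.

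The main obstacle is this uniform domination. I need the bounds of Lemmas~\ref{lemma:P_i_E_i-bounds}--\ref{lemma:I_K_1-I_K_2-bounds} to hold with constants uniform over $R$. I would secure this by noting that the hypothesis of the proposition already requires them for every $\lambda\in R$ with a single $\rho$, and that the constants $C_{U}$ are continuous in the parameters, so they are bounded on compact sets.
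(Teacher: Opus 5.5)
Your proposal is correct and follows essentially the same route as the paper. The paper also uses Banach's fixed-point theorem for existence and the bound $\|Z\|_{L}\le\rho$. It then shows the Picard iterates from $Z_0=0$ are analytic in $\lambda$ by induction, with the improper $I_{K_2}$ integral controlled by the uniform bounds of Lemma~\ref{lemma:I_K_1-I_K_2-bounds}. Uniform convergence on $R$ comes from $q=\max_{\lambda\in R}C_{T_{12}}\xi_1^{-2}<1$ together with the inverse weight being bounded in $\lambda$ at fixed $\xi$. The formula for $Z'$ comes from $E_{12}I_{K_1}'+P_{12}I_{K_2}'=0$. Your Morera--Fubini justification of the tail integral, and your remark that $\Space_L$ depends on $\lambda$ (handled by working pointwise in $\xi$), are just more explicit versions of what the paper does.
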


\begin{proof}
  The existence of a unique fixed point and the bound
  \(\|Z\|_{L} \leq \rho\) are immediate consequences of the two
  inequalities~\eqref{eq:T_12-ineq-1} and~\eqref{eq:T_12-ineq-2} and
  the Banach fixed-point theorem.

  To prove analyticity in \(\lambda\), we begin by noting that
  \(T_{12}(u)(\xi) = T_{12}(u, \lambda)(\xi)\) depends analytically on
  both \(u\) and \(\lambda\). Analyticity in \(u\) is immediate since
  it is affine in \(u\). The dependence on \(\lambda\) comes from
  the functions \(E_{12}\), \(P_{12}\), \(K_{1}\) and \(K_{2}\)
  appearing in \(T_{12}\). The dependence of these functions on
  \(\lambda\) is given through \(B_{W,1}\) and \(B_{W,2}\), both of
  which are analytic in \(\lambda\), and
  \begin{equation*}
    U\left(a - \lambda, b, -c\xi^{2}\right),\quad
    U\left(b - a + \lambda, b, c\xi^{2}\right),\quad
    U\left(\conj{a} - \lambda, b, -\conj{c}\xi^{2}\right) \quad\text{and}\quad
    U\left(b - \conj{a} + \lambda, b, \conj{c}\xi^{2}\right).
  \end{equation*}
  Since \(U\) is an entire function in its first argument, these
  functions all depend analytically on \(\lambda\). Consequently,
  \(T_{12}\) is analytic in \(\lambda\).

  Now consider the Picard iterates
  \(Z_{n + 1}(\cdot, \lambda) = T_{12}(Z_{n}(\cdot, \lambda), \lambda)\),
  starting from \(Z_{0} = 0\). By induction, each value
  \(Z_{n}(\xi, \lambda)\) is analytic in \(\lambda\); for
  \(I_{K_{2}}\) we use here that the bounds in
  Lemma~\ref{lemma:I_K_1-I_K_2-bounds} hold uniformly for
  \(\lambda \in R\), so that the improper integral converges locally
  uniformly in \(\lambda\). Since \(R\) is compact and \(C_{T_{12}}\)
  depends continuously on \(\lambda\), the fact
  that~\eqref{eq:T_12-ineq-2} holds for every \(\lambda \in R\) gives
  \begin{equation*}
    q = \max_{\lambda \in R}C_{T_{12}}\xi_{1}^{-2} < 1.
  \end{equation*}
  Since \(Z_{0} = 0\) and \(T_{12}\) maps \(B_{\rho}\) into itself, we
  have \(\|Z_{1}\|_{L} \leq \rho\), and the Banach fixed-point theorem
  gives
  \begin{equation*}
    \|Z_{n} - Z\|_{L} \leq \frac{q^{n}}{1 - q}\|Z_{1}\|_{L}
    \leq \frac{q^{n}}{1 - q}\rho,
  \end{equation*}
  with \(q\) and \(\rho\) independent of \(\lambda \in R\). Hence
  \begin{equation*}
    |Z_{n}(\xi, \lambda) - Z(\xi, \lambda)|_{\infty}
    \leq e^{-\real(c)\xi^{2}}\xi^{\frac{1}{\sigma} - d - 2\real(\lambda)}\frac{q^{n}}{1 - q}\rho,
  \end{equation*}
  and, for fixed \(\xi\), the prefactor is bounded for \(\lambda\) in
  the compact set \(R\). The convergence
  \(Z_{n}(\xi, \lambda) \to Z(\xi, \lambda)\) is therefore uniform for
  \(\lambda \in R\). As a uniform limit of analytic functions,
  \(Z(\xi) = Z(\xi, \lambda)\) therefore depends analytically on
  \(\lambda\) in the interior of \(R\).

  Finally, differentiating the fixed-point equation and using that
  \(E_{12}I_{K_{1}}' + P_{12}I_{K_{2}}' = 0\), we get
  \begin{equation*}
    Z'(\xi)
    = E_{12}'(\xi)\begin{pmatrix} c_{0,1} \\ c_{0,2} \end{pmatrix}
    + E_{12}'(\xi)I_{K_{1}}(\xi) + P_{12}'(\xi)I_{K_{2}}(\xi).
  \end{equation*}
  Every term on the right-hand side is analytic in \(\lambda\), and
  hence so is \(Z'(\xi, \lambda)\).
\end{proof}

\subsection{Enclosures for fixed point}
\label{sec:fixed-point-enclosures-Z}

To compute refined enclosures of \(Z\) and \(Z'\) at
\(\xi = \xi_{1}\), we follow a similar approach to that used for the
forward solution in Section~\ref{sec:fixed-point-enclosures-hat}.
However, in this case, we do not need enclosures that are as tight,
and the estimates established in the preceding section are therefore
sufficient.

Since \(I_{K_{1}}(\xi_{1}) = 0\), we have
\begin{equation*}
  Z(\xi_{1}) = E_{12}(\xi_{1})\begin{pmatrix} c_{0,1} \\ c_{0,2} \end{pmatrix}
  + P_{12}(\xi_{1})I_{K_{2}}(\xi_{1}).
\end{equation*}
For the derivative, we note that \(E_{12}I_{K_{1}}'\) and
\(P_{12}I_{K_{2}}'\) cancel, leaving us with
\begin{equation*}
  Z'(\xi_{1})
  = E_{12}'(\xi_{1})\begin{pmatrix} c_{0,1} \\ c_{0,2} \end{pmatrix}
  + P_{12}'(\xi_{1})I_{K_{2}}(\xi_{1}).
\end{equation*}
The functions \(E_{12}\) and \(P_{12}\), along with their derivatives,
can easily be enclosed. To enclose \(I_{K_{2}}(\xi_{1})\), we use the
bound from Lemma~\ref{lemma:I_K_1-I_K_2-bounds} together with
\(\|Z\|_{L} \leq \rho\) from Proposition~\ref{prop:Z-fixed-point},
which gives an enclosure centered at zero. Since we do not need tight
enclosures here, this is sufficient.

Enclosures of \(Y_{\infty}\) and \(Y_{\infty}'\) at \(\xi_{1}\) are
finally recovered from \(Y = VZ\), which gives
\(Y_{\infty}(\xi_{1}) = VZ(\xi_{1})\) and
\(Y_{\infty}'(\xi_{1}) = VZ'(\xi_{1})\). In particular, the functions
\(Y_{\infty,1}\) and \(Y_{\infty,2}\) appearing in~\eqref{eq:H}
correspond to taking \(c_{0} = (1, 0)^{T}\) and
\(c_{0} = (0, 1)^{T}\), respectively.

\section{Linearized solution at zero}
\label{sec:linearized-solution-zero}

In this section, we study solutions to the initial value problem
associated with Equation~\eqref{eq:Y}, given by
\begin{align}
  \label{eq:Y-zero}
  AY'' + (B_{1}\xi + B_{2}\xi^{-1})Y' + (C + J_{N}(\xi) - 2\kappa\lambda I)Y &= 0,\\
  Y(0) &= y_{0},\nonumber\\
  Y'(0) &= 0.\nonumber
\end{align}
We are interested in enclosing \(Y(\xi_{1})\) and \(Y'(\xi_{1})\). The
general approach is similar to the backward and forward cases. We
split the interval \([0, \xi_{1}]\) into \([0, \xi_{0}]\) and
\([\xi_{0}, \xi_{1}]\) and use a Taylor expansion at zero for the
first interval, and a rigorous numerical integrator implemented in the
CAPD library~\cite{Kapela2021} for the second interval. As in the
previous section, we keep the strength of the nonlinearity and the
spatial dimension symbolic throughout this section. The matrices
\(A\), \(B_{1}\), \(B_{2}\), \(C\) and \(J_{N}\) are thus as
in~\eqref{eq:ABC-sigma-d} and~\eqref{eq:J_N-sigma-d}.

The CAPD integrator only handles systems of real first-order equations
and therefore requires us to rewrite the equation in that form. To
handle the term \(J_{N}\), which depends on the solution to the
forward equation, we also add the forward equation to this system. We
split \(Y\) into real and imaginary parts as \(Y = Y_{r} + iY_{i}\)
and \(\hat{Q}\) as \(\hat{Q} = \hat{a} + i\hat{b}\), giving us
\begin{align*}
  AY_{r}'' + (B_{1}\xi + B_{2}\xi^{-1})Y_{r}' + (C + J_{N}(\xi) - 2\kappa\real(\lambda) I)Y_{r} + 2\kappa\imag(\lambda)IY_{i} &= 0,\\
  AY_{i}'' + (B_{1}\xi + B_{2}\xi^{-1})Y_{i}' - 2\kappa\imag(\lambda)IY_{r} + (C + J_{N}(\xi) - 2\kappa\real(\lambda) I)Y_{i} &= 0,\\
  \hat{a}'' + \epsilon \hat{b}'' + \frac{d - 1}{\xi}(\hat{a}' + \epsilon \hat{b}') + \kappa \xi \hat{b}' + \frac{\kappa}{\sigma}\hat{b} + \omega \hat{a} + (\hat{a}^{2} + \hat{b}^{2})^{\sigma}\hat{a} - \delta(\hat{a}^{2} + \hat{b}^{2})^{\sigma}\hat{b} &= 0,\\
  \hat{b}'' - \epsilon \hat{a}'' + \frac{d - 1}{\xi}(\hat{b}' - \epsilon \hat{a}') - \kappa \xi \hat{a}' - \frac{\kappa}{\sigma}\hat{a} + \omega \hat{b} + (\hat{a}^{2} + \hat{b}^{2})^{\sigma}\hat{b} + \delta(\hat{a}^{2} + \hat{b}^{2})^{\sigma}\hat{a} &= 0,
\end{align*}
with initial conditions coming from the Taylor expansion on
\([0, \xi_{0}]\). This is then written as a 12-dimensional scalar
system of first-order equations before being implemented in CAPD.

To handle the removable singularity at \(\xi = 0\), we expand
\(Y\) in a Taylor series as
\begin{equation*}
  Y = \sum_{n = 0}^{\infty} Y_{n}\xi^{n}.
\end{equation*}
We also recall that, as discussed in
Section~\ref{sec:forward-solution-zero}, we can expand the real and
imaginary parts, \(\hat{a}\) and \(\hat{b}\), of \(\hat{Q}\) as
\begin{equation*}
  \hat{a} = \sum_{n = 0}^{\infty} \hat{a}_{n}\xi^{n}\quad\text{and}\quad
  \hat{b} = \sum_{n = 0}^{\infty} \hat{b}_{n}\xi^{n}.
\end{equation*}

Inserting the expansion of \(Y\) into Equation~\eqref{eq:Y-zero} gives
us the recurrence relation
\begin{equation*}
  (n + 2)((n + 1)A + B_{2})Y_{n + 2} = -(nB_{1} + C - 2\kappa\lambda I)Y_{n} - v_{n},
\end{equation*}
for even \(n \geq 0\), and \(Y_{n} = 0\) for odd \(n\). Here,
\(v_{n} = (J_{N}Y)_{n}\) and we have
\begin{equation}
  \label{eq:Y-zero-recurrence-1}
  ((n + 2)((n + 1)A + B_{2}))^{-1}
  = \frac{1}{(n + 2)(n + d)(1 + \epsilon^{2})}
  \begin{pmatrix}
    \epsilon & 1 \\ -1 & \epsilon
  \end{pmatrix}
\end{equation}
and
\begin{multline}
  \label{eq:Y-zero-recurrence-2}
  ((n + 2)((n + 1)A + B_{2}))^{-1}(nB_{1} + C - 2\kappa\lambda I)\\
  = \frac{1}{(n + 2)(n + d)(1 + \epsilon^{2})}
  \begin{pmatrix}
    \epsilon\kappa\left(n + \frac{1}{\sigma} - 2\lambda\right) + \omega
    & \kappa\left(n + \frac{1}{\sigma} - 2\lambda\right) - \epsilon \omega\\
    -\kappa\left(n + \frac{1}{\sigma} - 2\lambda\right) + \epsilon \omega
    & \epsilon\kappa\left(n + \frac{1}{\sigma} - 2\lambda\right) + \omega
  \end{pmatrix}.
\end{multline}

To bound the remainder term, we will show that, for \(n > N\), we have
\(|Y_{n}|_{\infty} \leq r^{n}\) for some (large) integer \(N\) and
some \(r > 0\). Here, we use \(|\cdot|_{\infty}\) to denote the
supremum norm of a vector.

This allows us to bound the remainder term in \(Y\). For
\(0 < \xi < \frac{1}{r}\), we have
\begin{equation*}
  \left|\sum_{n = N + 1}^{\infty} Y_{n}\xi^{n}\right|_{\infty}
  \leq \sum_{n = N + 1}^{\infty}(r\xi)^{n}
  = \frac{(r\xi)^{N + 1}}{1 - r\xi},
\end{equation*}
whereas the derivative satisfies
\begin{equation*}
  \left|\frac{d}{d\xi}\left(\sum_{n = N + 1}^{\infty} Y_{n}\xi^{n}\right)\right|_{\infty}
  = \left|\sum_{n = N + 1}^{\infty} nY_{n}\xi^{n - 1}\right|_{\infty}
  \leq \frac{1}{\xi}\sum_{n = N + 1}^{\infty}n(r\xi)^{n}
  = \frac{r(r\xi)^{N}(N + 1 - Nr\xi)}{(1 - r\xi)^{2}}.
\end{equation*}

To find \(r\) and \(N\) for \(Y\), we have the following lemma, which
is valid for \(\sigma = 1\). It is closely related
to~\cite[Lemma~8.1]{Dahne2024}.

\begin{lemma}\label{lemma:tail-bound-Y}
  Let \(\sigma = 1\). Let \(M\), \(N\), \(\tilde{C}\) and \(r\) be such
  that \(N\) is even, \(3M < N\),
  \begin{equation*}
    |\hat{a}_{n}|, |\hat{b}_{n}|, |Y_{n}|_{\infty} \leq \tilde{C} r^{n} \quad\text{for } n < M
  \end{equation*}
  and
  \begin{equation*}
    |\hat{a}_{n}|, |\hat{b}_{n}|, |Y_{n}|_{\infty} \leq r^{n} \quad\text{for } M \leq n \leq N.
  \end{equation*}
  If
  \begin{multline}
    \label{eq:tail-bound-inequality-Y}
    \frac{1 + |\epsilon|}{1 + \epsilon^{2}}\Bigg(
      \frac{|\kappa|}{N + d}
      + \frac{|\omega| + 2|\kappa||\lambda|}{(N + 2)(N + d)}\\
      + 6(1 + |\delta|)\left(
        \frac{1}{8} + \frac{1}{2N}
        + \frac{3m\tilde{C}(1 + 3/N)}{2(N + d)}
        + \frac{3m^{2}\tilde{C}^{2}}{(N + 2)(N + d)}
      \right)
    \Bigg) \leq r^{2},
  \end{multline}
  where \(m = \lceil M / 2 \rceil\), then
  \begin{equation*}
    |Y_{n}|_{\infty} \leq r^{n} \quad\text{for } n > N.
  \end{equation*}
\end{lemma}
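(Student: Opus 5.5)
Induction on even $n$: assume $|Y_k|_\infty\le r^k$ for all even $M\le k\le n$ (with $n\ge N$), together with the hypotheses on smaller indices, and show $|Y_{n+2}|_\infty\le r^{n+2}$. Odd coefficients vanish, so only even indices matter, and for $k\le N$ the hypothesis already covers everything. Each step uses the recurrence
\begin{equation*}
  Y_{n+2} = -((n+2)((n+1)A+B_2))^{-1}\left[(nB_1 + C - 2\kappa\lambda I)Y_n + v_n\right],
\end{equation*}
with the explicit matrices \eqref{eq:Y-zero-recurrence-1} and \eqref{eq:Y-zero-recurrence-2}. Their $\infty$-norm (row sum) is at most $\frac{1+|\epsilon|}{1+\epsilon^2}$ times the size of the scalar entries.

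\textbf{Linear part.} From \eqref{eq:Y-zero-recurrence-2}, each entry has modulus at most
\begin{equation*}
  \frac{|\kappa|(n + 1 + 2|\lambda|) + |\omega|}{(n+2)(n+d)}\quad\text{(up to the }\epsilon\text{ factors).}
\end{equation*}
This splits into $\frac{|\kappa|}{n+d}$ and $\frac{|\omega|+2|\kappa||\lambda|}{(n+2)(n+d)}$, where the $+1$ (that is, $1/\sigma$) is absorbed since $n+1\le n+2$. Both pieces are decreasing in $n$, so they are bounded by their values at $n=N$. Multiplying by $|Y_n|_\infty\le r^n$ reproduces the first two terms of \eqref{eq:tail-bound-inequality-Y} times $r^n$.

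\textbf{Nonlinear part $v_n=(J_NY)_n$.} With $\sigma=1$ the entries of $J_N$ are $(-\delta I+J)$ times quadratic forms in $\hat a,\hat b$. Each entry of $(\hat Q\cdot\hat Q)I+2\hat Q\hat Q^T$ has row sum at most $3(|\hat a|^2+|\hat b|^2)$-type products, and $\|-\delta I+J\|\le 1+|\delta|$. Hence $|v_n|_\infty\le 6(1+|\delta|)\max$ over triple Cauchy products
\begin{equation*}
  \sum_{i+j+k=n}|x_i||y_j||Y_k|_\infty .
\end{equation*}
I will bound such a triple sum in the way \cite[Lemma~8.1]{Dahne2024} does, using even indices only.

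\textbf{Key step: the combinatorial estimate.} Indices are classified by whether they are below $M$, where the bound is $\tilde C r^k$, or at least $M$, where it is $r^k$. Every term then has size $\le r^n$ times a factor $\tilde C^{\#\{\text{indices}<M\}}$.

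Terms with all three indices $\ge M$ are at most the number of even compositions of $n$, which is $\binom{n/2+2}{2}$, roughly $n^2/8+\cdots$. After dividing by $(n+2)(n+d)$ this gives the $\frac18+\frac1{2N}$ contribution.

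Terms with exactly one index $<M$ number at most $3m$ choices of that small index, times about $n/2+1$ choices for the split of the remainder. After dividing by $(n+2)(n+d)$ this gives $\frac{3m\tilde C(1+3/N)}{2(N+d)}$.

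Terms with two small indices number at most $3m^2$, giving the last term. All three indices cannot be small because $3M<N\le n$.

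Multiplying by the prefactor and comparing with \eqref{eq:tail-bound-inequality-Y} gives $|Y_{n+2}|_\infty\le r^n\cdot r^2$.

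\textbf{Main obstacle.} The delicate part is making these counting constants exactly match the stated ones, in particular the factor $(1+3/N)$ and the $\frac1{2N}$ term. This requires careful bounding of the compositions ratio and its monotonicity in $n\ge N$.

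A second subtlety is that the induction also needs $|\hat a_n|,|\hat b_n|\le r^n$ for $n>N$. This is not part of the conclusion. I expect either to run the analogous estimate for $\hat Q$ simultaneously (as in \cite[Lemma~8.1]{Dahne2024}, whose inequality is dominated by the present one), or to take those bounds as established in Section~\ref{sec:forward-solution-zero} with the same $r$.
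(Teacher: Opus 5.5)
Your proposal is correct and takes essentially the same approach as the paper. You bound the linear and nonlinear parts through the explicit inverse matrices, which gives the $6(1+|\delta|)$ factor. You classify the even-index triple Cauchy products by how many indices fall below $M$, and you get the tail bounds on $\hat a_n,\hat b_n$ from \cite[Lemma~8.1]{Dahne2024}, as the paper does. Your counts $\binom{n/2+2}{2}$ and $3m(n/2+1)$ are slightly cruder than the paper's exact $T_0,T_1$ but still fit inside the stated constants. Your direct induction over $n\ge N$ is equivalent to the paper's step of proving the bound for $Y_{N+2}$ and reapplying it, using that the left-hand side of \eqref{eq:tail-bound-inequality-Y} is decreasing in $N$.
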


\begin{proof}
  By~\cite[Lemma~8.1]{Dahne2024}, we have, under these assumptions,
  that
  \begin{equation*}
    |\hat{a}_{n}|, |\hat{b}_{n}| \leq r^{n} \quad\text{for } n > N.
  \end{equation*}
  Since \(N\) is even, we have \(|Y_{N + 1}|_{\infty} = 0\). It
  therefore suffices to show that
  \begin{equation*}
    |Y_{N + 2}|_{\infty} \leq r^{N + 2}.
  \end{equation*}
  The result then follows for \(n > N + 2\) by induction since the
  left-hand side of~\eqref{eq:tail-bound-inequality-Y} is decreasing
  in \(N\).

  We start by computing a bound for \(v_{N}\). For \(\sigma = 1\), we
  get that
  \begin{equation*}
    J_{N}(\xi) =
    \begin{pmatrix}
      -(3\delta \hat{a}^{2} + 2\hat{a}\hat{b} + \delta \hat{b}^{2}) & -(\hat{a}^{2} + 2\delta \hat{a}\hat{b} + 3\hat{b}^{2})\\
      3\hat{a}^{2} - 2\delta \hat{a}\hat{b} + \hat{b}^{2} & -(\delta \hat{a}^{2} - 2\hat{a}\hat{b} + 3\delta \hat{b}^{2})
    \end{pmatrix}.
  \end{equation*}
  Let \(Y^{(1)}\) and \(Y^{(2)}\) denote the two components of \(Y\).
  Then
  \begin{equation*}
    J_{N}Y =
    \begin{pmatrix}
      -(3\delta \hat{a}^{2} + 2\hat{a}\hat{b} + \delta \hat{b}^{2})Y^{(1)} - (\hat{a}^{2} + 2\delta \hat{a}\hat{b} + 3\hat{b}^{2})Y^{(2)}\\
      (3\hat{a}^{2} - 2\delta \hat{a}\hat{b} + \hat{b}^{2})Y^{(1)} - (\delta \hat{a}^{2} - 2\hat{a}\hat{b} + 3\delta \hat{b}^{2})Y^{(2)}
    \end{pmatrix}.
  \end{equation*}
  The bounds for the coefficients of \(\hat{a}\), \(\hat{b}\),
  \(Y^{(1)}\) and \(Y^{(2)}\) are all identical, so all the terms in
  \(J_{N}Y\) can be bounded in the same way. We make the computations
  for the term \(\hat{a}\hat{b}Y^{(1)}\), but the other terms follow
  analogously (the approach is identical to the one for bounding
  \((a^{3})_{N}\) in~\cite[Lemma~8.1]{Dahne2024}).

  We have
  \begin{equation*}
    (\hat{a}\hat{b}Y^{(1)})_{N} = \sum_{i + j + k = N}\hat{a}_{i}\hat{b}_{j}Y^{(1)}_{k}.
  \end{equation*}
  However, since \(\hat{a}_{n}\), \(\hat{b}_{n}\) and \(Y_{n}\) all
  vanish for odd \(n\) and \(N\) is even, we can limit the sum to
  \begin{equation*}
    (\hat{a}\hat{b}Y^{(1)})_{N} = \sum_{i + j + k = N/2}\hat{a}_{2i}\hat{b}_{2j}Y^{(1)}_{2k}.
  \end{equation*}
  Since the bounds depend on whether \(n\) is less than \(M\), let
  us group the terms in the sum by how many of the indices \(i\),
  \(j\) and \(k\) are less than \(m = \lceil M / 2 \rceil\):
  \begin{enumerate}
  \item The number of terms with no indices less than \(m\) is
    \begin{equation*}
      T_{0} = \sum_{i = 0}^{N/2 - 3m}\sum_{j = 0}^{N/2 - 3m - i} 1 = \frac{(N/2 - 3m + 1)(N/2 - 3m + 2)}{2}.
    \end{equation*}
    These are bounded by \(r^{N}\).
  \item For the terms with exactly one index less than \(m\), there are
    \(3\) choices for which index is the small one. If we let \(i\) be
    the small index, then there are \(N/2 - 2m - i + 1\) choices for
    \(j\) and \(k\). Summing over all choices for \(i\) and
    multiplying by \(3\), we get
    \begin{equation*}
      T_{1} = 3\sum_{i = 0}^{m - 1} \left(N/2 - 2m - i + 1\right) = 3\frac{m}{2}\left(N - 5m + 3\right).
    \end{equation*}
    These are bounded by \(\tilde{C}r^{N}\).
  \item The number of terms with exactly two indices less than \(m\)
    is \(T_{2} = 3m^{2}\). These are bounded by \(\tilde{C}^{2}r^{N}\).
  \end{enumerate}
  Note that since \(3M < N\), there are no terms with three indices
  less than \(m\). In total, this yields the bound
  \begin{equation*}
    |(\hat{a}\hat{b}Y^{(1)})_{N}| \leq (T_{0} + T_{1}\tilde{C} + T_{2}\tilde{C}^{2})r^{N}.
  \end{equation*}
  Since the bounds for all the other terms in \(v\) are the same, we
  get
  \begin{equation*}
    |v_{N}|_{\infty}
    \leq 6(1 + |\delta|)(T_{0} + T_{1}\tilde{C} + T_{2}\tilde{C}^{2})r^{N}.
  \end{equation*}

  Recall that
  \begin{equation*}
    Y_{N + 2} = -((N + 2)((N + 1)A + B_{2}))^{-1}(NB_{1} + C - 2\kappa\lambda I)Y_{N} - ((N + 2)((N + 1)A + B_{2}))^{-1}v_{N}.
  \end{equation*}
  This gives
  \begin{multline*}
    |Y_{N + 2}|_{\infty} \leq \|((N + 2)((N + 1)A + B_{2}))^{-1}(NB_{1} + C - 2\kappa\lambda I)\|_{\infty}|Y_{N}|_{\infty}\\
    + \|((N + 2)((N + 1)A + B_{2}))^{-1}\|_{\infty}|v_{N}|_{\infty},
  \end{multline*}
  where \(\|\cdot\|_{\infty}\) denotes the operator norm on matrices
  induced by the supremum norm on vectors (given by the maximum
  absolute row sum). From Equations~\eqref{eq:Y-zero-recurrence-1}
  and~\eqref{eq:Y-zero-recurrence-2}, a straightforward computation
  gives
  \begin{equation*}
    \|((N + 2)((N + 1)A + B_{2}))^{-1}\|_{\infty}
    \leq \frac{1 + |\epsilon|}{(N + 2)(N + d)(1 + \epsilon^{2})},
  \end{equation*}
  and
  \begin{equation*}
    \|((N + 2)((N + 1)A + B_{2}))^{-1}(NB_{1} + C - 2\kappa\lambda I)\|_{\infty}
    \leq \frac{(1 + |\epsilon|)(|\kappa|(N + 1 + 2|\lambda|) + |\omega|)}{(N + 2)(N + d)(1 + \epsilon^{2})}.
  \end{equation*}
  Combining this with the bounds for \(|Y_{N}|_{\infty}\) and
  \(|v_{N}|_{\infty}\), we get
  \begin{equation*}
    |Y_{N + 2}|_{\infty} \leq
    \frac{1 + |\epsilon|}{(N + 2)(N + d)(1 + \epsilon^{2})}\left(
    |\kappa|(N + 1 + 2|\lambda|) + |\omega|
    + 6(1 + |\delta|)(T_{0} + T_{1}\tilde{C} + T_{2}\tilde{C}^{2})
    \right)r^{N}.
  \end{equation*}
  We have
  \begin{equation*}
    \begin{split}
      \frac{T_{0}}{(N + 2)(N + d)}
      &= \frac{(N/2 - 3m + 1)(N/2 - 3m + 2)}{2(N + 2)(N + d)}\\
      &= \frac{1}{8}\frac{(N - 6m + 2)(N - 6m + 4)}{(N + 2)(N + d)}\\
      &\leq \frac{1}{8}\frac{N + 4}{N + d}
        \leq \frac{1}{8}\left(1 + \frac{4}{N}\right)
        = \frac{1}{8} + \frac{1}{2N},
    \end{split}
  \end{equation*}
  as well as
  \begin{equation*}
    \frac{T_{1}}{(N + 2)(N + d)}
    = 3\frac{m}{2}\frac{N - 5m + 3}{(N + 2)(N + d)}
    \leq 3\frac{m}{2}\frac{N + 3}{(N + 2)(N + d)}
    \leq \frac{3m(1 + 3/N)}{2(N + d)}.
  \end{equation*}
  This gives us
  \begin{multline*}
    |Y_{N + 2}|_{\infty} \leq
    \frac{1 + |\epsilon|}{1 + \epsilon^{2}}\Bigg(
    \frac{|\kappa|}{N + d}
    + \frac{|\omega| + 2|\kappa||\lambda|}{(N + 2)(N + d)}\\
    + 6(1 + |\delta|)\left(
        \frac{1}{8} + \frac{1}{2N}
        + \frac{3m\tilde{C}(1 + 3/N)}{2(N + d)}
        + \frac{3m^{2}\tilde{C}^{2}}{(N + 2)(N + d)}
      \right)
    \Bigg)r^{N}.
  \end{multline*}

  By~\eqref{eq:tail-bound-inequality-Y}, the factor in front of
  \(r^{N}\) is bounded by \(r^{2}\), giving us the required bound
  \(|Y_{N + 2}|_{\infty} \leq r^{N + 2}\).
\end{proof}

\section{Implementation details}
\label{sec:implementation-details}
The full code on which the computer-assisted parts of the proofs are
based, as well as notebooks presenting the results and figures, is
available in the repository~\cite{CGL2.jl}. The code builds on top of
the code from~\cite{Dahne2024}, found in the repository~\cite{CGL.jl}.
Below we briefly describe what has been kept in the new code, what has
been removed and what has been added. For more details we refer to the
repository itself as well as~\cite{Dahne2024}.

More details about the structure of the code and exactly how the
results are generated can be found in that repository. The majority of
the code is implemented in Julia~\cite{Julia-2017}, except for the
parts related to CAPD~\cite{Kapela2021}, which are implemented in C++.
The rigorous parts, on which the computer-assisted results are based,
make use of the CAPD~\cite{Kapela2021} library and the
Arb~\cite{Johansson2017arb}/FLINT~\cite{Flint} library\footnote{In
  2023 Arb was merged with the FLINT library.}. The CAPD library is
used for the rigorous numerical integration for enclosing \(Q_{0}\),
\(\hat{Q}_{0}\) and \(Y_{0}\), and the Arb library is used for
everything else. The Arb library is used through the Julia wrapper
\texttt{Arblib.jl}~\cite{Arblib.jl}, with many of the basic interval
arithmetic algorithms, such as isolating roots or enclosing maximum
values of single-variable functions, implemented in
\texttt{ArbExtras.jl}~\cite{ArbExtras.jl}.

Compared to the code in~\cite{CGL.jl}, the whole implementation of
\(G\) stays the same, as does the implementation of the Krawczyk
interval Newton method and the handling of confluent hypergeometric
functions. Parts that have been removed include, in particular, the
entire logic for the branch verification and the code for counting
critical points. All HPC related functionality has also been removed,
as the computations required for this paper are well withing the
capabilities of a regular desktop.

The additions to the code are the implementation of the functions
\(\hat{G}\) and \(H\), which in turn relies on evaluation of
\(\hat{Q}_{0}\), \(\hat{Q}_{\infty}\), \(Y_{0}\) and \(Y_{\infty}\),
as well as the finite-difference scheme described in
Appendix~\ref{sec:finite-differences-approximations}.

\pdfbookmark[1]{Acknowledgments}{sec:acknowledgments}
\section*{Acknowledgments}
The authors are grateful for discussions with Hao Jia regarding the
mechanism for non-uniqueness from an unstable eigenvalue.

\begin{description}
\item[Funding] Vladimír Šverák was supported in part by NSF
  DMS-2247027 and NSF DMS-2553691.
\item[Data availability] The code on which the computer-assisted parts
  of the proofs are based and notebooks generating the results and
  figures are available at the following URL:
  \url{https://github.com/Joel-Dahne/CGL2.jl}.
\item[Declaration of AI usage] AI, primarily Claude Opus 5, has been
  used as part of proofreading the manuscript, generating some of the
  initial estimates in
  Section~\ref{sec:non-uniqueness-from-unstable-eigenvalue} and
  reviewing the code. The authors take full responsibility for the
  content of both the manuscript and the code.
\end{description}

\appendix

\section{Finite-difference approximations of eigenvalues}
\label{sec:finite-differences-approximations}

To compute an initial approximation of an unstable eigenvalue of
\(L_{\hat{Q}}\), we make use of a finite-difference scheme. For this,
we use the representation
\begin{equation*}
  L_{\hat{Q}}Y = \frac{1}{2\kappa}\left[AY'' + (B_{1}\xi + B_{2}\xi^{-1})Y' + (C + J_{N}(\xi))Y\right]
\end{equation*}
with \(A\), \(B_{1}\), \(B_{2}\), \(C\) and \(J_{N}\) as
in~\eqref{eq:ABC-sigma-d} and~\eqref{eq:J_N-sigma-d}.

To discretize the spatial coordinate, we use a uniform grid given by
\begin{equation*}
  0 = x_{0}, x_{1}, \dots, x_{n}, x_{n + 1} = \xi_{1},
\end{equation*}
and denote the step size by \(h\). We let \(Y_{i} = Y(x_{i})\). We
have a Neumann condition at zero and a Dirichlet condition at the
truncation point \(\xi_{1}\), giving us \(Y_{0} = Y_{1}\) and
\(Y_{n + 1} = 0\). On the grid points, the discretized derivatives are
given by
\begin{equation*}
  Y_{i}' = \frac{Y_{i + 1} - Y_{i - 1}}{2h},\quad
  Y_{i}'' = \frac{Y_{i + 1} - 2Y_{i} + Y_{i - 1}}{h^{2}}.
\end{equation*}
If we let
\begin{equation*}
  \bm{Y} =
  \begin{pmatrix}
    Y_{1} \\ \vdots \\ Y_{n}
  \end{pmatrix},
\end{equation*}
then the derivatives can be written using the block matrices
\begin{equation*}
  D^{(1)} =
  \frac{1}{2h}\begin{pmatrix}
    -I & I & 0 & 0 & 0 & \cdots & 0\\
    -I & 0 & I & 0 & 0 & \cdots & 0\\
    0 & -I & 0 & I & 0 & \cdots & 0\\
    \vdots & & & & & & \vdots \\
    0 & 0 & \cdots & 0 & -I & 0 & I\\
    0 & 0 & \cdots & 0 & 0 & -I & 0
  \end{pmatrix},\quad
  D^{(2)} =
  \frac{1}{h^{2}}\begin{pmatrix}
    -I & I & 0 & 0 & 0 & \cdots & 0\\
    I & -2I & I & 0 & 0 & \cdots & 0\\
    0 & I & -2I & I & 0 & \cdots & 0\\
    \vdots & & & & & & \vdots \\
    0 & 0 & \cdots & 0 & I & -2I & I\\
    0 & 0 & \cdots & 0 & 0 & I & -2I
  \end{pmatrix}.
\end{equation*}
Note the adjustments to the top-left entries, which come from the
Neumann condition \(Y_{0} = Y_{1}\). The discretized operator
\(L_{n}\) can then be written as the sum of block matrices
\begin{equation*}
  L_{n} = \frac{1}{2\kappa}\left[A_{n}D^{(2)} + (B_{1,n}X_{n} + B_{2,n}X_{n}^{-1})D^{(1)} + C_{n} + J_{N,n}\right].
\end{equation*}
Here \(A_{n}\), \(B_{1,n}\), \(B_{2,n}\) and \(C_{n}\) are the
\(2n \times 2n\) block-diagonal matrices formed by taking \(n\) copies
of \(A\), \(B_{1}\), \(B_{2}\) and \(C\), respectively, on the
diagonal. Similarly, \(X_{n}\) and \(J_{N,n}\) are the block-diagonal
matrices given by
\begin{equation*}
  X_{n} =
  \begin{pmatrix}
    \left(\begin{smallmatrix} x_{1} & 0 \\ 0 & x_{1} \end{smallmatrix}\right) & & 0 \\
    & \ddots & \\
    0 & & \left(\begin{smallmatrix} x_{n} & 0 \\ 0 & x_{n} \end{smallmatrix}\right)
  \end{pmatrix},\quad
  J_{N,n} =
  \begin{pmatrix}
    J_{N}(x_{1}) & & 0 \\
    & \ddots & \\
    0 & & J_{N}(x_{n})
  \end{pmatrix}.
\end{equation*}
Eigenvalues of \(L_{n}\) are then computed using the ARPACK sparse
eigenvalue solver~\cite{Lehoucq1998}.

\section{Asymptotic behavior of the backward self-similar solution}
\label{sec:improvements-backward}

Compared to~\cite{Dahne2024}, we need better control of the leading
asymptotic behavior of the backward self-similar solution \(Q\) at
infinity. Recall that \(Q\) is a solution to the equation
\begin{equation*}
  Q(\xi) = \gamma P(\xi) + P(\xi)I_{E}(Q, \xi) + E(\xi)I_{P}(Q, \xi),
\end{equation*}
where
\begin{equation*}
  I_{P}(Q, \xi) = \int_{\xi}^{\infty} J_{P}(\eta)|Q(\eta)|^{2\sigma}Q(\eta)\,d\eta,\quad
  I_{E}(Q, \xi) = \int_{\xi_{1}}^{\xi} J_{E}(\eta)|Q(\eta)|^{2\sigma}Q(\eta)\,d\eta
\end{equation*}
with
\begin{equation*}
  J_{P}(\xi) = B_{W}P(\xi)e^{-c\xi^{2}}\xi^{d - 1}
  \quad\text{and}\quad
  J_{E}(\xi) = B_{W}E(\xi)e^{-c\xi^{2}}\xi^{d - 1},
\end{equation*}
where
\begin{equation*}
  B_{W} = -\frac{1 + i\delta}{i\kappa}e^{-\sign(\imag(c))\pi i(b - a)}c^{b}.
\end{equation*}
By~\cite[Lemma~7.3]{Dahne2024}, we have the bounds
\begin{equation*}
  |P(\xi)| \leq C_{P}\xi^{-\frac{1}{\sigma}}
  \quad\text{and}\quad
  |E(\xi)| \leq C_{E}e^{\real(c)\xi^{2}}\xi^{\frac{1}{\sigma} - d},
\end{equation*}
as well as
\begin{equation*}
  |J_{P}(\xi)| \leq C_{J_{P}}e^{-\real(c)\xi^{2}}\xi^{-\frac{1}{\sigma} + d - 1}
  \quad\text{and}\quad
  |J_{E}(\xi)| \leq C_{J_{E}}\xi^{\frac{1}{\sigma} - 1}.
\end{equation*}
From~\cite[Lemma~7.4]{Dahne2024} with \(\normv = 0\), we further get
\begin{align*}
  |I_{E}(\xi)| &\leq C_{I_{E}}\|Q\|_{0}^{2\sigma + 1}\xi_{1}^{-2},\\
  |I_{P}(\xi)| &\leq C_{I_{P}}\|Q\|_{0}^{2\sigma + 1}e^{-\real(c)\xi^{2}}\xi^{-\frac{2}{\sigma} + d - 2}.
\end{align*}

Since \(I_{E}\) does not vanish in the limit \(\xi \to \infty\), the
leading asymptotic behavior of \(Q\) is not given by the term
\(\gamma P\) alone. Instead, we have the following lemma.

\begin{lemma}\label{lemma:p_Q_0}
  We have
  \begin{equation*}
    Q(\xi) = c^{-a}p_{Q,0}\xi^{-2a} + \mathcal{O}(\xi^{-\frac{1}{\sigma} - 2}),
  \end{equation*}
  with
  \begin{equation*}
    p_{Q,0} = \gamma + I_{E,\infty},
  \end{equation*}
  and
  \begin{equation*}
    I_{E,\infty} = \int_{\xi_{1}}^{\infty} J_{E}(\eta)|Q(\eta)|^{2\sigma}Q(\eta)\,d\eta.
  \end{equation*}
\end{lemma}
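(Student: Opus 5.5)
Starting from the fixed-point representation
\[
  Q(\xi) = \gamma P(\xi) + P(\xi)I_{E}(Q, \xi) + E(\xi)I_{P}(Q, \xi),
\]
I would split \(I_{E}(Q,\xi) = I_{E,\infty} - \int_{\xi}^{\infty} J_{E}(\eta)|Q(\eta)|^{2\sigma}Q(\eta)\,d\eta\). The improper integral \(I_{E,\infty}\) converges absolutely because \(|J_{E}(\eta)| \leq C_{J_{E}}\eta^{\frac{1}{\sigma}-1}\) and \(|Q(\eta)|^{2\sigma+1} \leq \|Q\|_{0}^{2\sigma+1}\eta^{-\frac{1}{\sigma}-2}\), so the integrand is \(\mathcal{O}(\eta^{-3})\). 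This gives
\[
  Q(\xi) = (\gamma + I_{E,\infty})P(\xi) - P(\xi)\int_{\xi}^{\infty} J_{E}|Q|^{2\sigma}Q\,d\eta + E(\xi)I_{P}(Q,\xi).
\]

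Next I would bound the two remainder terms. The tail integral is at most \(\frac{C_{J_{E}}}{2}\|Q\|_{0}^{2\sigma+1}\xi^{-2}\). Multiplying by \(|P(\xi)| \leq C_{P}\xi^{-\frac{1}{\sigma}}\) gives \(\mathcal{O}(\xi^{-\frac{1}{\sigma}-2})\). For the last term, the bounds on \(|E|\) and \(|I_{P}|\) quoted above combine so that the exponentials \(e^{\pm\real(c)\xi^{2}}\) cancel, leaving \(C_{E}C_{I_{P}}\|Q\|_{0}^{2\sigma+1}\xi^{\frac{1}{\sigma}-d}\xi^{-\frac{2}{\sigma}+d-2} = \mathcal{O}(\xi^{-\frac{1}{\sigma}-2})\).

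Finally I would expand \(P(\xi) = U(a,b,c\xi^{2})\) using the asymptotic expansion of \(U\) from~\cite[Lemma~7.1]{Dahne2024} with \(n=1\):
\[
  P(\xi) = (c\xi^{2})^{-a}\bigl(1 + \mathcal{O}(\xi^{-2})\bigr) = c^{-a}\xi^{-2a} + \mathcal{O}(\xi^{-\frac{1}{\sigma}-2}),
\]
since \(\real(2a) = \frac{1}{\sigma}\). Splitting the power is legitimate because \(\xi^{2} > 0\), so \((c\xi^{2})^{-a} = c^{-a}\xi^{-2a}\) holds for the principal branch. Substituting this yields \(Q(\xi) = c^{-a}p_{Q,0}\xi^{-2a} + \mathcal{O}(\xi^{-\frac{1}{\sigma}-2})\) with \(p_{Q,0} = \gamma + I_{E,\infty}\).

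The only delicate point is confirming the \(\xi\)-dependence in the quoted bound for \(I_{P}\). If it is \(\xi^{-\frac{2}{\sigma}+d-2}\) rather than \(-4\), the \(E I_{P}\) term decays only like \(\xi^{-\frac{1}{\sigma}}\), which is too weak. In that case I would redo the integration-by-parts estimate of Lemma~\ref{lemma:I_E-hat-I_P-hat-bounds} for \(\int_{\xi}^{\infty} e^{-\real(c)\eta^{2}}\eta^{-\frac{2}{\sigma}+d-3}\,d\eta\), using Lemma~\ref{lemma:exponential-integral-bound}. That recovers the extra factor \(\xi^{-2}\) and hence the \(\xi^{-\frac{1}{\sigma}-2}\) rate.
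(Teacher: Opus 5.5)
Your proof is correct and follows the paper's argument essentially line for line: write $I_{E}(\xi) = I_{E,\infty} - \int_{\xi}^{\infty} J_{E}|Q|^{2\sigma}Q\,d\eta$, bound the tail by $\frac{C_{J_{E}}}{2}\|Q\|_{0}^{2\sigma+1}\xi^{-2}$ times $|P(\xi)|$, bound $E I_{P}$ by the product of the quoted estimates, and use $P(\xi) = c^{-a}\xi^{-2a} + \mathcal{O}(\xi^{-\frac{1}{\sigma}-2})$. Your closing worry is unfounded, and its arithmetic is off: with the quoted exponent $-\frac{2}{\sigma}+d-2$ the product $E I_{P}$ is already $\mathcal{O}(\xi^{-\frac{1}{\sigma}-2})$, exactly as you computed in your second paragraph, so no further integration by parts is needed.
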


\begin{proof}
  We have
  \begin{equation*}
    Q(\xi) = P(\xi)(\gamma + I_{E}(\xi)) + E(\xi)I_{P}(\xi).
  \end{equation*}
  Let us start with the second term. Combining the above bounds for
  \(E(\xi)\) and \(I_{P}\), we get
  \begin{equation*}
    |E(\xi)I_{P}(\xi)| \leq C_{E}C_{I_{P}}\|Q\|_{0}^{2\sigma + 1}
    \xi^{-\frac{1}{\sigma} - 2},
  \end{equation*}
  and hence
  \(E(\xi)I_{P}(\xi) = \mathcal{O}(\xi^{-\frac{1}{\sigma} - 2})\).

  For the first term, we note that
  \(P(\xi) = c^{-a}\xi^{-2a} + \mathcal{O}(\xi^{-\frac{1}{\sigma} -
    2})\). Since \(\real(2a) = \frac{1}{\sigma}\), it therefore suffices
  to prove that \(I_{E}(\xi) = I_{E,\infty} + \mathcal{O}(\xi^{-2})\).
  We have
  \begin{equation*}
    |I_{E}(\xi) - I_{E,\infty}|
    \leq \int_{\xi}^{\infty}|J_{E}(\eta)||Q(\eta)|^{2\sigma + 1}\,d\eta
    \leq C_{J_{E}}\|Q\|_{0}^{2\sigma + 1}\int_{\xi}^{\infty}\eta^{-3}\,d\eta
    = \frac{C_{J_{E}}}{2}\|Q\|_{0}^{2\sigma + 1}\xi^{-2},
  \end{equation*}
  which is indeed \(\mathcal{O}(\xi^{-2})\).
\end{proof}

To compute an enclosure of \(I_{E,\infty}\), we have the following
lemma. We specialize to \(\sigma = 1\) to simplify the treatment
of the remainder terms.

\begin{lemma}\label{lemma:I_E_infty}
  Assume that \(\sigma = 1\). Then
  \begin{equation*}
    I_{E,\infty}
    = |\gamma|^{2}\gamma\int_{\xi_{1}}^{\infty} J_{E}(\eta)|P(\eta)|^{2}P(\eta)\,d\eta
    + R_{I_{E,\infty}},
  \end{equation*}
  with
  \begin{equation*}
    |R_{I_{E,\infty}}|
    \leq \frac{C_{J_{E}}(C_{R,1} + C_{R,2})}{4}
    \left(3|\gamma|^{2}C_{P}^{2} + 3|\gamma|C_{P}A_{\max} + A_{\max}^{2}\right)
    \xi_{1}^{-4},
  \end{equation*}
  and
  \begin{equation*}
    C_{R,1} = \frac{C_{P}C_{J_{E}}}{2}\|Q\|_{0}^{3},\quad
    C_{R,2} = C_{E}C_{I_{P}}\|Q\|_{0}^{3}\quad\text{and}\quad
    A_{\max} = \max(C_{R,1}, C_{R,2})\xi_{1}^{-2}.
  \end{equation*}
\end{lemma}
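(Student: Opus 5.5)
The plan is to write the fixed-point identity as a perturbation of its leading linear term and expand the cubic nonlinearity around it, in the same way as in Lemma~\ref{lemma:I_E-hat-enclosure} for the forward solution. Concretely, I would write
\begin{equation*}
  Q(\xi) = \gamma P(\xi) + R(\xi),\qquad R(\xi) = R_{1}(\xi) + R_{2}(\xi),\quad R_{1} = P I_{E},\quad R_{2} = E I_{P}.
\end{equation*}
With \(\sigma = 1\) I would then expand \(|Q|^{2}Q\) exactly as in the proof of Lemma~\ref{lemma:I_E-hat-enclosure}. This gives
\begin{equation*}
  |Q|^{2}Q = |\gamma|^{2}\gamma|P|^{2}P + R_{N},
\end{equation*}
where \(R_{N}\) collects the five terms containing at least one factor of \(R\). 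Inserting this into the definition of \(I_{E,\infty}\) produces the main term \(|\gamma|^{2}\gamma\int_{\xi_{1}}^{\infty}J_{E}|P|^{2}P\). The remainder is
\begin{equation*}
  R_{I_{E,\infty}} = \int_{\xi_{1}}^{\infty}J_{E}(\eta)R_{N}(\eta)\,d\eta.
\end{equation*}

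The second step is a pointwise bound on \(R\) using only the bounds recalled before Lemma~\ref{lemma:p_Q_0}. For \(R_{1}\), I would bound \(|I_{E}(\eta)|\) by \(\int_{\xi_{1}}^{\eta}C_{J_{E}}\|Q\|_{0}^{3}s^{-3}\,ds \le \frac{C_{J_{E}}}{2}\|Q\|_{0}^{3}\xi_{1}^{-2}\) and combine this with \(|P|\le C_{P}\eta^{-1}\). This yields a bound of the form \(C_{R,1}\) times a power of \(\eta\) and \(\xi_{1}\). For \(R_{2}\), multiplying the bounds for \(E\) and \(I_{P}\) gives \(|R_{2}(\eta)|\le C_{R,2}\eta^{-3}\). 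The \(e^{\pm\real(c)\eta^{2}}\) factors cancel here, and the \(\eta^{-3}\) decay can be traded for \(\xi_{1}^{-2}\eta^{-1}\) when convenient.

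From these I would extract two estimates. The first is a bound \(|R(\eta)|\le (C_{R,1}+C_{R,2})\,\eta^{-1}\times(\text{decay factor})\), to be used for the single factor of \(R\) that is always present. The second is a cruder bound \(|R(\eta)|\le A_{\max}\eta^{-1}\) for the extra factors of \(R\) in the higher-order terms, which is where \(A_{\max}=\max(C_{R,1},C_{R,2})\xi_{1}^{-2}\) enters. Pulling one factor \(|R|\) out of \(R_{N}\) then gives
\begin{equation*}
  |R_{N}|\le |R|\bigl(3|\gamma|^{2}|P|^{2}+3|\gamma||P||R|+|R|^{2}\bigr),
\end{equation*}
and hence a bound \(|R_{N}(\eta)|\lesssim (C_{R,1}+C_{R,2})\bigl(3|\gamma|^{2}C_{P}^{2}+3|\gamma|C_{P}A_{\max}+A_{\max}^{2}\bigr)\) times a power of \(\eta\).

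Finally, I would multiply by \(|J_{E}(\eta)|\le C_{J_{E}}\) (since \(\frac{1}{\sigma}-1=0\)) and integrate the resulting power of \(\eta\) explicitly over \([\xi_{1},\infty)\). For instance \(\int_{\xi_{1}}^{\infty}\eta^{-5}\,d\eta=\xi_{1}^{-4}/4\), which is where I expect the factor \(\frac{1}{4}\xi_{1}^{-4}\) to come from.

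The main obstacle is the bookkeeping of powers in the bound for \(R_{1}\). Namely, the power of \(\eta\) versus \(\xi_{1}\) must be arranged so that, after multiplying by \(|P|^{2}\sim\eta^{-2}\), the integrand decays like \(\eta^{-5}\) rather than \(\eta^{-3}\). A naive bound \(|I_{E}|\le \frac{C_{J_{E}}}{2}\|Q\|_{0}^{3}\xi_{1}^{-2}\) only gives \(\eta^{-3}\xi_{1}^{-2}\) and a constant \(\frac{1}{2}\) instead of \(\frac{1}{4}\). I would first try to sharpen the treatment of \(R_{1}\), for example by using \(I_{E}(\eta)-I_{E}(\xi_{1})\) together with the \(\eta\)-dependent form of the integral. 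If this cannot reproduce exactly the stated constant, I would accept a slightly weaker explicit constant of the same structure, since only an explicit bound is needed for the enclosure.
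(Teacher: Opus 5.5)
Your decomposition $Q=\gamma P+R$ with $R=PI_E+EI_P$, and the expansion of $|Q|^2Q$, match what is needed. However, the step that produces the stated constants is missing, and two of your claims do not hold as stated.

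\textbf{The maximum $A_{\max}$ is not justified.} The bound $|R(\eta)|\le A_{\max}\eta^{-1}$ with $A_{\max}=\max(C_{R,1},C_{R,2})\xi_1^{-2}$ does not follow from your pointwise bounds. The estimates $|PI_E|\le C_{R,1}\xi_1^{-2}\eta^{-1}$ and $|EI_P|\le C_{R,2}\eta^{-3}$ add up to $(C_{R,1}+C_{R,2})\xi_1^{-2}\eta^{-1}$ at $\eta=\xi_1$. That is a sum, not a maximum.

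\textbf{The integrand cannot decay like $\eta^{-5}$.} Since $I_E(\eta)\to I_{E,\infty}$, which is in general nonzero (this is why $p_{Q,0}=\gamma+I_{E,\infty}$ in Lemma~\ref{lemma:p_Q_0}), the term $PI_E$ is genuinely of size $\eta^{-1}$. So the $C_{R,1}$ part of the integrand only decays like $\xi_1^{-2}\eta^{-3}$. A pointwise bound of the form $(C_{R,1}+C_{R,2})\eta^{-3}$ on $R$ is not available.

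Carried out with the bounds you actually have, your plan gives $\frac{2C_{R,1}+C_{R,2}}{4}$ in place of $\frac{C_{R,1}+C_{R,2}}{4}$, and $(C_{R,1}+C_{R,2})\xi_1^{-2}$ in place of $A_{\max}$. Accepting that weaker constant does not prove the lemma as stated.

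\textbf{The missing idea.} It is the one you gesture at, but it works by cancellation rather than by extra decay. It is not about subtracting $I_E(\xi_1)$, which is zero. It is about not extending the inner integral to infinity. Since $I_E$ is anchored at $\xi_1$, integrate $C_{J_E}\|Q\|_0^3 s^{-3}$ only over $[\xi_1,\eta]$. This gives $|I_E(\eta)|\le\frac{C_{J_E}}{2}\|Q\|_0^3(\xi_1^{-2}-\eta^{-2})$, and hence $|R(\eta)|\le A(\eta)\eta^{-1}$ with
$A(\eta)=C_{R,1}(\xi_1^{-2}-\eta^{-2})+C_{R,2}\eta^{-2}=C_{R,1}\xi_1^{-2}+(C_{R,2}-C_{R,1})\eta^{-2}$.

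This $A$ is monotone on $[\xi_1,\infty)$ and lies between $C_{R,2}\xi_1^{-2}$ and $C_{R,1}\xi_1^{-2}$. That is exactly where the maximum $A_{\max}$ comes from.

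Then bound $|R_N(\eta)|\le\left(3|\gamma|^2C_P^2+3|\gamma|C_PA_{\max}+A_{\max}^2\right)A(\eta)\eta^{-3}$, keeping one factor $A(\eta)$ unmaximized. Computing
$\int_{\xi_1}^\infty A(\eta)\eta^{-3}\,d\eta=C_{R,1}\left(\frac12-\frac14\right)\xi_1^{-4}+\frac{C_{R,2}}{4}\xi_1^{-4}=\frac{C_{R,1}+C_{R,2}}{4}\xi_1^{-4}$
and multiplying by $|J_E|\le C_{J_E}$ gives the stated bound.

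So the factor $\frac14$ on $C_{R,1}$ comes from the subtraction $\frac12-\frac14$. The sum $C_{R,1}+C_{R,2}$ comes out of this integration, not from a pointwise bound on $R$.
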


\begin{proof}
  Define
  \(R_{Q}(\xi) = P(\xi)I_{E}(\xi) + E(\xi)I_{P}(\xi)\), so that
  \(Q(\xi) = \gamma P(\xi) + R_{Q}(\xi)\). Rather than using the bound
  \(|I_{E}(\xi)| \leq C_{I_{E}}\|Q\|_{0}^{2\sigma + 1}\xi_{1}^{-2}\),
  which only captures the limiting value of \(I_{E}\) as
  \(\xi \to \infty\), we keep track of the growth of \(I_{E}\) on
  \([\xi_{1}, \infty)\). Proceeding as in the proof of
  Lemma~\ref{lemma:p_Q_0}, but integrating over \([\xi_{1}, \xi]\)
  instead of \([\xi, \infty)\), we get
  \begin{equation*}
    |I_{E}(\xi)|
    \leq C_{J_{E}}\|Q\|_{0}^{3}\int_{\xi_{1}}^{\xi}\eta^{-3}\,d\eta
    = \frac{C_{J_{E}}}{2}\|Q\|_{0}^{3}\left(\xi_{1}^{-2} - \xi^{-2}\right).
  \end{equation*}
  Together with the above bounds for \(P(\xi)\), \(E(\xi)\) and
  \(I_{P}(\xi)\), this gives
  \begin{equation*}
    \begin{split}
      |R_{Q}(\xi)|
      &\leq
        \frac{C_{P}C_{J_{E}}}{2}\|Q\|_{0}^{3}\left(\xi_{1}^{-2} - \xi^{-2}\right)\xi^{-1}
        + C_{E}C_{I_{P}}\|Q\|_{0}^{3}\xi^{-3}\\
      &= A(\xi)\xi^{-1},
    \end{split}
  \end{equation*}
  with
  \begin{equation*}
    A(\xi) = C_{R,1}\left(\xi_{1}^{-2} - \xi^{-2}\right) + C_{R,2}\xi^{-2}
    = C_{R,1}\xi_{1}^{-2} + (C_{R,2} - C_{R,1})\xi^{-2}.
  \end{equation*}
  From the second form, we see that \(A\) is monotone on
  \([\xi_{1}, \infty)\), taking values between
  \(A(\xi_{1}) = C_{R,2}\xi_{1}^{-2}\) and
  \(\lim_{\xi \to \infty} A(\xi) = C_{R,1}\xi_{1}^{-2}\), so that
  \(A(\xi) \leq A_{\max}\).

  With \(\sigma = 1\), we have
  \begin{multline*}
    |Q(\xi)|^{2\sigma}Q(\xi)
    = |\gamma|^{2}\gamma|P(\xi)|^{2}P(\xi)
    + 2|\gamma|^{2}|P(\xi)|^{2} R_{Q}(\xi)
    + \gamma^{2}P(\xi)^{2}\conj{R_{Q}(\xi)}\\
    + 2\gamma P(\xi) |R_{Q}(\xi)|^{2}
    + \conj{\gamma}\conj{P(\xi)} R_{Q}(\xi)^{2}
    + |R_{Q}(\xi)|^{2} R_{Q}(\xi).
  \end{multline*}
  Let \(R_{Q,N}\) denote the sum of the remainder terms, so that
  \begin{equation*}
    |Q(\xi)|^{2\sigma}Q(\xi) = |\gamma|^{2}\gamma|P(\xi)|^{2}P(\xi)
    + R_{Q,N}(\xi).
  \end{equation*}
  Substituting this into \(I_{E,\infty}\) yields
  \begin{equation*}
    I_{E,\infty}
    = |\gamma|^{2}\gamma\int_{\xi_{1}}^{\infty} J_{E}(\eta)|P(\eta)|^{2}P(\eta)\,d\eta
    + \int_{\xi_{1}}^{\infty} J_{E}(\eta)R_{Q,N}(\eta)\,d\eta.
  \end{equation*}
  To bound the remainder integral, we note that the bounds for \(P\)
  and \(R_{Q}\) yield
  \begin{equation*}
    \begin{split}
      |R_{Q,N}(\xi)|
      &\leq \left(3|\gamma|^{2}C_{P}^{2} + 3|\gamma|C_{P}A(\xi) + A(\xi)^{2}\right)A(\xi)\xi^{-3}\\
      &\leq \left(3|\gamma|^{2}C_{P}^{2} + 3|\gamma|C_{P}A_{\max} + A_{\max}^{2}\right)A(\xi)\xi^{-3}.
    \end{split}
  \end{equation*}
  Inserting this back into the integral and combining it with the bound
  \(|J_{E}(\xi)| \leq C_{J_{E}}\xi^{\frac{1}{\sigma} - 1} = C_{J_{E}}\)
  from~\cite[Lemma~7.3]{Dahne2024}, we get
  \begin{equation*}
    \left|\int_{\xi_{1}}^{\infty} J_{E}(\eta)R_{Q,N}(\eta)\,d\eta\right|
    \leq C_{J_{E}}\left(3|\gamma|^{2}C_{P}^{2} + 3|\gamma|C_{P}A_{\max} + A_{\max}^{2}\right)
    \int_{\xi_{1}}^{\infty} A(\eta)\eta^{-3}\,d\eta.
  \end{equation*}
  The remaining integral is given by
  \begin{equation*}
    \begin{split}
      \int_{\xi_{1}}^{\infty} A(\eta)\eta^{-3}\,d\eta
      &= C_{R,1}\int_{\xi_{1}}^{\infty}\left(\xi_{1}^{-2}\eta^{-3} - \eta^{-5}\right)\,d\eta
        + C_{R,2}\int_{\xi_{1}}^{\infty}\eta^{-5}\,d\eta\\
      &= C_{R,1}\left(\frac{\xi_{1}^{-4}}{2} - \frac{\xi_{1}^{-4}}{4}\right)
        + C_{R,2}\frac{\xi_{1}^{-4}}{4}
      = \frac{C_{R,1} + C_{R,2}}{4}\xi_{1}^{-4},
    \end{split}
  \end{equation*}
  which gives the stated bound.
\end{proof}

To enclose the integral
\begin{equation*}
  \int_{\xi_{1}}^{\infty} J_{E}(\eta)|P(\eta)|^{2}P(\eta)\,d\eta,
\end{equation*}
we follow the same approach as for
Lemma~\ref{lemma:integral-J_E-hat-P-hat}. In this case, we have the
following version.

\begin{lemma}\label{lemma:I_E_infty-integral}
  We have
  \begin{equation*}
    \int_{\xi_{1}}^{\infty} J_{E}(\eta)|P(\eta)|^{2}P(\eta)\,d\eta
    = B_{W}(-c)^{a - b}|c^{-a}|^{2}c^{-a}I_{U},
  \end{equation*}
  with
  \begin{equation*}
    \begin{split}
      I_{U}
      = \int_{\xi_{1}}^{\infty}
      &\left(
        \sum_{k = 0}^{n - 1} \frac{(b - a)_{k}(-a + 1)_{k}}{k!c^{k}}\eta^{-2k}
        + R_{U}(b - a, b, n, -c\eta^{2})(-c)^{-n}\eta^{-2n}
        \right)\\
      &\quad\left(
        \sum_{k = 0}^{n - 1} \frac{(a)_{k}(a - b + 1)_{k}}{k!(-c)^{k}}\eta^{-2k}
        + R_{U}(a, b, n, c\eta^{2})c^{-n}\eta^{-2n}
        \right)^{2}\\
      &\quad\left(
        \sum_{k = 0}^{n - 1} \conj{\left(\frac{(a)_{k}(a - b + 1)_{k}}{k!(-c)^{k}}\right)}\eta^{-2k}
        + \conj{R_{U}(a, b, n, c\eta^{2})c^{-n}}\eta^{-2n}
        \right)\eta^{-\frac{2}{\sigma} - 1}\,d\eta.
    \end{split}
  \end{equation*}
  Here \(R_{U}\) is the remainder term in the asymptotic expansion of
  \(U\) from~\cite[Lemma~7.1]{Dahne2024} and \(n\) is any
  non-negative integer.
\end{lemma}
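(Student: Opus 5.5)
The plan is to mirror the proof of Lemma~\ref{lemma:integral-J_E-hat-P-hat} with the signs of \(c\) swapped. First, write out the backward-case functions from~\cite[Section~7]{Dahne2024}. There \(P(\xi) = U(a, b, c\xi^{2})\) and \(E(\xi) = e^{c\xi^{2}}U(b - a, b, -c\xi^{2})\), the backward analogues of \(\hat{P}\) and \(\hat{E}\), corresponding to \(z = c\xi^{2}\) instead of \(\hat{z} = -c\xi^{2}\). Since \(J_{E}(\xi) = B_{W}E(\xi)e^{-c\xi^{2}}\xi^{d - 1}\), the exponentials cancel and we get \(J_{E}(\eta) = B_{W}U(b - a, b, -c\eta^{2})\eta^{d - 1}\). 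The integral then becomes
\begin{equation*}
  B_{W}\int_{\xi_{1}}^{\infty} U(b - a, b, -c\eta^{2})P(\eta)^{2}\conj{P(\eta)}\eta^{d - 1}\,d\eta .
\end{equation*}

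Next, insert the truncated asymptotic expansion of \(U\) with remainder from~\cite[Lemma~7.1]{Dahne2024}. This expansion reads
\begin{equation*}
  U(\alpha, b, z) = \left(\sum_{k = 0}^{n - 1}\frac{(\alpha)_{k}(\alpha - b + 1)_{k}}{k!(-z)^{k}} + R_{U}(\alpha, b, n, z)z^{-n}\right)z^{-\alpha}.
\end{equation*}
For \(P\) we take \(z = c\eta^{2}\), so that \((-z)^{k} = (-c)^{k}\eta^{2k}\), \(z^{-n} = c^{-n}\eta^{-2n}\) and \(z^{-a} = c^{-a}\eta^{-2a}\). For the \(J_{E}\) factor we take \(\alpha = b - a\) and \(z = -c\eta^{2}\), so that \((-z)^{k} = c^{k}\eta^{2k}\), \(z^{-n} = (-c)^{-n}\eta^{-2n}\) and \(z^{a - b} = (-c)^{a - b}\eta^{2a - 2b}\). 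The conjugated factor \(\conj{P}\) is handled by conjugating the expansion for \(P\) termwise, which gives \(\conj{c^{-a}}\eta^{-2\conj{a}}\) as its prefactor.

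Finally, collect the constants and powers of \(\eta\). The constant prefactor is \(B_{W}(-c)^{a - b}(c^{-a})^{2}\conj{c^{-a}} = B_{W}(-c)^{a - b}|c^{-a}|^{2}c^{-a}\). The powers of \(\eta\) combine to
\begin{equation*}
  \eta^{2a - 2b}\,\eta^{-4a}\,\eta^{-2\conj{a}}\,\eta^{d - 1} = \eta^{-2a - 2\conj{a} - 2b + d - 1} = \eta^{-\frac{2}{\sigma} - 1},
\end{equation*}
using \(2a + 2\conj{a} = \frac{2}{\sigma}\) and \(2b = d\). What remains under the integral is exactly the product of the three bracketed sums in the statement, which gives \(I_{U}\).

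The only delicate point is the splitting of complex powers such as \((c\eta^{2})^{-a} = c^{-a}\eta^{-2a}\) and \((-c\eta^{2})^{a - b} = (-c)^{a - b}\eta^{2(a - b)}\). This splitting is valid for the principal branch because \(\eta > 0\) is real and positive, so multiplying by \(\eta^{2}\) does not change the argument. The convergence of the improper integral at infinity, given that the integrand is \(\mathcal{O}(\eta^{-2/\sigma - 1})\), together with the bounds on \(R_{U}\), is inherited from~\cite[Lemma~7.1]{Dahne2024}.
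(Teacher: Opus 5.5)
Your proposal is correct and follows the paper's proof essentially step by step. Both cancel the exponential to get \(J_{E}(\eta) = B_{W}U(b - a, b, -c\eta^{2})\eta^{d - 1}\), insert the expansions of \(U(a, b, c\eta^{2})\) and \(U(b - a, b, -c\eta^{2})\) from~\cite[Lemma~7.1]{Dahne2024}, and collect the constants and powers of \(\eta\) using \(2a + 2\conj{a} = \frac{2}{\sigma}\) and \(2b = d\); your remark on splitting principal-branch powers for real \(\eta > 0\) only makes explicit what the paper leaves implicit.
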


\begin{proof}
  Since
  \begin{equation*}
    J_{E}(\xi) = B_{W}E(\xi)e^{-c\xi^{2}}\xi^{d - 1}
    = B_{W}U(b - a, b, -c\xi^{2})\xi^{d - 1},
  \end{equation*}
  we can write the integral as
  \begin{equation*}
    \int_{\xi_{1}}^{\infty} J_{E}(\eta)|P(\eta)|^{2}P(\eta)\,d\eta
    = B_{W}\int_{\xi_{1}}^{\infty} U(b - a, b, -c\eta^{2})P(\eta)^{2}\conj{P(\eta)}\eta^{d - 1}\,d\eta.
  \end{equation*}
  Using the expansions
  \begin{equation*}
    \begin{split}
      P(\xi)
      = U(a, b, c\xi^{2})
      &= \left(
        \sum_{k = 0}^{n - 1} \frac{(a)_{k}(a - b + 1)_{k}}{k!(-c\xi^{2})^{k}}
        + R_{U}(a, b, n, c\xi^{2})\left(c\xi^{2}\right)^{-n}
        \right)\left(c\xi^{2}\right)^{-a}\\
      &= \left(
        \sum_{k = 0}^{n - 1} \frac{(a)_{k}(a - b + 1)_{k}}{k!(-c)^{k}}\xi^{-2k}
        + R_{U}(a, b, n, c\xi^{2})c^{-n}\xi^{-2n}
        \right)c^{-a}\xi^{-2a},
    \end{split}
  \end{equation*}
  and
  \begin{equation*}
    \begin{split}
      U(b - a, b, -c\xi^{2})
      &= \left(
        \sum_{k = 0}^{n - 1} \frac{(b - a)_{k}(-a + 1)_{k}}{k!(c\xi^{2})^{k}}
        + R_{U}(b - a, b, n, -c\xi^{2})\left(-c\xi^{2}\right)^{-n}
        \right)\left(-c\xi^{2}\right)^{a - b}\\
      &= \left(
        \sum_{k = 0}^{n - 1} \frac{(b - a)_{k}(-a + 1)_{k}}{k!c^{k}}\xi^{-2k}
        + R_{U}(b - a, b, n, -c\xi^{2})(-c)^{-n}\xi^{-2n}
        \right)(-c)^{a - b}\xi^{2(a - b)},
    \end{split}
  \end{equation*}
  we can write the integral as
  \begin{equation*}
    \begin{split}
      B_{W}(-c)^{a - b}|c^{-a}|^{2}c^{-a}\int_{\xi_{1}}^{\infty}
      &\left(
        \sum_{k = 0}^{n - 1} \frac{(b - a)_{k}(-a + 1)_{k}}{k!c^{k}}\eta^{-2k}
        + R_{U}(b - a, b, n, -c\eta^{2})(-c)^{-n}\eta^{-2n}
        \right)\\
      &\quad\left(
        \sum_{k = 0}^{n - 1} \frac{(a)_{k}(a - b + 1)_{k}}{k!(-c)^{k}}\eta^{-2k}
        + R_{U}(a, b, n, c\eta^{2})c^{-n}\eta^{-2n}
        \right)^{2}\\
      &\quad\left(
        \sum_{k = 0}^{n - 1} \conj{\left(\frac{(a)_{k}(a - b + 1)_{k}}{k!(-c)^{k}}\right)}\eta^{-2k}
        + \conj{R_{U}(a, b, n, c\eta^{2})c^{-n}}\eta^{-2n}
        \right)\eta^{-\frac{2}{\sigma} - 1}\,d\eta,
    \end{split}
  \end{equation*}
  where we have used that \(2a + 2\conj{a} = \frac{2}{\sigma}\)
  and \(2b = d\).
\end{proof}

\printbibliography

\begin{tabular}{c@{\hspace{1cm}}c}
  \begin{tabular}[t]{l}
    \textbf{Joel Dahne} \\
    School of Mathematics \\
    University of Minnesota \\
    127 Vincent Hall, 206 Church St. SE \\
    Minneapolis, MN 55455, USA \\
    Email: jdahne@umn.edu
  \end{tabular}
  &
  \begin{tabular}[t]{l}
    \textbf{Vladimír Šverák} \\
    School of Mathematics \\
    University of Minnesota \\
    127 Vincent Hall, 206 Church St. SE \\
    Minneapolis, MN 55455, USA \\
    Email: sverak@umn.edu
  \end{tabular}
\end{tabular}

\end{document}